%
%
%
%
\documentclass{amsart}

\usepackage{graphicx}
\usepackage{tikz}
\usetikzlibrary{matrix,arrows,decorations.pathmorphing}
\usepackage{tikz-cd}
\usepackage{hyperref}
\usepackage{amsthm}
\usepackage[mathscr]{euscript}
\usepackage{amssymb}
\newtheorem{theorem}{Theorem}[section]
\newtheorem{lemma}[theorem]{Lemma}

\theoremstyle{definition}

\newtheorem{conjecture}[theorem]{Conjecture}

\newtheorem{proposition}[theorem]{Proposition}
\newtheorem{corollary}[theorem]{Corollary}

\newtheorem{subtheorem}{Theorem}[theorem]

\theoremstyle{remark}
\newtheorem{remark}[theorem]{Remark}

\numberwithin{equation}{section}



\newcommand{\RomanNumeralCaps}[1]
    {\MakeUppercase{\romannumeral #1}}
\begin{document}

\title{The Topological Period-Index Problem over $8$-complexes, \RomanNumeralCaps{1}}

\author{Xing Gu}
\address{School of Mathematics and Statistics, the University of Melbourne, Parkville VIC 3010, Australia}
\curraddr{}
\email{xing.gu@unimelb.edu.au}
\thanks{The author acknowledges support from the Australian Research Council via ARC-DP170102328.}


\subjclass[2000]{}

\date{}

\dedicatory{}

\keywords{Brauer groups, twisted K-theory, period-index problems.}

\begin{abstract}
 We study the Postnikov tower of the classifying space of a compact Lie group $P(n,mn)$, which gives obstructions to lifting a topological Brauer class of period $n$ to a $PU_{mn}$-torsor, where the base space is a CW complex of dimension $8$. Combined with the study of a twisted version of Atiyah-Hirzebruch spectral sequence, this solves the topological period-index problem for CW complexes of dimension $8$.
\end{abstract}

\maketitle
\section{Introduction}
This paper is a sequel to \cite{An} and \cite{An1}, in which Antieau and Williams initiated the study of the topological period-index problem. Given a path-connected topological space $X$, let $\operatorname{Br}(X)$ be the topological Brauer group defined in \cite{An1}, whose underlying set is the Azumaya algebras modulo the Brauer equivalence: $\mathscr{A}_0$ and $\mathscr{A}_1$ are called Brauer equivalent if there are vector bundles $\mathscr{E}_0$ and $\mathscr{E}_1$ such that
$$\mathscr{A}_0\otimes\operatorname{End}(\mathscr{E}_0)\cong \mathscr{A}_1\otimes\operatorname{End}(\mathscr{E}_1).$$
The multiplication is given by tensor product.

\begin{remark}
In its full generality, Brauer group can be defined for any locally-ringed topos. See \cite{An4}, for example.
\end{remark}
Azumaya algebras over $X$ of degree $r$ are classified by the collection of $PU_r$-torsors over $X$, i.e., the cohomology set $H^{1}(X;PU_r)$, where $PU_r$ is the projective unitary group of degree $r$. Consider the short exact sequences of Lie groups
\begin{equation}\label{ses1}
1\rightarrow S^1\rightarrow U_r\rightarrow PU_r\rightarrow 1
\end{equation}
and
\begin{equation}\label{ses2}
0\rightarrow\mathbb{Z}\rightarrow\mathbb{C}\xrightarrow{\textrm{exp}} S^1\rightarrow 1
\end{equation}
where the arrow $S^1\rightarrow U_r$ is the inclusion of scalars. Then the composition of Bockstein homomorphisms
\begin{equation}\label{Bockstein}
H^{1}(X;PU_r)\rightarrow H^{2}(X;S^1)\rightarrow H^{3}(X;\mathbb{Z})
\end{equation}
associates an Azumaya algebra $\mathscr{A}$ to a class $\alpha\in H^{3}(X;\mathbb{Z})$. The exactness of the sequences above implies that
\begin{enumerate}
\item $\alpha\in H^{3}(X;\mathbb{Z})_{\textrm{tor}}$, the subgroup of torsion elements of $ H^{3}(X;\mathbb{Z})$, and
\item the class $\alpha$ only depends on the Brauer equivalence class of $\mathscr{A}$.
\end{enumerate}
Therefore, we established a function $\textrm{Br}(X)\rightarrow H^{3}(X;\mathbb{Z})_{\textrm{tor}}$. It is not hard to show (\cite{Gr}) that this function is in fact an inclusion of a subgroup. For this reason, $H^{3}(X;\mathbb{Z})_{\textrm{tor}}$ is also called the cohomological Brauer group of $X$, and is sometimes denoted by $\textrm{Br}'(X)$.

Serre showed (\cite{Gr}) that when $X$ is a finite CW complex, the inclusion is also surjective. Hence, for any $\alpha\in H^{3}(X;\mathbb{Z})_{\textrm{tor}}$, there is some $r$ such that a $PU_r$-torsor over $X$ is associated to $\alpha$ via the homomorphism (\ref{Bockstein}). Let $\operatorname{per}(\alpha)$ denote the order of $\alpha$ as an element of the group $H^{3}(X;\mathbb{Z})$, then Serre also showed (\cite{Gr}) that $\operatorname{per}(\alpha)|r$, for all $r$ such that there is a $PU_r$-torsor over $X$ associated to $\alpha$ in the way described above. Let $\operatorname{ind}(\alpha)$ denote the greatest common divisor of all such $r$, then in particular we have
\begin{equation}\label{per div ind}
\operatorname{per}(\alpha)|\operatorname{ind}(\alpha).
\end{equation}
Furthermore, Antieau and Williams showed (\cite{An1}) the following
\begin{theorem}[Theorem 6, \cite{An4}]\label{same prime divisors}
Let $(X,R)$ be a locally-ringed connected topos and let $\alpha\in\operatorname{Br}(X,R)$. There exists a representative $A$ of $\alpha$ such that the prime numbers dividing $\operatorname{per}(\alpha)$ and $\operatorname{deg}(A)$ coincide.
\end{theorem}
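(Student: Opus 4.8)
The plan is to reduce the statement to the primary components of $\alpha$ and then handle each prime separately. Write $\operatorname{per}(\alpha)=\prod_{i}p_i^{a_i}$ for distinct primes $p_i$. Since $\operatorname{per}(\alpha)$ is finite, $\alpha$ is a torsion element of the abelian group $\operatorname{Br}(X,R)$, and so it decomposes uniquely as a sum $\alpha=\sum_i\alpha_i$ of its $p_i$-primary components, with $\operatorname{per}(\alpha_i)=p_i^{a_i}$ and $\alpha_i\neq 0$ for each $i$. Because $\operatorname{per}(\alpha)\mid\operatorname{ind}(\alpha)$ by (\ref{per div ind}) and $\operatorname{ind}(\alpha)$ divides the degree of every representative, every prime dividing $\operatorname{per}(\alpha)$ automatically divides the degree of \emph{any} representative. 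The entire content of the theorem is therefore to produce a representative whose degree carries no \emph{extra} prime factors.

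First I would isolate the key special case: if $\beta\in\operatorname{Br}(X,R)$ is $p$-primary, so $\operatorname{per}(\beta)=p^a$, then $\beta$ admits a representative whose degree is a power of $p$. In the classical algebraic setting this is immediate from the existence of a division-algebra representative realizing $\operatorname{ind}(\beta)$ together with its primary tensor decomposition: the prime-to-$p$ tensor factors represent the (vanishing) prime-to-$p$ components of $\beta$ and hence are split, forcing $\operatorname{ind}(\beta)$ to be a power of $p$. I would seek the topos-theoretic analogue of this assertion, namely that $\operatorname{ind}(\beta)$ is itself a power of $p$ and is realized by an honest Azumaya algebra. The natural tool is corestriction along a finite covering of the topos, playing the role of passage to a finite extension: restricting $\beta$ to a cover that absorbs the prime-to-$p$ part, combined with a projection formula relating restriction and corestriction of indices, should force the prime-to-$p$ part of $\operatorname{ind}(\beta)$ to be trivial.

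With the primary case established, the proof concludes formally. For each $i$ I would choose a representative $A_i$ of $\alpha_i$ with $\deg(A_i)=p_i^{c_i}$; since $\alpha_i\neq 0$ an Azumaya algebra of degree $1$ would represent the trivial class, so necessarily $c_i\geq 1$. The tensor product $A=\bigotimes_i A_i$ is then an Azumaya algebra representing $\sum_i\alpha_i=\alpha$, of degree $\deg(A)=\prod_i p_i^{c_i}$. The set of primes dividing $\deg(A)$ is exactly $\{p_i\}$, which is precisely the set of primes dividing $\operatorname{per}(\alpha)$, as required.

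The hard part will be the primary special case, and within it the verification that $\operatorname{ind}(\beta)$ is a power of $p$ and is attained by an actual Azumaya algebra in the generality of an arbitrary connected locally-ringed topos. Unlike the field case, there is no a priori division-algebra representative to invoke, so one must either prove that the index is realized by a minimal representative, or bypass this input entirely by running the bookkeeping directly on indices: establish a corestriction bound $\operatorname{ind}(\beta)\mid e\cdot\operatorname{ind}(\beta|_{U})$ for a suitable degree-$e$ cover $U$ splitting off the prime-to-$p$ behaviour, and deduce the prime-power conclusion prime by prime without ever naming a smallest representative. I expect the technical heart of the argument to lie in making this corestriction formalism available in the topos setting.
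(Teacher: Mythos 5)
First, a point of calibration: the paper you are working from does not prove this statement at all --- it is quoted verbatim from Antieau--Williams \cite{An4} (Theorem 6 there), so the comparison has to be made against that source rather than against anything internal to this paper.

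Your formal skeleton is correct: the primary decomposition $\alpha=\sum_i\alpha_i$ (each $\alpha_i$ is an integer multiple of $\alpha$, hence again a Brauer class), the observation that a degree-$1$ Azumaya algebra is trivial so $c_i\geq 1$, and the tensor product $A=\bigotimes_i A_i$ giving exactly the primes of $\operatorname{per}(\alpha)$. But this is bookkeeping; it repackages the theorem into the $p$-primary case, which is then exactly what you do not prove. And the tool you propose for that case is where the genuine gap lies. Restriction--corestriction along finite covers is the mechanism behind the classical field proof (and its scheme-theoretic extensions via finite \'etale covers), but it simply does not exist for an arbitrary connected locally-ringed topos: there is no Galois theory, no supply of finite locally free covers that could ``absorb the prime-to-$p$ part,'' and no transfer map on Brauer groups or on indices in this generality. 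The absence of that machinery is precisely why the statement is a theorem of \cite{An4} rather than a transcription of the classical argument, so a plan whose ``technical heart'' is to make corestriction available is a plan to prove a different theorem. You are right to flag the second subtlety --- that $\operatorname{ind}$, being a gcd, need not be attained by any actual algebra in this setting --- but your fallback still routes through transfer.

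For your reference, the argument in this generality runs in the opposite, constructive direction, never naming a minimal representative and never using transfer. Three ingredients: (i) the set $D$ of degrees of representatives of a fixed class $\alpha$ is closed under addition (block sum of $PGL$-cocycles with the same coboundary, equivalently direct sum of $\alpha$-twisted vector bundles), so $D$ is a numerical semigroup; (ii) exterior-power operations take a degree-$n$ representative of $\alpha$ to a degree-$\binom{n}{k}$ representative of $k\alpha$, and tensor products let one combine these; (iii) Kummer's theorem on binomial coefficients shows that for each prime $p\nmid\operatorname{per}(\alpha)$ one can choose $k_1,\dots,k_r$, each a power of $p$, with $\sum_i k_i\equiv 1\pmod{\operatorname{per}(\alpha)}$ and $p\nmid\prod_i\binom{n}{k_i}$, yielding a representative of $\alpha$ of degree prime to $p$. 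Hence $\gcd(D)$ has the same prime divisors as $\operatorname{per}(\alpha)$, and since a numerical semigroup contains all sufficiently large multiples of its gcd, $D$ contains an integer divisible by exactly those primes. That element of $D$ is the desired representative.
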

In particular, the period and index of any $\alpha\in\operatorname{Br}(X)$ have the same prime divisors, for a connected finite CW-complex $X$. Hence, for a sufficiently large integer $e$ we have
\begin{equation}\label{ind div per}
\operatorname{ind}(\alpha)|\operatorname{per}(\alpha)^e.
\end{equation}
The topological period-index problem can be stated as follows:

\textit{For a given class $\mathscr{C}$ of finite CW complexes, find the sharp lower bound of $e$ such that (\ref{ind div per}) holds for all finite CW complex $X$ in $\textrm{C}$ and all elements $\alpha\in\textrm{Br}(X)$.}

The topological period-index problem is motivated by its analog in algebraic geometry, where the topological Brauer group of a CW complex is replaced by the usual Brauer group of a scheme, and where the period and index are defined similarly. We have the following folklore conjecture:
\begin{conjecture}[Colliot-Th{\'e}l{\`e}ne]\label{algconj}
Let $k$ be either a $C_d$-field or the function field of a $d$-dimensional variety over an algebraically closed field. Let $\alpha\in \textrm{Br}(k)$, and suppose that $\operatorname{per}(\alpha)$ is prime to the characteristic of $k$. Then
$$\operatorname{ind}(\alpha)|\operatorname{per}(\alpha)^{d-1}.$$
\end{conjecture}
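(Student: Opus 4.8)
The statement is a celebrated open problem, so rather than a complete argument I will describe the natural strategy one attempts and pinpoint where it stalls. The plan is to induct on the dimension $d$. The base cases are classical and should be quoted rather than reproved. For $d=1$ the claim reads $\operatorname{ind}(\alpha)\mid 1$, i.e. $\operatorname{Br}(k)=0$; this is Tsen's theorem for the function field of a curve over an algebraically closed field, together with the standard vanishing of the Brauer group of a $C_1$-field. For $d=2$ the assertion is $\operatorname{per}(\alpha)=\operatorname{ind}(\alpha)$, which is de Jong's period-index theorem for the function field of a surface over an algebraically closed field (with $\operatorname{per}(\alpha)$ prime to the characteristic); its proof already runs through the deformation theory of twisted sheaves, so it is a genuine input rather than a warm-up.

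For the inductive step I would fibre a smooth projective model $Y$ of $k$, of dimension $d$, over a base $B$ of dimension $d-1$ with one-dimensional generic fibre, so that the function field of the generic fibre is a curve over $\kappa(B)$. The idea is to split the \emph{vertical} part of $\alpha$ using the $d=1$ result on the generic fibre, producing a single factor of $\operatorname{per}(\alpha)$ in the index, and to control the remaining \emph{horizontal} part by restricting to $\kappa(B)$ and applying the inductive hypothesis $\operatorname{ind}\mid\operatorname{per}^{d-2}$. If these two contributions could be combined multiplicatively, one would obtain $\operatorname{ind}(\alpha)\mid\operatorname{per}(\alpha)^{d-1}$. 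Concretely this requires a mechanism for realising a twisted class by an Azumaya algebra of the predicted degree: either Lieblich's moduli stack of twisted sheaves, where one must exhibit a rational point, or the field-patching machinery of Harbater--Hartmann--Krashen, which is tailored to function fields of curves over a base and glues local splittings into a global one.

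The main obstacle is precisely this inductive step, and it is where the conjecture is in fact open for $d\ge 3$. Two difficulties intertwine. First, the fibration does not cleanly separate the period measured along the fibre from that over the base: the restriction of $\alpha$ to the generic fibre can drop in period, and the ramification of $\alpha$ along the vertical and exceptional divisors contributes to the index in a way that resists a uniform bound. Second, the moduli-theoretic route needs rational points on moduli of twisted sheaves, and over a $d$-dimensional function field with $d\ge 3$ there is no $C_d$-type quasi-algebraic-closure input forcing their existence---this is exactly the ingredient that makes $d\le 2$ succeed. I therefore expect that completing the argument demands a genuinely new idea at this point, most plausibly a synthesis of the patching philosophy with a fine analysis of ramification, in the spirit of the recent progress over function fields of $p$-adic curves, rather than a formal d\'evissage.
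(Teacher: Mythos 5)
The statement you were asked about is Conjecture~\ref{algconj} of the paper, attributed to Colliot-Th{\'e}l{\`e}ne: the paper states it as a folklore conjecture, gives no proof, and explicitly notes that it is known only in a few low-dimensional cases with very little known in higher dimensions. Your response is therefore the right one — you correctly recognized the statement as an open problem rather than fabricating an argument, and your summary of the known cases ($d=1$ via Tsen's theorem, $d=2$ via de Jong's period-index theorem, with $d\ge 3$ open precisely because the d\'evissage/patching step fails) is accurate and fully consistent with how the paper treats the conjecture, namely as motivation for its topological analogue rather than as a result to be proved.
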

Examples have been known such that $\operatorname{ind}(\alpha)=\operatorname{per}(\alpha)^{d-1}$ (\cite{Co}), so the bound is sharp if it holds. The conjecture has been proved in a few low dimensional cases, which are summarized in \cite{An1}. Very little is known in high dimensions.

There is an obvious topological analog of Conjecture \ref{algconj}, which is proposed by Antieau and Williams in \cite{An} and referred to as ``straw man'', or the topological period-index conjecture:
\begin{conjecture}[Antieau-Williams]\label{topconj}
If X is a $2d$-dimensional finite CW complex, and $\alpha\in\operatorname{Br}(X)$, then
$$\operatorname{ind}(\alpha)|\operatorname{per}(\alpha)^{d-1}.$$
\end{conjecture}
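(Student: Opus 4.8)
As stated this is Conjecture~\ref{topconj}, which remains open in full generality; what follows is the program I would pursue, of which the cases $d\le 3$ are due to Antieau--Williams and the case $d=4$ is the content of this paper. The plan is to realize every Brauer class $\alpha$ of period $n=\operatorname{per}(\alpha)$, on a $2d$-dimensional finite CW complex $X$, by a $PU_r$-torsor whose degree $r$ divides $n^{d-1}$, which by the definition of $\operatorname{ind}$ yields $\operatorname{ind}(\alpha)\mid\operatorname{per}(\alpha)^{d-1}$.

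First I would reduce to the prime-power case. By Theorem~\ref{same prime divisors} the period and index share their prime divisors, and both $\operatorname{per}$ and $\operatorname{ind}$ respect the primary decomposition of $\operatorname{Br}(X)\subseteq H^{3}(X;\mathbb{Z})_{\mathrm{tor}}$; writing $\alpha=\sum_{p}\alpha_{p}$ it therefore suffices to bound $\operatorname{ind}(\alpha_{p})$ for each $p$-primary component. I may thus assume $n=p^{k}$ and aim to construct a single torsor of degree $n^{d-1}$.

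Next I would set up the obstruction theory. A degree-$r$ torsor realizing $\alpha$ is exactly a lift of the classifying map $\alpha\colon X\to K(\mathbb{Z},3)$ along the Brauer-class map $BPU_{r}\to K(\mathbb{Z},3)$, whose homotopy fiber is $BU_{r}$ (the delooping of $U_r\to PU_r\to K(\mathbb Z,2)$ coming from (\ref{ses1})); restricting to classes of period $n$ leads to the classifying space of the group $P(n,mn)$ studied in the abstract. I would build the Postnikov tower of the fibration $BU_{r}\to BPU_{r}\to K(\mathbb{Z},3)$ and run obstruction theory stage by stage: the obstruction to extending a lift of $\alpha$ over the $(i+1)$-skeleton of $X$ lives in $H^{i+1}\bigl(X;\pi_{i}(BU_{r})\bigr)$. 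Because $\dim X=2d$, only the stages with $i\le 2d-1$ contribute, so one needs the homotopy groups $\pi_{i}(BU_{r})$ and the $k$-invariants of the tower in this finite range, must identify each successive obstruction as a universal torsion class built functorially from $\alpha$, and must verify that taking $r=n^{d-1}$ forces every such obstruction to vanish, with enough indeterminacy at each stage to correct the partial lift.

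\textbf{Main obstacle.} The difficulty lies entirely in the middle step: as $d$ grows, the cohomology $H^{*}(BPU_{r};\mathbb{Z})$ in degrees up to $2d$ and, above all, the higher $k$-invariants of the Postnikov tower of $BPU_{r}\to K(\mathbb{Z},3)$ become progressively harder to control, and no closed form for them is known in general. For $d\le 4$ these invariants can be computed or estimated directly --- which is precisely what makes the $8$-complex case accessible in this paper --- but a uniform argument would require either a structural description of these $k$-invariants or a genuinely different input. A twisted Atiyah--Hirzebruch spectral sequence computing twisted $K$-theory supplies such an alternative handle in low dimensions and is the natural tool for pushing further, yet controlling its differentials in all degrees is itself open. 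This is why the statement is recorded as a conjecture rather than a theorem.
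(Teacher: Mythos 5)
There is a genuine and fatal gap: the statement you are trying to prove is false, and the paper records it precisely as a \emph{disproved} conjecture, not an open one. Your framing --- that ``the cases $d\le 3$ are due to Antieau--Williams and the case $d=4$ is the content of this paper'' --- inverts the actual results. Antieau and Williams \emph{refuted} Conjecture~\ref{topconj} in dimension $6$: by Theorem~\ref{6-complex} there is a $6$-complex ($d=3$) carrying a class with $\operatorname{per}(\alpha)=n$ and $\operatorname{ind}(\alpha)=\epsilon_2(n)n^2$, so for even $n$ one has $\operatorname{ind}(\alpha)=2n^2\nmid n^2=\operatorname{per}(\alpha)^{d-1}$. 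Likewise the content of this paper for $d=4$ is another refutation: Theorems~\ref{main}, \ref{main1} and \ref{main2} show that for $X$ the $8$-skeleton of $K(\mathbb{Z}/n,2)$ with $n\equiv 2 \pmod 4$ one has $\operatorname{ind}(\alpha)=\epsilon_2(n)\epsilon_3(n)n^3\nmid n^3=\operatorname{per}(\alpha)^{3}$, and the sharp exponent over $8$-complexes is $4$, not $d-1=3$. So there is no proof of this statement in the paper to compare against; the paper proves its negation (together with the corrected upper bound $\operatorname{ind}(\alpha)\mid\epsilon_2(n)\epsilon_3(n)n^3$ of Corollary~\ref{upper bound}).

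Correspondingly, the pivotal step of your program --- ``verify that taking $r=n^{d-1}$ forces every such obstruction to vanish'' --- is exactly where the argument must fail, and the paper computes the failure explicitly. The relevant lifting problem is through $\mathbf{B}P(n,mn)\to K(\mathbb{Z}/n,2)$ (diagram (\ref{lift'})), and its low $k$-invariants are nonzero universal classes: Proposition~\ref{kappa3} shows $\kappa_3=0$ only when $\epsilon_2(n)n\mid m$, forcing the factor $\epsilon_2(n)$ already at the first stage, and Theorem~\ref{H^7(BP)} together with the proof of Theorem~\ref{main2} shows that for $n=2$, $m=4$ (i.e.\ torsor degree $mn=8=n^3$) the invariant $\kappa_5=R_2\times 1+\beta_2\times\iota_4+1\times\Gamma_4$ pulls back along the unique lift $f_5$ to the nonzero class $R_2\in H^7(X;\mathbb{Z})$, obstructing any $PU_{8}$-torsor realizing $\alpha$. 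These nonvanishing obstructions (equivalently, the nontrivial differentials $\tilde d_5$, $\tilde d_7$ out of $\tilde E^{0,0}_*$ in the twisted Atiyah--Hirzebruch spectral sequence of Theorem~\ref{AH diff}) are exactly the source of the extra factors $\epsilon_2(n)$, $\epsilon_3(n)$ in the true answer; no refinement of the obstruction-theoretic setup you describe can make them vanish, because the divisibility you are aiming for does not hold.
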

Notice that, if $X$ is a complex algebraic variety of dimension $d$, then its underlying topological space has a $2d$-dimensional cell decomposition, whence the $2d$ in the conjecture.

Antieau and Williams disproved Conjecture \ref{topconj} in \cite{An}. To state their results in consistency with this paper, we denote by $\epsilon_{p}(n)$ the greatest common divisor of $p$ and $n$. Typically $p$ will be a prime number. The notations in the following theorem is altered accordingly.
\begin{theorem}[Antieau-Williams, \cite{An}]\label{6-complex}
Let $n$ be a positive integer. There exists a connected finite CW complex $X$ of dimension $6$ equipped with a class $\alpha\in\operatorname{Br}(X)$ for which $\operatorname{per}(\alpha)=n$ and $\operatorname{ind}(\alpha)=\epsilon_{2}(n)n^2$.
\end{theorem}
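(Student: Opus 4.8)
The plan is to convert the computation of $\operatorname{ind}(\alpha)$ into a question in twisted $K$-theory and then to exhibit a single $6$-dimensional complex on which the resulting obstruction is as large as the dimension allows. Recall that $\operatorname{ind}(\alpha)\mid r$ exactly when the classifying map $\alpha\colon X\to K(\mathbb{Z},3)$ lifts along $\delta\colon BPU_r\to K(\mathbb{Z},3)$, equivalently when $\alpha$ is carried by an $\alpha$-twisted vector bundle of rank $r$. Since the fibre of $\delta$ is $BU_r$, whose homotopy is concentrated in even degrees, the ranks that occur assemble, on a finite complex, into the image of the rank homomorphism $K^0_\alpha(X)\to H^0(X;\mathbb{Z})=\mathbb{Z}$; this image is the positive generator of the edge group $E_\infty^{0,0}$ of the $\alpha$-twisted Atiyah--Hirzebruch spectral sequence, and I would identify $\operatorname{ind}(\alpha)$ with that generator.

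The key structural observation is that a class of period $n$ is classified through the Bockstein: any $(X,\alpha)$ with $\operatorname{per}(\alpha)=n$ admits a factorization $\alpha=\beta\circ u$ with $u\colon X\to K(\mathbb{Z}/n,2)$ and $\beta\colon K(\mathbb{Z}/n,2)\to K(\mathbb{Z},3)$ the integral Bockstein. Hence $K(\mathbb{Z}/n,2)$ carries the universal period-$n$ class, and by naturality of the spectral sequence $\operatorname{ind}(\alpha)$ always divides the index of this universal class. When $\dim X\le 6$ the map $u$ factors through the $6$-skeleton, so I would take $X_0$ to be the (finite, simply connected, $6$-dimensional) $6$-skeleton of $K(\mathbb{Z}/n,2)$, with $\alpha_0=\beta(u_0)$ the Bockstein of the fundamental class; a short check with the low-degree integral cohomology of $K(\mathbb{Z}/n,2)$ shows $\operatorname{per}(\alpha_0)=n$. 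Computing $\operatorname{ind}(\alpha_0)$ then yields both the universal upper bound for all $6$-complexes and the sought extremal example.

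I would then run the twisted spectral sequence on $X_0$. On a $6$-complex the only differentials leaving the column $E_*^{0,0}$ are $d_3$ and $d_5$, because the targets of $d_7$ and higher lie in $H^{\ge 7}(X_0;\mathbb{Z})=0$. The differential $d_3$ is cup product with $\alpha_0$ on $E_3^{0,0}=\mathbb{Z}$, so $d_3(1)=\pm\alpha_0$ has order $n$ and $E_4^{0,0}=n\mathbb{Z}$; this reproves $\operatorname{per}(\alpha)\mid\operatorname{ind}(\alpha)$ as in (\ref{per div ind}). The heart of the argument is the evaluation of $d_5$ on the surviving generator $n\in E_5^{0,0}$: this differential is a secondary operation built from $\alpha_0$, and I would show that $d_5(n)$ is an element of (a subquotient of) $H^5(X_0;\mathbb{Z})$ of order exactly $n$ when $n$ is odd and $2n$ when $n$ is even, the extra factor of $2$ being detected by an $Sq^2$-component of the operation. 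Since $d_5$ is a homomorphism, $E_6^{0,0}=\epsilon_2(n)n\cdot n\,\mathbb{Z}=\epsilon_2(n)n^2\,\mathbb{Z}$, and with no further differentials $E_\infty^{0,0}=\epsilon_2(n)n^2\,\mathbb{Z}$, giving $\operatorname{ind}(\alpha_0)=\epsilon_2(n)n^2$.

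The main obstacle is exactly this computation of $d_5$ and the isolation of its $2$-primary part: one must identify the relevant secondary cohomology operation, evaluate it against the ring structure and Steenrod action on $H^*(K(\mathbb{Z}/n,2))$ in degrees $\le 6$, and confirm that the additional factor of $2$ materializes precisely when $2\mid n$ --- this parity dependence is the source of the failure of Conjecture \ref{topconj}, since for even $n$ one gets $\operatorname{ind}(\alpha)=2n^2\nmid n^2$. A secondary matter requiring care is verifying that passage to the $6$-skeleton leaves $\operatorname{per}(\alpha_0)=n$ and introduces no spurious low-degree classes that could alter $E_\infty^{0,0}$, so that the universal computation genuinely transfers to the finite complex $X_0$.
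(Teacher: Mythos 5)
Your setup is sound and matches the machinery this paper uses for the upper bound: identifying $\operatorname{ind}(\alpha)$ with the generator of $\tilde{E}_{\infty}^{0,0}$ (Theorem \ref{AH diff}), factoring any period-$n$ class through $K(\mathbb{Z}/n,2)$ via the Bockstein, restricting to the $6$-skeleton, and observing that on a $6$-complex only $\tilde{d}_3$ and $\tilde{d}_5$ can act on the edge column. This gives $\operatorname{ind}(\alpha_0)\mid\epsilon_2(n)n^2$ exactly as in Corollary \ref{upper bound}. But the theorem's content is the reverse inequality, and that is where your proposal has a genuine gap: everything reduces to showing that $\tilde{d}_5$ on the surviving generator $n\in\tilde{E}_5^{0,0}$ has order \emph{exactly} $\epsilon_2(n)n$ in $\tilde{E}_5^{5,-4}\cong H^5(K(\mathbb{Z}/n,2);\mathbb{Z})\cong\mathbb{Z}/\epsilon_2(n)n$, and you assert this rather than prove it. The proposed method --- evaluating the differential ``against the ring structure and Steenrod action'' --- cannot work as stated, because $\tilde{d}_5$ is not a primary operation: it is a secondary operation defined only on the kernel of $\tilde{d}_3$ and with indeterminacy, and it is not determined by the Steenrod action on $H^*(K(\mathbb{Z}/n,2))$ in any direct way. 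Identifying and evaluating it is precisely the theorem, not a computation one can read off.

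The known proof (Antieau--Williams, and the route this paper's machinery is built for) goes in the opposite direction: one computes the index by obstruction theory and then, if desired, deduces the differential. Concretely, $\alpha_0$ is represented by a $PU_{mn}$-torsor if and only if the map $\alpha_0'\colon X_0\to K(\mathbb{Z}/n,2)$ lifts to $\mathbf{B}P(n,mn)$ (Proposition \ref{lift to BP}, applicable since $H^2(X_0;\mathbb{Z})=0$); for a $6$-complex the only obstruction is the pullback of the first $k$-invariant $\kappa_3\in H^5(K(\mathbb{Z}/n,2);\mathbb{Z})$, since the next obstruction lies in $H^7(X_0;\pi_6)=0$. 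Proposition \ref{kappa3} shows $\kappa_3=0$ if and only if $\epsilon_2(n)n\mid m$, and since $\alpha_0'$ is the skeletal inclusion, inducing an isomorphism on $H^5$, the lift exists if and only if $\epsilon_2(n)n\mid m$. Hence
\[
\operatorname{ind}(\alpha_0)=\gcd\{mn:\epsilon_2(n)n\mid m\}=\epsilon_2(n)n^2 .
\]
The nontrivial input here is the Serre spectral sequence computation of $H^5(\mathbf{B}P(n,mn);\mathbb{Z})$ (Proposition \ref{H^5(BP)}), which detects the factor $\epsilon_2(n)$; this replaces the inaccessible direct evaluation of $\tilde{d}_5$. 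To repair your proposal you would need either to import this obstruction-theoretic computation, or to supply an independent identification of $\tilde{d}_5$ as a specific secondary operation together with its value on the universal example --- a substantial result that your sketch does not provide.
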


Theorem \ref{6-complex} would be a special case of the following
\begin{conjecture}[Antieau-Williams, \cite{An2}]\label{awconjecture}
Let X be a finite $2d$-dimensional CW-complex, and let $\alpha\in\operatorname{Br}(X)$ have period $m=p_1^{r_1}\cdots p_k^{r_k}$. Then,
$$\operatorname{ind}(\alpha)=m^{d-1}\prod_{i=1}^{k}p_i^{v_{p_i}((d-1)!)},$$
where $v_{p_i}$ is the $p_i$-adic evaluation.
\end{conjecture}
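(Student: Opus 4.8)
I read Conjecture~\ref{awconjecture} as the assertion that $m^{d-1}\prod_{i}p_i^{v_{p_i}((d-1)!)}$ is the \emph{sharp} value of the index over all $2d$-dimensional complexes, so the plan is to establish both the divisibility $\operatorname{ind}(\alpha)\mid m^{d-1}\prod_i p_i^{v_{p_i}((d-1)!)}$ for every finite $2d$-complex $X$ and the existence of some $X$ attaining equality. The starting point is the standard reduction of the index to twisted $K$-theory: a degree-$r$ Azumaya algebra representing $\alpha$ is the same datum as a rank-$r$ $\alpha$-twisted vector bundle, so $\operatorname{ind}(\alpha)$ is the positive generator of the image of the rank homomorphism $K^{0}_{\alpha}(X)\to\mathbb{Z}$. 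Identifying that image with the edge term $E_{\infty}^{0,0}$ of the $\alpha$-twisted Atiyah--Hirzebruch spectral sequence converging to $K^{*}_{\alpha}(X)$, I reduce the entire problem to computing $E_{\infty}^{0,0}\subseteq E_{2}^{0,0}=H^{0}(X;\mathbb{Z})=\mathbb{Z}$. Because $\dim X=2d$, the only differentials emanating from $E^{0,0}$ are $d_{3},d_{5},\dots,d_{2d-1}$, landing in $E^{2j+1,-2j}$ for $1\le j\le d-1$; these $d-1$ differentials are exactly the quantities I must control.

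The first of them is the Atiyah--Segal formula $d_{3}(1)=\alpha$, whose kernel on $E_{3}^{0,0}=\mathbb{Z}$ is $m\mathbb{Z}$, recovering $\operatorname{per}(\alpha)\mid\operatorname{ind}(\alpha)$ and accounting for one factor of $m$. The core of the argument is the evaluation of the higher twisted differentials $d_{5},\dots,d_{2d-1}$ on the surviving class. For the divisibility half I would argue dually through the Postnikov tower of $BP(m,N)$ promised in the abstract, where $N=m^{d-1}\prod_i p_i^{v_{p_i}((d-1)!)}$ is the target degree: lifting $\alpha$ to a $PU_{N}$-torsor amounts to producing a section of a tower whose successive obstructions lie in $H^{2j+1}\bigl(X;\pi_{2j}(\mathrm{fibre})\bigr)$, and one shows these obstructions vanish precisely because $N$ absorbs the relevant $p$-adic denominators. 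Heuristically the integers $p_i^{v_{p_i}((d-1)!)}$ should enter as the denominators of the twisted Chern character in degrees up to $2(d-1)$ (equivalently, as the universal coefficients of the higher twisted differentials, where the top differential $d_{2d-1}$ meets degree $2d-2$ and so contributes $(d-1)!$), while each successive twist by $\alpha$ forces an extra factor of $m$, producing $m^{d-1}$.

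For sharpness I would generalize the construction behind Theorem~\ref{6-complex}, working one prime $p_i$ at a time: build $X$ from suitable skeleta of products of $B\mu_{p_i^{r_i}}$ and lens-type spaces carrying a canonical class $\alpha$, arranged so that every differential $d_{2j+1}$ out of $E^{0,0}$ is nonzero to the maximal extent permitted by the grading. Multiplicativity of the twisted spectral sequence over a product, together with explicit knowledge of the $\alpha$-twisted differentials on the model factors, should then force $E_{\infty}^{0,0}=\bigl(m^{d-1}\prod_i p_i^{v_{p_i}((d-1)!)}\bigr)\mathbb{Z}$, matching the upper bound and completing the equality.

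The main obstacle is exactly the explicit determination of the higher twisted differentials $d_{5},\dots,d_{2d-1}$, equivalently the higher $k$-invariants of the Postnikov tower of $BP(m,N)$ through dimension $2d$. These are secondary and higher cohomology operations, realized as Massey products with the twist class, and while $d_{3}$ together with a single higher differential already suffices for small $d$, controlling the whole range of differentials simultaneously at all primes is what resists a uniform treatment. This is why I expect the full conjecture to remain out of reach by these methods in general, and why the present paper settles only the case $2d=8$, where the tower terminates after finitely many explicitly computable stages.
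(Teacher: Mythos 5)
The statement you set out to prove is a \emph{conjecture}: the paper contains no proof of it, and none is known. What is actually established is asymmetric. The divisibility half, $\operatorname{ind}(\alpha)\mid m^{d-1}\prod_i p_i^{v_{p_i}((d-1)!)}$, is a theorem of Antieau and Williams in \cite{An2}, as the paper states immediately after Conjecture \ref{awconjecture}; the sharpness half is open in general, and the entire content of the present paper is the case $d=4$, where even then the conjectured equality is confirmed only when $4\nmid n$ (Theorems \ref{main1} and \ref{main2}), the case $4\mid n$ being left open with only the lower bound $\epsilon_3(n)n^3\mid\operatorname{ind}(\alpha)$. Note also that the literal statement, ``for all $2d$-complexes $X$,'' cannot hold as written: taking $X$ to be a low-dimensional skeleton carrying a period-$m$ class wedged with a $2d$-sphere gives a $2d$-complex on which $\operatorname{ind}(\alpha)$ is far smaller (e.g.\ $\operatorname{ind}=\operatorname{per}$ on $4$-complexes). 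Your reinterpretation of the conjecture as a sharp upper bound attained by some $X$ (in practice, a skeleton of $K(\mathbb{Z}/m,2)$) is the intended reading, and flagging that was correct.

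As a proof, however, your proposal has a genuine gap in each half, and in both cases the gap is exactly the open content. For divisibility, ``the factors $p_i^{v_{p_i}((d-1)!)}$ enter as denominators of the twisted Chern character'' is a heuristic, not an argument; making it precise is what \cite{An2} does, via the Postnikov tower of $\mathbf{B}P(m,N)$, and you would need to reproduce or cite that. For sharpness, the step ``multiplicativity of the twisted spectral sequence \dots should then force $E_{\infty}^{0,0}=N\mathbb{Z}$'' is the step that fails: the higher twisted differentials $d_5,\dots,d_{2d-1}$ are higher cohomology operations whose evaluation is unknown beyond the first few, and no multiplicativity principle forces them to be maximally nonzero on a product of lens-type skeleta. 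The paper's own $d=4$ case illustrates how nontrivial even the first higher obstruction is: identifying the $k$-invariant $\kappa_5$ of $\mathbf{B}P(n,mn)$ occupies Sections 4--6 (Lemma \ref{kappa_5}, Theorem \ref{H^7(BP)}), and determining its $2$-primary part for $n\equiv 2 \bmod 4$ requires the exceptional isomorphism $PU_2\cong SO_3$, Steenrod operations in the Serre spectral sequence (Theorem \ref{Steenrod in E}), and the prime-decomposition Theorem \ref{separate primes} --- and even all of that does not settle $4\mid n$. Your closing paragraph concedes that the differentials ``resist a uniform treatment''; that concession is accurate, but it means what you have written is a research program restating the conjecture's difficulty rather than a proof of the statement.
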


In this paper, we show that the topological period-index conjecture fails again for $8$-dimensional CW complexes. The main result is the following
\begin{theorem}\label{main}
Let $X$ be a topological space of homotopy type of an $8$-dimensional connected finite CW-complex, and let $\alpha\in H^{3}(X;\mathbb{Z})_{\operatorname{tor}}$ be a topological Brauer class of period $n$. Then
\begin{equation}\label{bound}
\operatorname{ind}(\alpha)|\epsilon_{2}(n)\epsilon_{3}(n)n^3.
\end{equation}
In addition, if $X$ is the $8$-th skeleton of $K(\mathbb{Z}/n,2)$, and $\alpha$ is the restriction of the fundamental class $\beta_n\in H^{3}(K(\mathbb{Z}/n,2),\mathbb{Z})$, then
\begin{equation*}
\begin{cases}
\operatorname{ind}(\alpha)=\epsilon_{2}(n)\epsilon_{3}(n)n^3,\quad\textrm{$4\nmid n$,}\\
\epsilon_{3}(n)n^3|\operatorname{ind}(\alpha), \quad\textrm{$4|n$.}
\end{cases}
\end{equation*}
In particular, the sharp lower bound of $e$ such that $\operatorname{ind}(\alpha)|n^{e}$ for all $X$ and $\alpha$ is $4$.
\end{theorem}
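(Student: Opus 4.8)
The plan is to attack the two parts of Theorem \ref{main} by rather different means. For the upper bound \eqref{bound}, the strategy is to understand the obstruction theory for lifting a Brauer class $\alpha \in H^3(X;\mathbb{Z})_{\operatorname{tor}}$ of period $n$ to a $PU_{mn}$-torsor for a suitable multiplier $m$. Concretely, a Brauer class of period $n$ is classified by a map $X \to K(\mathbb{Z}/n,2)$ (up to the identification of $H^3(X;\mathbb{Z})$-classes with $H^2(X;S^1)$-classes via the Bockstein), and lifting it to a $PU_{mn}$-torsor amounts to lifting this map through the fibration $BPU_{mn} \to K(\mathbb{Z},3)$ classifying the obstruction. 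I would analyze the Postnikov tower of $BPU_{mn}$ through dimension $8$, computing the $k$-invariants stage by stage, and then show that the successive obstructions to lifting $\alpha$ — which live in groups $H^{i+1}(X; \pi_i)$ for $i \le 8$ — can be killed after tensoring $\alpha$ with an index divisible only by $2$, $3$, and powers of $n$. The factors $\epsilon_2(n)$ and $\epsilon_3(n)$ should emerge precisely from the $2$-torsion and $3$-torsion in the relevant homotopy groups of $BPU_{mn}$ (or equivalently from the $2$- and $3$-primary parts of the $k$-invariants), while the $n^3$ comes from the three ``layers'' of obstruction one must traverse in an $8$-complex; this is where the arithmetic of $\epsilon_p(n)$ and the $v_{p_i}((d-1)!)$ pattern of Conjecture \ref{awconjecture} with $d=4$ should be visible.

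For the sharpness statement, the idea is to take $X = \operatorname{sk}_8 K(\mathbb{Z}/n,2)$ with $\alpha = \beta_n$ restricted, so that the lifting problem becomes universal: a lift of $\alpha$ over this $X$ to a $PU_{mn}$-torsor exists if and only if the obstructions computed above actually vanish, with no room for accidental cancellation coming from the geometry of a smaller complex. Here I would turn to the twisted Atiyah--Hirzebruch spectral sequence computing the $\alpha$-twisted $K$-theory of $X$, following the philosophy that $\operatorname{ind}(\alpha)$ is detected by the image of the unit (or of rank-type classes) in $K^0_\alpha(X)$. The key computational input is the twisted differential $d_3 = \mathrm{Sq}^3_\alpha$ (the $\alpha$-twisted analog of the third differential, given by cup product with $\alpha$ together with a Steenrod square correction), and the higher differential $d_5$, acting on the $E_*^{*,*}$ page built from $H^*(K(\mathbb{Z}/n,2);\mathbb{Z})$ in the range of total degree $\le 8$. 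I would read off from the surviving classes on the $E_\infty$ page exactly which multiples of the generators are hit, and translate this into a lower bound on $\operatorname{ind}(\alpha)$ matching $\epsilon_2(n)\epsilon_3(n)n^3$.

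The main obstacle, and where most of the work lies, is the explicit computation of the cohomology ring $H^*(K(\mathbb{Z}/n,2);\mathbb{Z})$ through degree $8$ together with the action of the twisted differentials on it. The integral cohomology of $K(\mathbb{Z}/n,2)$ in this range involves a mix of $p$-primary pieces for each prime $p \mid n$, and the twisted $d_3$ and $d_5$ differentials interact nontrivially with the ring structure and with the Bockstein; separating the $2$-primary, $3$-primary, and ``generic'' contributions so that the $\epsilon_2(n)$, $\epsilon_3(n)$, and $n^3$ factors appear cleanly — and in particular establishing that no additional primes contribute and that the $4 \mid n$ case genuinely behaves differently (explaining the two-case formula) — is the delicate heart of the argument. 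The final claim that the sharp $e$ equals $4$ then follows by combining the upper bound $\operatorname{ind}(\alpha) \mid \epsilon_2(n)\epsilon_3(n)n^3 \mid n^4$ (noting $\epsilon_2(n)\epsilon_3(n) \mid n^2$ when the relevant primes divide $n$, but in the worst case forces the exponent up to $4$) with an explicit family of examples, e.g.\ taking $n$ divisible by both $2$ and $3$, for which the lower bound forces $\operatorname{ind}(\alpha) = \epsilon_2(n)\epsilon_3(n)n^3$ and hence $e = 4$ cannot be improved.
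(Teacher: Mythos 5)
Your proposal has the paper's two tools exactly interchanged, and the interchange is fatal in one direction. In the twisted Atiyah--Hirzebruch spectral sequence only the first differential admits a formula (the twisted $\tilde{d}_3$, i.e.\ integral $\operatorname{Sq}^3$ corrected by cup product with $\alpha$); no formula or computational handle exists for $\tilde{d}_5$ or $\tilde{d}_7$, which are secondary/tertiary-operation-level data. This asymmetry is precisely why the twisted AHSS is effective for the \emph{upper} bound and useless, by itself, for the \emph{lower} bound: since $\operatorname{ind}(\alpha)$ generates $\tilde{E}_{\infty}^{0,0}$ (Theorem \ref{AH diff}) and the only differentials leaving the $(0,0)$ spot land in subquotients of $H^3$, $H^5$, $H^7$ of $X$, the divisibility $\operatorname{ind}(\alpha)\mid n\cdot\epsilon_2(n)n\cdot\epsilon_3(n)n$ follows from the mere \emph{orders} of these groups for $X=\operatorname{sk}_8K(\mathbb{Z}/n,2)$, without computing a single differential (this is Corollary \ref{upper bound}). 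For sharpness, by contrast, you must prove that specific higher differentials $\tilde{d}_5^{0,0}$, $\tilde{d}_7^{0,0}$ are \emph{nonzero}; ``reading off the surviving classes on the $E_\infty$ page'' presupposes exactly what needs to be proven. The paper therefore runs the argument the other way: AHSS for the upper bound, and obstruction theory for the lower bound --- if $\alpha$ is represented by a $PU_{mn}$-torsor, the classifying map lifts through the Postnikov tower of $\mathbf{B}P(n,mn)$, and the explicit determination of the $k$-invariant $\kappa_5$ (Theorem \ref{H^7(BP)}) shows the obstruction is a nonzero multiple of $R_n$ unless $\epsilon_3(n)n^3\mid mn$. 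Your first paragraph (obstruction theory for the upper bound) could in principle be made to work, since on an $8$-complex all obstructions above $\kappa_5$ vanish for dimension reasons and the universal obstruction vanishes for suitable $m$; but it requires the very same $k$-invariant computation, so nothing is saved, and it is the cheaper half of the theorem in any case.

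There is a second, independent gap: even granting a complete obstruction-theoretic calculus, the $k$-invariant analysis alone only pins down $\kappa_5$ modulo $2$-torsion when $n$ is even (Theorem \ref{H^7(BP)}(2)), and hence only yields $\epsilon_3(n)n^3\mid\operatorname{ind}(\alpha)$ there. Your proposal contains no mechanism to resolve this $2$-primary ambiguity, which is exactly what separates the claimed equality $\operatorname{ind}(\alpha)=\epsilon_2(n)\epsilon_3(n)n^3$ for $4\nmid n$ from mere divisibility. The paper needs a separate argument for this (Theorem \ref{main2}): reduce to $n=2$ via the prime-decomposition theorem for the index (Theorem \ref{separate primes}), then detect the obstruction after restriction along $\mathbf{B}PU_2\simeq\mathbf{B}SO_3\hookrightarrow\mathbf{B}P(2,2m)$, using $\operatorname{Sq}^1w_2=w_3$, the identification of the mod $2$ reduction of $R_2$ with $b_2^2b_3$ (Proposition \ref{R_n mod 2}), and Araki--V\'azquez Steenrod operations in the Serre spectral sequence to show $\operatorname{Sq}^3(\bar{e}_2')\neq 0$. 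Without something playing this role, the case $n\equiv 2\pmod 4$ --- which is the case that forces the sharp exponent $4$ at the prime $2$ --- remains open. Relatedly, your reading of the two-case statement is off: the $4\mid n$ case is not shown to ``genuinely behave differently''; it is simply left unresolved, with only the divisibility $\epsilon_3(n)n^3\mid\operatorname{ind}(\alpha)$ established.
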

The theorem solves the topological period-index problem for $\alpha\in\operatorname{Br}(X)$ where $X$ is an $8$-complex, and $4\nmid\operatorname{per}(\alpha)$. In particular, it implies that the topological version of the period-index conjecture fails in dimension $8$, as it does in dimension $6$. We decompose Theorem \ref{main} into two sub-theorems as follows, since the proof of the second sub-theorem requires special attention.

\begin{subtheorem}\label{main1}
Let $X$ be a topological space of homotopy type of an $8$-dimensional connected finite CW-complex, and let $\alpha\in H^{3}(X;\mathbb{Z})_{\operatorname{tor}}$ be a topological Brauer class of period $n$. Then
\begin{equation}\label{bound'}
\operatorname{ind}(\alpha)|\epsilon_{2}(n)\epsilon_{3}(n)n^3.
\end{equation}
In addition, if $X$ is the $8$-th skeleton of $K(\mathbb{Z}/n,2)$, and $\alpha$ is the restriction of the fundamental class $\beta_n\in H^{3}(K(\mathbb{Z}/n,2),\mathbb{Z})$, then
\begin{equation}\label{edit:bound}
\begin{cases}
\operatorname{ind}(\alpha)=\epsilon_{3}(n)n^3,\quad\textrm{$n$ odd,}\\
\epsilon_{3}(n)n^3|\operatorname{ind}(\alpha), \quad\textrm{$n$ even.}
\end{cases}
\end{equation}
\end{subtheorem}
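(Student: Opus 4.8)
The plan is to translate the statement into a lifting problem and attack it with obstruction theory along the Postnikov tower of $BP(n,mn)$, with a twisted Atiyah--Hirzebruch spectral sequence supplying the sharpness part. First I would record that a Brauer class $\alpha$ of period $n$ is the same datum as a homotopy class of maps $X\to K(\mathbb{Z}/n,2)$ whose Bockstein is $\alpha$, and that writing $r=mn$ the central extension $1\to\mu_n\to U_{mn}\to P(n,mn)\to 1$ produces the fibration
\[
BU_{mn}\longrightarrow BP(n,mn)\longrightarrow K(\mathbb{Z}/n,2),
\]
so that realizing $\alpha$ by a degree-$r$ Azumaya algebra is exactly lifting the classifying map of $\alpha$ through this fibration; hence $\operatorname{ind}(\alpha)$ is the greatest common divisor of those $r$ for which a lift exists. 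By Theorem \ref{same prime divisors} the index is supported on the primes dividing $n$, so I would localize at each prime $p\mid n$ and account for the factors $n^{3}$, $\epsilon_{2}(n)$ and $\epsilon_{3}(n)$ one prime at a time.

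For the upper bound (\ref{bound'}), the main tool is the Postnikov tower of $BP(n,mn)$ through dimension $8$. Since $X$ is an $8$-complex, the base $K(\mathbb{Z}/n,2)$ is simply connected (so the coefficients are untwisted), and in the stable range $mn\ge 4$ the fibre $BU_{mn}$ has homotopy $\mathbb{Z}$ concentrated in degrees $2,4,6,8$, the obstructions to lifting $\alpha$ live in the groups $H^{3}(X;\mathbb{Z})$, $H^{5}(X;\mathbb{Z})$ and $H^{7}(X;\mathbb{Z})$; the would-be obstruction in $H^{9}$ vanishes for dimension reasons. I would push $\alpha$ up the tower stage by stage, showing that after multiplying the rank by the appropriate factor the image of the relevant $k$-invariant becomes divisible and the obstruction can be annihilated: the three even Chern-type layers account for the factor $n^{3}$, while the $2$- and $3$-primary parts of the higher $k$-invariants---governed by cohomology operations of order dividing $2$ and $3$ in this range---force the additional factors $\epsilon_{2}(n)$ and $\epsilon_{3}(n)$. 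Taking $m=\epsilon_{2}(n)\epsilon_{3}(n)n^{2}$ then yields a lift, giving $\operatorname{ind}(\alpha)\mid\epsilon_{2}(n)\epsilon_{3}(n)n^{3}$.

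For sharpness on $X=\operatorname{sk}_{8}K(\mathbb{Z}/n,2)$ with $\alpha=\beta_n|_X$, I would evaluate the same obstructions on the universal class. Here the twisted Atiyah--Hirzebruch spectral sequence computing $K^{*}_{\alpha}(X)$ is the cleaner instrument: an $\alpha$-twisted vector bundle of rank $r$ is the same as a $PU_r$-torsor realizing $\alpha$, so $\operatorname{ind}(\alpha)$ is the generator of the sub-monoid of ranks of $\alpha$-twisted bundles, which is read off from how far the rank class survives against the twisted differentials. The first of these, $d_3=Sq^{3}_{\mathbb{Z}}+(-\cup\alpha)$, is determined by $\beta_n$ together with cup and Steenrod expressions in $H^{*}(K(\mathbb{Z}/n,2);\mathbb{Z})$, and the cup with $\alpha$ is what introduces the factor $n$. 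Showing that the relevant differentials are nonzero of the expected order yields $\epsilon_{3}(n)n^{3}\mid\operatorname{ind}(\alpha)$ for all $n$, and when $n$ is odd (so $\epsilon_{2}(n)=1$) this matches the upper bound to give the equality $\operatorname{ind}(\alpha)=\epsilon_{3}(n)n^{3}$.

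The hard part will be the precise determination of the higher $k$-invariants of $BP(n,mn)$ in degrees $5$ and $7$ and the evaluation of the corresponding twisted differentials on $\beta_n$, especially the $2$- and $3$-local contributions responsible for $\epsilon_{2}(n)$ and $\epsilon_{3}(n)$; this rests on a detailed knowledge of $H^{*}(BPU_{mn};\mathbb{Z})$ in low degrees. The interplay at the prime $2$ is the most delicate point---it is exactly where the even case cannot yet be pinned down beyond a lower bound---which is why here the $\epsilon_{2}(n)$ factor is obtained only as an upper bound and the exact even-$n$ answer (distinguishing $4\mid n$) is deferred.
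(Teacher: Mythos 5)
Your toolkit is the right one (the lifting problem along the Postnikov tower of $\mathbf{B}P(n,mn)$, plus the twisted Atiyah--Hirzebruch spectral sequence), but you have assigned the two tools to the wrong halves of the theorem, and the swap creates a genuine gap in the sharpness part. You propose to obtain $\epsilon_3(n)n^3\mid\operatorname{ind}(\alpha)$ on $\operatorname{sk}_8 K(\mathbb{Z}/n,2)$ by "showing that the relevant twisted differentials are nonzero of the expected order." The only twisted differential with a known closed formula is $\tilde{d}_3=\overline{\operatorname{Sq}^3}-(\,\cdot\,\cup\alpha)$, and on $\tilde{E}_3^{0,0}$ it only produces the period factor $n$; the lower bound you need is carried entirely by $\tilde{d}_5^{0,0}$ and $\tilde{d}_7^{0,0}$, which are \emph{not} primary operations and for which no formula is available---evaluating them on the universal class is essentially equivalent to the period-index problem you are trying to solve (indeed, by Theorem \ref{AH diff} their images \emph{are} the answer). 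This is precisely why the paper runs the argument the other way around: the AHSS is used only for the soft upper bound (Corollary \ref{upper bound}: the index divides the product of the orders of the cyclic groups $H^3$, $H^5$, $H^7$ of $K(\mathbb{Z}/n,2)$, i.e.\ $n\cdot\epsilon_2(n)n\cdot\epsilon_3(n)n$, since the images of the three differentials can be no larger), while the lower bound is proved by obstruction theory, by explicitly determining the $k$-invariant $\kappa_5$ of $\mathbf{B}P(n,mn)[5]\simeq K(\mathbb{Z}/n,2)\times K(\mathbb{Z},4)$: its $R_n$-coefficient is $\frac{\epsilon_3(n)m}{\epsilon_3(m)n}\lambda$ with $\lambda$ invertible (Theorem \ref{H^7(BP)}), so vanishing of the obstruction forces $\epsilon_3(n)n^3\mid mn$ for every $mn$ realizing $\alpha$.

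Your obstruction-theoretic route to the upper bound, by contrast, is viable (it is essentially the Antieau--Williams argument from \cite{An2}), but note that it does not let you avoid the hard computation either: to annihilate the second obstruction for $m=\epsilon_2(n)\epsilon_3(n)n^2$ you must know that the $R_n$-component of $\kappa_5$ vanishes for that $m$, which is again Theorem \ref{H^7(BP)} (built on Proposition \ref{kappa3}, Lemma \ref{kappa_5}, the Serre spectral sequence comparison with $\mathbf{B}U_{mn}\to\mathbf{B}PU_{mn}\to K(\mathbb{Z},3)$, and the cohomology of $\mathbf{B}PU_{mn}$ from Theorem \ref{Cohomology of BPU}); the paper's AHSS proof of the same bound costs nothing beyond the orders of three cohomology groups. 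Two smaller corrections: the extension $1\to\mu_n\to U_{mn}\to P(n,mn)\to 1$ does not exist---$P(n,mn)=SU_{mn}/(\mathbb{Z}/n)$, so the relevant fibration is $\mathbf{B}SU_{mn}\to\mathbf{B}P(n,mn)\to K(\mathbb{Z}/n,2)$ and the lifting obstructions live in $H^5$, $H^7$, $H^9$ (not $H^3$, $H^5$, $H^7$); and your "one factor of $n$ per even layer" accounting is only heuristic, since the actual conditions that come out of the $k$-invariants are divisibilities such as $\epsilon_2(n)n\mid m$ and $\epsilon_3(m)n^2\mid m$ rather than one factor of $n$ per stage.
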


The divisibility relation (\ref{bound'}) was shown by Antieau and Williams in \cite{An2}. Indeed, they proved the more general fact $\operatorname{ind}(\alpha)|m^{d-1}\prod_{i=1}^{k}p_i^{v_{p_i}((d-1)!)}$, which would be a corollary of Conjecture \ref{awconjecture} if it holds. The equation (\ref{edit:bound}) is not known before. Nonetheless, we provide a full proof of Theorem \ref{main1} for completeness.

\begin{subtheorem}\label{main2}
Let $n=2l$ for some odd integer $l$, $X$ the $8$-th skeleton of $K(\mathbb{Z}/n,2)$, and $\alpha$ the restriction of the fundamental class $\beta_n$. Then $\operatorname{ind}(\alpha)\nmid\epsilon_{3}(n)n^3$, which shows, following Theorem \ref{main1},
\[\operatorname{ind}(\alpha)=\epsilon_2(n)\epsilon_{3}(n)n^3.\]
\end{subtheorem}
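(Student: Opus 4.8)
The plan is to reduce everything to a $2$-primary, unstable obstruction computation. Write $n=2l$ with $l$ odd, so that $\epsilon_3(n)$ is odd and $v_2(n)=1$; hence $v_2(\epsilon_3(n)n^3)=3$, while the upper bound (\ref{bound'}) gives $v_2(\operatorname{ind}(\alpha))\le v_2(\epsilon_2(n)\epsilon_3(n)n^3)=4$. Thus $\operatorname{ind}(\alpha)\nmid\epsilon_3(n)n^3$ is equivalent to the purely $2$-primary statement $v_2(\operatorname{ind}(\alpha))=4$, and by Theorem \ref{main1} the only alternative is $v_2(\operatorname{ind}(\alpha))=3$, which I must rule out. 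Since the reduction $\mathbb{Z}/n\to\mathbb{Z}/2$ induces a $2$-local equivalence $K(\mathbb{Z}/n,2)\to K(\mathbb{Z}/2,2)$ (as $l$ is odd), it restricts to a $2$-local equivalence of the relevant $8$-skeleta carrying $\beta_n$ to $\beta_2$; the $2$-primary part of the index is therefore computed on the $8$-skeleton of $K(\mathbb{Z}/2,2)$ with its fundamental class. So it suffices to prove that this period-$2$ class does \emph{not} lift to a degree-$8$ Azumaya algebra, i.e. does not lift to a $P(2,8)$-torsor.

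I would set this up as an obstruction problem for the map $\alpha\colon X\to K(\mathbb{Z},3)$ classifying the Brauer class, asking whether it lifts through the fibration $BP(2,8)\to K(\mathbb{Z},3)$ whose fiber $F$ has the homotopy type recorded by the Postnikov tower computed earlier in the paper. The successive obstructions lie in $H^{k+1}(X;\pi_k F)$. The key structural input is that the twisted Atiyah--Hirzebruch spectral sequence only sees the factor $\epsilon_3(n)n^3$: on an $8$-complex the differentials out of the corner are exhausted by $d_3,d_5,d_7$, so the stable (twisted K-theory) index can absorb at most three powers of $2$, matching the lower bound $8\mid\operatorname{ind}(\alpha)$ of Theorem \ref{main1} but leaving the fourth power of $2$ --- the factor $\epsilon_2(n)$, corresponding to $v_2((d-1)!)=v_2(3!)=1$ --- invisible to K-theory. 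Hence the obstruction I am after is genuinely unstable and must be extracted from the $k$-invariants of $F$ rather than from the spectral sequence. I would then verify that all obstructions in degrees below $8$ vanish (they are precisely the ones realized by the rank-$8$ stable lift), reducing the question to a single top obstruction $\mathcal{O}\in H^8(X;\mathbb{Z}/2)$.

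The heart of the argument is to compute $\mathcal{O}$ and show it is nonzero modulo its indeterminacy. Using Serre's description $H^*(K(\mathbb{Z}/2,2);\mathbb{Z}/2)=\mathbb{Z}/2[\iota_2,Sq^1\iota_2,Sq^2Sq^1\iota_2,\dots]$, the degree-$8$ group on $X$ has basis $\{\iota_2^4,\ \iota_2(Sq^1\iota_2)^2,\ (Sq^1\iota_2)(Sq^2Sq^1\iota_2)\}$, and the twist is $\alpha\equiv Sq^1\iota_2\pmod 2$. I would identify the relevant $k$-invariant with the secondary operation built from $Sq^2$ that appears in the Postnikov tower of $BP(2,8)$, evaluate it against the fundamental class, and show the result is a nonzero class, which I expect to be $(Sq^1\iota_2)(Sq^2Sq^1\iota_2)=\alpha\cup Sq^2\alpha$, not hit by any of the choices made over the $7$-skeleton. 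This nonvanishing forces the top obstruction to persist, so no lift to $BP(2,8)$ exists over $X$, giving $\operatorname{ind}(\alpha)\nmid\epsilon_3(n)n^3$; combined with Theorem \ref{main1} this yields $\operatorname{ind}(\alpha)=\epsilon_2(n)\epsilon_3(n)n^3$.

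The main obstacle is this last step: pinning down the precise $k$-invariant and controlling its indeterminacy. A primary obstruction would be cohomology-operation valued and easy to evaluate, but here the class lives after several stages of the tower, so it is a secondary (or higher) operation whose value on $\iota_2^4,\ \iota_2(Sq^1\iota_2)^2,\ (Sq^1\iota_2)(Sq^2Sq^1\iota_2)$ must be computed together with the subgroup of $H^8(X;\mathbb{Z}/2)$ swept out by re-choosing the lift over the lower skeleta. Establishing that $\mathcal{O}$ survives this indeterminacy --- that the factor of $2$ cannot be traded away by an unstable modification --- is exactly the delicate $p=2$ phenomenon that the twisted K-theory computation cannot detect, and it is where the detailed structure of the Postnikov tower of $BP(2,8)$ from the body of the paper is indispensable.
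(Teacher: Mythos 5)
Your overall strategy --- reduce to the $2$-primary case $n=2$ and then run unstable obstruction theory against the Postnikov tower of $\mathbf{B}P(2,8)$ --- is the same as the paper's, but the step that carries all the content is missing, and the concrete details you do commit to are incorrect. First, the decisive obstruction does not live in $H^8(X;\mathbb{Z}/2)$. Whichever fibration you use ($\mathbf{B}P(2,8)\to K(\mathbb{Z}/2,2)$ with fiber $\mathbf{B}SU_8$, or $\mathbf{B}P(2,8)\to K(\mathbb{Z},3)$, whose fiber also has homotopy $\mathbb{Z}$ concentrated in even degrees), the obstructions lie in $H^{5}(X;\mathbb{Z})$ and $H^{7}(X;\mathbb{Z})$; there is no degree-$8$ obstruction group at all, because $\pi_7$ of the fiber vanishes. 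Correspondingly the obstruction class is not $\alpha\cup\operatorname{Sq}^2\alpha=b_3b_5$: it is the integral class $R_2\in H^7(X;\mathbb{Z})$, whose mod $2$ reduction is $b_2^2b_3$ (Proposition \ref{R_n mod 2}). Second, your structural claim that the factor $\epsilon_2(n)$ is ``invisible to K-theory'' contradicts Theorem \ref{AH diff}: $\tilde{E}_{\infty}^{0,0}$ is generated exactly by $\operatorname{ind}(\alpha)$, so the index \emph{is} a twisted K-theory invariant; for $n=2$ the order-of-groups bound from the AHSS is $2^4=16$, not $2^3$, and the real difficulty is that the differentials $\tilde{d}_5,\tilde{d}_7$ cannot be evaluated by inspection. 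The unstable computation is how the paper evaluates them indirectly, not a way of seeing something K-theory misses; were your claim true, it would refute the very statement being proved. (Your reduction to $n=2$ via a $2$-local equivalence also needs care, since the index is not a priori a local invariant; the paper instead uses the multiplicativity $\operatorname{ind}(\alpha)=\operatorname{ind}(\alpha_1)\operatorname{ind}(\alpha_2)$ of Theorem \ref{separate primes}, and naturality of the obstruction class under the projection $K(\mathbb{Z}/n,2)\to K(\mathbb{Z}/2,2)$.)

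The genuine gap is precisely the point you flag as ``the main obstacle'': identifying the relevant $k$-invariant and showing it survives pullback to $X$. By Lemma \ref{kappa_5} one knows $\kappa_5=\lambda_1R_2\times 1+\lambda_2\beta_2\times\iota_4+1\times\Gamma_4$, and since the unique lift $f_5$ kills everything coming from the $K(\mathbb{Z},4)$ factor, what must be proved is exactly that $\lambda_1\neq 0$ when $m=4$; nothing in your proposal computes $\lambda_1$. The paper does this by restricting along $\mathbf{B}\Delta_1\colon\mathbf{B}PU_2=\mathbf{B}SO_3\to\mathbf{B}P(2,8)$, induced by the diagonal $SU_2\hookrightarrow SU_8$, where everything becomes explicit in Stiefel--Whitney classes: $(\mathbf{B}\Delta_1)^*(\bar R_2)=w_2^2w_3$ (Corollary \ref{R2BDelta}), $(\mathbf{B}\Delta_1)^*(\bar e_2')=w_2^2$ for $m=4$ (Corollary \ref{m=4}, which rests on the index-$\frac{m}{4}$ computation of Lemma \ref{e2}), and --- the key input, due to Bousfield --- $\operatorname{Sq}^3(\bar e_2')\neq 0$ in $H^7(\mathbf{B}P(2,8);\mathbb{Z}/2)$, proved with the Araki--V{\'a}zquez Steenrod operations inside a Serre spectral sequence (Lemma \ref{Sq^3e_2'}). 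Since $\kappa_5$ pulls back to zero on $\mathbf{B}P(2,8)$, and hence on $\mathbf{B}PU_2$, these three facts force $\lambda_1=\lambda_2=1$, giving the nonvanishing obstruction $R_2$ and hence $\operatorname{ind}(\alpha)\nmid 8$. Without an argument of this type, or a genuine replacement for it, your outline cannot be completed: the secondary-operation computation you defer is the theorem, and the class you predict it should produce, $b_3b_5$ in degree $8$, is not even in a degree where an obstruction exists.
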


The proof of (\ref{bound}) relies on twisted complex K-theory, of which details are discussed in Section 2. We prove the second paragraph of the theorem with classical obstruction theory, which is outlined as follows.

Let $m,n$ be integers. Then $\mathbb{Z}/n$ is a closed normal subgroup of $SU_{mn}$ in the sense of the following monomorphism of Lie groups:
\begin{equation*}
\mathbb{Z}/n\hookrightarrow SU_{mn}: t\mapsto e^{2\pi\sqrt{-1}t/n}\mathbf{I}_{mn},
\end{equation*}
where $\mathbf{I}_r$ is the identity matrix of degree $r$. We define the quotient group to be $P(n,mn)$. In particular, $P(n,n)$ is the projective unitary group $PU_n$, and we have the following short exact sequence of Lie groups: $$1\rightarrow\mathbb{Z}/{n}\rightarrow P(n,mn)\xrightarrow{\varphi} PU_{mn}\rightarrow 1.$$
The homotopy groups of $P(n,mn)$ in low degrees relative to $mn$ are well known:
\begin{equation}\label{homotopy grp of P(n,mn)}
\pi_{i}(P(n,mn))\cong
\begin{cases}
\mathbb{Z}/n,\quad\textrm{if $i=1$},\\
\mathbb{Z},\quad\textrm{if $1<n<2mn$, and $n$ is odd,}\\
0, \quad\textrm{if $1<n<2mn$, and $n$ is even,}\\
\mathbb{Z}/(mn)!,\quad\textrm{if $i=2mn$.}
\end{cases}
\end{equation}
This follows since $P(n,mn)$ has $SU_{mn}$ as a simply connected $n$-cover, whose homotopy groups in low dimensions follows from Bott periodicity (\cite{At2}). Consider its classifying space $\mathbf{B}P(n,mn)$, and we have a map $\mathbf{B}P(n,mn)\rightarrow K(\mathbb{Z}/n,2)$ which is the projection of $\mathbf{B}P(n,mn)$ onto the first non-trivial stage of its Postnikov tower. This map also classifies the generator of $H^{2}(\mathbf{B}P(n,mn);\mathbb{Z}/n)$.

Given a connected CW-complex $X$ such that $H^2(X;\mathbb{Z})=0$, and $\alpha\in\textrm{Br}'(X)$ of period $n$, there is a unique class $\alpha'\in H^{2}(X;\mathbb{Z}/n)$ such that $B(\alpha')=\alpha$, where $B$ is the Bockstein homomorphism. Then $\alpha'$ is classified by a map $X\rightarrow K(\mathbb{Z}/n,2)$. Therefore we have a lifting problem as shown by the following diagram:
\begin{equation}\label{lift}
\begin{tikzcd}
&\mathbf{B}P(n,mn)\arrow{d}\\
X\arrow[ur,dashed]\arrow{r}{\alpha'}&K(\mathbb{Z}/n,2)
\end{tikzcd}
\end{equation}
It can be shown, as done in later sections, that $\alpha$ is classified by a $PU_{mn}$-torsor over $X$ if and only if
the lifting problem above has a solution. If $X$ is a finite CW complex, then it suffices to study maps from $X$ into successive stages of the Posnikov tower of $\mathbf{B}P(n,mn)$, which occupies most of this paper.

In Section 2 we recapture the cohomology of Eilenberg-Mac Lane spaces necessary for our purpose; in Section 3 we introduce the twisted K-theory and the associated Atiyah-Hirzebruch spectral sequence; Sections 4, 5, and 6 are devoted to the study of the classifying spaces $\mathbf{B}P(n,mn)$, in particular their Postnikov towers, which is the technical core of this paper. In section 7 we introduce a few tricks to prove Theorem \ref{main2}

\textbf{Acknowledgement.} This paper is a revised version of the second chapter of the author's Ph.D. thesis at the University of Illinois at Chicago. The author is grateful to his co-advisor Benjamin Antieau, for proposing the question which leads to this paper and examining its earlier versions, to his advisor Brooke Shipley, for many helpful discussions, to Aldridge Bousfield, for pointing out a number of problems in earlier versions of this paper and offering solutions to them, and to Ben Williams, for a pleasant and inspiring conversation on this paper during his visit to Chicago, when he served on the author's thesis defense committee, along with all the others mentioned above. Finally the author would like to thank the referee for a very detailed review and many helpful suggestions.
\section{preliminary on the cohomology of eilenberg-mac lane spaces}
As mentioned in the introduction, the objects of interest are various stages of the Postnikov tower of the space $\mathbf{B}P(n,mn)$. It follows from (\ref{homotopy grp of P(n,mn)}) that the relevant Eilenberg-Mac Lane spaces are of the forms $K(\mathbb{Z}/n,2)$ and $K(\mathbb{Z},n)$. All the assertions made in this section are essentially consequences of \cite{Ca}.

Consider the Eilenberg-Mac Lane space $K(\mathbb{Z},n)$ for $n\geq 3$. By \cite{Ca}, the integral cohomology ring $H^{*}(K(\mathbb{Z},n);\mathbb{Z})$ in degree $\leq n+3$ is isomorphic to the following graded ring:
\begin{equation}\label{K(Z,n)}
\mathbb{Z}[\iota_n,\Gamma_n]/(2\Gamma_n),
\end{equation}
where $\iota_n$, of degree $n$, is the so-called fundamental class, and $\Gamma_n$, of degree $n+3$, is a class of order $2$. We denote by $\bar{\iota}_n, \bar{\Gamma}_n$ the mod $2$ reduction of $\iota_n$ and $\Gamma_n$ in $H^{*}(K(\mathbb{Z},n);\mathbb{Z}/2)$, respectively. Either by \cite{Ca} or by the K{\"u}nneth Theorem, there is a class $\Gamma'_n\in H^{n+2}(K(\mathbb{Z},n);\mathbb{Z}/2)$ such that $B(\Gamma'_n)=\Gamma_n$, where $B$ denotes the Bockstein homomorphism. Moreover, we consider the Steenrod square $\operatorname{Sq}^r$ and write $\overline{\operatorname{Sq}^r}$ for the following composition:
$$H^{*}(-;\mathbb{Z})\xrightarrow{\textrm{mod }2} H^{*}(-;\mathbb{Z}/2)\xrightarrow{\operatorname{Sq}^r}H^{*+r}(-;\mathbb{Z}/2),$$
where the first arrow denotes the mod $2$ reduction.
\begin{lemma}\label{Steenrod}
If $n>3$, then $\Gamma'_n=\operatorname{Sq}^{2}(\bar{\iota}_n)$. In terms of cohomology operations, this means
$$\Gamma_n=B\circ\overline{\operatorname{Sq}^2}: H^{n}(-;\mathbb{Z})\rightarrow  H^{n+3}(-;\mathbb{Z}).$$
Then the Adem relation $\operatorname{Sq}^{3}=\operatorname{Sq}^{1}\operatorname{Sq}^{2}$ implies $\overline{\Gamma}_n=\overline{\operatorname{Sq}^3}(\iota_n)$.
\end{lemma}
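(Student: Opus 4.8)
The plan is to read off the low-degree mod $2$ cohomology of $K(\mathbb{Z},n)$ from Serre's theorem, as computed in \cite{Ca}, and then pin down $\Gamma'_n$ by a dimension count. Recall that $H^{*}(K(\mathbb{Z},n);\mathbb{Z}/2)$ is a polynomial algebra on the classes $\operatorname{Sq}^{I}\bar{\iota}_n$, where $I=(i_1,\dots,i_k)$ ranges over admissible sequences of excess $<n$ whose last entry is not $1$; the condition $i_k\neq 1$ reflects the fact that $\bar{\iota}_n$ is the reduction of an integral class, so that $\operatorname{Sq}^{1}\bar{\iota}_n=0$. First I would restrict attention to degrees $n\le *\le n+3$. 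Since $n>3$ gives $n+3<2n$, no products of generators occur in this range, so each graded piece is spanned by the admissible monomials of the appropriate degree: degree $n$ gives $\bar{\iota}_n$; degree $n+1$ is zero (the only candidate $\operatorname{Sq}^{1}\bar{\iota}_n$ is excluded); degree $n+2$ is spanned by the single class $\operatorname{Sq}^{2}\bar{\iota}_n$; and degree $n+3$ is spanned by $\operatorname{Sq}^{3}\bar{\iota}_n$ (the competing monomial $\operatorname{Sq}^{2}\operatorname{Sq}^{1}$ ends in $\operatorname{Sq}^{1}$ and is excluded).

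The key step is then the identification $\Gamma'_n=\operatorname{Sq}^{2}\bar{\iota}_n$. By the previous paragraph $H^{n+2}(K(\mathbb{Z},n);\mathbb{Z}/2)\cong\mathbb{Z}/2$ is generated by $\operatorname{Sq}^{2}\bar{\iota}_n$, so it suffices to show $\Gamma'_n\neq 0$. This is immediate from its defining property $B(\Gamma'_n)=\Gamma_n$: since $\Gamma_n$ has order $2$ it is nonzero, hence so is $\Gamma'_n$. One can equally observe that $H^{n+2}(K(\mathbb{Z},n);\mathbb{Z})=0$ in the ring (\ref{K(Z,n)}), so the Bockstein $B$ is injective on $H^{n+2}(-;\mathbb{Z}/2)$ and $\Gamma'_n$ is forced to be its unique nonzero class. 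Therefore $\Gamma'_n=\operatorname{Sq}^{2}\bar{\iota}_n=\overline{\operatorname{Sq}^2}(\iota_n)$, and applying $B$ yields $\Gamma_n=B\circ\overline{\operatorname{Sq}^2}(\iota_n)$ as classes in $H^{n+3}(K(\mathbb{Z},n);\mathbb{Z})$.

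To upgrade this to the stated identity of cohomology operations $\Gamma_n=B\circ\overline{\operatorname{Sq}^2}$ on $H^{n}(-;\mathbb{Z})$, I would invoke the universal property of $K(\mathbb{Z},n)$: a class $x\in H^{n}(X;\mathbb{Z})$ equals $f^{*}\iota_n$ for a unique homotopy class $f\colon X\to K(\mathbb{Z},n)$, and naturality of both $\Gamma_n$ and $B\circ\overline{\operatorname{Sq}^2}$ then transports the equality at the fundamental class to an equality at $x$. Finally, for the mod $2$ reduction, write $\rho$ for reduction and use the standard relation $\operatorname{Sq}^{1}=\rho\circ B$ between the first Steenrod square and the integral Bockstein: then $\bar{\Gamma}_n=\rho\bigl(B(\operatorname{Sq}^{2}\bar{\iota}_n)\bigr)=\operatorname{Sq}^{1}\operatorname{Sq}^{2}\bar{\iota}_n$, and the Adem relation $\operatorname{Sq}^{3}=\operatorname{Sq}^{1}\operatorname{Sq}^{2}$ gives $\bar{\Gamma}_n=\operatorname{Sq}^{3}\bar{\iota}_n=\overline{\operatorname{Sq}^3}(\iota_n)$, as claimed.

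The only genuine subtlety, and where the hypothesis $n>3$ is essential, is staying below the range where the squaring relation $\operatorname{Sq}^{n}\bar{\iota}_n=\bar{\iota}_n^{2}$ pollutes degree $n+3$: for $n=3$ one has $n+3=2n$ and $\operatorname{Sq}^{3}\bar{\iota}_3=\bar{\iota}_3^{2}$ is no longer an independent generator, so the clean identification breaks down. I expect the main obstacle to be purely bookkeeping with Serre's description of the generators—in particular the exclusion of sequences ending in $\operatorname{Sq}^{1}$—to guarantee that degree $n+2$ is exactly one-dimensional, which is what makes the nonvanishing of $\Gamma'_n$ force the desired equality.
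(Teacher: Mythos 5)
Your proof is correct, but it takes a genuinely different route from the paper. The paper argues by induction on $n$ using the path fibrations $K(\mathbb{Z},n-1)\rightarrow *\rightarrow K(\mathbb{Z},n)$: the base case $n=3$ comes from the mod $2$ Serre spectral sequence of $K(\mathbb{Z},2)\rightarrow *\rightarrow K(\mathbb{Z},3)$, where $\operatorname{Sq}^2(\bar{\iota}_2)=\bar{\iota}_2^2$ transgresses to $\Gamma'_3$, and the key tool throughout is Kudo's theorem that Steenrod squares commute with transgressions (Corollary 6.9 of \cite{Mc}). You avoid spectral sequences entirely: you invoke Serre's structure theorem for $H^*(K(\mathbb{Z},n);\mathbb{Z}/2)$ as a polynomial algebra on admissible $\operatorname{Sq}^I\bar{\iota}_n$ with $e(I)<n$ and $i_k\neq 1$, observe that for $n>3$ the range $n+1\leq *\leq n+3$ contains no products and exactly one generator in degree $n+2$, namely $\operatorname{Sq}^2(\bar{\iota}_n)$, and then pin down $\Gamma'_n$ as this unique nonzero class since $B(\Gamma'_n)=\Gamma_n\neq 0$ (with injectivity of $B$ in that degree as a bonus from $H^{n+2}(K(\mathbb{Z},n);\mathbb{Z})=0$). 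The upgrade to an identity of operations via the universal class, and the derivation of $\bar{\Gamma}_n=\overline{\operatorname{Sq}^3}(\iota_n)$ from $\operatorname{Sq}^1=\rho\circ B$ and the Adem relation, are both sound. What each approach buys: yours is shorter, non-inductive, and makes transparent exactly where $n>3$ enters (staying below degree $2n$), at the cost of citing the full strength of Serre's computation rather than just low-degree data; the paper's is more self-contained, needing only the two explicitly named transgressions plus Kudo's theorem, and it simultaneously handles $n=3$, which it needs as the base case. One small imprecision in your closing remark: at $n=3$ the identification $\Gamma'_3=\operatorname{Sq}^2(\bar{\iota}_3)$ does \emph{not} break down --- degree $n+2=5$ is still one-dimensional and the paper proves exactly this as its base case; what changes at $n=3$ is only that $\operatorname{Sq}^3(\bar{\iota}_3)=\bar{\iota}_3^2$ becomes decomposable in degree $n+3=2n$.
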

\begin{proof}
First we consider the case $n=3$. The path fibration $K(\mathbb{Z},2)\rightarrow *\rightarrow K(\mathbb{Z},3)$ induces a cohomological Serre spectral sequence ${^3}E_{*}^{*,*}$ with coefficients in $\mathbb{Z}/2$, such that ${^3}E_{2}^{3,0}\cong H^{3}(K(\mathbb{Z},3);\mathbb{Z}/2)\cong\mathbb{Z}/2$ is generated by $\bar{\iota}_3$; ${^3}E_{2}^{0,2}\cong H^{2}(K(\mathbb{Z},2);\mathbb{Z}/2)\cong\mathbb{Z}/2$ is generated by $\bar{\iota}_2$; ${^3}E_{2}^{5,0}\cong H^{5}(K(\mathbb{Z},3);\mathbb{Z}/2)\cong\mathbb{Z}/2$ is generated by $\Gamma'_3$; and ${^3}E_{2}^{0,4}\cong H^{4}(K(\mathbb{Z},2);\mathbb{Z}/2)\cong\mathbb{Z}/2$ is generated by $\bar{\iota}_2^2$.
The vanishing of the $E_{\infty}$ page in positive total degrees implies
\begin{equation}\label{d(iota_2)}
d_{2}(\bar{\iota}_2)=\bar{\iota}_3,
\end{equation}
and
\begin{equation}\label{d(iota_2^2)}
d_{4}(\bar{\iota}_2^2)=\Gamma'_3.
\end{equation}
Notice that $\bar{\iota}_2^2=\operatorname{Sq}^{2}(\bar{\iota}_2)$. Moreover, by Corollary 6.9 of \cite{Mc}, Steenrod squares commute with transgressions in Serre spectral sequences. The following equation then follows from (\ref{d(iota_2)}) and (\ref{d(iota_2^2)}):
\begin{equation}\label{n=3}
\Gamma'_3=d_{4}(\bar{\iota}_2^2)=d_{4}(\operatorname{Sq}^{2}(\bar{\iota}_2))=\operatorname{Sq}^{2}d_{2}((\bar{\iota}_2))=
\operatorname{Sq}^{2}(\bar{\iota}_3).
\end{equation}
This proves lemma in the case $n=3$. We verify the general case by induction on $n$. Consider the path fibration $K(\mathbb{Z},n-1)\rightarrow *\rightarrow K(\mathbb{Z},n)$. Again, by the vanishing of the $E_{\infty}$ page in positive total degrees, we have $d_{n}(\bar{\iota}_{n-1})=\bar{\iota}_n$ and  $d_{n+2}(\Gamma'_{n-1})=\Gamma'_n$.
See Figure \ref{K(Z,n)spec} for an indication of the relevant differentials. Since all the differentials in sight are transgressions, the induction is complete.
\end{proof}
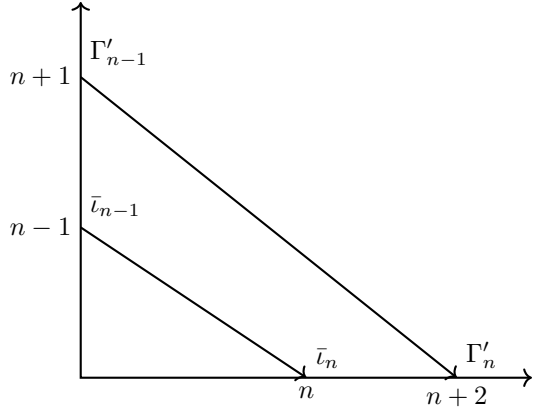
\begin{figure}[!]
\begin{tikzpicture}
\draw[step=1cm,gray, thick];
\draw [ thick, <->] (0,5)--(0,0)--(6,0);

\node [below] at (3,0) {$n$};
\node [below] at (5,0) {$n+2$};
\node [left] at (0,2) {$n-1$};
\node [left] at (0,4) {$n+1$};

\node [above right] at (0,2) {$\bar{\iota}_{n-1}$};
\node [above right] at (0,4) {$\Gamma'_{n-1}$};
\node [above right] at (3,0) {$\bar{\iota}_n$};
\node [above right] at (5,0) {$\Gamma'_{n}$};
\draw (0,2) [thick, ->] to (3,0);
\draw (0,4) [thick, ->] to (5,0);
\end{tikzpicture}\\
\caption{Low dimensional trasgressions in the mod $2$ cohomological Serre spectral sequence induced by $K(\mathbb{Z},n-1)\rightarrow * \rightarrow K(\mathbb{Z},n)$.}\label{K(Z,n)spec}
\end{figure}

We proceed to consider $K(\mathbb{Z}/n,2)$, for any positive integer $n$. By \cite{Ca}, the integral cohomology of $K(\mathbb{Z}/n,2)$ in degree $\leq 8$ is isomorphic to the following graded commutative ring:
\begin{equation}\label{K(Z/2,2)}
\mathbb{Z}[\beta_n,Q_n,R_n,\rho_n]/(n\beta_n,\epsilon_{2}(n)\beta_n^2, \epsilon_{2}(n)nQ_n,\epsilon_{3}(n)nR_n,\epsilon_3(n)\rho_n),
\end{equation}
where $\operatorname{deg}(\beta_n)=3, \operatorname{deg}(Q_n)=5, \operatorname{deg}(R_n)=7$, and $\operatorname{deg}(\rho_n)=8$. In other words, there is exactly one generator in each of the degrees $3,5,6,7$, which are, respectively, $\beta_n, Q_n, \beta_n^2, R_n$, of order
$$n,\epsilon_{2}(n)n,\epsilon_{2}(n),\epsilon_{3}(n)n,$$
and $2$ generators in degree $8$, $\beta_nQ_n$ and $\rho_n$, of order $\epsilon_2(n)$ and $\epsilon_3(n)$, respectively.
Notice that when $n$ is odd, the elements $\epsilon_{2}(n)\beta_n^2$ and $\beta_nQ_n$ are trivial. When $n$ is coprime to $3$, $\rho_n=0$.

Consider the canonical inclusion $g_{m,n}:\mathbb{Z}/n\rightarrow\mathbb{Z}/mn$, which induces a map $g_{m,n}^{(i)}:K(\mathbb{Z}/n,i)\rightarrow K(\mathbb{Z}/mn,i)$ for any integer $i>0$.

For any prime number $p$ such that $p|n$, a straight forward computation of homology of groups shows that we have the isomorphism
\begin{equation}\label{K(Z/r,1)}
 (g_{m,n}^{(1)})_*:H_2(K(\mathbb{Z}/n,1);\mathbb{Z}/p)\cong H_2(K(\mathbb{Z}/mn,1);\mathbb{Z}/p)\cong\mathbb{Z}/p.
\end{equation}
See, for example, Section 6.2 of \cite{We}. On the other hand, recall ``la transpotence''
$$\psi_p: H_{2q}(K(\mathbb{Z}/r,i);\mathbb{Z}/p)\rightarrow H_{2pq+2}(K(\mathbb{Z}/r,i+1);\mathbb{Z}/p)$$
defined in Section 6 of \cite{Ca}, which, by the example on page 6-08, \cite{Ca}, is $\mathbb{Z}/p$-linear when $p$ is odd.  We adopt the notations in Section 11 of \cite{Ca}. Let $A_r$ be the group ring of $\mathbb{Z}/r$ generated by a single element $u_r$. Then, it is an easy consequence of Section 9 of \cite{Ca} that $H_{2}(K(\mathbb{Z}/r,1);\mathbb{Z}/p)$ is generated by $\psi_p(u_r)$. It then follows from (\ref{K(Z/r,1)}) that we have
\begin{equation*}
(g_{m,n}^{(1)})_*(\psi_p(u_n))=\mu\psi_p(u_{nm}).
\end{equation*}
For some $\mu\in(\mathbb{Z}/p)^*$. When $p$ is odd, since $\psi_p$ is functorial and $\mathbb{Z}/p$-linear, we have
\begin{equation}\label{K(Z/r,2)}
(g_{m,n}^{(2)})_*((\psi_p)^2(u_n))=\mu(\psi_p)^2(u_{nm}),
\end{equation}
an equation of elements of $H_8(K(\mathbb{Z}/mn,2);\mathbb{Z}/p)$. When $p=3$, the discussion above leads to the following
\begin{lemma}\label{rho}
The induced homomorphism $H^8(g^{(2)}_{m,n})$ is a $3$-local isomorphism if $3|n$ and $0$ otherwise.
\end{lemma}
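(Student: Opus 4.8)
The plan is to reduce the whole statement to the prime $3$ and then dualize the mod-$3$ homology computation already recorded in (\ref{K(Z/r,2)}). First I would pin down the source and target. By the ring presentation (\ref{K(Z/2,2)}), in degree $8$ the integral cohomology $H^{8}(K(\mathbb{Z}/r,2);\mathbb{Z})$ is generated by $\beta_r Q_r$ and $\rho_r$, of orders $\epsilon_2(r)$ and $\epsilon_3(r)$. Localizing at $3$ annihilates the $2$-torsion generator $\beta_r Q_r$, so $H^{8}(K(\mathbb{Z}/r,2);\mathbb{Z})_{(3)}\cong\langle\rho_r\rangle\cong\mathbb{Z}/\epsilon_3(r)$. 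When $3\nmid n$ we have $\epsilon_3(n)=1$ and $\rho_n=0$, so the target of $H^{8}(g^{(2)}_{m,n})$ vanishes $3$-locally and the map is $0$; this settles the second case at once. It therefore remains to treat $3\mid n$, in which case $\epsilon_3(n)=\epsilon_3(mn)=3$ and both $\langle\rho_n\rangle$ and $\langle\rho_{mn}\rangle$ are copies of $\mathbb{Z}/3$, and I must show that $H^{8}(g^{(2)}_{m,n})$ sends $\rho_{mn}$ to a generator of $\langle\rho_n\rangle$.

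Next I would convert this cohomological assertion into a homological one. Since $\rho_r$ has order exactly $3$, it lies in the image of the integral Bockstein and is faithfully detected mod $3$: via the universal coefficient theorem the $3$-torsion of $H^{8}(K(\mathbb{Z}/r,2);\mathbb{Z})$ is identified with the $3$-torsion of $H_{7}(K(\mathbb{Z}/r,2);\mathbb{Z})$, which in turn is the image of the connecting map $\partial\colon H_{8}(K(\mathbb{Z}/r,2);\mathbb{Z}/3)\to\operatorname{Tor}(H_{7}(K(\mathbb{Z}/r,2);\mathbb{Z}),\mathbb{Z}/3)$ from the mod-$3$ homology Bockstein sequence. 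Because $H^{8}(g^{(2)}_{m,n})$ is induced by $g^{(2)}_{m,n}\colon K(\mathbb{Z}/n,2)\to K(\mathbb{Z}/mn,2)$, over the field $\mathbb{Z}/3$ it is the linear dual of the pushforward $(g^{(2)}_{m,n})_{*}$ on $H_{8}(-;\mathbb{Z}/3)$. Hence it suffices to prove that $(g^{(2)}_{m,n})_{*}$ is nonzero on the $\mathbb{Z}/3$-line of $H_{8}(-;\mathbb{Z}/3)$ that detects $\rho_r$, i.e. the line whose image under $\partial$ generates the $3$-torsion of $H_7$.

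I would then identify that detecting line with the iterated transpotence: the class $\rho_r$ is, up to the Bockstein and universal-coefficient identifications above, the degree-$8$ generator dual to $(\psi_3)^2(u_r)$, since the transpotence $\psi_3$ is precisely the mechanism producing the $3$-primary degree-$8$ class in Cartan's computation (Sections 9 and 11 of \cite{Ca}). Granting this, I invoke (\ref{K(Z/r,2)}): for $p=3$ and $3\mid n$ it reads $(g^{(2)}_{m,n})_{*}\big((\psi_3)^2(u_n)\big)=\mu\,(\psi_3)^2(u_{mn})$ with $\mu\in(\mathbb{Z}/3)^{*}$, so the homology map carries a generator of the detecting line to a unit multiple of a generator and is an isomorphism there. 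Dualizing back, $H^{8}(g^{(2)}_{m,n})(\rho_{mn})$ generates $\langle\rho_n\rangle$, which gives the claimed $3$-local isomorphism.

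The step I expect to be the main obstacle is the identification in the previous paragraph: rigorously matching the transpotence class $(\psi_3)^2(u_r)$ with the homology class detecting the integral generator $\rho_r$. Concretely, I must verify that the pairing between $(\psi_3)^2(u_r)$ and the mod-$3$ class detecting $\rho_r$ is nontrivial, and in particular that $(\psi_3)^2(u_r)$ is \emph{not} dual instead to the $3$-locally irrelevant degree-$8$ classes coming from $\iota_2^4$ or the product of $\iota_2$ with the degree-$6$ generator. This forces one to unwind Cartan's construction far enough to locate $\rho_r$ among the generators built by iterating the transpotence, and to track how $\partial$ sends $(\psi_3)^2(u_r)$ onto the $3$-torsion of $H_7$; once this bookkeeping is in place, functoriality and $\mathbb{Z}/3$-linearity of $\psi_3$ (used already to derive (\ref{K(Z/r,2)}) from (\ref{K(Z/r,1)})) deliver the conclusion.
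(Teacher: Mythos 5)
Your proposal is correct and follows essentially the same route as the paper: reduce by the universal coefficient theorem to a statement in homology, then combine Cartan's transpotence computation with (\ref{K(Z/r,2)}) and naturality of the Bockstein. The step you flag as the main obstacle is exactly what the paper quotes directly from Section 11 of \cite{Ca} --- that for $3\mid r$ the $3$-primary part of $H_{7}(K(\mathbb{Z}/r,2);\mathbb{Z})$ is $\mathbb{Z}/3$, generated by the Bockstein of $(\psi_3)^2(u_r)$ --- and since that group is cyclic of order $3$, no further dual-basis bookkeeping is required.
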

\begin{proof}
When $3\nmid n$, we have $H^8(K(\mathbb{Z}/n,2);\mathbb{Z})=0$, and there is nothing to prove. When $3|n$, by the universal coefficient theorem, it suffices to show that
$$(g_{m,n}^{(2)})_*:H_{7}(K(\mathbb{Z}/n,2);\mathbb{Z})\rightarrow H_{7}(K(\mathbb{Z}/mn,2);\mathbb{Z})$$
is an isomorphism.
It follows from Section 11 of \cite{Ca} that
$$H_{7}(K(\mathbb{Z}/r,2);\mathbb{Z})\cong\mathbb{Z}/3$$
is generated by the Bockstein of $(\psi_3)^2(u_r)$, when $3|r$. The lemma then follows from (\ref{K(Z/r,2)}).
\end{proof}

Consider the short exact sequence
$$0\rightarrow\mathbb{Z}/n\rightarrow\mathbb{Z}/mn\rightarrow\mathbb{Z}/m\rightarrow 0$$
which induces the following fiber sequence of spaces:
$$K(\mathbb{Z}/m,1)\rightarrow K(\mathbb{Z}/n,2)\rightarrow K(\mathbb{Z}/mn,2).$$
We denote the induced cohomological Serre spectral sequence in integral coefficients by $^{H}E_{*}^{*,*}$.
\begin{lemma}\label{R_n}
Let $^{H}E_{*}^{*,*}$ be as above. If $\epsilon_2(n)n|m$, then
$$H^{5}(K(\mathbb{Z}/n,2);\mathbb{Z})\cong{^{H}E}_{\infty}^{3,2}=\frac{\epsilon_2(m)}{\epsilon_2(n)}{^{H}E}_{3}^{3,2}/\operatorname{Im}
{^{H}d}_{3}^{0,4},$$
and if $\epsilon_3(n)n|m$, then
$$H^{7}(K(\mathbb{Z}/n,2);\mathbb{Z})\cong{^{H}E}_{\infty}^{3,4}=\frac{\epsilon_3(m)}{\epsilon_3(n)}{^{H}E}_{3}^{3,4}/\operatorname{Im}
{^{H}d}_{3}^{0,6}.$$
\end{lemma}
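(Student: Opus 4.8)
The plan is to read $H^{5}$ and $H^{7}$ of the total space off the $p=3$ column of the integral Serre spectral sequence ${^{H}E}_{*}^{*,*}$ of $K(\mathbb{Z}/m,1)\to K(\mathbb{Z}/n,2)\xrightarrow{g_{m,n}^{(2)}} K(\mathbb{Z}/mn,2)$. The two inputs are the base ring, namely (\ref{K(Z/2,2)}) with $n$ replaced by $mn$, and the fiber ring $H^{*}(K(\mathbb{Z}/m,1);\mathbb{Z})\cong\mathbb{Z}[y]/(my)$ with a generator $y$ of degree $2$, which is concentrated in even degrees. Because the fiber has no odd cohomology, any $d_{r}$ with $r$ even would carry an even fiber row into an odd one and so vanishes; thus only $d_{3},d_{5},d_{7}$ can be nonzero and in particular ${^{H}E}_{2}={^{H}E}_{3}$ throughout our range. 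The $p=1$ column also vanishes, the base being simply connected. Finally, the hypotheses $\epsilon_{2}(n)n\mid m$ and $\epsilon_{3}(n)n\mid m$ make the ratios $\epsilon_{2}(m)/\epsilon_{2}(n)\in\{1,2\}$ and $\epsilon_{3}(m)/\epsilon_{3}(n)\in\{1,3\}$ integers, so the asserted presentations are meaningful.

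Everything is driven by the transgression of $y$. In total degree $3$ the only surviving lattice point is $(3,0)$, so comparing $H^{3}(K(\mathbb{Z}/n,2);\mathbb{Z})\cong\mathbb{Z}/n$ with ${^{H}E}_{3}^{3,0}=\mathbb{Z}/mn$ forces $\operatorname{Im}{^{H}d}_{3}^{0,2}$ to have order $m$, i.e. ${^{H}d}_{3}(y)=n\beta_{mn}$ up to a unit. Since $H^{4}(K(\mathbb{Z}/mn,2);\mathbb{Z})=0$, mod-$m$ reduction $H^{3}(-;\mathbb{Z})\to H^{3}(-;\mathbb{Z}/m)$ is onto, identifying $y\beta_{mn}$ and $y^{2}\beta_{mn}$ as generators of ${^{H}E}_{3}^{3,2}\cong{^{H}E}_{3}^{3,4}\cong\mathbb{Z}/m$. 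The Leibniz rule then delivers the four differentials I need: ${^{H}d}_{3}^{0,4}(y^{2})=2n\,y\beta_{mn}$, ${^{H}d}_{3}^{0,6}(y^{3})=3n\,y^{2}\beta_{mn}$, ${^{H}d}_{3}^{3,2}(y\beta_{mn})=n\beta_{mn}^{2}$ into ${^{H}E}_{3}^{6,0}=\mathbb{Z}/\epsilon_{2}(mn)$, and ${^{H}d}_{3}^{3,4}(y^{2}\beta_{mn})=2n\,y\beta_{mn}^{2}$.

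For $H^{5}$ this already suffices: by the parity constraint the only differentials meeting $(3,2)$ are the incoming ${^{H}d}_{3}^{0,4}$ and the outgoing ${^{H}d}_{3}^{3,2}$, so ${^{H}E}_{\infty}^{3,2}=\ker{^{H}d}_{3}^{3,2}/\operatorname{Im}{^{H}d}_{3}^{0,4}$. Using ${^{H}d}_{3}^{3,2}(y\beta_{mn})=n\beta_{mn}^{2}$ and running through the three parity cases permitted by $\epsilon_{2}(n)n\mid m$ gives $\ker{^{H}d}_{3}^{3,2}=\frac{\epsilon_{2}(m)}{\epsilon_{2}(n)}{^{H}E}_{3}^{3,2}$, which is the claimed presentation. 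An order count then yields $|{^{H}E}_{\infty}^{3,2}|=\epsilon_{2}(n)n=|H^{5}(K(\mathbb{Z}/n,2);\mathbb{Z})|$; as the only other lattice point in total degree $5$ is $(5,0)$, this forces ${^{H}E}_{\infty}^{5,0}=0$ and the isomorphism $H^{5}\cong{^{H}E}_{\infty}^{3,2}$.

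The degree-$7$ statement is the main obstacle. Here ${^{H}d}_{3}^{3,4}(y^{2}\beta_{mn})=2n\,y\beta_{mn}^{2}$ vanishes, because $\beta_{mn}^{2}$ is $2$-torsion and the factor $2$ annihilates it; hence ${^{H}E}_{4}^{3,4}={^{H}E}_{3}^{3,4}/\operatorname{Im}{^{H}d}_{3}^{0,6}$. A prime-by-prime check shows that at every prime other than $3$ this group already has the right order, and that the single exceptional regime is $3\nmid n$ with $3\mid m$, where a surviving $\mathbb{Z}/3$ must still be removed. By the parity constraint the only higher differential that can remove it is ${^{H}d}_{5}^{3,4}\colon{^{H}E}_{5}^{3,4}\to{^{H}E}_{5}^{8,0}$, whose target is a subquotient of $H^{8}(K(\mathbb{Z}/mn,2);\mathbb{Z})=\langle\beta_{mn}Q_{mn}\rangle\oplus\langle\rho_{mn}\rangle$. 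This is precisely where Lemma \ref{rho} is used: it computes ${^{H}E}_{\infty}^{8,0}=\operatorname{Im}\big(H^{8}(g^{(2)}_{m,n})\big)$ $3$-locally and shows that $\rho_{mn}$ is not a permanent cycle when $3\nmid n$, so it must be cancelled, and $(3,4)$ is the only available source. The point I expect to cost the most effort is verifying that ${^{H}d}_{3}^{5,2}$ does not consume $\rho_{mn}$ first, so that it does survive to be hit by ${^{H}d}_{5}^{3,4}$; granting this, $\ker{^{H}d}_{5}^{3,4}=\frac{\epsilon_{3}(m)}{\epsilon_{3}(n)}{^{H}E}_{3}^{3,4}/\operatorname{Im}{^{H}d}_{3}^{0,6}$, and the order count $|{^{H}E}_{\infty}^{3,4}|=\epsilon_{3}(n)n=|H^{7}|$ forces ${^{H}E}_{\infty}^{5,2}={^{H}E}_{\infty}^{7,0}=0$ and the isomorphism $H^{7}\cong{^{H}E}_{\infty}^{3,4}$.
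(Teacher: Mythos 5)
Your proposal is correct and follows the paper's own proof essentially step for step: the same fibration, the same determination of the transgression of the degree-two fiber class (forced by $H^2$ and $H^3$ of the total space), the same Leibniz computations of ${^Hd}_3^{0,4}$, ${^Hd}_3^{3,2}$, ${^Hd}_3^{0,6}$, ${^Hd}_3^{3,4}$, and the same use of Lemma \ref{rho} through the edge homomorphism at position $(8,0)$ to pin down ${^Hd}_5^{3,4}$, followed by the same order count against Cartan's computation of $H^5$ and $H^7$ of $K(\mathbb{Z}/n,2)$.

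The one step you flag as outstanding---that $\rho_{mn}$ survives ${^Hd}_3^{5,2}$ so that it is still available to be hit by ${^Hd}_5^{3,4}$---is precisely the point the paper asserts without proof (``the codomain ${^HE}_4^{8,0}$ is a quotient group of $H^8(K(\mathbb{Z}/mn,2);\mathbb{Z})$ in which $\rho_{mn}$ is nontrivial''), and it closes by the same multiplicativity you already invoked: $3$-locally, ${^HE}_3^{5,2}\cong H^5(K(\mathbb{Z}/mn,2);\mathbb{Z}/m)$ is generated by the mod-$m$ reduction of $Q_{mn}$ (the Tor summand coming from $H^6$ is $2$-torsion), and the Leibniz rule gives ${^Hd}_3(Q_{mn}v)=\pm nQ_{mn}\beta_{mn}$, which lies in the $2$-torsion subgroup $\langle\beta_{mn}Q_{mn}\rangle$ and is therefore $3$-locally zero; hence $\operatorname{Im}{^Hd}_3^{5,2}$ misses $\langle\rho_{mn}\rangle$ and ${^HE}_4^{8,0}$ retains $\rho_{mn}$. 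With that one line added, your argument and the paper's coincide.
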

\begin{remark}
In particular, we have
\begin{equation*}
H^{5}(K(\mathbb{Z}/n,2);\mathbb{Z})\cong\mathbb{Z}/\epsilon_{2}(n)n,
\end{equation*}
and
\begin{equation*}
H^{7}(K(\mathbb{Z}/n,2);\mathbb{Z})\cong\mathbb{Z}/\epsilon_{3}(n)n.
\end{equation*}
\end{remark}
\begin{proof}
Consider the $E_2$-page
\begin{equation*}
\begin{split}
^{H}E_{2}^{s,t}\cong H^{s}(K(\mathbb{Z}/mn,2);H^t(K(\mathbb{Z}/m,1);\mathbb{Z}))\cong\\
\begin{cases}
H^{s}(K(\mathbb{Z}/mn,2);\mathbb{Z}), \textrm{ if $t=0$.}\\
H^{s}(K(\mathbb{Z}/mn,2);\mathbb{Z}/m), \textrm{ if $t>0$ and $t$ is even,}\\
0,\textrm{ if $t$ is odd.}
\end{cases}
\end{split}
\end{equation*}
This follows from the fact that, as a ring,
\begin{equation}\label{H(K(Z/m,1))}
H^{*}(K(\mathbb{Z}/m,2);\mathbb{Z})\cong\mathbb{Z}[v]/(mv),
\end{equation}
where $v$ is of degree $2$. For obvious degree reasons, we have
\begin{equation}\label{^H d_3^0,2}
^Hd_3^{0,2}: {^{H}E}_{3}^{0,2}\cong\mathbb{Z}/m\rightarrow {^{H}E}_{3}^{3,0}\cong\mathbb{Z}/mn, v\mapsto n\beta_{mn},
\end{equation}
i.e., the canonical inclusion $\mathbb{Z}/m\rightarrow\mathbb{Z}/mn$.
Since the spectral sequence is multiplicative, it follows from (\ref{^H d_3^0,2}) that
\begin{equation}\label{^H d_3^3,2}
^Hd_3^{3,2}: {^{H}E}_{3}^{3,2}\cong\mathbb{Z}/m\rightarrow {^{H}E}_{3}^{6,0}\cong\mathbb{Z}/2, v\beta_{mn}\mapsto n\beta_{mn}^2,
\end{equation}
which is surjective if $n$ is odd, and $0$ if $n$ is even. By Leibniz rule, we have
\begin{equation}\label{^H d_3^0,4}
^Hd_3^{0,4}: {^{H}E}_{3}^{0,4}\cong\mathbb{Z}/m\rightarrow {^{H}E}_{3}^{3,2}\cong\mathbb{Z}/m, v^2\mapsto 2nv\beta_{mn}.
\end{equation}
It follows from obvious degree reasons that $^HE_{\infty}^{3,2}=\operatorname{Ker}{^Hd}_{3}^{3,2}/\operatorname{Im}{^Hd}_{3}^{0,4}$. Therefore, by (\ref{^H d_3^3,2}), (\ref{^H d_3^0,4}) and $\epsilon_2(n)n|m$, we have
\begin{equation}\label{^H E^3,2}
^HE_{\infty}^{3,2}=\langle \frac{2}{\epsilon_2(n)}v\beta_{mn}\rangle/\langle 2nv\beta_{mn}\rangle=\frac{\epsilon_2(m)}{\epsilon_2(n)}{^{H}E}_{3}^{3,2}/\operatorname{Im}
{^{H}d}_{3}^{0,4}\cong\mathbb{Z}/\epsilon_2(n)n,
\end{equation}
which is isomorphic to $H^{5}(K(\mathbb{Z}/n,2);\mathbb{Z})$, and the first equation follows.

We proceed to prove the second equation in the lemma. By Leibniz rule and (\ref{^H d_3^0,2}), we have
\begin{equation*}
d_{3}^{0,6}(v^3)=3nv^2\beta_{mn}, \textrm{ and }d_3^{3,4}(v^2\beta_{mn})=2v\beta_{mn}^2=0
\end{equation*}
since $2\beta_{mn}=0$, from which it follows that
\begin{equation}\label{^H E_4^3,4}
^HE_{4}^{3,4}={^HE}_{3}^{3,4}/\operatorname{Im}d_3^{0,6}={^HE}_{3}^{3,4}/3n{^HE}_{3}^{3,4}\cong\mathbb{Z}/\epsilon_{3}(m)n.
\end{equation}
For degree reasons the only potentially nontrivial differential into or out of $^HE_{4}^{3,4}$ is
$$^Hd_{5}^{3,4}:{^HE}_{4}^{3,4}\rightarrow {^HE}_{4}^{8,0},$$
where the codomain ${^HE}_{4}^{8,0}$ is a quotient group of $H^{8}(K(\mathbb{Z}/mn,2);\mathbb{Z})$ in which $\rho_{mn}$ is nontrivial. It follows from Lemma \ref{rho} that
\begin{equation}\label{Hd_5(3,4)}
^Hd_{5}^{3,4}
\begin{cases}
 =0, \textrm{if } 3|n,\\
 \textrm{onto $\rho_{mn}$, otherwise}.
\end{cases}
\end{equation}
Hence, when $\epsilon_3(n)n|m$, we have
\begin{equation}\label{^H E^3,4}
{^HE}_{\infty}^{3,4}=\operatorname{Ker}{^Hd_{5}}^{3,4}=
\frac{\epsilon_3(m)}{\epsilon_3(n)}{^HE}_{3}^{3,4}/\operatorname{Im}d_3^{0,6}
\cong\mathbb{Z}/\epsilon_3(n)n,
\end{equation}
which is isomorphic to $H^7(K(\mathbb{Z}/n,2);\mathbb{Z})$, and the desired equation follows.
\end{proof}

The cohomology of $K(\mathbb{Z}/n,2)$ with coefficients in $\mathbb{Z}/2$ is of particular interest to us. In fact, we have a beautiful description of the cohomology ring $H^*(K(\mathbb{Z}/2, q);\mathbb{Z}/2)$ for any $q>0$. We denote the fundamental class of $H^q(K(\mathbb{Z}/2,q);\mathbb{Z}/2)$ by $b$. Recall that a finite sequence of positive integers $I=(i_1,i_2,\cdots,i_r)$ is called admissible if $i_k\geq2i_{k+1}$, for $k=1,\cdots,r-1$. The excess of $I$ is defined as
\begin{equation*}
e(I)=i_1-i_2-\cdots-i_r.
\end{equation*}
The following well-known theorem can be found, for example, in \cite{Mo}, in a slightly different form.
\begin{theorem}[Theorem 4, Chapter 9, \cite{Mo}]\label{K(Z/n,q)mod2}
When $n$ is even, the ring $H^*(K(\mathbb{Z}/n,q);\mathbb{Z}/2)$ is the polynomial ring with generators
\begin{equation*}
\operatorname{Sq}^I(b)=\operatorname{Sq}^{i_1}\operatorname{Sq}^{i_2}\cdots\operatorname{Sq}^{i_r}(b)
\end{equation*}
where $I$ runs through admissible sequences of excess $e(I)<q$, with the exception, in the case $4|n$, and $i_r=1$, $\operatorname{Sq}^I(b)$ is replaced by
\begin{equation*}
\operatorname{Sq}^{i_1}\operatorname{Sq}^{i_2}\cdots\operatorname{Sq}^{i_{r-1}}(b'),
\end{equation*}
where $b'$ is the mod $2$ reduction of the generator of $H^{q+1}(K(\mathbb{Z}/n,q);\mathbb{Z})$, and $\operatorname{Sq}^1(b)=0$.
\end{theorem}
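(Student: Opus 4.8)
The plan is to induct on $q$ by means of the path-loop fibration $K(\mathbb{Z}/n,q-1)\to\ast\to K(\mathbb{Z}/n,q)$, combining Borel's transgression theorem with the fact (Corollary 6.9 of \cite{Mc}, already invoked above) that Steenrod squares commute with transgression. Since mod $2$ cohomology detects only the $2$-primary part of the coefficients and $K(\mathbb{Z}/n,q)\simeq K(\mathbb{Z}/2^k,q)\times K(\mathbb{Z}/m,q)$ for $n=2^k m$ with $m$ odd, the odd factor being mod $2$ acyclic, I would first reduce to $n=2^k$; then $n\equiv 2\pmod 4$ is the case $k=1$ and $4\mid n$ is the case $k\geq 2$.

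For the base case $q=1$ I would compute the group cohomology $H^*(\mathbb{Z}/2^k;\mathbb{Z}/2)$ directly. When $k=1$ this is $\mathbb{Z}/2[b]$ with $\deg b=1$ and $\operatorname{Sq}^1 b=b^2\neq 0$; when $k\geq 2$ it is $\Lambda(b)\otimes\mathbb{Z}/2[b']$, where $\deg b=1$, $\deg b'=2$, the class $b'$ is the mod $2$ reduction of the generator of $H^2(K(\mathbb{Z}/2^k,1);\mathbb{Z})$, and $\operatorname{Sq}^1 b=0$. This matches the asserted description in degree $q=1$, and it is precisely here that the generator $b'$ and the relation $\operatorname{Sq}^1 b=0$ enter.

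For the inductive step I would feed the known fiber cohomology into Borel's theorem. Writing $b_{q-1}$ for the fundamental class of the fiber, a simple system of generators of the polynomial algebra $H^*(K(\mathbb{Z}/n,q-1);\mathbb{Z}/2)$ is furnished by the powers $(\operatorname{Sq}^J b_{q-1})^{2^s}$; by the unstable relation $\operatorname{Sq}^{\deg x}x=x^2$ each such power is again $\operatorname{Sq}^K b_{q-1}$ for an admissible $K$, obtained from $J$ by prepending the top squares. Every one of these is an admissible Steenrod operation applied to the transgressive fundamental class, so it is transgressive and, since squares commute with transgression, its transgression is $\operatorname{Sq}^K b_q$. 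Borel's theorem then presents $H^*(K(\mathbb{Z}/n,q);\mathbb{Z}/2)$ as the polynomial algebra on these transgressions. A short excess computation shows that the powers with $s=0$ produce exactly the admissible sequences of excess $<q-1$, while the positive powers produce exactly those of excess $q-1$, so that $K$ runs without repetition over all admissible sequences with $e(K)<q$. In the case $4\mid n$ one carries along the exterior class $b$ with $\operatorname{Sq}^1 b=0$, so that any admissible $\operatorname{Sq}^I$ ending in $\operatorname{Sq}^1$ kills $b$; one substitutes for the vanishing generator the degree-preserving class $\operatorname{Sq}^{(i_1,\dots,i_{r-1})}b'$ (note $\deg\operatorname{Sq}^I b=\deg\operatorname{Sq}^{(i_1,\dots,i_{r-1})}b'$), which is exactly the exception in the statement.

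The main obstacle is the combinatorial bookkeeping through the transgression: one must check that the transgressions of a simple system of generators of the fiber are indexed bijectively, and without overlap, by the admissible sequences of excess $<q$ on the base, and simultaneously track the substitution of $b'$ for $b$ when $4\mid n$ so that the excess bound and the degree shift from $q$ to $q+1$ are respected. Verifying the transgressivity hypothesis of Borel's theorem for all generators, rather than merely the fundamental class, and ensuring that no spurious relation is introduced in the case $k\geq 2$, are the points demanding the most care.
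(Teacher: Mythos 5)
This theorem is not proved in the paper at all: it is quoted, with attribution, as Theorem 4 of Chapter 9 of \cite{Mo}, so there is no internal argument to compare yours against. Your sketch reconstructs the standard proof---essentially the one in the cited source, going back to Serre and Cartan: reduce to $n=2^k$ via $\mathbb{Z}/n\cong\mathbb{Z}/2^k\times\mathbb{Z}/m$ and the mod $2$ acyclicity of $K(\mathbb{Z}/m,q)$ for $m$ odd; compute $H^*(\mathbb{Z}/2^k;\mathbb{Z}/2)$ as the base case; and induct along the path-loop fibration using Borel's transgression theorem together with Kudo's theorem that Steenrod squares commute with transgression. Your excess bookkeeping is also correct: the $2^s$-th powers ($s\geq 1$) of the fiber generators transgress exactly to the admissible monomials of excess equal to $q-1$, and together with the $s=0$ generators these account, without repetition, for all admissible sequences of excess $<q$ on the base.

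Two points deserve sharpening. First, the transgressivity issue you flag in the case $4\mid n$ is real and is \emph{not} settled by Kudo's theorem: since $\operatorname{Sq}^1 b=0$, the class $b'$ is not a Steenrod operation applied to the fundamental class, so one cannot obtain its transgression by commuting squares past the transgression of $b$. The remedy is to work integrally: in the integral Serre spectral sequence of the path fibration, the transgression $H^{q}(K(\mathbb{Z}/2^k,q-1);\mathbb{Z})\rightarrow H^{q+1}(K(\mathbb{Z}/2^k,q);\mathbb{Z})$ carries generator to generator (for degree reasons, since the total space is contractible), and mod $2$ reduction is a map of spectral sequences; hence the fiber class $b'$ is transgressive with transgression the base class $b'$, after which Kudo handles all $\operatorname{Sq}^{I}b'$ and their powers. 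Second, a caveat on your base case: for $q=1$ and $4\mid n$ the statement of the theorem is not literally true, since $H^*(K(\mathbb{Z}/2^k,1);\mathbb{Z}/2)\cong\Lambda(b)\otimes\mathbb{Z}/2[b']$ is not a polynomial ring on the prescribed generators (which for $q=1$ would be $b$ alone). So the $q=1$ computation should be presented as the input to the induction rather than as an instance of the theorem; the statement itself, as used in the paper (only at $q=2$), should be read as asserting the result for $q\geq 2$.
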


In the special case of $q=2$, we have
\begin{corollary}\label{K(Z/n,2)mod 2}
When $n$ is even, we have the isomorphism
\begin{equation*}
H^*(K(\mathbb{Z}/n,2);\mathbb{Z}/2)=\cong\mathbb{Z}/2[b_2, b_3, b_5]
\end{equation*}
where $b_2=b$ is the fundamental class, $\operatorname{Sq}^1b_2=0$ when $4|n$ and $\operatorname{Sq}^1b_2=b_3$ otherwise, and $b_5=\operatorname{Sq}^2b_3$.
\end{corollary}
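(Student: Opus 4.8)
The plan is to specialize Theorem \ref{K(Z/n,q)mod2} to the case $q=2$ and then do the combinatorial bookkeeping that isolates the polynomial generators in the range of interest. For $q=2$ the generators $\operatorname{Sq}^I(b)$ are indexed by admissible sequences $I=(i_1,\dots,i_r)$ of excess $e(I)<2$, i.e.\ $e(I)\in\{0,1\}$, so the first task is simply to enumerate these sequences and read off the degrees of the corresponding generators.

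To enumerate them I would use the rewriting
\begin{equation*}
e(I)=\sum_{j=1}^{r-1}(i_j-2i_{j+1})+i_r,
\end{equation*}
in which every summand $i_j-2i_{j+1}$ is nonnegative by admissibility and $i_r\geq 1$. Thus a nonempty admissible $I$ has $e(I)\geq 1$, and $e(I)=1$ forces $i_r=1$ together with $i_j=2i_{j+1}$ for all $j$; that is, $I=I_k:=(2^k,2^{k-1},\dots,2,1)$ for some $k\geq 0$. Consequently the unique excess-$0$ generator is $b$ itself (the empty sequence), while the excess-$1$ generators are the $\operatorname{Sq}^{I_k}(b)$, of degree $2+(2^{k+1}-1)=2^{k+1}+1$, i.e.\ in degrees $3,5,9,17,\dots$. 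Restricting to degrees $\leq 8$ — the only range that matters for the rest of Section~2 — exactly three generators survive: $b_2=b$ in degree $2$, $b_3=\operatorname{Sq}^1 b=\operatorname{Sq}^{I_0}(b)$ in degree $3$, and $b_5=\operatorname{Sq}^2\operatorname{Sq}^1 b=\operatorname{Sq}^{I_1}(b)$ in degree $5$, the next generator $\operatorname{Sq}^{I_2}(b)=\operatorname{Sq}^4\operatorname{Sq}^2\operatorname{Sq}^1 b$ first appearing in degree $9$.

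When $n\equiv 2\pmod 4$ no exception of Theorem \ref{K(Z/n,q)mod2} is triggered, so these generators are literally $\operatorname{Sq}^{I_k}(b)$ and one reads off $H^*(K(\mathbb{Z}/n,2);\mathbb{Z}/2)\cong\mathbb{Z}/2[b_2,b_3,b_5]$ through degree $8$, with $\operatorname{Sq}^1 b_2=b_3$ and $b_5=\operatorname{Sq}^2 b_3$. The case $4\mid n$ is where care is needed, and I expect it to be the main obstacle: every excess-$1$ sequence $I_k$ ends in $i_r=1$, so the exception clause applies to the entire family at once. One sets $\operatorname{Sq}^1 b=0$ and replaces $\operatorname{Sq}^{I_k}(b)$ by $\operatorname{Sq}^{2^k}\cdots\operatorname{Sq}^2(b')$, where $b'$ is the mod $2$ reduction of the degree-$3$ integral generator. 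For $k=0$ this gives $b_3:=b'$ in degree $3$, for $k=1$ it gives $b_5:=\operatorname{Sq}^2 b'=\operatorname{Sq}^2 b_3$ in degree $5$, and for $k=2$ the replacement $\operatorname{Sq}^4\operatorname{Sq}^2(b')$ again lands in degree $9$. Hence through degree $8$ one obtains the same ring $\mathbb{Z}/2[b_2,b_3,b_5]$, now with $\operatorname{Sq}^1 b_2=0$ and $b_5=\operatorname{Sq}^2 b_3$, which is precisely the asserted dichotomy. The genuine subtleties to handle cleanly are thus (i) applying the exception uniformly across $\{I_k\}$ and correctly matching $b_3$ with $b'$, and (ii) recording that the polynomial description is exact only through degree $8$, the first omitted generator sitting in degree $9$ in both parities of $n/2$.
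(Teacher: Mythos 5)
Your proof is correct and is essentially the argument the paper leaves implicit: the paper presents Corollary \ref{K(Z/n,2)mod 2} as an immediate specialization of Theorem \ref{K(Z/n,q)mod2} at $q=2$, and your enumeration of admissible sequences of excess $<2$ (the empty sequence and $I_k=(2^k,2^{k-1},\dots,2,1)$, giving generators in degrees $2,3,5,9,17,\dots$), together with the uniform application of the $4\mid n$ exception clause identifying $b_3$ with $b'$ and $b_5$ with $\operatorname{Sq}^2 b'$, is exactly that specialization spelled out. Your further observation that the isomorphism with $\mathbb{Z}/2[b_2,b_3,b_5]$ is literally valid only through degree $8$ (the first omitted generator sitting in degree $9$) is a caveat the paper's statement suppresses, and it is precisely the range in which the corollary is used later in the paper.
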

We conclude this section with the following.
\begin{proposition}\label{R_n mod 2}
The mod $2$ reduction of $R_2\in H^7(K(\mathbb{Z}/2,2);\mathbb{Z})$ is $b_2^{2}b_3\in H^7(K(\mathbb{Z}/2,2);\mathbb{Z}/2)$. In particular, it is nontrivial.
\end{proposition}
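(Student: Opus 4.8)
The plan is to locate $\bar{R}_2$ inside the two-dimensional space $H^7(K(\mathbb{Z}/2,2);\mathbb{Z}/2)$ by combining the Bockstein exact sequence with the $\operatorname{Sq}^1$-action. By Corollary \ref{K(Z/n,2)mod 2}, $H^*(K(\mathbb{Z}/2,2);\mathbb{Z}/2)=\mathbb{Z}/2[b_2,b_3,b_5]$, whose degree-$7$ part is spanned by the two monomials $b_2^2b_3$ and $b_2b_5$; so the task is precisely to rule out $b_2b_5$ and confirm $\bar{R}_2=b_2^2b_3$. On the integral side, the Remark after Lemma \ref{R_n} gives $H^7(K(\mathbb{Z}/2,2);\mathbb{Z})\cong\mathbb{Z}/\epsilon_3(2)\cdot 2\cong\mathbb{Z}/2$, generated by $R_2$, so $R_2$ is a nonzero $2$-torsion class and its reduction is one of the three nonzero elements above.

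First I would exploit that $R_2$ is $2$-torsion. In the Bockstein long exact sequence
\[
H^6(K(\mathbb{Z}/2,2);\mathbb{Z})\xrightarrow{\ \rho\ }H^6(K(\mathbb{Z}/2,2);\mathbb{Z}/2)\xrightarrow{\ B\ }H^7(K(\mathbb{Z}/2,2);\mathbb{Z})\xrightarrow{\ \times2\ }H^7(K(\mathbb{Z}/2,2);\mathbb{Z}),
\]
the image of the integral Bockstein $B$ is the $2$-torsion subgroup of $H^7(K(\mathbb{Z}/2,2);\mathbb{Z})$, which is all of $\langle R_2\rangle$. Hence $R_2=B(x)$ for some $x\in H^6(K(\mathbb{Z}/2,2);\mathbb{Z}/2)$. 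To pin down $x$, I would compute that the degree-$6$ part of $\mathbb{Z}/2[b_2,b_3,b_5]$ is $\langle b_2^3,\,b_3^2\rangle$, while by (\ref{K(Z/2,2)}) we have $H^6(K(\mathbb{Z}/2,2);\mathbb{Z})\cong\mathbb{Z}/\epsilon_2(2)\cong\mathbb{Z}/2$, generated by $\beta_2^2$. Since $\beta_2=B(b_2)$ forces $\bar\beta_2=\operatorname{Sq}^1 b_2=b_3$ (using $4\nmid 2$ in Corollary \ref{K(Z/n,2)mod 2}), reduction carries $\beta_2^2$ to $b_3^2$. Thus $\operatorname{Im}\rho=\langle b_3^2\rangle=\operatorname{Ker}B$, so $B$ is injective on the complementary class $b_2^3$, giving $B(b_2^3)=R_2$ (the unique nonzero element of the image).

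It then remains to reduce. Using $\bar{R}_2=\rho(R_2)=\rho\,B(b_2^3)=\operatorname{Sq}^1(b_2^3)$ and the Cartan formula together with $\operatorname{Sq}^1 b_2=b_3$, I get $\operatorname{Sq}^1(b_2^3)=3\,b_2^2\operatorname{Sq}^1 b_2=b_2^2b_3$, which is the assertion; the alternative choice $x=b_2^3+b_3^2$ changes nothing since $\operatorname{Sq}^1(b_3^2)=2\,b_3\operatorname{Sq}^1 b_3=0$. I expect the main obstacle to be the bookkeeping of the two groups in degree $6$ — specifically, correctly identifying $\operatorname{Ker}B$ with $\operatorname{Im}\rho=\langle b_3^2\rangle$, so that $b_2^3$ (and not $b_3^2$) is the class whose Bockstein is $R_2$. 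Once that is secured, the identity $\rho\circ B=\operatorname{Sq}^1$ and the single Cartan computation finish the proof, and as a consistency check one notes $\operatorname{Sq}^1(b_2b_5)=b_3b_5+b_2b_3^2\neq0$, confirming that $b_2b_5$ is not a reduction and hence cannot be $\bar{R}_2$.
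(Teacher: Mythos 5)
Your proof is correct, but it takes a genuinely different route from the paper's. The paper argues downward from degree $7$ into degree $8$: by Cartan's formula and $\operatorname{Sq}^1\operatorname{Sq}^1=0$ it shows $\operatorname{Sq}^1(b_2^2b_3)=0$; since $H^8(K(\mathbb{Z}/2,2);\mathbb{Z})$ is all $2$-torsion by (\ref{K(Z/2,2)}), the mod $2$ reduction $\rho$ is injective in degree $8$, so $\operatorname{Sq}^1=\rho\circ B$ forces $B(b_2^2b_3)=0$, and exactness then exhibits $b_2^2b_3$ as the reduction of a nonzero integral class of degree $7$, which can only be $R_2$. You argue upward from degree $6$: using $H^6(K(\mathbb{Z}/2,2);\mathbb{Z})\cong\mathbb{Z}/2$ generated by $\beta_2^2$, the identification $\bar\beta_2=b_3$, and exactness, you pin down $\operatorname{Ker}B=\langle b_3^2\rangle$ in degree $6$, produce $R_2$ explicitly as $B(b_2^3)$, and then compute $\bar R_2=\rho B(b_2^3)=\operatorname{Sq}^1(b_2^3)=b_2^2b_3$. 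Both proofs draw on the same toolbox ($\operatorname{Sq}^1=\rho\circ B$, the Bockstein exact sequence, Cartan's formula, Corollary \ref{K(Z/n,2)mod 2}), but they consume different parts of Cartan's integral computation: the paper needs only that $H^8$ is $2$-torsion, while you need the precise structure of $H^6(K(\mathbb{Z}/2,2);\mathbb{Z})$ and of its reduction, which is exactly the bookkeeping you flag as the delicate point. What your version buys is constructiveness — an explicit class $b_2^3$ with $B(b_2^3)=R_2$ — whereas the paper's is marginally shorter, since a single Cartan computation plus injectivity in degree $8$ suffices. All of your individual steps check out ($\beta_2=B(b_2)$ holds because $H^2(K(\mathbb{Z}/2,2);\mathbb{Z})=0$ and multiplication by $2$ kills $H^3\cong\mathbb{Z}/2$; $\operatorname{Sq}^1b_2=b_3$ since $4\nmid 2$), and your closing consistency check that $\operatorname{Sq}^1(b_2b_5)=b_3b_5+b_2b_3^2\neq0$ is logically redundant but a sound sanity check.
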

\begin{proof}
We know that $\operatorname{Sq}^1$ is the composition of the Bockstein homomorphism followed by the mod $2$ reduction. (\cite{Mo}, for example.) It follows from (\ref{K(Z/2,2)}) that $H^8(K(\mathbb{Z}/2,2);\mathbb{Z})$ is a $2$-torsion group, from which it follows that the mod $2$ reduction
$$H^8(K(\mathbb{Z}/2,2);\mathbb{Z})\rightarrow H^8(K(\mathbb{Z}/2,2);\mathbb{Z}/2)$$
is injective. Furthermore, $H^8(K(\mathbb{Z}/2,2);\mathbb{Z})$ is generated by a single element $R_2$. Therefore, it suffices to show $\operatorname{Sq}^1(b_2^{2}b_3)=0$, which implies the Bockstein homomorphism sends $b_2^{2}b_3$ to $0$, i.e., $b_2^{2}b_3$ is the mod $2$ reduction of some nonzero integral class, which may only be $R_2$. Indeed, by Cartan's formula, we have
\begin{equation*}
\begin{split}
\operatorname{Sq}^1(b_2^{2}b_3)
=&\operatorname{Sq}^1(b_2)b_2b_3+b_2\operatorname{Sq}^1(b_2)b_3+b_2^2\operatorname{Sq}^1(b_3)\\
=&2\operatorname{Sq}^1(b_2)b_2b_3+b_2^2\operatorname{Sq}^1\operatorname{Sq}^1(b_2)\\
=&0,
\end{split}
\end{equation*}
where the last equation follows from the Adem relation $\operatorname{Sq}^1\operatorname{Sq}^1=0$.
\end{proof}

\section{twisted K-theory and the atiyah-hirzebruch spectral sequence}
For a connected topological space $X$ and a class $\alpha\in\operatorname{Br}'(X)=H^{3}(X;\mathbb{Z})_{\textrm{Tor}}$, Donovan-Karoubi (\cite{Do}) and Atiyah-Segal (\cite{At})defined  the twisted complex K-theory of $X$ with respect to $\alpha$, which we denote by $KU(X)_{\alpha}$, following the convention in \cite{An1}.

Similar to the usual, untwisted complex K-Theory, there is a twisted version of the Atiyah-Hirzebruch spectral sequence, $\tilde{E}_{*}^{*,*}$, such that
\begin{equation*}
\tilde{E}_{2}^{s,t}\cong
\begin{cases}
H^{s}(X;\mathbb{Z}),\quad\textrm{if $t$ is even,}\\
0, \quad\textrm{if $t$ is odd,}
\end{cases}
\end{equation*}
and converges to $KU(X)_{\alpha}$ when $X$ is a finite CW complex. For more details, see \cite{At} and \cite{At1}. The spectral sequence is closely related to the index of $\alpha$, as shown in the following
\begin{theorem}\label{AH diff}
Let $X$ be a connected finite CW complex and let $\alpha\in\operatorname{Br}(X)$. Consider $\tilde{E}_{*}^{*,*}$, the twisted Atiyah-Hirzebruch spectral sequence with respect to $\alpha$ with differentials $\tilde{d}_{r}^{s,t}$ with bi-degree $(r, -r+1)$. Then we have $\tilde{E}_{2}^{0,0}\cong\mathbb{Z}$, and any $\tilde{E}_{r}^{0,0}$ with $r>2$ is a subgroup of $\mathbb{Z}$ and therefore generated by a positive integer. The subgroup $\tilde{E}_{3}^{0,0}$ (resp.  $\tilde{E}_{\infty}^{0,0}$) is generated by $\operatorname{per}(\alpha)$ (resp. $\operatorname{ind}(\alpha))$.
\end{theorem}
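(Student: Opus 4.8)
The plan is to treat the three assertions separately: the formal structure of the corner $\tilde E_r^{0,0}$, the appearance of $\operatorname{per}(\alpha)$ after the first non-trivial differential, and the identification of $\tilde E_\infty^{0,0}$ with $\operatorname{ind}(\alpha)$. The formal part is immediate. As $X$ is connected, $\tilde E_2^{0,0}\cong H^0(X;\mathbb Z)\cong\mathbb Z$. Any differential $\tilde d_r$ with target in bidegree $(0,0)$ would emanate from $\tilde E_r^{-r,\,r-1}$, which vanishes since its first index is negative; hence no differential ever hits the corner, and $\tilde E_{r+1}^{0,0}=\ker\tilde d_r^{0,0}$ is a subgroup of $\tilde E_r^{0,0}$. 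Inductively every $\tilde E_r^{0,0}$ is a subgroup of $\mathbb Z$, so is generated by a unique non-negative integer.

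For the period I would use that $KU^t(\mathrm{pt})=0$ for odd $t$, so all even-length differentials vanish; thus $\tilde d_2^{0,0}=0$ and the first differential that can be non-trivial on the corner is $\tilde d_3^{0,0}\colon\mathbb Z\to\tilde E_3^{3,-2}=H^3(X;\mathbb Z)$, nothing having touched bidegree $(3,-2)$ either. The key input is the known description of this first twisted differential (\cite{At}): it is the sum of the untwisted differential $B\,\overline{\operatorname{Sq}^2}$ and cup product with the twisting class $\alpha$, and on the bottom row the untwisted summand vanishes in degree $0$, leaving $\tilde d_3^{0,0}(x)=\pm x\alpha$. Therefore $\ker\tilde d_3^{0,0}=\{x\in\mathbb Z:x\alpha=0\}=\operatorname{per}(\alpha)\mathbb Z$, since $\operatorname{per}(\alpha)$ is the additive order of $\alpha$ in $H^3(X;\mathbb Z)$; this is the first proper subgroup appearing in the corner and is generated by $\operatorname{per}(\alpha)$.

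The identification of $\tilde E_\infty^{0,0}$ with $\operatorname{ind}(\alpha)$ is the substantive part, and where I expect the real work to lie. Invoking convergence on the finite complex $X$, $\tilde E_\infty^{0,0}$ is the bottom associated-graded piece $F^0KU^0(X)_\alpha/F^1KU^0(X)_\alpha$, which under the edge map is exactly the image of the rank homomorphism $\mathrm{rk}\colon KU^0(X)_\alpha\to H^0(X;\mathbb Z)=\mathbb Z$. To compute this image I would use that on a finite complex $KU^0(X)_\alpha$ is the Grothendieck group of $\alpha$-twisted vector bundles (\cite{Do},\cite{At}), so every class is a difference $[\mathscr E]-[\mathscr F]$ and $\operatorname{im}(\mathrm{rk})$ is the subgroup generated by the ranks of $\alpha$-twisted bundles. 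Since a rank-$r$ $\alpha$-twisted bundle is the same datum as a $PU_r$-torsor with associated class $\alpha$, this set of ranks is exactly the set of $r$ entering the definition of $\operatorname{ind}(\alpha)$ (non-empty by Serre's theorem, \cite{Gr}), and the subgroup it generates is $\gcd\{r\}\,\mathbb Z=\operatorname{ind}(\alpha)\mathbb Z$. The two delicate points are thus the reduction of $\tilde E_\infty^{0,0}$ to $\operatorname{im}(\mathrm{rk})$, requiring the filtration and convergence statements for twisted $K$-theory on a finite complex, and the dictionary between $\alpha$-twisted bundles and $PU_r$-torsors underlying the rank computation; by contrast the period half is a one-line consequence of the formula for $\tilde d_3$.
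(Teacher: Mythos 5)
Your proposal is correct and follows essentially the same route as the paper: the paper gives no independent argument but simply cites Proposition 2.21 and Lemma 2.23 of \cite{An1} (the rank map $KU^0(X)_\alpha\to\mathbb{Z}$ having image generated by $\operatorname{ind}(\alpha)$, and the identification of the corner entries via $\tilde d_3=\pm\,\cup\,\alpha$ and the edge homomorphism), which is exactly the content you reconstruct. The only cosmetic discrepancy is indexing: with the standard convention your kernel of $\tilde d_3^{0,0}$ is $\tilde E_4^{0,0}$ rather than $\tilde E_3^{0,0}$ as in the paper's wording, but the mathematical content is identical.
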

Moreover, we have a rank map $KU^{0}(X)_{\alpha}\rightarrow\mathbb{Z}$ (See Section 2.5 of \cite{An1}) of which the image is generated by $\operatorname{ind}(\alpha)$. Theorem \ref{AH diff} is an immediate consequence of Proposition 2.21 and Lemma 2.23 of \cite{An1}. It has the following consequence:
\begin{corollary}\label{upper bound}
Let $X$ be a connected $8$-dimensional CW-complex, and let $\alpha\in\textrm{Br}'(X)=H^{3}(X;\mathbb{Z})_{\textrm{tor}}$ be such that $\operatorname{per}(\alpha)=n$. Then $\operatorname{ind}(\alpha)|\epsilon_{2}(n)\epsilon_{3}(n)n^3$.
\end{corollary}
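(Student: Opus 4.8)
The plan is to extract $\operatorname{ind}(\alpha)$ from the zero-column of the twisted Atiyah--Hirzebruch spectral sequence $\tilde{E}_{*}^{*,*}$ and to bound how much the subgroup $\tilde{E}_{*}^{0,0}$ can shrink past the $d_3$-stage. By Theorem \ref{AH diff} we have $\tilde{E}_{3}^{0,0}=\langle n\rangle$ and $\tilde{E}_{\infty}^{0,0}=\langle\operatorname{ind}(\alpha)\rangle$. The differential $\tilde{d}_{r}^{0,0}$ lands in $\tilde{E}_{r}^{r,1-r}$, which is a subquotient of $H^{r}(X;\mathbb{Z})$ and is nonzero only when $1-r$ is even, i.e. $r$ is odd; moreover it vanishes once $r>\dim X=8$, since then $H^{r}(X;\mathbb{Z})=0$. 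Hence, after $d_3$, only $d_5$ and $d_7$ can act on the zero-column. I would then record, using $\mathbb{Z}/n$-linearity of the differentials and the absence of $d_4,d_6$ on this column, that $\tilde{E}_{5}^{0,0}=\tilde{E}_{3}^{0,0}=\langle n\rangle$, that $\tilde{E}_{7}^{0,0}=\ker\tilde{d}_{5}^{0,0}=\langle a_{5}n\rangle$ where $a_{5}$ is the order of $\tilde{d}_{5}^{0,0}(n)$, and finally $\tilde{E}_{\infty}^{0,0}=\ker\tilde{d}_{7}^{0,0}=\langle a_{5}a_{7}n\rangle$ where $a_{7}$ is the order of $\tilde{d}_{7}^{0,0}(a_{5}n)$. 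Thus $\operatorname{ind}(\alpha)=a_{5}a_{7}n$, and everything reduces to proving $a_{5}\mid\epsilon_{2}(n)n$ and $a_{7}\mid\epsilon_{3}(n)n$.

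To bound these orders I would pass to the universal example by naturality. Since $\operatorname{per}(\alpha)=n$ gives $n\alpha=0$, the class $\alpha$ lifts along the mod-$n$ Bockstein to some $\alpha'\in H^{2}(X;\mathbb{Z}/n)$, which is classified by a map $f\colon X\rightarrow K(\mathbb{Z}/n,2)$ with $\alpha=f^{*}\beta_{n}$. As $\dim X=8$, cellular approximation lets me factor $f$ through the finite $8$-complex $Y=\operatorname{sk}_{8}K(\mathbb{Z}/n,2)$, so that $\alpha$ is pulled back from the corresponding twist on $Y$ and the hypotheses of Theorem \ref{AH diff} apply to $Y$. Because the twisted complex K-theory and its Atiyah--Hirzebruch spectral sequence are functorial in the pair (space, twist), all differentials for $X$ are the $f^{*}$-images of the differentials for $Y$; in particular $\tilde{d}_{5}^{0,0}(n)=f^{*}\bigl(\tilde{d}_{5}^{Y}(n)\bigr)$ and $\tilde{d}_{7}^{0,0}(a_{5}n)=f^{*}\bigl(\tilde{d}_{7}^{Y}(a_{5}n)\bigr)$.

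On $Y$ the relevant cohomology has already been computed: $H^{5}(Y;\mathbb{Z})\cong H^{5}(K(\mathbb{Z}/n,2);\mathbb{Z})\cong\mathbb{Z}/\epsilon_{2}(n)n$ and $H^{7}(Y;\mathbb{Z})\cong\mathbb{Z}/\epsilon_{3}(n)n$, by Lemma \ref{R_n} and the remark following it. The targets $\tilde{E}_{5}^{5,-4}(Y)$ and $\tilde{E}_{7}^{7,-6}(Y)$ are subquotients of these cyclic groups, so every element of them has order dividing $\epsilon_{2}(n)n$, respectively $\epsilon_{3}(n)n$. Since $f^{*}$ is a homomorphism, the orders of $\tilde{d}_{5}^{0,0}(n)$ and $\tilde{d}_{7}^{0,0}(a_{5}n)$ divide those of their preimages, giving $a_{5}\mid\epsilon_{2}(n)n$ and $a_{7}\mid\epsilon_{3}(n)n$. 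Combining, $\operatorname{ind}(\alpha)=a_{5}a_{7}n$ divides $\epsilon_{2}(n)n\cdot\epsilon_{3}(n)n\cdot n=\epsilon_{2}(n)\epsilon_{3}(n)n^{3}$, as desired.

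The \textbf{main obstacle} is the reduction in the second paragraph: one must justify that the \emph{higher} twisted differentials $\tilde{d}_{5},\tilde{d}_{7}$ on $X$ are genuinely pulled back from the universal space, which rests on the naturality of the twisted Atiyah--Hirzebruch spectral sequence in the twist and on arranging the universal object to be a finite complex so that $\operatorname{ind}$ and Theorem \ref{AH diff} are available there. Two smaller points must be checked along the way: that $n\cdot 1$ actually survives to the $\tilde{E}_{5}$-page (so that $\tilde{d}_{5}^{0,0}(n)$ is defined), which is exactly the statement $\tilde{E}_{3}^{0,0}=\langle n\rangle$ from Theorem \ref{AH diff}; and that replacing $K(\mathbb{Z}/n,2)$ by its $8$-skeleton does not change $H^{5}$ and $H^{7}$, which holds because skeleta agree with the full space in cohomological degrees below the skeleton dimension.
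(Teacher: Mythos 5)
Your strategy is the same as the paper's: combine Theorem \ref{AH diff} with the observation that on an $8$-complex only $\tilde{d}_3,\tilde{d}_5,\tilde{d}_7$ can act on the zero column, pass to the universal example $Y=\operatorname{sk}_8 K(\mathbb{Z}/n,2)$, and use Lemma \ref{R_n} together with naturality of the twisted Atiyah--Hirzebruch spectral sequence. The conclusion is correct, but one step does not hold as written.

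The problem is the identity $\tilde{d}_{7}^{0,0}(a_{5}n)=f^{*}\bigl(\tilde{d}_{7}^{Y}(a_{5}n)\bigr)$. You defined $a_{5}$ as the order of $\tilde{d}_{5}^{0,0}(n)$ computed on $X$; for the right-hand side to be defined at all, the class $a_{5}n$ must survive to $\tilde{E}_{7}^{0,0}(Y)$, i.e. one needs $a_{5}^{Y}\mid a_{5}$, where $a_{5}^{Y}$ denotes the order of $\tilde{d}_{5}^{Y}(n)$. But naturality gives the \emph{opposite} divisibility: from $\tilde{d}_{5}^{0,0}(n)=f^{*}\bigl(\tilde{d}_{5}^{Y}(n)\bigr)$ and the fact that a homomorphism can only decrease orders, one gets $a_{5}\mid a_{5}^{Y}$, possibly strictly (for instance $f^{*}$ may kill part of $\tilde{E}_{5}^{5,-4}(Y)$). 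So the zero-column subgroups of the two $E_{7}$-pages need not coincide, and $\tilde{d}_{7}^{Y}(a_{5}n)$ may simply be undefined. The repair is easy and amounts to what the paper actually does. Either replace $a_{5}$ by $a_{5}^{Y}$ throughout: since $a_{5}\mid a_{5}^{Y}$, the class $a_{5}^{Y}n$ lies in $\tilde{E}_{7}^{0,0}(X)$, the identity $\tilde{d}_{7}^{0,0}(a_{5}^{Y}n)=f^{*}\bigl(\tilde{d}_{7}^{Y}(a_{5}^{Y}n)\bigr)$ is now legitimate, its order $b$ divides $\epsilon_{3}(n)n$, and hence $\operatorname{ind}(\alpha)\mid b\,a_{5}^{Y}n\mid\epsilon_{2}(n)\epsilon_{3}(n)n^{3}$. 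Or, more simply, run the entire counting argument on $Y$ alone, concluding that the integer $n\cdot\epsilon_{2}(n)n\cdot\epsilon_{3}(n)n=\epsilon_{2}(n)\epsilon_{3}(n)n^{3}$ lies in $\tilde{E}_{\infty}^{0,0}(Y)$, and then invoke only the coarse consequence of naturality that a permanent cycle on $Y$ pulls back to a permanent cycle on $X$: this places $\epsilon_{2}(n)\epsilon_{3}(n)n^{3}$ in $\tilde{E}_{\infty}^{0,0}(X)=\langle\operatorname{ind}(\alpha)\rangle$, which is the desired divisibility without any page-by-page matching of differentials.
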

\begin{proof}
First we fix a CW-complex structure on the Eilenberg-Mac Lane space $K(\mathbb{Z}/n,2)$ and take $X$ to be $\operatorname{sk}_8(K(\mathbb{Z}/n,2))$, the $8$th skeleton of $K(\mathbb{Z}/n,2)$. Then the corresponding twisted Atiyah-Hirzebruch spectral sequence is shown in Figure \ref{twisted AH}, where one readily sees that the only differentials out of $\tilde{E}_{*}^{0,0}$ with non-trivial codomains are $\tilde{d}_{3}^{0,0}$, $\tilde{d}_{5}^{0,0}$ and $\tilde{d}_{7}^{0,0}$, whose codomains are, respectively, subquotients of $\tilde{E}_{2}^{3,-2}\cong H^{3}(K(\mathbb{Z}/n);\mathbb{Z})$, $\tilde{E}_{2}^{5,-4}\cong H^{5}(K(\mathbb{Z}/n);\mathbb{Z})$ and $\tilde{E}_{2}^{7,-6}\cong H^{7}(K(\mathbb{Z}/n);\mathbb{Z})$. As discussed in Section 1, the three groups above are all cyclic, of order $n$, $\epsilon_{2}(n)n$, and $\epsilon_{3}(n)n$ respectively, from which the desired result follows for $X=\operatorname{sk}_8(K(\mathbb{Z}/n,2))$.

For a general $X$ and $\alpha$, choose $\alpha'\in H^{2}(X;\mathbb{Z}/n)$ such that $B(\alpha')=\alpha$, where $B$ is the Bockstein homomorphism. then $\alpha'$ is classified by a cell map $f: X\rightarrow K(\mathbb{Z}/n,2)$ such that $f^{*}(\beta_n)=\alpha$, where $\beta_n$ is the canonical generator of $H^{2}(K(\mathbb{Z}/n,2);\mathbb{Z})$ as discussed in Section 2. The corollary then follows from the functoriality of the twisted Atiyah-Hirzebruch spectral sequence. The idea of the proof is indicated in Figure \ref{twisted AH}.
\end{proof}

\begin{figure}[!]
\begin{tikzpicture}
\draw[step=1cm,gray, thick] (0,0) grid (7,-6);
\draw [ thick, <->] (0,-6)--(0,0)--(8,0);

\node [above] at (3,0) {$3$};
\node [above] at (5,0) {$5$};
\node [above] at (7,0) {$7$};
\node [left] at (0,-2) {$-2$};
\node [left] at (0,-4) {$-4$};
\node [left] at (0,-6) {$-6$};

\node [below right] at (0,0) {$\mathbb{Z}$};
\node [below right] at (3,0) {$\mathbb{Z}/n$};
\node [below right] at (5,0) {$\mathbb{Z}/\epsilon_{2}(n)n$};
\node [below right] at (7,0) {$\mathbb{Z}/\epsilon_{3}(n)n$};
\draw (0,0) [thick, ->] to (3,-2);
\draw (0,0) [thick, ->] to (5,-4);
\draw (0,0) [thick, ->] to (7,-6);
\end{tikzpicture}
\caption{The twisted Atiyah-Hirzebruch spectral sequence associated to the $8$th skeleton of $K(\mathbb{Z}/n,2)$.}\label{twisted AH}
\end{figure}
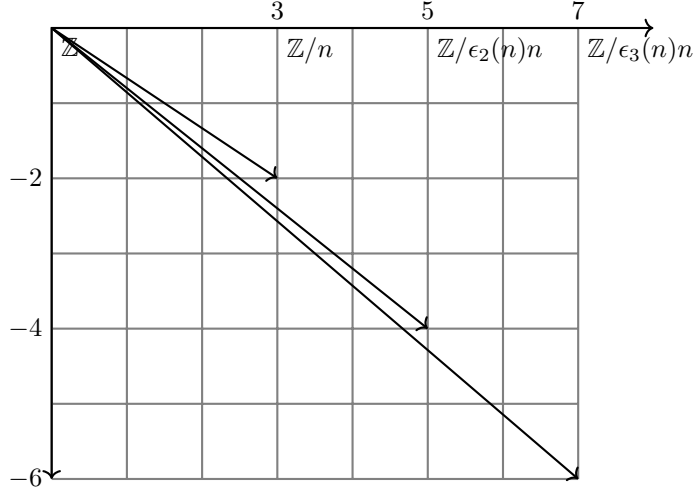

\section{the space $\mathbf{B}P(n,mn)$ and its low dimensional postnikov decomposition}
Let $m,n$ be integers. Recall that in Section 1 we defined a Lie group $P(n,mn)$ which fits in the following exact sequence:
$$1\rightarrow\mathbb{Z}/{m}\rightarrow P(n,mn)\xrightarrow{\varphi} PU_{mn}\rightarrow 1.$$
Applying the classifying space functor, we obtain a fiber sequence
\begin{equation}\label{fiberseq}
\mathbf{B}\mathbb{Z}/{m}\rightarrow\mathbf{B}P(n,mn)\xrightarrow{\mathbf{B}\varphi} \mathbf{B}PU_{mn}.
\end{equation}
The space $\mathbf{B}P(n,mn)$ plays an important role in the study of topological period-index problem.
As we mentioned in the introduction, $\pi_{1}(P(n,mn))\cong\mathbb{Z}/n$, and consequently $\mathbf{B}P(n,mn)$ is a simply connected space with $\pi_{2}(\mathbf{B}P(n,mn))\cong\mathbb{Z}/n$. Therefore we have a projection onto the $2$nd stage of Postnikov Tower $\mathbf{B}P(n,mn)\rightarrow K(\mathbb{Z}/n,2)$. In particular, $P(n,n)=PU_n$, and we have the following commutative diagram
\begin{equation}\label{Postnikov}
\begin{tikzcd}
\mathbf{B}P(n,mn)\arrow{d}\arrow{r}&\mathbf{B}PU_{mn}\arrow{d}\\
K(\mathbb{Z}/n,2)\arrow{r}&K(\mathbb{Z}/mn,2)
\end{tikzcd}
\end{equation}
where the vertical arrows are the projections to the $2$nd stages of the respective Postnikov towers.
\begin{proposition}\label{Z/n to Z/mn}
The bottom arrow in the diagram (\ref{Postnikov}) is induced by the canonical inclusion $\mathbb{Z}/n\hookrightarrow\mathbb{Z}/mn$.
\end{proposition}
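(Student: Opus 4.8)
The plan is to reduce the statement to a computation on $\pi_2$. Both vertical maps in (\ref{Postnikov}) are projections onto second Postnikov stages, hence induce isomorphisms on $\pi_2$, and a map between the Eilenberg--Mac Lane spaces $K(\mathbb{Z}/n,2)$ and $K(\mathbb{Z}/mn,2)$ is determined up to homotopy by its effect on $\pi_2$, since $[K(A,2),K(B,2)]\cong H^2(K(A,2);B)\cong\operatorname{Hom}(A,B)$. Therefore it suffices to identify the homomorphism induced by the bottom arrow on $\pi_2$; transported through the two vertical isomorphisms, this is exactly $\pi_2(\mathbf{B}\varphi)$, which under the natural isomorphism $\pi_k(\mathbf{B}G)\cong\pi_{k-1}(G)$ equals $\pi_1(\varphi)\colon\mathbb{Z}/n\to\mathbb{Z}/mn$. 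The claim thus amounts to showing that $\pi_1(\varphi)$ is the canonical inclusion $g_{m,n}$.

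The key input is that $SU_{mn}$ is simply connected and serves as the universal cover of both groups: one has $P(n,mn)=SU_{mn}/(\mathbb{Z}/n)$ and $PU_{mn}=SU_{mn}/(\mathbb{Z}/mn)$, where $\mathbb{Z}/n\subset\mathbb{Z}/mn=Z(SU_{mn})$ are the central scalar subgroups, and $\varphi$ is the evident further quotient, covered by $\operatorname{id}_{SU_{mn}}$. I would then invoke the standard identification of the fundamental group of a quotient $G/H$ of a simply connected group $G$ by a central discrete subgroup $H$ with $H$ itself, where a class in $\pi_1(G/H)$ is represented by the projection of a path in $G$ from $e$ to the relevant element of $H$. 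Naturality of this identification for the morphism of covering spaces given by $\operatorname{id}_{SU_{mn}}$ upstairs and $\varphi$ downstairs (equivalently, naturality of the connecting homomorphism applied to the map of central extensions that is the identity in the middle term) shows that $\pi_1(\varphi)$ is precisely the inclusion of central subgroups $\mathbb{Z}/n\hookrightarrow\mathbb{Z}/mn$.

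It remains to match this subgroup inclusion with $g_{m,n}$. Writing $\zeta=e^{2\pi\sqrt{-1}/(mn)}$, the generator $1\in\mathbb{Z}/n$ corresponds to the scalar $e^{2\pi\sqrt{-1}/n}\mathbf{I}_{mn}=\zeta^{m}\mathbf{I}_{mn}$, that is, to $m\in\mathbb{Z}/mn$; this is exactly the canonical inclusion of the order-$n$ subgroup. Hence the bottom arrow induces $g_{m,n}$ on $\pi_2$, and by the classification of maps of $K(-,2)$'s it is homotopic to $g_{m,n}^{(2)}$, as claimed.

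I expect the only point requiring genuine care to be this last bookkeeping. One must confirm that ``the canonical inclusion'' is the subgroup map sending $1$ to $m$, and that the deck-transformation (equivalently, connecting-homomorphism) identification produces exactly this map rather than, for instance, multiplication by a unit or the reduction $\mathbb{Z}/mn\to\mathbb{Z}/n$. Once the orientation of these identifications is pinned down consistently, everything else is a formal consequence of the Postnikov-stage property, the equivalence $\Omega\mathbf{B}G\simeq G$, and the representability of $K(-,2)$.
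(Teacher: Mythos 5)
Your proof is correct and takes essentially the same route as the paper, whose entire proof is the one-line observation that $\mathbf{B}\varphi$ induces the canonical inclusion $\mathbb{Z}/n\hookrightarrow\mathbb{Z}/mn$ on second homotopy groups, combined (implicitly) with the fact that maps between $K(-,2)$'s are determined by their effect on $\pi_2$. Your write-up simply supplies the details the paper leaves tacit: the covering-space identification of $\pi_1(\varphi)$ with the inclusion of central subgroups, and the bookkeeping that $1\in\mathbb{Z}/n$ corresponds to $m\in\mathbb{Z}/mn$.
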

In fact, this follows from the fact that $\mathbf{B}\psi$ induces a morphism on the $2$nd homotopy groups which is the inclusion described above.

Delooping the first term of the fiber sequence (\ref{fiberseq}), we obtain another fiber sequence
\begin{equation}\label{fiberseq'}
\mathbf{B}P(n,mn)\xrightarrow{\mathbf{B}\varphi}\mathbf{B}PU_{mn}\xrightarrow{\eta} K(\mathbb{Z}/m,2),
\end{equation}
which leads to the following
\begin{proposition}\label{eta}
The second arrow $\eta$ in the fiber sequence (\ref{fiberseq'}) fits in the following commutative diagram:
\begin{equation*}
\begin{tikzcd}
\mathbf{B}P(n,mn)\arrow{d}\arrow{r}&\mathbf{B}PU_{mn}\arrow{d}\arrow{dr}{\eta}\\
K(\mathbb{Z}/n,2)\arrow{r}&K(\mathbb{Z}/mn,2)\arrow{r}&K(\mathbb{Z}/m,2)
\end{tikzcd}
\end{equation*}
in which the square is diagram (\ref{Postnikov}) and the bottom row is a fiber sequence induced by delooping twice the canonical short exact sequence
$$0\rightarrow\mathbb{Z}/n\rightarrow\mathbb{Z}/mn\rightarrow\mathbb{Z}/m\rightarrow 0.$$
\end{proposition}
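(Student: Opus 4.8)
The plan is to recognise $\eta$ as the classifying map of the central extension $1\to\mathbb{Z}/m\to P(n,mn)\xrightarrow{\varphi}PU_{mn}\to 1$ and to identify that extension as the pushforward, along the quotient $q\colon\mathbb{Z}/mn\to\mathbb{Z}/m$, of the defining central extension
\[
1\to\mathbb{Z}/mn\to SU_{mn}\to PU_{mn}\to 1
\]
of $PU_{mn}$. First I would record the relevant group theory: the centre of $SU_{mn}$ is cyclic of order $mn$, generated by $e^{2\pi\sqrt{-1}/mn}\mathbf{I}_{mn}$; it contains the subgroup $\mathbb{Z}/n$ used to define $P(n,mn)$ as its order-$n$ subgroup, with resulting quotient $\mathbb{Z}/m$. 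Thus $P(n,mn)=SU_{mn}/(\mathbb{Z}/n)$ and $PU_{mn}=SU_{mn}/(\mathbb{Z}/mn)$, and the short exact sequence $0\to\mathbb{Z}/n\to\mathbb{Z}/mn\xrightarrow{q}\mathbb{Z}/m\to 0$ is precisely the one in the statement.

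Next I would exhibit a morphism of central extensions from the displayed extension of $PU_{mn}$ to the extension $1\to\mathbb{Z}/m\to P(n,mn)\xrightarrow{\varphi}PU_{mn}\to 1$, given on kernels by $q$, on total groups by the quotient $p\colon SU_{mn}\to SU_{mn}/(\mathbb{Z}/n)=P(n,mn)$ (observe $\ker q=\mathbb{Z}/n$), and by the identity on the common base $PU_{mn}$. Verifying that the two resulting squares commute is routine: the base square records that $\varphi\circ p$ is the total quotient $SU_{mn}\to PU_{mn}$, and the kernel square records that a central element $z$ maps into $P(n,mn)$ exactly as $q(z)\in\mathbb{Z}/m$ does. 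Because the base is held fixed and the middle map is the induced one, this presents the second extension as the pushforward of the first along $q$; equivalently, $P(n,mn)$ is the pushout of $SU_{mn}\hookleftarrow\mathbb{Z}/mn\xrightarrow{q}\mathbb{Z}/m$. Applying the classifying-space functor turns this morphism into a map of principal fibrations, and the naturality of the classifying map of a central extension under pushforward yields a homotopy $\eta\simeq K(q,2)\circ\tau$, where $\tau\colon\mathbf{B}PU_{mn}\to K(\mathbb{Z}/mn,2)$ is the classifying (equivalently connecting) map of the upper extension.

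It then remains to match the two factors with the arrows asserted in the diagram. The map $\tau$ is the Postnikov projection of diagram (\ref{Postnikov}): since $SU_{mn}$ is $2$-connected, $\mathbf{B}SU_{mn}$ is $3$-connected, so in the fiber sequence $\mathbf{B}SU_{mn}\to\mathbf{B}PU_{mn}\xrightarrow{\tau}K(\mathbb{Z}/mn,2)$ the map $\tau$ is an isomorphism on $\pi_2$ and hence the second Postnikov stage. The factor $K(q,2)$ is precisely the second map of the bottom row, as that row is obtained by delooping $0\to\mathbb{Z}/n\to\mathbb{Z}/mn\xrightarrow{q}\mathbb{Z}/m\to 0$ twice; its first map is $K(i,2)$ for the inclusion $i$, which by Proposition \ref{Z/n to Z/mn} is the bottom arrow of the square (\ref{Postnikov}). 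Combining the already-established commutativity of that square with the homotopy $\eta\simeq K(q,2)\circ\tau$ gives the full commutative diagram and exhibits the bottom row as a fiber sequence.

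The conceptual input --- that pushing a central extension forward along a map of abelian kernels pushes its classifying map forward by the induced map of $K(-,2)$'s --- is standard, so I expect the work to be bookkeeping rather than genuine difficulty. The two points needing care are verifying that $P(n,mn)$ really is the asserted pushout (equivalently, that both squares in the morphism of extensions commute) and pinning down $\tau$ as the Postnikov projection up to homotopy rather than some twist of it. A minor preliminary to settle is that the kernel of $\varphi$ is $\mathbb{Z}/m$, in agreement with the present section and the statement, correcting the $\mathbb{Z}/n$ that appears in the corresponding sequence of Section 1.
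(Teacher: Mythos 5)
Your proof is correct: the paper states this proposition without proof, treating it as immediate from the construction of $\eta$ by delooping, and your argument --- realizing $1\to\mathbb{Z}/m\to P(n,mn)\xrightarrow{\varphi}PU_{mn}\to 1$ as the pushforward along $q\colon\mathbb{Z}/mn\to\mathbb{Z}/m$ of $1\to\mathbb{Z}/mn\to SU_{mn}\to PU_{mn}\to 1$, invoking naturality of classifying maps of central extensions, and identifying the classifying map of the $SU_{mn}$-extension with the Postnikov projection via the $3$-connectivity of $\mathbf{B}SU_{mn}$ --- is precisely the naturality argument the paper leaves implicit. Your side remark is also right: the kernel of $\varphi$ is $\mathbb{Z}/m$, as written in Section 4 and required by the statement, and the $\mathbb{Z}/n$ appearing in the corresponding sequence of Section 1 is a typo.
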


Let $X$ be a finite CW complex and let $\alpha\in\textrm{Br}(X)=H^{3}(X;\mathbb{Z})_{\textrm{tor}}$ be of order $n$. Recall the lifting problem (\ref{lift}) discussed in the introduction, as shown by the following diagram.
\begin{equation}\label{lift'}
\begin{tikzcd}
&\mathbf{B}P(n,mn)\arrow{d}\\
X\arrow[ur,dashed]\arrow{r}{\alpha'}&K(\mathbb{Z}/n,2)
\end{tikzcd}
\end{equation}
We have the following
\begin{proposition}\label{lift to BP}
Let $X$, $\alpha$ be as above. Furthermore, suppose that $H^2(X;\mathbb{Z})=0$. Then $\alpha$ is classified by an Azumaya algebra of degree $mn$ if and only if the lift in diagram (\ref{lift'}) exists.
\end{proposition}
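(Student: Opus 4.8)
The plan is to translate both sides of the equivalence into statements about homotopy classes of maps, and to pass between the two relevant fibrations of $\mathbf{B}P(n,mn)$ — the projection $\mathbf{B}\varphi$ of (\ref{fiberseq'}) and the Postnikov projection $p_n\colon \mathbf{B}P(n,mn)\to K(\mathbb{Z}/n,2)$ of (\ref{Postnikov}) — using the compatibility recorded in Proposition \ref{eta}. I write $p_{mn}\colon\mathbf{B}PU_{mn}\to K(\mathbb{Z}/mn,2)$ for the analogous Postnikov projection, $\iota\colon\mathbb{Z}/n\hookrightarrow\mathbb{Z}/mn$ for the canonical inclusion, $\iota_*$ for the maps it induces on cohomology (which, by Proposition \ref{Z/n to Z/mn}, are realized by post-composition with the bottom arrow $g^{(2)}_{m,n}$ of (\ref{Postnikov})), and $B_n,B_{mn}$ for the integral Bocksteins. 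The first point I would record is that $H^2(X;\mathbb{Z})=0$ forces $B_n$ and $B_{mn}$ to be injective on $H^2$: in the long exact coefficient sequences the connecting maps $H^2(X;\mathbb{Z}/n)\to H^3(X;\mathbb{Z})$ and $H^2(X;\mathbb{Z}/mn)\to H^3(X;\mathbb{Z})$ then have trivial kernel. Thus $\alpha'$ is the unique class with $B_n(\alpha')=\alpha$, and the inclusion is Bockstein-compatible, $B_{mn}\circ\iota_*=B_n$.

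For the direction $(\Leftarrow)$, given a lift $g$ I set $f=\mathbf{B}\varphi\circ g$; this classifies an Azumaya algebra of degree $mn$ whose Brauer class is $B_{mn}(p_{mn}\circ f)$. The commuting square (\ref{Postnikov}) gives $p_{mn}\circ f=\iota_*(p_n\circ g)=\iota_*(\alpha')$, so the Brauer class is $B_{mn}(\iota_*\alpha')=B_n(\alpha')=\alpha$, as required.

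For the harder direction $(\Rightarrow)$, suppose $f\colon X\to\mathbf{B}PU_{mn}$ has Brauer class $\alpha$, and set $\gamma=p_{mn}\circ f\in H^2(X;\mathbb{Z}/mn)$, so $B_{mn}(\gamma)=\alpha$. Since $B_{mn}$ is injective and $B_{mn}(\iota_*\alpha')=\alpha$, I conclude $\gamma=\iota_*(\alpha')$. By Proposition \ref{eta} the obstruction $\eta\circ f$ to lifting $f$ across $\mathbf{B}\varphi$ is the reduction of $\gamma$ along $\mathbb{Z}/mn\twoheadrightarrow\mathbb{Z}/m$, and this vanishes because $\mathbb{Z}/n\hookrightarrow\mathbb{Z}/mn\twoheadrightarrow\mathbb{Z}/m$ is zero; hence the fiber sequence (\ref{fiberseq'}) furnishes a lift $g_0$ with $\mathbf{B}\varphi\circ g_0\simeq f$. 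It remains to correct $g_0$ so that $p_n\circ g_0$ equals $\alpha'$. Here $\iota_*(p_n\circ g_0)=p_{mn}\circ f=\iota_*\alpha'$, so $\alpha'-p_n\circ g_0$ lies in $\ker\iota_*$, which by exactness of the coefficient sequence is the image of the connecting map $\partial\colon H^1(X;\mathbb{Z}/m)\to H^2(X;\mathbb{Z}/n)$.

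Finally I would use that $\mathbf{B}\varphi$ is a principal $K(\mathbb{Z}/m,1)$-fibration (it is the pullback of the path fibration over $K(\mathbb{Z}/m,2)$ along $\eta$), so its lifts of $f$ form a torsor under $[X,K(\mathbb{Z}/m,1)]=H^1(X;\mathbb{Z}/m)$; translating $g_0$ by $\xi$ changes $p_n\circ g_0$ by $(p_n\circ j)_*(\xi)$, where $j\colon K(\mathbb{Z}/m,1)\to\mathbf{B}P(n,mn)$ is the fiber inclusion, additivity holding because the target $K(\mathbb{Z}/n,2)$ is an $H$-space. The key identification — and the step I expect to be the main obstacle — is that $p_n\circ j$ is, up to an automorphism of $\mathbb{Z}/m$, the classifying map of $\partial$. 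I would prove this by comparing horizontal homotopy fibers in the map of fibrations coming from (\ref{Postnikov}): the fiber of $g^{(2)}_{m,n}\colon K(\mathbb{Z}/n,2)\to K(\mathbb{Z}/mn,2)$ is $K(\mathbb{Z}/m,1)$ with fiber inclusion exactly the classifier of $\partial$, while $p_n$ induces on fibers a self-map of $K(\mathbb{Z}/m,1)$ that is an isomorphism on $\pi_1$, since $p_{mn}$ is an isomorphism on $\pi_2$ and the two connecting maps $\pi_2\to\pi_1$ agree. Hence $\operatorname{im}(p_n\circ j)_*=\operatorname{im}\partial\ni\alpha'-p_n\circ g_0$, so a suitable $\xi$ produces a corrected lift $g$ with $p_n\circ g\simeq\alpha'$, completing the proof.
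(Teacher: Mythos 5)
Your proof is correct, and its skeleton is the same as the paper's: use Bockstein uniqueness (from $H^2(X;\mathbb{Z})=0$) to identify $p_{mn}\circ f$ with $\iota_*\alpha'$, use Proposition \ref{eta} to see that $\eta\circ f$ is null, and use the fiber sequence (\ref{fiberseq'}) to produce a lift $g_0$ of $f$. Where you genuinely depart from the paper is the final step. The paper's proof ends with ``a simple diagram chasing shows that the lift exists''; what that chase implicitly needs is that $\iota_*\colon H^2(X;\mathbb{Z}/n)\to H^2(X;\mathbb{Z}/mn)$ is injective, so that $\iota_*(p_n\circ g_0)=\iota_*(\alpha')$ already forces $p_n\circ g_0=\alpha'$. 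That injectivity is in fact free under the hypothesis: $H^2(X;\mathbb{Z})=0$ makes $H^1(X;\mathbb{Z})\to H^1(X;\mathbb{Z}/m)$ surjective, this map factors through $H^1(X;\mathbb{Z}/mn)$, so the connecting map $\partial$ is zero and $\ker\iota_*=\operatorname{im}\partial=0$. You instead allow $\ker\iota_*\neq 0$ and repair $g_0$ by the principal $K(\mathbb{Z}/m,1)$-action on lifts, identifying $p_n\circ j$ with the classifier of $\partial$ via the fiber comparison in (\ref{Postnikov}); this is valid and strictly more robust (it would work even where $\partial\neq 0$), at the cost of extra machinery that the hypothesis renders unnecessary. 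One small looseness to patch: the additivity $p_n\circ(g_0\cdot\xi)=p_n\circ g_0+(p_n\circ j)_*\xi$ is not a formal consequence of $K(\mathbb{Z}/n,2)$ being an $H$-space; it holds because the cross term lies in $\tilde H^2\bigl(K(\mathbb{Z}/m,1)\wedge\mathbf{B}P(n,mn);\mathbb{Z}/n\bigr)$, which vanishes since $\mathbf{B}P(n,mn)$ is simply connected, making the smash product $2$-connected.
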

\begin{proof}
The ``if'' part follows easily by post-composing a lift $X\rightarrow\mathbf{B}P(n,mn)$ with $\mathbf{B}\varphi$ (See (\ref{fiberseq})). To prove the ``only if'' part, suppose that $\alpha$ is classified by a map $f:X\rightarrow\mathbf{B}PU_{mn}$. Since $H^2(X;\mathbb{Z})=0$, there is a unique $\alpha'\in H^2(X;\mathbb{Z}/n)$ such that $B(\alpha')=\alpha$, where $B$ is the Bockstein homomorphism. Moreover, let $\alpha''$ be the image of $\alpha'$ under the canonical map $K(\mathbb{Z}/n,2)\rightarrow K(\mathbb{Z}/mn,2)$, then $\alpha''$ is the unique class in $H^2(X;\mathbb{Z}/mn)$ such that $B(\alpha'')=\alpha$. The uniqueness of $\alpha''$ indicates that the map $f$ above fits in the following commutative diagram
\begin{equation}\label{classify alpha}
\begin{tikzcd}
&\mathbf{B}P(n,mn)\arrow{r}\arrow{d}&\mathbf{B}PU_{mn}\arrow{r}{\eta}\arrow{d}&K(\mathbb{Z}/m,2)\arrow{d}{=}\\
X\arrow[ur,dashed]\arrow{urr}[swap,pos=0.7]{f}
\arrow{r}[swap]{\alpha'}&K(\mathbb{Z}/n,2)\arrow{r}&K(\mathbb{Z}/mn,2)\arrow{r}&K(\mathbb{Z}/m,2)
\end{tikzcd}
\end{equation}
where the square in the middle is the one in Proposition \ref{Postnikov}, and both the top and bottom rows of the $3$ by $2$ rectangular diagram are fiber sequences. The bottom row being a contractible map, a simple diagram chasing shows that the lift indicated by the dashed arrow exists.
\end{proof}

We denote integral cohomological Serre spectral sequence associated to (\ref{fiberseq}) by $(E_{*}^{*,*}, d_{*}^{*,*})$, of which the $E_2$ page is
\begin{equation}\label{E_2}
E_{2}^{s,t}\cong H^{s}(\mathbf{B}PU_{mn}; H^{t}(\mathbf{B}\mathbb{Z}/m))\cong
\begin{cases}
H^{s}(\mathbf{B}PU_{mn}; \mathbb{Z}), \quad\textrm{if }t=0;\\
H^{s}(\mathbf{B}PU_{mn}; \mathbb{Z}/m),\quad \textrm{if $t>0$ is even;}\\
0, \quad\textrm{if $t$ is odd.}
\end{cases}
\end{equation}

This follows from the fact that, as a ring,
\begin{equation}\label{H(K(Z/m,1))}
H^{*}(\mathbf{B}\mathbb{Z}/m)\cong\mathbb{Z}[v]/(mv),
\end{equation}
 where $v$ is of degree $2$. As for the cohomology of $\mathbf{B}PU_{mn}$, we have the following
\begin{theorem}[\cite{Gu}, Theorem 1.1]\label{Cohomology of BPU}
 For an integer $n>1$,  $H^{*}(\mathbf{B}PU_n;\mathbb{Z})$ in degrees $\leq 10$ is isomorphic to the following graded ring:
 $$\mathbb{Z}[e_2, \cdots, e_{j_n}, x_1, y_{3,0}, y_{2,1}]/I_n.$$
 Here $e_i$ is of degree  $2i$, $j_n=\textrm{min}\{5, n\}$. The degrees of $x_1, y_{3,0}, y_{2,1}$ are $3, 8, 10$, respectively. The ideal $I_n$ is generated by
 \[nx_1,\quad \epsilon_2(n)x_1^2,\quad \epsilon_3(n)y_{3,0},\quad \epsilon_2(n)y_{2,1},\]
 \[\delta(n)e_{2}x_1,\quad (\delta(n)-1)(y_{2,1}-e_{2}x_1^2),\quad e_{3}x_1,\]
 where $\epsilon_p(n)=\operatorname{gcd}(p,n)$, and
 \begin{equation*}
 \delta(n)=
 \begin{cases}
  2, \textrm{ if }n=4l+2\textrm{ for some integer }l,\\
  1, \textrm{ otherwise}.
 \end{cases}
 \end{equation*}
\end{theorem}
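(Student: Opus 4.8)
The plan is to extract $H^*(\mathbf{B}PU_n;\mathbb{Z})$ in degrees $\le 10$ from the integral Serre spectral sequence of the fibration
\[
\mathbf{B}U_n \longrightarrow \mathbf{B}PU_n \xrightarrow{\ \eta\ } K(\mathbb{Z},3),
\]
the tail of the fibre sequence attached to the central extension (\ref{ses1}). The fibre carries $H^*(\mathbf{B}U_n;\mathbb{Z})=\mathbb{Z}[c_1,\dots,c_n]$ with the Chern classes $c_i$ in degree $2i$, while the base has integral cohomology in the relevant range given by (\ref{K(Z,n)}) with $n=3$, namely $\mathbb{Z}[\iota_3,\Gamma_3]/(2\Gamma_3)$, where $\deg\iota_3=3$ and $\Gamma_3$ is a class of order $2$ in degree $6$. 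Since the fibre cohomology is free, $E_2^{s,t}\cong H^s(K(\mathbb{Z},3);\mathbb{Z})\otimes H^t(\mathbf{B}U_n;\mathbb{Z})$ throughout the range, and the task is to locate the generators $x_1,e_2,\dots,e_{j_n},y_{3,0},y_{2,1}$ among the surviving classes and to read off the ideal $I_n$ of relations.

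The computational heart is the first nontrivial differential $d_3$, which I would determine by the splitting principle: pulling back along $\mathbf{B}U_1^{\times n}\to\mathbf{B}U_n$ puts the central $S^1$ diagonally, so each line-bundle generator $t_i\in H^2$ transgresses to the fundamental class, $d_3(t_i)=\iota_3$. Writing $c_k=e_k(t_1,\dots,t_n)$ and using that $d_3$ is a derivation gives
\[
d_3(c_k)=(n-k+1)\,c_{k-1}\,\iota_3, \qquad 1\le k\le n.
\]
From $d_3(c_1)=n\iota_3$ the class $\iota_3$ survives to an order-$n$ class $x_1=\eta^*(\iota_3)\in H^3$, which explains $nx_1=0$ and the vanishing of $H^2(\mathbf{B}PU_n;\mathbb{Z})$; moreover $x_1$ is of odd degree, so $2x_1^2=0$ automatically, and when $n$ is odd $x_1^2$ is simultaneously $n$-torsion and $2$-torsion, forcing $\epsilon_2(n)x_1^2=0$. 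Since $(n-k+1)\ge1$, no single $c_k$ survives; instead the even generators $e_2,\dots,e_{j_n}$ (with $j_n=\min\{5,n\}$, as $c_k$ first appears in degree $2k$) arise as the integral combinations of Chern monomials lying in $\ker d_3$, and the arithmetic of the coefficients $n-k+1$ modulo $2$ and $3$ is what feeds the relations $e_3x_1=0$ and $\delta(n)e_2x_1=0$.

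With $E_4$ in hand I would pass to the higher transgressions $d_5$ and $d_7$, responsible for the two remaining generators $y_{3,0}$ in degree $8$ and $y_{2,1}$ in degree $10$; these appear as permanent cycles assembled from $\iota_3$, $\Gamma_3$, and the Chern classes. To fix their orders I would run the mod-$2$ and mod-$3$ Serre spectral sequences in parallel: the order-$2$ class $\Gamma_3$, together with the identity $\Gamma_3=B\circ\overline{\operatorname{Sq}^2}$ of Lemma \ref{Steenrod}, controls the $2$-primary contribution and yields $\epsilon_2(n)y_{2,1}=0$, while the $3$-primary analysis gives $\epsilon_3(n)y_{3,0}=0$, so that $y_{3,0}$ is nonzero precisely when $3\mid n$ and $y_{2,1}$ precisely when $n$ is even. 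One then resolves the extension problems relating $E_\infty$ to the associated graded of $H^*(\mathbf{B}PU_n;\mathbb{Z})$ and verifies that $y_{3,0}$ and $y_{2,1}$ are genuinely indecomposable rather than expressible in the $e_i$ and $x_1$.

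The main obstacle is the $2$-primary bookkeeping encoded in $\delta(n)$, which separates $n\equiv2\pmod4$ from all other cases. This subtlety is invisible rationally and arises only from the interplay of the order-$2$ class $\Gamma_3$, the Bockstein, and the parity of the coefficients $n-k+1$; it surfaces exactly in the relations $\delta(n)e_2x_1=0$ and $(\delta(n)-1)(y_{2,1}-e_2x_1^2)=0$, the latter declaring $y_{2,1}$ to be the decomposable $e_2x_1^2$ when $n\equiv2\pmod4$ and an independent generator otherwise. Disentangling this requires a careful secondary analysis of the $2$-local differentials and extension problems, and is where the argument genuinely departs from the torsion-free computation for $\mathbf{B}U_n$; the parallel $3$-primary relation governing $y_{3,0}$ is analogous but easier, being controlled solely by whether $3\mid n$.
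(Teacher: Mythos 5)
The paper itself contains no proof of this statement: it is imported verbatim from the author's earlier work \cite{Gu} (Theorem 1.1 there), and the only part of that argument visible in the present paper is in Section 5, where the integral Serre spectral sequence of $\mathbf{B}U_r\to\mathbf{B}PU_r\to K(\mathbb{Z},3)$ and the differential formula $^{U}d_3(c_k)=(r-k+1)c_{k-1}x_1$ (quoted as Corollary 5.3 of \cite{Gu}) are recalled. Your overall strategy is the same as that of \cite{Gu}: same fibration, and your splitting-principle derivation of $d_3(c_k)=(n-k+1)c_{k-1}\iota_3$ reproduces exactly the quoted formula. Your degree $\le 6$ conclusions ($nx_1=0$, $\epsilon_2(n)x_1^2=0$, the generators $e_k$ found inside $\ker d_3$) are sound.

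There is, however, a genuine gap in the range $7\le *\le 10$, which is precisely where $y_{3,0}$ and $y_{2,1}$ live. You feed in the base cohomology as $\mathbb{Z}[\iota_3,\Gamma_3]/(2\Gamma_3)$ ``in the relevant range'', but the presentation (\ref{K(Z,n)}) is valid only in degrees $\le n+3=6$; for the theorem you need $H^*(K(\mathbb{Z},3);\mathbb{Z})$ up to degree $10$, where new \emph{ring generators} appear: $H^8\cong\mathbb{Z}/3$ and $H^{10}\cong\mathbb{Z}/2$ (the integral Bocksteins of $P^1\bar{\iota}_3$ and of $\operatorname{Sq}^4\operatorname{Sq}^2\bar{\iota}_3$). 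These base classes, pulled back along $\mathbf{B}PU_n\to K(\mathbb{Z},3)$, are what $y_{3,0}$ and $y_{2,1}$ denote. Your plan to obtain them as ``permanent cycles assembled from $\iota_3$, $\Gamma_3$, and the Chern classes'' via the transgressions $d_5$, $d_7$ cannot work: no product of $\iota_3$ (degree $3$) and $\Gamma_3$ (degree $6$) lies in degree $8$ or $10$, and differentials of fiber classes map \emph{into} the base edge $E^{*,0}$ (whose elements are automatically permanent cycles) rather than creating new classes on it. The actual content of the proof at this stage — invisible to your setup — is to decide when $y_{3,0}\in E^{8,0}$ and $y_{2,1}\in E^{10,0}$ are annihilated by the only incoming differentials, $d_5^{3,4}$ and $d_7^{3,6}$; they are hit exactly when $3\nmid n$, resp.\ $2\nmid n$, which is the meaning of the relations $\epsilon_3(n)y_{3,0}=0$ and $\epsilon_2(n)y_{2,1}=0$, and this requires Steenrod-operation control of higher differentials (Kudo-type transgression theorems in the spirit of Theorem \ref{Steenrod in E}) applied to the mod $2$ and mod $3$ spectral sequences. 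The same analysis is where $\delta(n)$ and the relation $(\delta(n)-1)(y_{2,1}-e_2x_1^2)=0$ originate, so your concluding paragraph correctly anticipates the difficulty but the sketch as written does not have the inputs to resolve it.
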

The degreewise cohomology groups with coefficients in an arbitrary ring follow immediately from the theorem above, together with the K{\"u}nneth theorem. We will simply refer to Theorem \ref{Cohomology of BPU} for them.

Consider the quotient map $SU_{mn}\rightarrow P(n,mn)$, which is a simply connected cover with Deck transformation group $\mathbb{Z}/n$. Therefore, we have
\begin{equation}\label{H^3}
\begin{cases}
H^{1}(\mathbf{B}P(n,mn;\mathbb{Z}))\cong H^{2}(\mathbf{B}P(n,mn);\mathbb{Z})=0,\\
H^{3}(\mathbf{B}P(n,mn);\mathbb{Z})\cong H_{2}(\mathbf{B}P(n,mn);\mathbb{Z})\cong\pi_{2}(\mathbf{B}P(n,mn))\cong
\pi_{1}(P(n,mn))\cong\mathbb{Z}/n,
\end{cases}
\end{equation}
which leads to the following

\begin{lemma}\label{d_3}
In the Serre spectral sequence $(E_{*}^{*,*}, d_{*}^{*,*})$ associated to the fiber sequence
$$\mathbf{B}\mathbb{Z}/{m}\rightarrow\mathbf{B}P(n,mn)\xrightarrow{\mathbf{B}\varphi} \mathbf{B}PU_{mn},$$
the differential $d_{3}^{0,2}$ is a monomorphism. By choosing the generator of $E_{2}^{0,2}\cong\mathbb{Z}/m$ correctly, $d_{3}^{0,2}$ can be taken as the canonical inclusion $\mathbb{Z}/m\hookrightarrow\mathbb{Z}/mn$. In particular, $H^{3}(\mathbf{B}P(n,mn);\mathbb{Z})$ is generated by $x_1'$, the image of $x_1$ under the homomorphism $H^{3}(\mathbf{B}PU_{mn};\mathbb{Z})\rightarrow H^{3}(\mathbf{B}P(n,mn);\mathbb{Z})$ induced by the quotient map.
\end{lemma}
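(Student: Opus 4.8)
The plan is to extract the differential $d_3^{0,2}$ purely from the shape of the $E_2$-page displayed in (\ref{E_2}) together with the two vanishing facts recorded in (\ref{H^3}), namely $H^2(\mathbf{B}P(n,mn);\mathbb{Z})=0$ and $H^3(\mathbf{B}P(n,mn);\mathbb{Z})\cong\mathbb{Z}/n$. The whole argument is a position-and-convergence computation in a first-quadrant spectral sequence; no delicate input beyond (\ref{H^3}) and Theorem \ref{Cohomology of BPU} is needed.

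First I would record the entries in total degrees $2$ and $3$. From (\ref{E_2}) the fibre contributes $E_2^{0,2}\cong H^2(\mathbf{B}\mathbb{Z}/m;\mathbb{Z})\cong\mathbb{Z}/m$, generated by $v$, while by Theorem \ref{Cohomology of BPU} the base contributes $E_2^{3,0}=H^3(\mathbf{B}PU_{mn};\mathbb{Z})\cong\mathbb{Z}/mn$, generated by $x_1$. Since $\mathbf{B}PU_{mn}$ is simply connected with $H^2(\mathbf{B}PU_{mn};\mathbb{Z})=0$, and since $H^*(\mathbf{B}\mathbb{Z}/m;\mathbb{Z})$ is concentrated in even degrees, all of the neighbouring entries $E_2^{2,0},E_2^{1,1},E_2^{2,1},E_2^{1,2},E_2^{0,3}$ vanish by (\ref{E_2}). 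In particular $E_\infty^{0,2}$ is the only surviving term of total degree $2$, and $E_\infty^{3,0}$ the only one of total degree $3$.

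Next I would pin down which differentials can touch the $(0,2)$ and $(3,0)$ spots. Because the odd rows vanish, $d_2^{0,2}$ lands in $E_2^{2,1}=0$, so $E_3^{0,2}=E_2^{0,2}$ and $E_3^{3,0}=E_2^{3,0}$. For position reasons the only possibly nonzero differential leaving $E_r^{0,2}$ is $d_3^{0,2}\colon E_3^{0,2}\to E_3^{3,0}$ (all later targets sit in negative rows), and no nonzero differential enters $E_r^{0,2}$ (their sources sit in negative columns); symmetrically, no nonzero differential leaves $E_r^{3,0}$. Hence $E_\infty^{0,2}=\ker d_3^{0,2}$ and $E_\infty^{3,0}=\operatorname{coker}d_3^{0,2}$. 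Convergence now does the rest: since $E_\infty^{0,2}$ is the only total-degree-$2$ term, $H^2(\mathbf{B}P(n,mn);\mathbb{Z})=0$ forces $\ker d_3^{0,2}=0$, which is the asserted monomorphism. An injection $\mathbb{Z}/m\hookrightarrow\mathbb{Z}/mn$ has image the unique subgroup of order $m$, namely $\langle n x_1\rangle$, so $d_3^{0,2}(v)=n u\,x_1$ for some unit $u\bmod m$; replacing the generator $v$ by $u^{-1}v$ I may take $d_3^{0,2}(v)=n x_1$, i.e.\ the canonical inclusion $\mathbb{Z}/m\hookrightarrow\mathbb{Z}/mn$.

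Finally, the generation claim follows by computing $E_\infty^{3,0}\cong(\mathbb{Z}/mn)/\langle n x_1\rangle\cong\mathbb{Z}/n$, generated by the class of $x_1$; as this is the only surviving term in total degree $3$, the edge homomorphism $E_2^{3,0}\twoheadrightarrow E_\infty^{3,0}\hookrightarrow H^3(\mathbf{B}P(n,mn);\mathbb{Z})$, which is exactly $(\mathbf{B}\varphi)^*$, exhibits $H^3(\mathbf{B}P(n,mn);\mathbb{Z})$ as generated by $x_1'=(\mathbf{B}\varphi)^*(x_1)$. I do not expect a genuine obstacle; the only point demanding care is the verification that every neighbouring $E_2$-entry vanishes, so that $E_\infty^{0,2}$ and $E_\infty^{3,0}$ really are the sole contributions in their total degrees. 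This is precisely where the simple connectivity of $\mathbf{B}PU_{mn}$ and the even concentration of $H^*(\mathbf{B}\mathbb{Z}/m;\mathbb{Z})$ enter, and it is what allows the differential to be read off directly from the two input facts in (\ref{H^3}).
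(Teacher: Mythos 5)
Your proof is correct and follows exactly the route the paper intends: Lemma \ref{d_3} is stated as an immediate consequence of (\ref{H^3}), and your position-and-convergence argument (vanishing of neighbouring $E_2$-entries, $H^2(\mathbf{B}P(n,mn);\mathbb{Z})=0$ forcing injectivity of $d_3^{0,2}$, rescaling the generator $v$, and identifying $E_\infty^{3,0}$ with $H^3$ via the base edge homomorphism) is precisely the implicit derivation. No gaps; the paper simply leaves these standard details unwritten.
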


Lemma \ref{d_3} has the following consequence:
\begin{proposition}\label{H^5(BP)}
Let $m,n$ be positive integers. Then $\epsilon_{2}(n)n|m$ if and only if
\begin{equation*}
H^{5}(\mathbf{B}P(n,mn);\mathbb{Z})\cong\mathbb{Z}/\epsilon_{2}(n)n.
\end{equation*}
\end{proposition}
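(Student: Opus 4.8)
The plan is to read off $H^{5}(\mathbf{B}P(n,mn);\mathbb{Z})$ from the integral Serre spectral sequence $(E_{*}^{*,*},d_{*}^{*,*})$ of the fiber sequence (\ref{fiberseq}), whose $E_{2}$-page is recorded in (\ref{E_2}). In total degree $5$ the only potentially nonzero entries are $E_{2}^{5,0}$, $E_{2}^{3,2}$ and $E_{2}^{1,4}$. Using Theorem \ref{Cohomology of BPU} one sees $E_{2}^{5,0}=H^{5}(\mathbf{B}PU_{mn};\mathbb{Z})=0$, since no product of the generators $x_{1},e_{2},\dots$ lands in degree $5$; and since $H^{1}(\mathbf{B}PU_{mn};\mathbb{Z})=H^{2}(\mathbf{B}PU_{mn};\mathbb{Z})=0$, the universal coefficient theorem gives $E_{2}^{1,4}=H^{1}(\mathbf{B}PU_{mn};\mathbb{Z}/m)=0$. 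Hence $H^{5}(\mathbf{B}P(n,mn);\mathbb{Z})\cong E_{\infty}^{3,2}$, and the whole statement reduces to computing this single subquotient.

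Since $E_{2}^{*,\mathrm{odd}}=0$, the differential $d_{2}$ vanishes, so $E_{3}=E_{2}$ and $E_{2}^{3,2}=H^{3}(\mathbf{B}PU_{mn};\mathbb{Z}/m)\cong\mathbb{Z}/m$ is generated by $x_{1}v$, where $v$ is the fibre generator of $H^{2}(\mathbf{B}\mathbb{Z}/m)$. For degree reasons the only differentials touching the $(3,2)$-slot are $d_{3}^{0,4}$ into it and $d_{3}^{3,2}$ out of it, so $E_{\infty}^{3,2}=E_{4}^{3,2}=\operatorname{Ker}d_{3}^{3,2}/\operatorname{Im}d_{3}^{0,4}$. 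I would compute both differentials from the transgression $d_{3}^{0,2}(v)=nx_{1}$ of Lemma \ref{d_3} together with the Leibniz rule, exactly as in (\ref{^H d_3^3,2}) and (\ref{^H d_3^0,4}): this gives $d_{3}^{3,2}(x_{1}v)=nx_{1}^{2}$ and $d_{3}^{0,4}(v^{2})=2n\,x_{1}v$. Here $x_{1}^{2}\in H^{6}(\mathbf{B}PU_{mn};\mathbb{Z})$ has order $\epsilon_{2}(mn)$ by Theorem \ref{Cohomology of BPU}, and this is precisely the input that makes the parity of $n$ enter.

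This reduces the statement to an order count inside the cyclic group $\mathbb{Z}/m$. Writing $c$ for the order of $nx_{1}^{2}$, one has $\lvert\operatorname{Ker}d_{3}^{3,2}\rvert=m/c$ and $\lvert\operatorname{Im}d_{3}^{0,4}\rvert=m/\gcd(m,2n)$; since consecutive $d_{3}$'s compose to zero the image lies in the kernel, so $E_{\infty}^{3,2}$ is cyclic of order $\gcd(m,2n)/c$. Evaluating $c$ (namely $c=2/\epsilon_{2}(n)$ when $mn$ is even and $c=1$ when $mn$ is odd) and simplifying the gcd separately for $n$ odd and $n$ even, I expect the four cases to collapse to the single formula $\lvert H^{5}\rvert=\gcd(m,\epsilon_{2}(n)n)$. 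Since $\gcd(m,\epsilon_{2}(n)n)=\epsilon_{2}(n)n$ exactly when $\epsilon_{2}(n)n\mid m$, and since a cyclic group is isomorphic to $\mathbb{Z}/\epsilon_{2}(n)n$ iff it has that order, both directions of the equivalence follow at once.

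The main obstacle I anticipate is not any single step but the bookkeeping that yields the clean divisibility criterion: one must carry along the order $\epsilon_{2}(mn)$ of $x_{1}^{2}$, the factor $2n$ coming from $d_{3}^{0,4}$, and the various gcd's through the independent parities of $m$ and $n$, and then verify that the resulting cases really do uniformize to $\lvert H^{5}\rvert=\gcd(m,\epsilon_{2}(n)n)$. The ``only if'' direction in particular forces the full computation without the hypothesis $\epsilon_{2}(n)n\mid m$, rather than just the assumed special case treated in the analogous Lemma \ref{R_n}.
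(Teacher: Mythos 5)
Your proposal is correct and follows essentially the same route as the paper: the paper likewise reduces to $E_{\infty}^{3,2}$ in the Serre spectral sequence of (\ref{fiberseq}) (noting $E_2^{5,0}=0$) and then invokes the kernel-modulo-image computation of $d_3^{3,2}$ and $d_3^{0,4}$ from Lemma \ref{R_n}, with $\beta_{mn}$ replaced by $x_1$. Your only addition is to carry out that computation without assuming $\epsilon_2(n)n\mid m$, arriving at the uniform formula $\lvert H^5\rvert=\gcd(m,\epsilon_2(n)n)$ — which is exactly the generalization the paper's terse citation of Lemma \ref{R_n} leaves implicit, and which is needed for the ``only if'' direction.
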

\begin{proof}
See Figure \ref{spec seq E} for the spectral sequence discussed here. Notice $E_2^{5,0}=0$ from Theorem \ref{Cohomology of BPU}. Then for obvious degree reasons the only nontrivial entry of the $E_2$-page of total degree $5$ is $E_{2}^{3,2}\cong\mathbb{Z}/m$, from which it follows that
\begin{equation*}
H^5(\mathbf{B}P(n,mn);\mathbb{Z})\cong E_{\infty}^{3,2}.
\end{equation*}
The proposition then follows from the same computation as in the proof of the first statement of Lemma \ref{R_n}, only with $\beta_{mn}$ replaced by $x_1$.
\end{proof}
\begin{remark}
Indeed, the generator of $H^2(\mathbf{B}P(n,mn);\mathbb{Z}/2)$
\begin{equation*}
\mathbf{B}P(n,mn)\rightarrow K(\mathbb{Z}/n,2)
\end{equation*}
induces a homomorphism taking the generator $Q_n$ of $H^5(K(\mathbb{Z}/n,2);\mathbb{Z})$ to the generator of $H^5(\mathbf{B}P(n,mn);\mathbb{Z})$ when $\epsilon_2(n)n|m$.
\end{remark}
\begin{figure}
\begin{tikzpicture}
\draw[step=1cm,gray, thick] (0,0) grid (7,7);
\draw [ thick, <->] (0,7)--(0,0)--(7,0);

\node [below] at (2,0) {$2$};
\node [below] at (3,0) {$3$};
\node [below] at (4,0) {$4$};
\node [below] at (5,0) {$5$};
\node [below] at (6,0) {$6$};
\node [left] at (0,2) {$2$};
\node [left] at (0,4) {$4$};
\node [left] at (0,6) {$6$};
\filldraw [gray] (2,2) circle (1pt);
\filldraw [gray] (2,4) circle (1pt);
\filldraw [gray] (5,2) circle (1pt);

\node [above right] at (0,2) {$\mathbb{Z}/m$};
\node [above right] at (0,4) {$\mathbb{Z}/m$};
\node [above right] at (3,0) {$\mathbb{Z}/mn$};
\node [above right] at (4,0) {$\mathbb{Z}$};
\node [below right] at (2,2) {$\mathbb{Z}/m$};
\node [below right] at (2,4) {$\mathbb{Z}/m$};
\node [above right] at (5,0) {$0$};
\node [above right] at (6,0) {$\mathbb{Z}\oplus\mathbb{Z}/2$};
\node [above right] at (5,2) {$\mathbb{Z}/2$};
\draw (0,2) [thick, ->] to node [above] {$\times n$} (3,0);
\draw (0,4) [thick, ->] to node [above] {$\times 2n$}(3,2);
\draw (0,6) [thick, ->] to node [above] {$\times 3n$}(3,4);
\draw (3,2) [thick, ->] to node [above] {$\times n$} (6,0);
\end{tikzpicture}\\
\caption{The $E_3$-page of the spectral sequence $E_{*}^{*,*}$.}\label{spec seq E}
\end{figure}
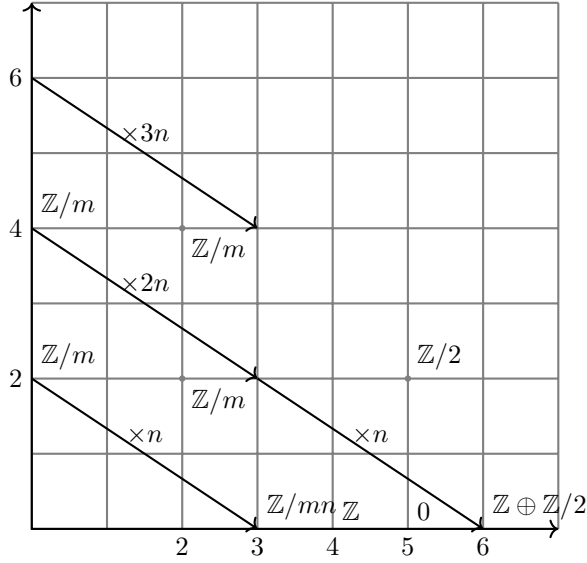

In the proof of Proposition \ref{H^5(BP)}, we observe that when $n$ is odd and $m$ is even, the differentials $d_3^{0,2}$ and $d_3^{3,2}$ annihilate the $2$-torsion elements in $E_{3}^{*,0}\cong H^{*}(\mathbf{B}P(n,mn);\mathbb{Z})$. This is a special case of a more general argument. By the definition of $P(n,mn)$, we have the following fiber sequence
$$\mathbf{B}SU_{mn}\rightarrow\mathbf{B}P(n,mn)\rightarrow K(\mathbb{Z}/n,2).$$
We consider the associated cohomological Serre spectral sequence with integral coefficients, of which the $E_2$-page has no $p$-torsion for any prime $p$ not dividing $n$, from which we deduce
\begin{lemma}\label{p-torsion}
For a prime $p$, $H^*(\mathbf{B}P(n,mn);\mathbb{Z})$ has no nontrivial $p$-torsion if $p\nmid n$.
\end{lemma}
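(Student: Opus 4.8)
The plan is to run the integral cohomological Serre spectral sequence of the fibration
$$\mathbf{B}SU_{mn}\rightarrow\mathbf{B}P(n,mn)\rightarrow K(\mathbb{Z}/n,2)$$
and to show that, away from the primes dividing $n$, it degenerates onto the fiber column. The two inputs I would exploit are that the fiber cohomology $H^*(\mathbf{B}SU_{mn};\mathbb{Z})\cong\mathbb{Z}[c_2,\dots,c_{mn}]$ is torsion-free, while the reduced integral homology of the base $K(\mathbb{Z}/n,2)$ is finite in each positive degree and $n$-primary, hence supported entirely at primes dividing $n$. Since the base is simply connected the local coefficient system is trivial, so the $E_2$-page reads $E_2^{s,t}=H^s(K(\mathbb{Z}/n,2);H^t(\mathbf{B}SU_{mn};\mathbb{Z}))$.

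First I would fix a prime $p\nmid n$ and examine the $E_2$-page one column at a time. The column $s=0$ is $E_2^{0,t}=H^t(\mathbf{B}SU_{mn};\mathbb{Z})$, which is torsion-free, hence free of $p$-torsion. For $s>0$ I would invoke the universal coefficient theorem: because $H^t(\mathbf{B}SU_{mn};\mathbb{Z})$ is free abelian,
$$E_2^{s,t}\cong \operatorname{Hom}(H_s,H^t)\oplus\operatorname{Ext}^1(H_{s-1},H^t),$$
where $H_j=H_j(K(\mathbb{Z}/n,2);\mathbb{Z})$ and $H^t=H^t(\mathbf{B}SU_{mn};\mathbb{Z})$. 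For $s>0$ the Hom-term vanishes (finite source, torsion-free target) and the Ext-term is annihilated by a power of $n$, just as $H_{s-1}$ is; in either case $E_2^{s,t}$ has no $p$-torsion.

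Next I would pass to $E_\infty$. Because the spectral sequence lives in the first quadrant, the column $s=0$ receives no incoming differentials, so $E_\infty^{0,t}$ is a subgroup of the torsion-free group $E_2^{0,t}$ and is therefore itself torsion-free; every other $E_\infty^{s,t}$ is a subquotient of an $n$-primary group and so has no $p$-torsion. Finally, $H^k(\mathbf{B}P(n,mn);\mathbb{Z})$ carries a finite filtration whose associated graded pieces are the $E_\infty^{s,k-s}$. Using that the class of abelian groups with no $p$-torsion is closed under extensions --- if $A$ and $C$ have no $p$-torsion then neither does an extension $B$, since a $p$-torsion element of $B$ maps to $0$ in $C$ and hence lies in $A$ --- a short induction on the filtration length yields that $H^k(\mathbf{B}P(n,mn);\mathbb{Z})$ has no $p$-torsion, as claimed.

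The spectral-sequence input is routine; the only point that needs genuine care is the bookkeeping at the abutment, namely confirming that ``no $p$-torsion'' really propagates from the $E_\infty$ subquotients up to the group they assemble, which is precisely the extension-closure remark above. I would also note in passing that the whole argument can be packaged more cleanly by localizing at $p$: since $(\mathbb{Z}/n)\otimes\mathbb{Z}_{(p)}=0$ for $p\nmid n$, the base $K(\mathbb{Z}/n,2)$ is $p$-locally contractible, the fibration therefore gives a $p$-local equivalence $\mathbf{B}P(n,mn)_{(p)}\simeq \mathbf{B}SU_{mn,(p)}$, and the torsion-freeness of $H^*(\mathbf{B}SU_{mn};\mathbb{Z})$ finishes the proof at once.
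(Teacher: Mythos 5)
Your proof is correct and takes essentially the same route as the paper: the paper deduces the lemma from precisely this Serre spectral sequence for $\mathbf{B}SU_{mn}\rightarrow\mathbf{B}P(n,mn)\rightarrow K(\mathbb{Z}/n,2)$, observing that its $E_2$-page has no $p$-torsion when $p\nmid n$. You merely supply the details the paper leaves implicit (the universal coefficient computation of the $E_2$-page and the extension-closure bookkeeping at the abutment), and your $p$-localization remark is a clean alternative packaging of the same idea.
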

Lemma \ref{p-torsion} has the following immediate
\begin{corollary}\label{p-tor in E}
Let $p$ be a prime such that $p|m$ and $p\nmid n$. Then all $p$-torsion element of $E_{2}^{*,0}$ vanish in the $E_{\infty}$-page.
\end{corollary}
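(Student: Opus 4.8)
The plan is to exploit the fact that the bottom row $E_\infty^{*,0}$ of the spectral sequence $(E_*^{*,*},d_*^{*,*})$ is simultaneously a quotient of $E_2^{*,0}\cong H^*(\mathbf{B}PU_{mn};\mathbb{Z})$ and (up to isomorphism) a subgroup of $H^*(\mathbf{B}P(n,mn);\mathbb{Z})$, and then to push the torsion hypothesis through this bridge using Lemma \ref{p-torsion}. First I would record the structure of the bottom row. The differentials $d_r^{s,0}$ leaving $E_r^{s,0}$ land in $E_r^{s+r,-r+1}$, which vanishes for all $r\geq 2$ for degree reasons; hence there are no differentials out of the row $t=0$, and each $E_\infty^{s,0}$ is a quotient of $E_2^{s,0}\cong H^s(\mathbf{B}PU_{mn};\mathbb{Z})$, obtained by killing the images of the incoming differentials. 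On the other hand, with respect to the decreasing filtration on $H^s(\mathbf{B}P(n,mn);\mathbb{Z})$ one has $E_\infty^{s,0}=F^sH^s(\mathbf{B}P(n,mn);\mathbb{Z})$, the bottom filtration step, so $E_\infty^{s,0}$ is a subgroup of $H^s(\mathbf{B}P(n,mn);\mathbb{Z})$; in fact the composite $E_2^{s,0}\twoheadrightarrow E_\infty^{s,0}\hookrightarrow H^s(\mathbf{B}P(n,mn);\mathbb{Z})$ is exactly the edge homomorphism $(\mathbf{B}\varphi)^*$.

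With this in hand I would transfer the torsion hypothesis. Since $p\nmid n$, Lemma \ref{p-torsion} guarantees that $H^s(\mathbf{B}P(n,mn);\mathbb{Z})$ has no nonzero element of $p$-power order; the same therefore holds for its subgroup $E_\infty^{s,0}$. Now let $x\in E_2^{s,0}$ be a $p$-torsion class, say of order $p^k$, and let $\bar{x}$ denote its image under the surjection $E_2^{s,0}\twoheadrightarrow E_\infty^{s,0}$. Then $p^k\bar{x}=0$, so $\bar{x}$ is a $p$-torsion element of the $p$-torsion-free group $E_\infty^{s,0}$, forcing $\bar{x}=0$. Equivalently, $(\mathbf{B}\varphi)^*(x)$ is a $p$-torsion class in a group without $p$-torsion, hence zero, so $x$ lies in the kernel of the edge map. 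Either way this says precisely that $x$ does not survive to the $E_\infty$-page.

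The main point requiring care is simply the direction of the filtration: one must use that $E_\infty^{s,0}$ is the \emph{sub}-object $F^sH^s$ of the total cohomology (rather than a still further quotient), since it is the absence of $p$-torsion \emph{downstairs} in $H^*(\mathbf{B}P(n,mn);\mathbb{Z})$ that we are propagating. Beyond this bookkeeping there is no genuine obstacle: the substantive content already resides in Lemma \ref{p-torsion}, and the corollary is the formal observation that a $p$-torsion class in the cohomology of the base cannot map isomorphically onto a surviving summand along a row whose associated graded embeds into a $p$-torsion-free total space.
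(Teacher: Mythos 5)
Your proof is correct and is exactly the argument the paper leaves implicit: the paper states this corollary as an ``immediate'' consequence of Lemma \ref{p-torsion}, and your filled-in details --- the row $t=0$ supports no outgoing differentials since $d_r^{s,0}$ lands in $E_r^{s+r,-r+1}=0$ for $r\geq 2$, so $E_\infty^{s,0}$ is a quotient of $E_2^{s,0}$ which at the same time is the bottom filtration step $F^sH^s(\mathbf{B}P(n,mn);\mathbb{Z})$, a subgroup of a group with no $p$-torsion, whence the image of any $p$-torsion class must vanish --- are precisely what is intended. Your emphasis on the direction of the filtration (that $E_\infty^{s,0}$ is a \emph{subgroup} of the total cohomology rather than a further quotient) is the one point of substance, and you have it right.
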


Recall from Theorem \ref{Cohomology of BPU} that the torsion subgroup of $E_2^{8,0}\cong H^{8}(\mathbf{B}PU_r;\mathbb{Z})$ is $\mathbb{Z}/3$ if $3|r$ and $0$ otherwise.
\begin{corollary}\label{d_5}
If $3|m$ and $3\nmid n$, then the differential
$$d_5^{3,4}: E_5^{3,4}\rightarrow E_{5}^{8,0}=E_2^{8,0}$$
is a surjection onto its subgroup $\mathbb{Z}/3$.
\end{corollary}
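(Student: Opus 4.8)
The plan is to derive the surjectivity entirely from the vanishing statement of Corollary \ref{p-tor in E}, combined with a positional analysis of the differentials abutting to the bottom row $E_*^{8,0}$. First I would record the two endpoints. Since $3\mid m$ we have $3\mid mn$, so Theorem \ref{Cohomology of BPU} identifies the torsion subgroup of $E_2^{8,0}\cong H^8(\mathbf{B}PU_{mn};\mathbb{Z})$ with the cyclic group $\mathbb{Z}/3$ generated by $y_{3,0}$. On the other hand, because $3\mid m$ and $3\nmid n$, Corollary \ref{p-tor in E} forces every $3$-torsion element of $E_2^{*,0}$ — in particular this $\mathbb{Z}/3$ — to vanish on the $E_\infty$-page. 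The entire argument then reduces to showing that $d_5^{3,4}$ is the only differential capable of killing $\langle y_{3,0}\rangle$.

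Next I would enumerate the differentials that can alter the entry $(8,0)$. Differentials emanating from $E_*^{8,0}$ land in rows of negative fibre degree and hence vanish, so the bottom row can only be hit by incoming differentials $d_r^{8-r,\,r-1}$. Nonvanishing of the source forces $r-1$ to be even and nonnegative with $8-r\ge 0$, leaving exactly $r=3,5,7$, with sources $E_*^{5,2}$, $E_*^{3,4}$, and $E_*^{1,6}$. I would dispose of the two extreme cases by a coefficient computation. Since $\mathbf{B}PU_{mn}$ is simply connected, $E_2^{1,6}=H^1(\mathbf{B}PU_{mn};\mathbb{Z}/m)=0$. For $r=3$, the universal coefficient theorem together with Theorem \ref{Cohomology of BPU} gives $H^5(\mathbf{B}PU_{mn};\mathbb{Z})=0$ and hence $E_2^{5,2}=H^5(\mathbf{B}PU_{mn};\mathbb{Z}/m)\cong\operatorname{Tor}(H^6(\mathbf{B}PU_{mn};\mathbb{Z}),\mathbb{Z}/m)=\operatorname{Tor}(\mathbb{Z}/\epsilon_2(mn),\mathbb{Z}/m)$, which is annihilated by $2$; this is precisely the entry $\mathbb{Z}/2$ recorded in Figure \ref{spec seq E}. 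Consequently $d_3^{5,2}$ cannot meet the $3$-torsion, so the subgroup $\mathbb{Z}/3$ survives unchanged to $E_5^{8,0}=E_2^{8,0}$, exactly as asserted in the statement.

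With both extreme cases eliminated, the only remaining differential into $(8,0)$ is $d_5^{3,4}$, so it is forced to account for the entire disappearance of the $\mathbb{Z}/3$; this is the required surjectivity onto $\langle y_{3,0}\rangle$. To see that the image is precisely this subgroup and nothing larger, I would observe that the source $E_5^{3,4}$ is a subquotient of $E_2^{3,4}=H^3(\mathbf{B}PU_{mn};\mathbb{Z}/m)$. Using that $x_1$ has order $mn$ (Lemma \ref{d_3}), this group is $\mathbb{Z}/mn\otimes\mathbb{Z}/m\cong\mathbb{Z}/m$, so its exponent divides $m$; therefore the image of $d_5^{3,4}$ lies in the torsion subgroup of $E_5^{8,0}$, which is exactly $\langle y_{3,0}\rangle\cong\mathbb{Z}/3$. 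Combined with the forced surjectivity this gives $\operatorname{Im}(d_5^{3,4})=\mathbb{Z}/3$.

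The step I expect to be the only genuine obstacle is the bookkeeping that guarantees $E_5^{8,0}$ agrees with $E_2^{8,0}$ on the $3$-primary part — that is, that no earlier incoming differential has already modified the relevant $\mathbb{Z}/3$. This is handled entirely by the $2$-torsion computation of $E_2^{5,2}$ above. Everything else is formal: the nontrivial input is supplied once and for all by Corollary \ref{p-tor in E}, whose content is the vanishing of the $3$-torsion of $H^*(\mathbf{B}P(n,mn);\mathbb{Z})$ from Lemma \ref{p-torsion}, and the positional argument merely transfers that vanishing onto the single differential $d_5^{3,4}$.
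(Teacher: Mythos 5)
Your proof is correct and takes essentially the same route as the paper: Corollary \ref{p-tor in E} forces the $\mathbb{Z}/3\subset E_2^{8,0}$ to vanish at $E_\infty$, and a positional analysis of the differentials entering $(8,0)$ shows $d_5^{3,4}$ is the only one that can kill it. You are in fact more careful than the paper, which dismisses the incoming differential $d_3^{5,2}$ purely ``for degree reasons'' even though $E_2^{5,2}$ can be nonzero (it is $\mathbb{Z}/2$ when $m$ is even); your observations that its image is $2$-torsion landing in a group with no $2$-torsion, and that the image of $d_5^{3,4}$ is contained in (hence equal to) the torsion subgroup $\mathbb{Z}/3$, fill in small gaps that the paper leaves implicit.
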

\begin{proof}
It follows for degree reasons that $d_5^{3,4}$ is the only possibly nontrivial differential towards $E_{*}^{8,0}$, and in particular, it follows that $E_{5}^{8,0}=E_2^{8,0}$. The fact that $d_5^{3,4}$ is onto $\mathbb{Z}/3$ follows from Corollary \ref{p-tor in E}.
\end{proof}
We proceed to consider the Postnikov tower of $\mathbf{B}P(n,mn)$. Recall the low-dimensional homotopy groups of $\mathbf{B}P(n,mn)$:
\begin{equation}
\pi_{i}(\mathbf{B}P(n,mn))\cong
\begin{cases}
\mathbb{Z}/n, \quad i=2,\\
\mathbb{Z}, \quad 2<i<2mn, \quad i\textrm{ even},\\
0, \quad 0<i<2mn, \quad i\textrm{ odd}.\\
\end{cases}
\end{equation}
We denote the $i$th stage of the Postnikov tower of a simply connected topological space $X$ by $X[i]$, and the $i$th $k$-invariant by $\kappa_i$. Then we have part of the Postnikov system of $\mathbf{B}P(n,mn)$ as follows:
\begin{equation}
\begin{tikzcd}
K(\mathbb{Z},4)\arrow{r}&\mathbf{B}P(n,mn)[4]=\mathbf{B}P(n,mn)[5]\arrow{d}&\\
&\mathbf{B}P(n,mn)[3]=K(\mathbb{Z}/n,2)\arrow{r}{\kappa_3}& K(\mathbb{Z},5)
\end{tikzcd}
\end{equation}
In general we have $\mathbf{B}P(n,mn)[2i]=\mathbf{B}P(n,mn)[2i+1]$ for all $n>0$ even and $i<n$, since in such cases we have $\pi_{2i+1}(\mathbf{B}P(n,mn))=0$. By (\ref{K(Z/2,2)}) and Proposition \ref{H^5(BP)}, we have
\begin{equation*}
H^{5}(\mathbf{B}P(n,mn);\mathbb{Z})\cong H^{5}(K(\mathbb{Z}/n,2);\mathbb{Z})\cong\mathbb{Z}/\epsilon_{2}(n)n\cong H^{5}(K(\mathbb{Z}/n,2)\times K(\mathbb{Z},4);\mathbb{Z})
\end{equation*}
if and only if $\epsilon_2(n)n|m$, which implies the following
\begin{proposition}\label{kappa3}
Let $m,n$ be positive integers. Then $\epsilon_{2}(n)n|m$ if and only if in the Postnikov tower of $\mathbf{B}P(n,mn)$, we have $\kappa_3=0$. or equivalently, we have
$$\mathbf{B}P(n,mn)[5]=\mathbf{B}P(n,mn)[4]\simeq K(\mathbb{Z}/n,2)\times K(\mathbb{Z},4).$$
\end{proposition}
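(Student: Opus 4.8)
The plan is to identify the $k$-invariant $\kappa_3$ with a transgression in a Serre spectral sequence and then to read off its vanishing from the order of $H^5(\mathbf{B}P(n,mn);\mathbb{Z})$, a group already controlled by Proposition \ref{H^5(BP)}. The starting point is that, since $\pi_5(\mathbf{B}P(n,mn))=0$, we have $\mathbf{B}P(n,mn)[4]=\mathbf{B}P(n,mn)[5]$, and the truncation map $\mathbf{B}P(n,mn)\to\mathbf{B}P(n,mn)[5]$ has $5$-connected homotopy fibre; hence it induces an isomorphism on integral cohomology in degrees $\le 5$, so that $H^5(\mathbf{B}P(n,mn);\mathbb{Z})\cong H^5(\mathbf{B}P(n,mn)[4];\mathbb{Z})$. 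It therefore suffices to compute the latter group.

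To do so I would run the integral Serre spectral sequence of the Postnikov fibration $K(\mathbb{Z},4)\to\mathbf{B}P(n,mn)[4]\to K(\mathbb{Z}/n,2)$, whose classifying map is $\kappa_3\in H^5(K(\mathbb{Z}/n,2);\mathbb{Z})\cong\mathbb{Z}/\epsilon_2(n)n$, the target group being identified by the Remark following Lemma \ref{R_n}. Because $H^t(K(\mathbb{Z},4);\mathbb{Z})$ vanishes for $1\le t\le 3$, the coefficient rows are concentrated in fibre-degrees $t=0$ and $t=4$; the potential differentials $d_2,d_3,d_4$ out of $E_2^{0,4}$ land in these vanishing rows, so the fundamental class $\iota_4\in E_2^{0,4}$ survives to $E_5$ and transgresses by $d_5^{0,4}(\iota_4)=\kappa_3$, this being the standard identification of the transgression with the $k$-invariant. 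The only entry of total degree $5$ on the $E_2$-page is $E_2^{5,0}\cong H^5(K(\mathbb{Z}/n,2);\mathbb{Z})$, since $E_2^{1,4}\cong H^1(K(\mathbb{Z}/n,2);\mathbb{Z})=0$, and $d_5^{0,4}$ is the unique differential touching $E^{5,0}$. Thus
\begin{equation*}
H^5(\mathbf{B}P(n,mn)[4];\mathbb{Z})\cong E_\infty^{5,0}\cong\bigl(\mathbb{Z}/\epsilon_2(n)n\bigr)\big/\langle\kappa_3\rangle.
\end{equation*}

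Combining the two computations gives $H^5(\mathbf{B}P(n,mn);\mathbb{Z})\cong(\mathbb{Z}/\epsilon_2(n)n)/\langle\kappa_3\rangle$, whose order equals $\epsilon_2(n)n$ if and only if $\kappa_3=0$. By Proposition \ref{H^5(BP)} the condition $H^5(\mathbf{B}P(n,mn);\mathbb{Z})\cong\mathbb{Z}/\epsilon_2(n)n$ is equivalent to $\epsilon_2(n)n\mid m$; chaining the equivalences yields $\epsilon_2(n)n\mid m\iff\kappa_3=0$. The remaining clause is formal: a vanishing $k$-invariant splits the classifying fibration, so $\kappa_3=0$ is equivalent to $\mathbf{B}P(n,mn)[5]=\mathbf{B}P(n,mn)[4]\simeq K(\mathbb{Z}/n,2)\times K(\mathbb{Z},4)$, which completes the argument.

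The step I expect to need the most care is the bookkeeping in the spectral sequence: verifying that $E^{5,0}$ is the sole contributor to $H^5$ of the total space and that $d_5^{0,4}$ is the only differential altering it. Both reduce to the $1$-connectivity of $K(\mathbb{Z}/n,2)$ together with the vanishing of $H^t(K(\mathbb{Z},4);\mathbb{Z})$ for $t=1,2,3$, so no genuine difficulty arises; the identification $d_5^{0,4}(\iota_4)=\kappa_3$ is classical Postnikov theory, which I would cite rather than reprove.
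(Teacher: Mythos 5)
Your proposal is correct and follows essentially the same route as the paper: the paper also deduces the proposition directly from Proposition \ref{H^5(BP)}, by observing that $H^{5}(\mathbf{B}P(n,mn);\mathbb{Z})\cong H^{5}(K(\mathbb{Z}/n,2)\times K(\mathbb{Z},4);\mathbb{Z})\cong\mathbb{Z}/\epsilon_{2}(n)n$ holds precisely when $\epsilon_{2}(n)n\mid m$. Your identification of $\kappa_3$ with the transgression $d_5^{0,4}(\iota_4)$ in the Postnikov fibration, giving $H^5(\mathbf{B}P(n,mn)[4];\mathbb{Z})\cong(\mathbb{Z}/\epsilon_2(n)n)/\langle\kappa_3\rangle$, is exactly the detail the paper leaves implicit in its phrase ``which implies the following.''
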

\begin{remark}
This is essentially the main result of \cite{An}.
\end{remark}
The integral cohomology groups of $\mathbf{B}P(n,mn)$ in degree $\leq 5$ are immediate from the proposition above. In particular, we have
\begin{corollary}\label{H^4(BP)}
As in Proposition \ref{kappa3}, we assume that $\epsilon_{2}(n)n|m$.
\begin{enumerate}
\item $H^{4}(\mathbf{B}P(n,mn);\mathbb{Z})\cong\mathbb{Z}$. We denote its generator by $e'_2$.
\item Recall the map $\mathbf{B}\varphi: \mathbf{B}P(n,mn)\rightarrow \mathbf{B}PU_{mn}$ induced by the quotient map $\varphi$. The induced homomorphism
    $$(\mathbf{B}\varphi)^{*}: H^{4}(\mathbf{B}PU_{mn};\mathbb{Z})\cong\mathbb{Z}\rightarrow H^{4}(\mathbf{B}P(n,mn);\mathbb{Z})\cong\mathbb{Z}$$
    is the multiplication by $\epsilon_{2}(n)mn$.
\end{enumerate}
\end{corollary}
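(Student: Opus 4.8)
For part (1), the plan is to reduce to the Postnikov stage supplied by Proposition \ref{kappa3}. The projection $\mathbf{B}P(n,mn)\to\mathbf{B}P(n,mn)[4]$ onto the fourth Postnikov stage is a $5$-equivalence (it is an isomorphism on $\pi_i$ for $i\le 4$ and kills $\pi_i$ for $i>4$), hence induces an isomorphism on $H^i(-;\mathbb{Z})$ for $i\le 4$; in particular $H^4(\mathbf{B}P(n,mn);\mathbb{Z})\cong H^4(\mathbf{B}P(n,mn)[4];\mathbb{Z})$. Under the standing hypothesis $\epsilon_2(n)n\mid m$, Proposition \ref{kappa3} identifies this stage with $K(\mathbb{Z}/n,2)\times K(\mathbb{Z},4)$. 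I would then apply the K\"unneth theorem together with the low-degree cohomology recorded in Section 2: $H^*(K(\mathbb{Z}/n,2);\mathbb{Z})$ vanishes in degree $4$ (its generators sit in degrees $3,5,6,7,8$), while $H^4(K(\mathbb{Z},4);\mathbb{Z})\cong\mathbb{Z}$ is generated by the fundamental class $\iota_4$, and every relevant $\operatorname{Tor}$-term vanishes for degree reasons. This yields $H^4(\mathbf{B}P(n,mn);\mathbb{Z})\cong\mathbb{Z}$, with generator $e_2'$ the pullback of $\iota_4$, settling (1).

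For part (2) I would run the integral Serre spectral sequence $(E_*^{*,*},d_*^{*,*})$ of the fibration (\ref{fiberseq}), the same one analysed in Lemma \ref{d_3} and Proposition \ref{H^5(BP)}. The structural point is that $(\mathbf{B}\varphi)^*$ is exactly the edge homomorphism $H^4(\mathbf{B}PU_{mn};\mathbb{Z})=E_2^{4,0}\twoheadrightarrow E_\infty^{4,0}\hookrightarrow H^4(\mathbf{B}P(n,mn);\mathbb{Z})$. For degree reasons no differential enters or leaves $(4,0)$ (outgoing differentials land in negative fibre degree, and the potential sources $(4-r,r-1)$ are all zero), so $E_\infty^{4,0}=E_2^{4,0}\cong\mathbb{Z}$ and $(\mathbf{B}\varphi)^*(e_2)$ generates the bottom filtration piece $F^4H^4(\mathbf{B}P(n,mn))=E_\infty^{4,0}$ inside $H^4(\mathbf{B}P(n,mn))\cong\mathbb{Z}$. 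Hence the integer by which $(\mathbf{B}\varphi)^*$ multiplies equals the index $[H^4:F^4]$, which by convergence is the product of the orders of the remaining graded pieces of total degree $4$, namely $\lvert E_\infty^{2,2}\rvert\cdot\lvert E_\infty^{0,4}\rvert$ (the pieces $E_\infty^{3,1}$ and $E_\infty^{1,3}$ vanish, being odd fibre rows).

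It then remains to compute these two orders, which is the technical heart of the argument. Writing $v$ for the generator of $H^2(\mathbf{B}\mathbb{Z}/m)=E_2^{0,2}$, Lemma \ref{d_3} gives $d_3(v)=nx_1$, so the Leibniz rule yields $d_3(v^2)=2n\,(vx_1)$ in $E_2^{3,2}\cong H^3(\mathbf{B}PU_{mn};\mathbb{Z}/m)\cong\mathbb{Z}/m$, and $E_\infty^{0,4}$ is the kernel of this map; for the other term one reads off from Theorem \ref{Cohomology of BPU} that $E_2^{2,2}\cong H^2(\mathbf{B}PU_{mn};\mathbb{Z}/m)\cong\mathbb{Z}/m$ and that it carries no nonzero differential, since the only candidate $d_3^{2,2}$ targets $H^5(\mathbf{B}PU_{mn};\mathbb{Z})=0$, so $E_\infty^{2,2}\cong\mathbb{Z}/m$ survives intact. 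The hard part, and where I expect the real obstacle, is the precise determination of $\lvert E_\infty^{0,4}\rvert$ under the divisibility $\epsilon_2(n)n\mid m$, together with the organisation of the two contributions so that their product is the asserted $\epsilon_2(n)mn$; this is exactly a $2$-primary bookkeeping of the kernel of multiplication by $2n$ on $\mathbb{Z}/m$, which is where the factor $\epsilon_2(n)$ is forced to appear. I would carry out this step by the same accounting used in the proof of Lemma \ref{R_n}, and cross-check the outcome by factoring $\mathbf{B}\varphi$ through $\mathbf{B}SU_{mn}$: the cover $SU_{mn}\to P(n,mn)$ gives $\pi\colon \mathbf{B}SU_{mn}\to\mathbf{B}P(n,mn)$ with $\mathbf{B}\varphi\circ\pi$ the standard $\mathbf{B}SU_{mn}\to\mathbf{B}PU_{mn}$, and the vanishing $\kappa_3=0$ (again Proposition \ref{kappa3}) forces $\pi^*$ to be an isomorphism on $H^4$ sending $e_2'$ to $\pm c_2$, so that the multiplier for $(\mathbf{B}\varphi)^*$ coincides with that of the Chern-class restriction $e_2\mapsto \pm\,(\,\cdot\,)\,c_2$ for $SU_{mn}\to PU_{mn}$.
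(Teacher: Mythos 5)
Your strategy is the same as the paper's: part (1) is Proposition \ref{kappa3} plus the K\"unneth theorem, and for part (2) the paper likewise reads off the multiplier as the index of the bottom filtration piece in the Serre spectral sequence of (\ref{fiberseq}), namely $\lvert E_\infty^{2,2}\rvert\cdot\lvert E_\infty^{0,4}\rvert$, with $E_\infty^{2,2}=E_2^{2,2}\cong\mathbb{Z}/m$ and $E_\infty^{0,4}=\ker d_3^{0,4}$ exactly as you set it up (the paper also uses $H^5(\mathbf{B}PU_{mn};\mathbb{Z})=0$ from Theorem \ref{Cohomology of BPU} to rule out the differentials you rule out). Everything you actually carried out is correct.

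The gap is the step you deferred, and your instinct that it is ``where the real obstacle'' lies is exactly right --- but not because the computation is hard. The kernel of $d_3^{0,4}$, i.e.\ of multiplication by $2n$ on $E_2^{0,4}\cong\mathbb{Z}/m$, is cyclic of order $\gcd(2n,m)$; writing $m=nk$, one has $\gcd(2n,m)=n\gcd(2,k)=\epsilon_2(m)n$ under the standing hypothesis $\epsilon_2(n)n\mid m$. This equals $\epsilon_2(n)n$ when $n$ is even or $m$ is odd, but when $n$ is odd and $m$ is even it equals $2n$, so your index count yields $m\cdot 2n=\epsilon_2(mn)mn$ rather than the asserted $\epsilon_2(n)mn$. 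Your own cross-check detects the same discrepancy: Lemma \ref{BSU to BPU} gives the composite $H^4(\mathbf{B}PU_{mn};\mathbb{Z})\to H^4(\mathbf{B}SU_{mn};\mathbb{Z})$ image $\epsilon_2(mn)mn\,c_2$, and since $\pi^*(e_2')=\pm c_2$, the multiplier for $(\mathbf{B}\varphi)^*$ is $\epsilon_2(mn)mn$. So the ``$2$-primary bookkeeping'' cannot be organised to produce $\epsilon_2(n)mn$ in that parity case; the constant this argument proves is $\epsilon_2(mn)mn$ (equivalently $\epsilon_2(m)mn$). For comparison, the paper's own proof makes precisely the slip you were wary of: it asserts that $\ker d_3^{0,4}$ is the $\epsilon_2(n)n$-torsion subgroup $\mathbb{Z}/\epsilon_2(n)n$, whereas Lemma \ref{E_infty^0,4} later records the order of this same group correctly as $\epsilon_2(m)n$. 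The two formulas agree whenever $n$ is even or $m$ is odd, which covers the paper's subsequent applications (e.g.\ Lemma \ref{e2}, where $n=2$), but as a blanket statement part (2) needs the constant $\epsilon_2(mn)mn$. In short: same approach as the paper, with the one genuinely substantive step left undone, and that step, done honestly, changes the stated constant in one parity case.
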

\begin{proof}
The statement (1) follows immediately from Proposition \ref{kappa3}. To prove (2), consider the spectral sequence $E_{*}^{*,*}$ as in (\ref{E_2}). Notice $E_{2}^{5,0}\cong H^{5}(\mathbf{B}PU_{mn};\mathbb{Z})=0$, from which it follows that
\begin{equation}\label{E_3^2,2}
E_{\infty}^{2,2}=E_{2}^{2,2}\cong H^{2}(\mathbf{B}PU_{mn};\mathbb{Z}/m)\cong\mathbb{Z}/m.
\end{equation}
For the same reason we have $E_{\infty}^{0,4}\cong\textrm{Ker}d_{3}^{0,4}$. By the Leibniz rule we have $d_{3}(v^2)=2vd_{3}(v)=2nvx_1$, which implies that $E_{\infty}^{0,4}$ is the subgroup of $E_{2}^{0,4}$ of $\epsilon_{2}(n)n$-torsion elements, i.e.,
\begin{equation}\label{E_3^0,4}
E_{\infty}^{0,4}\cong\textrm{Ker}d_{3}^{0,4}=E_{4}^{0,4}\cong\mathbb{Z}/\epsilon_{2}(n)n.
\end{equation}
The equations (\ref{E_3^2,2}) and (\ref{E_3^0,4}), together with (1) of the corollary imply (2).
\end{proof}

We proceed to make a similar assertion on $H^{6}(\mathbf{B}P(n,mn);\mathbb{Z})$. To do so we need the following
\begin{lemma}\label{noncyclic}
When $\epsilon_{2}(n)n|m$, the abelian group $H^{7}(\mathbf{B}P(n,mn);\mathbb{Z})$ is isomorphic to  $\mathbb{Z}/\epsilon_{3}(n)n\times\mathbb{Z}/n\times\mathbb{Z}/2$ modulo a cyclic subgroup. In particular, $H^{7}(\mathbf{B}P(n,mn);\mathbb{Z})$ is \textbf{not} a cyclic group when $n$ is even.
\end{lemma}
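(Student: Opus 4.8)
The plan is to compute $H^{7}(\mathbf{B}P(n,mn);\mathbb{Z})$ by replacing $\mathbf{B}P(n,mn)$ with its seventh Postnikov stage, where the group can be read off from a single fibration. Since the projection $\mathbf{B}P(n,mn)\to\mathbf{B}P(n,mn)[7]$ is an isomorphism on $\pi_i$ for $i\leq 7$, it is an $8$-equivalence and hence induces an isomorphism $H^{7}(\mathbf{B}P(n,mn);\mathbb{Z})\cong H^{7}(\mathbf{B}P(n,mn)[7];\mathbb{Z})$. Under the standing hypothesis $\epsilon_2(n)n\mid m$, Proposition \ref{kappa3} gives $\mathbf{B}P(n,mn)[5]\simeq Y:=K(\mathbb{Z}/n,2)\times K(\mathbb{Z},4)$, and since $\pi_{7}(\mathbf{B}P(n,mn))=0$ we have $\mathbf{B}P(n,mn)[6]=\mathbf{B}P(n,mn)[7]$, realized as the fiber of the $k$-invariant $\kappa_5\colon Y\to K(\mathbb{Z},7)$. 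Thus there is a fibration $K(\mathbb{Z},6)\to\mathbf{B}P(n,mn)[7]\to Y$ whose Serre spectral sequence I intend to exploit.

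First I would compute $H^{7}(Y;\mathbb{Z})$ by the K{\"u}nneth theorem, using the cohomology of $K(\mathbb{Z}/n,2)$ from \eqref{K(Z/2,2)} and that of $K(\mathbb{Z},4)$ from \eqref{K(Z,n)} (where $\Gamma_4$, of degree $7$, is the distinguished class of order $2$). The only contributions in degree $7$ are the tensor terms $R_n$ (of order $\epsilon_3(n)n$), $\beta_n\iota_4$ (of order $n$), and $\Gamma_4$ (of order $2$); all $\operatorname{Tor}$ terms in degree $7$ vanish, since the sole torsion in $H^{*}(K(\mathbb{Z},4))$ within this range sits in degree $7$ and pairs only with $H^{1}(K(\mathbb{Z}/n,2))=0$. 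Hence
\[
H^{7}(Y;\mathbb{Z})\cong\mathbb{Z}/\epsilon_3(n)n\oplus\mathbb{Z}/n\oplus\mathbb{Z}/2 .
\]
Next I would run the Serre spectral sequence of $K(\mathbb{Z},6)\to\mathbf{B}P(n,mn)[7]\to Y$. In total degree $7$ the only potentially nonzero entries are $E_2^{7,0}=H^{7}(Y;\mathbb{Z})$ and $E_2^{1,6}=H^{1}(Y;\mathbb{Z})=0$; the transgression $d_7^{0,6}$ sends the fundamental class $\iota_6$ to $\kappa_5$, so $E_\infty^{7,0}=H^{7}(Y;\mathbb{Z})/\langle\kappa_5\rangle$, where $\langle\kappa_5\rangle$ is cyclic as the image of a homomorphism out of $\mathbb{Z}\{\iota_6\}$. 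Because this is the unique surviving entry in total degree $7$, there is no extension problem, and I obtain
\[
H^{7}(\mathbf{B}P(n,mn);\mathbb{Z})\cong\bigl(\mathbb{Z}/\epsilon_3(n)n\oplus\mathbb{Z}/n\oplus\mathbb{Z}/2\bigr)\big/\langle\kappa_5\rangle,
\]
which is exactly the asserted quotient of $\mathbb{Z}/\epsilon_3(n)n\times\mathbb{Z}/n\times\mathbb{Z}/2$ by a cyclic subgroup.

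For the non-cyclicity when $n$ is even, I would argue $2$-locally. Write $2^{a}\parallel n$ with $a\geq 1$; as $\epsilon_3(n)=\gcd(3,n)$ is odd, the $2$-primary part of $\mathbb{Z}/\epsilon_3(n)n\oplus\mathbb{Z}/n\oplus\mathbb{Z}/2$ is $\mathbb{Z}/2^{a}\oplus\mathbb{Z}/2^{a}\oplus\mathbb{Z}/2$, which requires three generators (its mod-$2$ reduction is three-dimensional). Passing to the quotient by $\langle\kappa_5\rangle$, whose $2$-primary part is cyclic, lowers the minimal number of generators by at most one, so the $2$-primary part of $H^{7}(\mathbf{B}P(n,mn);\mathbb{Z})$ still needs at least two generators and cannot be cyclic; therefore $H^{7}(\mathbf{B}P(n,mn);\mathbb{Z})$ is not cyclic.

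I expect the main obstacle to be the honest identification of $H^{7}(Y;\mathbb{Z})$: one must be sure that the order-$2$ class $\Gamma_4\in H^{7}(K(\mathbb{Z},4))$ genuinely survives as a direct summand (rather than being absorbed into a $\operatorname{Tor}$ correction) and that no further torsion creeps in, since it is precisely this $\mathbb{Z}/2$, combined with the $2$-torsion of $R_n$ and $\beta_n\iota_4$, that forces three generators $2$-locally. A secondary point requiring care is confirming that the transgression image is exactly the cyclic group $\langle\kappa_5\rangle$ and that total degree $7$ is concentrated in $E_\infty^{7,0}$, so that the clean quotient description holds with no hidden extensions.
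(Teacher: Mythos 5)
Your proposal is correct and follows essentially the same route as the paper: identify $H^{7}(\mathbf{B}P(n,mn);\mathbb{Z})$ with $H^{7}$ of the sixth (equivalently seventh) Postnikov stage, use Proposition \ref{kappa3} to write the fifth stage as $K(\mathbb{Z}/n,2)\times K(\mathbb{Z},4)$, and present the answer as the K\"unneth group $\mathbb{Z}/\epsilon_{3}(n)n\times\mathbb{Z}/n\times\mathbb{Z}/2$ modulo the cyclic subgroup generated by $\kappa_5$. The only difference is that you spell out the details the paper leaves implicit (the Serre spectral sequence of the fiber of $\kappa_5$, and the two-local minimal-generator count for non-cyclicity), all of which are accurate.
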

\begin{proof}
It follows from (\ref{kappa3}) that
$$H^{7}(\mathbf{B}P(n,mn)[5];\mathbb{Z})\cong H^{7}(K(\mathbb{Z}/n,2)\times K(\mathbb{Z},4);\mathbb{Z})\cong\mathbb{Z}/\epsilon_{3}(n)n\times\mathbb{Z}/n\times\mathbb{Z}/2.$$
Therefore
$$H^{7}(\mathbf{B}P(n,mn);\mathbb{Z})\cong H^{7}(\mathbf{B}P(n,mn)[6];\mathbb{Z})\cong\mathbb{Z}/\epsilon_{3}(n)n\times\mathbb{Z}/n\times\mathbb{Z}/2/(\kappa_5),$$
and the result follows.
\end{proof}

\begin{corollary}\label{H^6(BP)}
Suppose $\epsilon_{2}(n)n|m$.
\begin{enumerate}
\item We have
\begin{equation*}
H^{6}(\mathbf{B}P(n,mn);\mathbb{Z})\cong
\begin{cases}
\mathbb{Z}\oplus\mathbb{Z}/2, \textrm{ if $n$ is even},\\
\mathbb{Z}, \textrm{ if $n$ is odd.}
\end{cases}
\end{equation*}

\item When $n$ is even, the subgroup $\mathbb{Z}/2$ of $H^{6}(\mathbf{B}P(n,mn);\mathbb{Z})$ is generated by $(x_1')^2$, where $x_1'=\mathbf{B}\varphi^*(x_1)$. Furthermore, $\mathbf{B}\varphi$ induces a homomorphism $$H^{6}(\mathbf{B}PU_{mn};\mathbb{Z})/(x_1^2)\cong\mathbb{Z}\rightarrow H^{6}(\mathbf{B}P(n,mn);\mathbb{Z})/(\mathbf{B}\varphi(x_1^2))\cong\mathbb{Z}$$
    which is the multiplication by
\begin{equation*}
\begin{cases}
\epsilon_{3}(\frac{m}{n})nm^{2}, \textrm{if $n$ is even, or $m,n$ are both odd,}\\
\epsilon_{3}(\frac{m}{n})nm^{2}/2, \textrm{if $n$ is odd, and $4|m$,}\\
\epsilon_{3}(\frac{m}{n})nm^{2}/4, \textrm{if $n$ is odd, and $m=2(2l+1)$ for some integer $l$.}
\end{cases}
\end{equation*}
\end{enumerate}
\end{corollary}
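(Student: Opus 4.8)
The plan is to read $\mathbf{B}\varphi^{*}$ off the Serre spectral sequence $(E_{*}^{*,*},d_{*}^{*,*})$ of (\ref{fiberseq}), in exact analogy with Corollary \ref{H^4(BP)}. Recall that $\mathbf{B}\varphi^{*}$ is the edge homomorphism
\[
H^{*}(\mathbf{B}PU_{mn};\mathbb{Z})=E_{2}^{*,0}\twoheadrightarrow E_{\infty}^{*,0}=F^{*}\hookrightarrow H^{*}(\mathbf{B}P(n,mn);\mathbb{Z}),
\]
where $F^{6}$ is the top filtration piece of $H^{6}$. By Theorem \ref{Cohomology of BPU} one has $E_{2}^{6,0}\cong\mathbb{Z}\langle e_{3}\rangle\oplus\mathbb{Z}/\epsilon_{2}(mn)\langle x_{1}^{2}\rangle$, and the only differential that can hit $E^{6,0}$ is $d_{3}^{3,2}$. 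Using Lemma \ref{d_3} and the Leibniz rule I would compute $d_{3}^{3,2}(vx_{1})=nx_{1}^{2}$. Since $\epsilon_{2}(n)n\mid m$ forces $m$ (hence $mn$) to be even whenever $n$ is even, we have $\epsilon_{2}(mn)=2$ in that case; and $nx_{1}^{2}=0$ exactly when $n$ is even, while $nx_{1}^{2}=x_{1}^{2}\neq0$ kills $x_{1}^{2}$ when $n$ is odd. Hence $E_{\infty}^{6,0}=\mathbb{Z}\oplus\mathbb{Z}/2$ for $n$ even and $E_{\infty}^{6,0}=\mathbb{Z}$ for $n$ odd.

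This gives part (1) together with part (2)(a): combining the computation above with Proposition \ref{kappa3}, which yields $H^{6}(\mathbf{B}P(n,mn);\mathbb{Z})\cong\mathbb{Z}\oplus\mathbb{Z}/\epsilon_{2}(n)$ from $\mathbf{B}P(n,mn)[5]\simeq K(\mathbb{Z}/n,2)\times K(\mathbb{Z},4)$, the surviving class $x_{1}^{2}$ is precisely the image $(x_{1}')^{2}=\mathbf{B}\varphi^{*}(x_{1}^{2})$ under the edge map, and it has order $2$. As $F^{6}\hookrightarrow H^{6}$ is injective, $(x_{1}')^{2}$ is a nonzero $2$-torsion element and hence generates the $\mathbb{Z}/2=\mathbb{Z}/\epsilon_{2}(n)$ when $n$ is even.

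For part (2)(b) I would argue exactly as in Corollary \ref{H^4(BP)}. Because $x_{1}^{2}\in F^{6}$ and the torsion of $F^{6}$ matches that of $H^{6}(\mathbf{B}P(n,mn);\mathbb{Z})$, the induced map on $\mathbb{Z}\cong H^{6}(\mathbf{B}PU_{mn};\mathbb{Z})/(x_{1}^{2})\to\mathbb{Z}\cong H^{6}(\mathbf{B}P(n,mn);\mathbb{Z})/((x_{1}')^{2})$ is multiplication by the index $[H^{6}:F^{6}]=\lvert E_{\infty}^{4,2}\rvert\,\lvert E_{\infty}^{2,4}\rvert\,\lvert E_{\infty}^{0,6}\rvert$. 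The $d_{3}$-differentials, all governed by $d_{3}(v)=nx_{1}$ and the Leibniz rule, produce the generic contributions: $\lvert E_{\infty}^{2,4}\rvert=m$; then $d_{3}^{4,2}(ve_{2})=nx_{1}e_{2}$, which vanishes unless $\delta(mn)=2$ and $n$ is odd, i.e.\ unless $n$ is odd and $m\equiv2\bmod4$, so that $\lvert E_{\infty}^{4,2}\rvert$ equals $m$ in general and $m/2$ in that single case; and $d_{3}^{0,6}(v^{3})=3nv^{2}x_{1}$, whence $\lvert E_{4}^{0,6}\rvert=\gcd(3n,m)=n\,\epsilon_{3}(m/n)$ (using $n\mid m$). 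Multiplying these gives the leading value $\epsilon_{3}(m/n)nm^{2}$, with the $m\equiv2\bmod4$ drop by $2$ already accounted for by $d_{3}^{4,2}$.

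The hard part will be isolating the remaining $2$-local corrections that distinguish $n$ even from $n$ odd, which in degree $6$ is subtler than in degree $4$ because now the transgression target is nonzero: the relevant higher differential is $d_{5}^{0,6}\colon E_{5}^{0,6}\to E_{5}^{5,2}$ with $E^{5,2}\cong\mathbb{Z}/\epsilon_{2}(mn)$, rather than landing in the trivial group $E^{5,0}$ as in degree $4$. I would compute it $2$-locally by passing to the mod $2$ spectral sequence and applying Kudo's transgression theorem: since $v$ transgresses to $\overline{nx_{1}}$, one gets $d_{5}(\bar v^{2})=\operatorname{Sq}^{2}(\overline{nx_{1}})$ and hence $d_{5}^{0,6}(\bar v^{3})=\bar v\cdot\operatorname{Sq}^{2}(\overline{nx_{1}})$, where a short Steenrod computation (using $\operatorname{Sq}^{1}\operatorname{Sq}^{2}\bar x_{1}=\operatorname{Sq}^{3}\bar x_{1}=\bar x_{1}^{2}\neq0$ for $mn$ even) shows $\operatorname{Sq}^{2}\bar x_{1}\neq0$ in $H^{5}(\mathbf{B}PU_{mn};\mathbb{Z}/2)$. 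Thus this differential is sensitive only to the parity of $n$. Since the $\epsilon_{3}(m/n)$-factor is $3$-local and comes solely from $d_{3}^{0,6}$, whereas the corrective factors of $2$ are $2$-local, I would finish by localizing separately at $2$ and $3$ and assembling the three stated multipliers. The two obstacles I anticipate are (i) verifying that $E_{5}^{5,2}$ genuinely survives to receive the Kudo differential (controlling $d_{3}^{5,2}$ and the incoming $d_{3}^{2,4}$, which interact with the mixed $\mathbb{Z}/m$/$\mathbb{Z}$ coefficients), and (ii) translating the mod $2$ information, together with the filtration extension problems, into the exact index rather than a mere divisibility.
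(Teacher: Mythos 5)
Your treatment of part (1) and of the first half of part (2) follows the paper's own route (Proposition \ref{kappa3} plus the Postnikov tower for part (1); the vanishing of $d_3^{3,2}$, whose image is generated by $nx_1^2=0$ for $n$ even, for part (2)(a)), and that much is fine. The genuine gap is in part (2)(b): you assert $\lvert E_{\infty}^{2,4}\rvert=m$ in all cases and plan to extract the remaining $2$-local corrections from $d_5^{0,6}\colon E_5^{0,6}\to E_5^{5,2}$ via Kudo's transgression theorem. This cannot work. When $n$ is odd and $n\mid m$, the group $E_5^{0,6}=\operatorname{Ker}d_3^{0,6}$ is cyclic of order $\gcd(3n,m)=\epsilon_3(m/n)\,n$, which is \emph{odd}, while $E_5^{5,2}$ is a subquotient of $E_2^{5,2}\cong\mathbb{Z}/2$; any homomorphism from an odd-order group to a $2$-group is zero, so $d_5^{0,6}=0$ identically in precisely the cases where you need a correction factor. (Passing to the mod $2$ spectral sequence does not rescue this: its differentials compute $H^*(\mathbf{B}P(n,mn);\mathbb{Z}/2)$, not the differentials of the integral spectral sequence, and the integral $d_5^{0,6}$ is zero for the parity reason just given.) Consequently your accounting yields $\epsilon_3(m/n)nm^2$ instead of $\epsilon_3(m/n)nm^2/2$ when $n$ is odd and $4\mid m$, and $\epsilon_3(m/n)nm^2/2$ instead of $\epsilon_3(m/n)nm^2/4$ when $n$ is odd and $m\equiv 2\pmod 4$.

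The factor of $2$ you are missing comes from $d_3^{2,4}\colon E_3^{2,4}\cong\mathbb{Z}/m\to E_3^{5,2}\cong\mathbb{Z}/2$, which is \emph{surjective} whenever $n$ is odd and $m$ is even, so that $\lvert E_{\infty}^{2,4}\rvert=m/2$ there. To see this, note that by Lemma \ref{p-torsion} (via the fibration $\mathbf{B}SU_{mn}\to\mathbf{B}P(n,mn)\to K(\mathbb{Z}/n,2)$) the group $H^7(\mathbf{B}P(n,mn);\mathbb{Z})$ has no $2$-torsion when $n$ is odd, so $E_{\infty}^{5,2}=0$; since $d_3^{5,2}$ lands in $E_2^{8,0}$, which has no $2$-torsion by Theorem \ref{Cohomology of BPU}, and $d_5^{0,6}=0$ by the parity argument above, the only differential that can kill $E^{5,2}$ is $d_3^{2,4}$, which must therefore be onto. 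Conversely, for $n$ even you still owe an argument (your obstacle (i)) that $d_3^{2,4}$ and $d_5^{0,6}$ vanish so that no spurious factor of $2$ appears; the paper gets this from Lemma \ref{noncyclic}: $H^7(\mathbf{B}P(n,mn);\mathbb{Z})$ is not cyclic for $n$ even, which forces $E_{\infty}^{5,2}\neq 0$ and hence kills every differential into $E^{5,2}$. With these two corrections your computation collapses onto the paper's case-by-case proof.
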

\begin{proof}
Consider the $6$th stage of the Postnikov tower of $\mathbf{B}P(n,mn)$ as described by the following diagram:
\begin{equation*}
\begin{tikzcd}
K(\mathbb{Z},6)\arrow{r}&\mathbf{B}P(n,mn)[6]\arrow{d}&\\
&\mathbf{B}P(n,mn)[5]=K(\mathbb{Z}/n,2)\times K(\mathbb{Z},4)\arrow{r}{\kappa_5}& K(\mathbb{Z},7)
\end{tikzcd}
\end{equation*}
from which it follows that
\begin{equation*}
H^{6}(\mathbf{B}P(n,mn);\mathbb{Z})\cong\mathbb{Z}\oplus H^{6}(K(\mathbb{Z}/n,2)\times K(\mathbb{Z},4);\mathbb{Z})\cong
\begin{cases}
\mathbb{Z}\oplus\mathbb{Z}/2, \textrm{ if $n$ is even},\\
\mathbb{Z}, \textrm{ if $n$ is odd}.
\end{cases}
\end{equation*}
from which (1) follows.

Consider the spectral sequence $E_{*}^{*,*}$. When $n$ is even, we have
$$E_{2}^{6,0}\cong H^{6}(\mathbf{B}PU_{mn};\mathbb{Z})\cong\mathbb{Z}\oplus\mathbb{Z}/2,$$
of which the $2$-torsion subgroup is generated by $x_1^2$. By Lemma \ref{d_3}, the image of $d_{3}^{3,2}$ is generated by $nx_1^2=0$, since $n$ is even. Therefore we have
\begin{equation}\label{d_3^(4,2)}
d_{3}^{3,2}=0.
\end{equation}
For obvious degree reasons there is no other nontrivial differential hitting the entry $(6,0)$. Hence the first half of (2) follows.

To prove the second half of (2), it suffices to show that $E_{\infty}^{s,t}$ such that $s+t=6, t>0$ are all finite, of which the product of the cardinality is equal to the number given in each case. On the $E_2$-page, the nontrivial entries $E_{2}^{s,t}$ such that $s+t=6, t>1$ are $E_{2}^{4,2}\cong\mathbb{Z}/m$, $E_{2}^{2,4}\cong\mathbb{Z}/m$ and $E_{2}^{0,6}\cong\mathbb{Z}/m$.

For $E_{2}^{0,6}$, we have
$$d_{3}^{0,6}: E_{2}^{0,6}\cong\mathbb{Z}/m\rightarrow E_{2}^{3,4}\cong\mathbb{Z}/m$$
the  multiplication by $3n$, by the Leibniz rule. Hence we have
\begin{equation}\label{E^0,6}
E_{4}^{0,6}=\operatorname{Ker}d_3^{0,6}\cong\mathbb{Z}/\epsilon_3(\frac{m}{n})n.
\end{equation}

We argue case by case, as follows.

\textbf{Case 1}: $n$ is even, or $m,n$ are both odd. In this case, either $4|mn$, or $mn$ is odd. Then it follows from Theorem \ref{Cohomology of BPU} that
$$E_{2}^{7,0}\cong H^7(\mathbf{B}PU_{mn};\mathbb{Z})=0.$$
Then for degree reasons there is no nontrivial differential into or out of $E_2^{4,2}$. Hence we have
\begin{equation}\label{E^4,2 case1}
E_{\infty}^{4,2}\cong E_2^{4,2}\cong\mathbb{Z}/m.
\end{equation}

Next we consider the entry $E_{*}^{2,4}$. For degree reasons the only possibly nontrivial differential into or out of it is $d_{3}^{2,4}$, of which the codomain, according to the K{\"u}nneth's theorem and Theorem \ref{Cohomology of BPU}, is
\begin{equation*}
E_{3}^{5,2}\cong H^5(\mathbf{B}PU_{mn};\mathbb{Z}/m)\cong
\begin{cases}
0,\textrm{ if $n$ is odd, and consequentely so is $m$,}\\
\mathbb{Z}/2, \textrm{ if $n$ is even}.
\end{cases}
\end{equation*}
We proceed to show $d_{3}^{2,4}=0$ in both cases. When $n$ is odd this is obvious. When $n$ is even, it follows from Lemma \ref{noncyclic} that $E_{\infty}^{5,2}\neq 0$, for otherwise $H^7(\mathbf{B}P(mn,n);\mathbb{Z})$ would be cyclic, a contradiction. Therefore, $E_{\infty}^{5,2}=E_{2}^{5,2}\cong\mathbb{Z}/2$, from which it follows that $d_{3}^{2,4}=0$. Hence, we have
\begin{equation}\label{E^2,4 case1}
E_{\infty}^{2,4}\cong E_2^{2,4}\cong\mathbb{Z}/m.
\end{equation}
Again for degree reasons, the only potentially nontrivial differential out of $E_4^{0,6}$ is into $E_4^{5,2}\cong\mathbb{Z}/2$. It follows from Lemma \ref{noncyclic} that this differential is $0$. Therefore we have
\begin{equation}\label{E^0,6 case1}
E_{\infty}^{0,6}\cong E_4^{0,6}\cong\mathbb{Z}/\epsilon_3(\frac{m}{n})n.
\end{equation}
Case 1 now follow from (\ref{E^4,2 case1}), (\ref{E^2,4 case1}) and (\ref{E^0,6 case1}).

\textbf{Case 2}: $n$ is odd, and $4|m$. In this case it again follows from Theorem \ref{Cohomology of BPU} that $H^{7}(\mathbf{B}P_{mn};\mathbb{Z})=0$. Then for the same reason as in Case 1 we have
\begin{equation}\label{E^4,2 case2}
E_{\infty}^{4,2}\cong E_2^{4,2}\cong\mathbb{Z}/m.
\end{equation}
Consider $E_*^{2,4}$. The only potentially nontrivial differential into or out of $E_*^{2,4}$ is
$$d_3^{2,4}: E_3^{2,4}\cong\mathbb{Z}/m\rightarrow E_3^{5,2}\cong\mathbb{Z}/2,$$
where $E_3^{5,2}\cong H^5(\mathbf{B}P_{mn};\mathbb{Z}/m)\cong\mathbb{Z}/2$ follows from K{\"u}nneth's theorem.
Since $n$ is odd, it follows from Corollary \ref{p-tor in E} that $E_{\infty}^{5,2}=0$. However, for degree reasons there is no nontrivial differential into or out of $E_*^{5,2}$ except for possibly $d_3^{2,4}$. (Notice that $E_{2}^{8,0}$ has no $2$-torsion, by Theorem \ref{Cohomology of BPU}.) Therefore, $d_3^{2,4}$ is surjective and it follows that
\begin{equation}\label{E^2,4 case2}
E_{\infty}^{2,4}\cong\mathbb{Z}/\frac{m}{2}.
\end{equation}

For degree reasons and the fact that $d_3^{3,4}$ is surjective, there is no nontrivial entry of total degree $7$ on the $E_4$-page. Hence it follows that
\begin{equation}\label{E^0,6 case2}
E_{\infty}^{0,6}\cong E_4^{0,6}\cong\mathbb{Z}/\epsilon_3(\frac{m}{n})n.
\end{equation}
Case 2 then follows from (\ref{E^4,2 case2}), (\ref{E^2,4 case2}) and (\ref{E^0,6 case2}).

\textbf{Case 3}: $n$ is odd, and $m=2(2l+1)$ for some integer $l$. In this case it follows from Theorem \ref{Cohomology of BPU} that
$$E_{2}^{7,0}\cong H^7(\mathbf{B}PU_{mn};\mathbb{Z})\cong\mathbb{Z}/2,$$
and moreover, the differential
$$d_3^{4,2}: E_3^{4,2}\cong\mathbb{Z}/m\rightarrow E_{2}^{8,0}\cong\mathbb{Z}/2$$
is onto, since, due to Theorem \ref{Cohomology of BPU}, $H^7(\mathbf{B}PU_{mn};\mathbb{Z})$ is generated by the cup product $e_2x_1$. For degree reasons there is no other nontrivial differentials into or out of $E_3^{4,2}$. Hence we have
\begin{equation}\label{E^4,2 case3}
E_{\infty}^{4,2}\cong\mathbb{Z}/\frac{m}{2}.
\end{equation}
For $E_{\infty}^{2,4}$ and $E_{\infty}^{0,6}$ the same arguments as in Case 2 applies and we have
\begin{equation}\label{E^2,4 case3}
E_{\infty}^{4,2}\cong\mathbb{Z}/\frac{m}{2}
\end{equation}
and
\begin{equation}\label{E^0,6 case3}
E_{\infty}^{0,6}\cong\mathbb{Z}/\epsilon_3(\frac{m}{n})n.
\end{equation}
Therefore, Case 3 follows.
\end{proof}

The study of the next non-trivial stage of the Postnikov tower requires some auxiliary results on the cohomology of the classifying spaces of some Lie groups, which is the topic of the next section.

\section{the cohomology of classifying spaces of some lie groups}
In \cite{Gu}, the author considered the integral cohomological Serre spectral sequence associated to the fiber sequence $\mathbf{B}U_r\rightarrow\mathbf{B}PU_r\rightarrow K(\mathbb{Z},3)$, which we denote by $^{U}E_{*}^{*,*}$.
and found a formula for the differential $^{U}d_3$. For degree reasons we have
$$^{U}E_{3}^{s,t}\cong {^UE}_{2}^{s,t}\cong H^{s}(K(\mathbb{Z},3);H^{t}(\mathbf{B}U_r;\mathbb{Z})).$$
Let $c_k\in H^{2k}(\mathbf{B}U_r;\mathbb{Z})$ be the $k$th Chern class, and $x_1$ be the generator of $H^{3}(K(\mathbb{Z},3);\mathbb{Z})$. Then we have the following
\begin{proposition}[Corollary 5.3, \cite{Gu}]
 $^{U}d_{3}(c_{k})=(r-k+1)c_{k-1}x_1.$
\end{proposition}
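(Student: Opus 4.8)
The plan is to compute $^{U}d_{3}$ by combining the multiplicativity of the spectral sequence with the splitting principle, reducing the whole computation to the diagonal maximal torus and a single naturality statement for the transgression. First I would record the shape of the sequence. Since $H^{*}(\mathbf{B}U_r;\mathbb{Z})$ is free and concentrated in even degrees, we have $^{U}E_{3}^{s,t}\cong H^{s}(K(\mathbb{Z},3);\mathbb{Z})\otimes H^{t}(\mathbf{B}U_r;\mathbb{Z})$, and the only differential that can act on $c_k\in{}^{U}E_{3}^{0,2k}$ is
$$^{U}d_{3}: {}^{U}E_{3}^{0,2k}\rightarrow {}^{U}E_{3}^{3,2k-2}\cong \mathbb{Z}x_1\otimes H^{2k-2}(\mathbf{B}U_r;\mathbb{Z}).$$
Thus $^{U}d_{3}(c_k)$ is automatically of the form $(\text{integral polynomial in the }c_i)\cdot x_1$; the entire content of the proposition is the precise coefficient.

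Second, I would set up the comparison with the maximal torus $T\cong (S^1)^r\subset U_r$. The inclusion $T\hookrightarrow U_r$ respects the common central diagonal $S^1$, so it fits into a commutative diagram of central extensions $1\to S^1\to T\to T/\Delta(S^1)\to 1$ and $1\to S^1\to U_r\to PU_r\to 1$. Applying $\mathbf{B}$ and delooping once, this yields a map of fibrations over the \emph{same} base $K(\mathbb{Z},3)$, whose fiber map on cohomology is the inclusion of symmetric polynomials $H^{*}(\mathbf{B}U_r)\hookrightarrow H^{*}(\mathbf{B}T)=\mathbb{Z}[t_1,\dots,t_r]$, sending $c_k\mapsto e_k(t_1,\dots,t_r)$. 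Since $^{U}d_{3}$ is natural with respect to maps of fibrations, and this restriction is injective both on $E^{0,\ast}$ and on $E^{3,\ast}$, it suffices to prove the formula in the torus spectral sequence and then descend. To pin down the torus differentials I would compare the torus fibration with the path–loop fibration $K(\mathbb{Z},2)\to *\to K(\mathbb{Z},3)$ via the diagonal scalar $\mathbf{B}S^1\to \mathbf{B}T$: because the composite $S^1\xrightarrow{\Delta}T\to T/\Delta(S^1)$ is trivial, the total-space map factors through the contractible total space, giving a map of fibrations over $K(\mathbb{Z},3)$. The diagonal pulls each $t_i$ back to $\iota_2$, and the transgression in the path–loop fibration sends $\iota_2\mapsto \iota_3=x_1$; naturality of the transgression then forces $^{U}d_{3}(t_i)=x_1$ for every $i$ (after fixing the sign of $x_1$). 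As a sanity check this already gives $^{U}d_{3}(c_1)=\sum_i x_1=r\,x_1$, the case $k=1$.

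Third comes the combinatorial step. Since $d_{3}$ is a derivation and the $t_i$ have even degree (so no signs intervene),
$$d_{3}(e_k)=\sum_{i_1<\cdots<i_k}\ \sum_{j=1}^{k} \bigl(t_{i_1}\cdots \widehat{t_{i_j}}\cdots t_{i_k}\bigr)\,x_1.$$
Each squarefree degree-$(k-1)$ monomial arises by deleting an index from exactly those $k$-subsets containing it, and there are $r-(k-1)=r-k+1$ admissible extensions, so $d_{3}(e_k)=(r-k+1)e_{k-1}x_1$. Transporting this identity back along the injection $c_k\mapsto e_k$, $c_{k-1}x_1\mapsto e_{k-1}x_1$ yields $^{U}d_{3}(c_k)=(r-k+1)c_{k-1}x_1$.

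I expect the main obstacle to be the naturality bookkeeping rather than any hard computation: one must arrange the two comparison maps (path–loop $\to$ torus, and torus $\to$ $U_r$) so that they genuinely lie over a common $K(\mathbb{Z},3)$, verify that the transgression of $t_i$ is natural across them, and confirm the two injectivity statements needed to move the identity from $\mathbf{B}T$ back to $\mathbf{B}U_r$. Once these functoriality points are secured, the derivation property and the counting of monomials are entirely routine.
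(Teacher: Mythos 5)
Your proof is correct, but it takes a genuinely different route from the source. Note first that the present paper contains no argument for this proposition at all: it is quoted verbatim as Corollary 5.3 of \cite{Gu}, so the real comparison is with the proof given there, which does not pass through the maximal torus. In \cite{Gu} the fibration $\mathbf{B}U_r\to\mathbf{B}PU_r\to K(\mathbb{Z},3)$ is treated as a principal-type fibration: the holonomy action of $\Omega K(\mathbb{Z},3)\simeq K(\mathbb{Z},2)$ on the fiber is identified with the map $K(\mathbb{Z},2)\times\mathbf{B}U_r\to\mathbf{B}U_r$ classifying the tensor product of the universal line bundle with the universal rank-$r$ bundle, a general lemma expresses $d_3$ in terms of the induced coaction on cohomology, and the coefficient $r-k+1$ is then read off as the part linear in $c_1(L)$ of the classical expansion $c_k(L\otimes E)=\sum_j\binom{r-j}{k-j}c_1(L)^{k-j}c_j(E)$. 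Your argument replaces the coaction lemma and the tensor-product formula by the splitting principle carried out inside the spectral sequence: transgression of the $t_i$ (pinned down by the path--loop comparison; alternatively, Weyl symmetry plus the vanishing of $H^3(\mathbf{B}(T/\Delta(S^1));\mathbb{Z})$, since $T/\Delta(S^1)$ is again a torus, would do the same job), then the Leibniz rule and a count of squarefree monomials. What your route buys is elementarity and self-containedness: it needs only naturality of the Serre spectral sequence for maps of central extensions sharing the same center, and injectivity of restriction to the torus on the relevant $E_3$ entries---both of which you correctly isolate as the points requiring care. What the route of \cite{Gu} buys is a structural formula for $d_3$, valid on all of $H^*(\mathbf{B}U_r;\mathbb{Z})$ at once as a derivation determined by the $K(\mathbb{Z},2)$-action, which is portable to other central quotients; it is exactly this kind of comparison that the present series exploits for $\mathbf{B}P(n,mn)$. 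The two proofs are close cousins in the end: the tensor-product Chern class formula invoked in \cite{Gu} is itself proved by the splitting principle, and your counting of squarefree monomials is precisely the expansion of $e_k(t_1+t,\dots,t_r+t)$ to first order in $t$.
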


In low dimensions, for example, $^{U}E_{3}^{0,4}$ and $^{U}E_{3}^{0,6}$, $^{U}d_{3}$ is the only non-trivial differential out of them. Therefore, the kernel of $^{U}d_{3}^{0,*}$ gives the image of the homomorphism $H^{*}(\mathbf{B}PU_r;\mathbb{Z})\rightarrow H^{*}(\mathbf{B}U_r;\mathbb{Z})$ induced by the quotient map $\mathbf{B}U_r\rightarrow\mathbf{B}PU_r$. A straightforward calculation gives the following

\begin{lemma}\label{BU to BPU}
The image of the homomorphism $H^{*}(\mathbf{B}PU_r;\mathbb{Z})\rightarrow H^{*}(\mathbf{B}U_r;\mathbb{Z})$ in degree $4$ and $6$ are generated respectively by $$\epsilon_{2}(r)(rc_2-\frac{r-1}{2}c_1^2)$$
and
$$\frac{\epsilon_{3}(r)}{\epsilon_{2}(r)\epsilon_2(\frac{r-2}{\epsilon_{2}(r-2)})}\big[r^2c_3-r(r-2)c_1c_2+
\frac{(r-1)(r-2)}{3}c_1^3\big].$$
\end{lemma}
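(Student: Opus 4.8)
The plan is to take as given (from the discussion preceding the lemma) that in degrees $4$ and $6$ the image of the restriction map $H^{*}(\mathbf{B}PU_r;\mathbb{Z})\to H^{*}(\mathbf{B}U_r;\mathbb{Z})$ coincides with $\operatorname{Ker}{}^{U}d_3^{0,*}$, so that the entire computation is the determination of this kernel from the formula ${}^{U}d_3(c_k)=(r-k+1)c_{k-1}x_1$ together with the Leibniz rule. First I would record that $H^{4}(\mathbf{B}U_r;\mathbb{Z})=\mathbb{Z}\{c_1^2,c_2\}$ and, for $r\geq 3$, $H^{6}(\mathbf{B}U_r;\mathbb{Z})=\mathbb{Z}\{c_1^3,c_1c_2,c_3\}$ are free, and that the targets ${}^{U}E_3^{3,2}=\mathbb{Z}\langle c_1x_1\rangle$ and ${}^{U}E_3^{3,4}=\mathbb{Z}\{c_1^2x_1,c_2x_1\}$ are free as well, since $H^{t}(\mathbf{B}U_r;\mathbb{Z})$ is torsion-free and $H^{3}(K(\mathbb{Z},3);\mathbb{Z})\cong\mathbb{Z}\langle x_1\rangle$. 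Thus ${}^{U}d_3^{0,*}$ is literally an integer matrix, and the image we seek is the saturated rank-one kernel of that matrix, i.e.\ its primitive generator.

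For degree $4$, the Leibniz rule gives ${}^{U}d_3(c_1^2)=2rc_1x_1$ and ${}^{U}d_3(c_2)=(r-1)c_1x_1$, so ${}^{U}d_3^{0,4}$ is the map $(2r,\,r-1)\colon\mathbb{Z}^2\to\mathbb{Z}$. Its kernel is generated by $\tfrac{1}{\gcd(2r,\,r-1)}\bigl(2rc_2-(r-1)c_1^2\bigr)$. Since $\gcd(2r,r-1)=\gcd(2,r-1)$ and $\gcd(2,r-1)\cdot\epsilon_2(r)=2$ (exactly one of $r,r-1$ is even), this primitive generator can be rewritten as $\epsilon_2(r)\bigl(rc_2-\tfrac{r-1}{2}c_1^2\bigr)$, which is the claimed expression (the factor $\epsilon_2(r)$ clears the denominator precisely when $r$ is even).

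For degree $6$ I would compute ${}^{U}d_3(c_1^3)=3rc_1^2x_1$, ${}^{U}d_3(c_1c_2)=(r-1)c_1^2x_1+rc_2x_1$, and ${}^{U}d_3(c_3)=(r-2)c_2x_1$. With respect to the bases above, ${}^{U}d_3^{0,6}$ is the $2\times 3$ matrix $M=\left(\begin{smallmatrix}3r&r-1&0\\0&r&r-2\end{smallmatrix}\right)$, which has rank $2$; hence its integral kernel is free of rank one and is generated by the primitive multiple of the vector of signed maximal minors $\bigl((r-1)(r-2),\,-3r(r-2),\,3r^2\bigr)$, i.e.\ by $\tfrac1g$ times that vector, where $g=\gcd\!\bigl((r-1)(r-2),\,3r(r-2),\,3r^2\bigr)$. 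Observing that this minor vector equals $3$ times the bracket $\bigl[r^2c_3-r(r-2)c_1c_2+\tfrac{(r-1)(r-2)}{3}c_1^3\bigr]$ appearing in the statement, the generator is $\tfrac{3}{g}$ times that bracket, and the lemma reduces to the arithmetic identity $g=3\,\epsilon_2(r)\,\epsilon_2\!\bigl(\tfrac{r-2}{\epsilon_2(r-2)}\bigr)/\epsilon_3(r)$, so that $\tfrac{3}{g}=\epsilon_3(r)/\bigl(\epsilon_2(r)\,\epsilon_2(\tfrac{r-2}{\epsilon_2(r-2)})\bigr)$.

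The main obstacle is exactly this last $\gcd$ identity, which I would verify prime by prime on $p$-adic valuations $v_p(g)=\min\bigl(v_p((r-1)(r-2)),v_p(3r(r-2)),v_p(3r^2)\bigr)$. For $p\geq 5$, since $r$, $r-1$, $r-2$ are three consecutive integers and $v_p(3r^2)=2v_p(r)$, a short case check on which of them $p$ divides forces $v_p(g)=0$, matching the $2$- and $3$-power right-hand side. For $p=3$, splitting on $r\bmod 3$ gives $v_3(g)=0$ when $3\mid r$ and $v_3(g)=1$ otherwise, i.e.\ $v_3(g)=1-v_3(\epsilon_3(r))$, as required. The delicate part is $p=2$: for $r$ odd all three valuations collapse to $0$, while for $r$ even one must distinguish $r\equiv 0$ from $r\equiv 2\pmod 4$ using $v_2(g)=\min\bigl(v_2(r-2),\,2v_2(r)\bigr)$, obtaining $v_2(g)=1$ when $4\mid r$ and $v_2(g)=2$ when $r\equiv 2\pmod 4$. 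This is precisely the behaviour encoded by the factor $\epsilon_2\!\bigl(\tfrac{r-2}{\epsilon_2(r-2)}\bigr)$, and checking that this factor reproduces the three $2$-adic cases is the step that needs the most care.
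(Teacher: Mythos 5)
Your proposal is correct and takes essentially the same approach as the paper: the paper reduces the lemma to computing $\operatorname{Ker}{}^{U}d_3^{0,4}$ and $\operatorname{Ker}{}^{U}d_3^{0,6}$ from the formula ${}^{U}d_{3}(c_{k})=(r-k+1)c_{k-1}x_1$ together with the Leibniz rule, and dismisses the rest as ``a straightforward calculation.'' Your integer linear algebra (the $(2r,\,r-1)$ map in degree $4$, the $2\times 3$ matrix and its vector of signed minors in degree $6$) and the prime-by-prime verification of the gcd identity, including the $2$-adic case split between $r\equiv 0$ and $r\equiv 2 \pmod 4$, are exactly that calculation carried out in full, and they check out.
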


By pre-composing the quotient map with the inclusion $SU_r\hookrightarrow U_r$, we obtain another quotient map $SU_r\rightarrow PU_r$. Applying the classifying space functor and taking integral cohomology, we obtain the homomorphism
$$H^{*}(\mathbf{B}PU_r;\mathbb{Z})\rightarrow H^{*}(\mathbf{B}SU_r;\mathbb{Z}).$$
Recall that the inclusion $SU_r\hookrightarrow U_r$ induces a homomorphism
$$H^{*}(\mathbf{B}U_r;\mathbb{Z})\cong\mathbb{Z}[c_1,\cdots,c_n]\rightarrow H^{*}(\mathbf{B}SU_r;\mathbb{Z})\cong
\mathbb{Z}[c_2,\cdots,c_n]$$
which annihilates $c_1$ and takes $c_i$ to itself, for $i>1$. Therefore, Lemma \ref{BU to BPU} immediately implies the following
\begin{lemma}\label{BSU to BPU}
The image of the homomorphism
$$H^{*}(\mathbf{B}PU_r;\mathbb{Z})\rightarrow H^{*}(\mathbf{B}SU_r;\mathbb{Z})$$
in degree $4$ and $6$ are generated respectively by $\epsilon_{2}(r)rc_2$ and
$$\frac{\epsilon_{3}(r)r^2}{\epsilon_{2}(r)\epsilon_2(\frac{r-2}{\epsilon_{2}(r-2)})}c_3.$$
\end{lemma}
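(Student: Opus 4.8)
The plan is to deduce this directly from Lemma \ref{BU to BPU} by exploiting the fact that the quotient map $SU_r \to PU_r$ factors through $U_r$. By construction this map is the composite $SU_r \xhookrightarrow{\iota} U_r \xrightarrow{q} PU_r$, so after applying the classifying space functor and taking integral cohomology we obtain a factorization
$$H^{*}(\mathbf{B}PU_r;\mathbb{Z}) \xrightarrow{q^{*}} H^{*}(\mathbf{B}U_r;\mathbb{Z}) \xrightarrow{\iota^{*}} H^{*}(\mathbf{B}SU_r;\mathbb{Z}),$$
where, as recalled just above the statement, $\iota^{*}$ sends $c_1$ to $0$ and fixes $c_i$ for $i>1$. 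The homomorphism in the lemma is precisely the composite $\iota^{*}\circ q^{*}$.

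Next I would invoke the elementary fact that the image of a composite of homomorphisms $g\circ f$ equals $g(\operatorname{Im} f)$. Applied here, the image of $\iota^{*}\circ q^{*}$ in any fixed degree is $\iota^{*}$ applied to $\operatorname{Im}(q^{*})$. In degrees $4$ and $6$ the group $\operatorname{Im}(q^{*})$ is cyclic, generated by the single explicit element furnished by Lemma \ref{BU to BPU}; consequently its image under $\iota^{*}$ is again singly generated, by the $\iota^{*}$-image of that generator. It then remains only to carry out the substitution $c_1 \mapsto 0$. In degree $4$ the generator $\epsilon_{2}(r)\bigl(rc_2-\tfrac{r-1}{2}c_1^2\bigr)$ becomes $\epsilon_{2}(r)\,r\,c_2$. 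In degree $6$ every term of the bracketed expression except $r^2c_3$ carries a factor of $c_1$ and hence vanishes, leaving $\frac{\epsilon_{3}(r)\,r^2}{\epsilon_{2}(r)\epsilon_2(\frac{r-2}{\epsilon_{2}(r-2)})}\,c_3$, exactly the asserted generators.

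There is essentially no genuine obstacle here: all of the arithmetic content is already packaged into Lemma \ref{BU to BPU}, and the only thing to verify is that passing to the image commutes with the restriction $\iota^{*}$, which is the tautology above. The single point deserving a word of care is the claim that $\operatorname{Im}(q^{*})$ is genuinely cyclic in these two degrees, so that one may read off the image under $\iota^{*}$ from a single generator rather than from a set of generators; this cyclicity is guaranteed by the phrasing (``generated respectively by'') of Lemma \ref{BU to BPU} itself, so no further work is needed.
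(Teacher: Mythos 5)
Your proposal is correct and is essentially identical to the paper's own proof: the paper likewise factors the quotient $SU_r \to PU_r$ through $U_r$, notes that the induced map $H^{*}(\mathbf{B}U_r;\mathbb{Z}) \to H^{*}(\mathbf{B}SU_r;\mathbb{Z})$ annihilates $c_1$ and fixes $c_i$ for $i>1$, and then reads off the generators from Lemma \ref{BU to BPU} by setting $c_1=0$. Your extra remark about cyclicity of the image being preserved under a homomorphism is a fine (if tautological) point of care that the paper leaves implicit.
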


We conclude this section with the following
\begin{proposition}\label{H^6 BP(n,mn) to H^6 BSU_mn}
Let $n$ and $m$ be such that $\epsilon_{2}(n)n|m$. Consider the quotient map $SU_{mn}\rightarrow P(n,mn)$. The induced homomorphism
$$H^{6}(\mathbf{B}P(n,mn);\mathbb{Z})\rightarrow H^{6}(\mathbf{B}SU_{mn};\mathbb{Z})=\mathbb{Z}[c_3]$$
has image generated by
$$\frac{\epsilon_3(mn)}{\epsilon_3(m/n)\epsilon_{2}(n)}nc_3.$$
\end{proposition}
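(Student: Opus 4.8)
The plan is to exploit the factorization of the central quotient $SU_{mn}\to PU_{mn}$ through $P(n,mn)$. Since the subgroups $\mu_n\subset\mu_{mn}$ of scalar matrices are exactly the central subgroups defining $P(n,mn)$ and $PU_{mn}$ as quotients of $SU_{mn}$, the composite $SU_{mn}\xrightarrow{q}P(n,mn)\xrightarrow{\varphi}PU_{mn}$ is precisely the standard projection onto the adjoint group. Applying $\mathbf{B}(-)$ and taking degree-$6$ integral cohomology, I obtain
\begin{equation*}
H^{6}(\mathbf{B}PU_{mn};\mathbb{Z})\xrightarrow{f}H^{6}(\mathbf{B}P(n,mn);\mathbb{Z})\xrightarrow{g}H^{6}(\mathbf{B}SU_{mn};\mathbb{Z})=\mathbb{Z}c_3,
\end{equation*}
where $f=(\mathbf{B}\varphi)^{*}$, $g=(\mathbf{B}q)^{*}$, and $g\circ f$ is exactly the map whose image is computed by Lemma \ref{BSU to BPU} with $r=mn$. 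The quantity I am after is the image of $g$.

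First I would reduce everything to free parts. Because the target $\mathbb{Z}c_3$ is torsion-free, $g$ annihilates all torsion in $H^{6}(\mathbf{B}P(n,mn);\mathbb{Z})$, which by Corollary \ref{H^6(BP)} is $\mathbb{Z}\oplus\mathbb{Z}/2$ (the $\mathbb{Z}/2$ generated by $(x_1')^2$) when $n$ is even and $\mathbb{Z}$ when $n$ is odd. Hence, writing $w$ for a generator of the free summand, the image of $g$ equals $\mathbb{Z}\cdot g(w)$, so it suffices to find the integer $k$ with $g(w)=kc_3$. Likewise, by Theorem \ref{Cohomology of BPU} one has $H^{6}(\mathbf{B}PU_{mn};\mathbb{Z})=\mathbb{Z}e_3\oplus(\mathbb{Z}/\epsilon_2(mn))x_1^2$, so $g\circ f$ kills $x_1^2$ and its image is generated by $(g\circ f)(e_3)$. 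By Lemma \ref{BSU to BPU} this image is $\mathbb{Z}\cdot E$ with $E=\frac{\epsilon_3(mn)(mn)^2}{\epsilon_2(mn)\epsilon_2(\frac{mn-2}{\epsilon_2(mn-2)})}c_3$, whence $(g\circ f)(e_3)=\pm E$.

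Next I would feed in the first map. Corollary \ref{H^6(BP)}(2) says that $f$ descends to multiplication by an explicit integer $c$ on the free quotients $H^{6}(\mathbf{B}PU_{mn})/(x_1^2)\cong\mathbb{Z}e_3$ and $H^{6}(\mathbf{B}P(n,mn))/((x_1')^2)\cong\mathbb{Z}w$; that is, $f(e_3)\equiv c\,w$ modulo torsion, with $c$ taking the three case-dependent values recorded there. Applying $g$ and using that it kills torsion gives $(g\circ f)(e_3)=c\,g(w)=c\,k\,c_3$; comparing with $(g\circ f)(e_3)=\pm E$ (in particular $ck\neq0$, so $g(w)\neq0$) yields $k=\frac{1}{c}\cdot\frac{\epsilon_3(mn)(mn)^2}{\epsilon_2(mn)\epsilon_2(\frac{mn-2}{\epsilon_2(mn-2)})}$.

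The remaining, and principal, task is to verify that this $k$ simplifies to $\frac{\epsilon_3(mn)}{\epsilon_3(m/n)\epsilon_2(n)}n$ in each case of Corollary \ref{H^6(BP)}(2). Writing the case-value as $c=\epsilon_3(m/n)nm^2/d$ with $d\in\{1,2,4\}$, and using $(mn)^2/(nm^2)=n$, the $\epsilon_3$-factors and the even powers of $m,n$ cancel immediately, reducing the claim to the purely $2$-adic identity
\begin{equation*}
d\cdot\epsilon_2(n)=\epsilon_2(mn)\cdot\epsilon_2\left(\frac{mn-2}{\epsilon_2(mn-2)}\right).
\end{equation*}
I expect this bookkeeping to be the main obstacle, and it is precisely here that the hypothesis $\epsilon_2(n)n\mid m$ enters: it controls the $2$-adic valuation of $mn$, forcing $4\mid mn$ when $n$ is even, while $mn\equiv 2\pmod 4$ can occur only in the mixed-parity case ($n$ odd, $m\equiv 2\pmod 4$). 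This pins down both $\epsilon_2(mn)$ and $\epsilon_2(\frac{mn-2}{\epsilon_2(mn-2)})$, and running through the four parity subcases ($n$ even; $m,n$ both odd; $n$ odd with $4\mid m$; $n$ odd with $m\equiv 2\pmod 4$) confirms the identity uniformly, completing the proof.
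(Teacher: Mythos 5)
Your proposal is correct and takes essentially the same route as the paper: factoring $SU_{mn}\to PU_{mn}$ through $P(n,mn)$, combining Lemma \ref{BSU to BPU} (with $r=mn$) and Corollary \ref{H^6(BP)}(2), and reducing everything to the $2$-adic identity $d\cdot\epsilon_2(n)=\epsilon_2(mn)\,\epsilon_2\bigl(\tfrac{mn-2}{\epsilon_2(mn-2)}\bigr)$, which does hold in all four parity subcases (the paper states the equivalent case-by-case equation without proof). Your write-up merely makes explicit the torsion bookkeeping and the division argument that the paper leaves implicit.
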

\begin{proof}
Notice that the quotient map $SU_{mn}\rightarrow PU_{mn}$ can be factorized as $SU_{mn}\rightarrow P(n,mn)\rightarrow PU_{mn}$, and the proposition follows from Lemma \ref{BSU to BPU} and Corollary \ref{H^6(BP)}, once we notice the following equation:
\begin{equation*}
\frac{\epsilon_3(mn)m^2n^2}{\epsilon_2(mn)\epsilon_2(\frac{mn-2}{\epsilon_2(mn-2)})}=
\begin{cases}
\frac{\epsilon_3(mn)m^2n^2}{\epsilon_2(n)}, \textrm{ $n$ is even(hence so is $m$), or $m,n$ are odd,}\\
\frac{\epsilon_3(mn)m^2n^2}{2\epsilon_2(n)}, \textrm{ $n$ is odd, and $4|m$,}\\
\frac{\epsilon_3(mn)m^2n^2}{4\epsilon_2(n)}, \textrm{ $n$ is odd, and $m=2(2l+1)$ for some $l$.}\\
\end{cases}
\end{equation*}
\end{proof}
\section{Proof of Theorem \ref{main1}}
We proceed to consider $H^{7}(\mathbf{B}P(n,mn);\mathbb{Z})$. Consider the fiber sequence
$$\mathbf{B}SU_{mn}\rightarrow\mathbf{B}P(n,mn)\rightarrow K(\mathbb{Z}/n,2)$$
and the associated integral cohomological Serre spectral sequence
${^{S}E}_{*}^{*,*}$ with
$${^{S}E}_{2}^{s,t}\cong H^{s}(K(\mathbb{Z}/n,2);H^{t}(\mathbf{B}SU_{mn};\mathbb{Z})).$$
We have the following
\begin{lemma}\label{^S d_3}
Suppose that $\epsilon_{2}(n)n|m$. Recall that $H^{3}(K(\mathbb{Z}/n,2);\mathbb{Z}))\cong\mathbb{Z}/n$ is generated by an element $\beta_n$, and that $H^{7}(K(\mathbb{Z}/n,2);\mathbb{Z}))\cong\mathbb{Z}/\epsilon_{3}(n)n$ is generated by $R_n$. In the spectral sequence ${^{S}E}_{*}^{*,*}$, we have ${^{S}d}_{3}^{0,6}(c_3)=2c_{2}\beta_{n}$ with kernel generated by
$$\frac{n}{\epsilon_{2}(n)}c_3,$$
and
$${^{S}d}_{7}^{0,6}(\frac{n}{\epsilon_{2}(n)}c_3)=
\frac{\epsilon_{3}(n)\epsilon_3(m/n)}{\epsilon_3(mn)}nR_n.$$
All the other differentials out of ${^{S}E}_{*}^{0,6}$ are trivial.

In particular ${^{S}d}_{3}^{0,6}$ is the only non-trivial differential out of ${^{S}E}_{*}^{0,6}$ when $\epsilon_3(n)n|m$.
\end{lemma}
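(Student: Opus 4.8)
The plan is to split the computation into the determination of ${}^{S}d_3^{0,6}$, which I would obtain by naturality against the better-understood spectral sequence of \cite{Gu}, and the determination of ${}^{S}d_7^{0,6}$, which I would extract from the fibre edge homomorphism together with Proposition \ref{H^6 BP(n,mn) to H^6 BSU_mn}. Throughout I use that the base $K(\mathbb{Z}/n,2)$ is simply connected, so there are no $d_2$-differentials and ${}^{S}E_3={}^{S}E_2$; in particular ${}^{S}E_3^{0,6}=H^6(\mathbf{B}SU_{mn};\mathbb{Z})=\mathbb{Z}\langle c_3\rangle$ and ${}^{S}E_3^{3,4}=H^3(K(\mathbb{Z}/n,2);\mathbb{Z})=\mathbb{Z}/n\langle c_2\beta_n\rangle$, the orders being read off from (\ref{K(Z/2,2)}).

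For ${}^{S}d_3^{0,6}$ I would compare the fibre sequence $\mathbf{B}SU_{mn}\to\mathbf{B}P(n,mn)\to K(\mathbb{Z}/n,2)$ with the fibre sequence $\mathbf{B}U_{mn}\to\mathbf{B}PU_{mn}\to K(\mathbb{Z},3)$ of \cite{Gu}. These arise by delooping the central extensions $\mathbb{Z}/n\to SU_{mn}\to P(n,mn)$ and $S^1\to U_{mn}\to PU_{mn}$, which are related by the inclusion $\mathbb{Z}/n\hookrightarrow S^1$ of $n$-th roots of unity, the inclusion $SU_{mn}\hookrightarrow U_{mn}$, and the quotient $\varphi\colon P(n,mn)\to PU_{mn}$. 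This yields a map of Serre spectral sequences ${}^{U}E\to{}^{S}E$, under which the fibre restriction $H^*(\mathbf{B}U_{mn})\to H^*(\mathbf{B}SU_{mn})$ sends $c_k\mapsto c_k$ for $k\geq2$ (and $c_1\mapsto0$), while the base map sends $x_1\mapsto\beta_n$. Applying naturality to the formula ${}^{U}d_3(c_3)=(mn-2)c_2x_1$ from \cite{Gu} gives ${}^{S}d_3^{0,6}(c_3)=(mn-2)c_2\beta_n$; since $n\mid mn$ this equals $-2c_2\beta_n$, which is $2c_2\beta_n$ after fixing the sign of the generator $\beta_n$. As $c_2\beta_n$ has order $n$, the kernel of ${}^{S}d_3^{0,6}$ on $\mathbb{Z}\langle c_3\rangle$ is exactly $\langle\frac{n}{\epsilon_2(n)}c_3\rangle$ in either sign convention.

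For the higher differentials I would first note that ${}^{S}d_5^{0,6}$ lands in a subquotient of ${}^{S}E_2^{5,2}=H^5(K(\mathbb{Z}/n,2);H^2(\mathbf{B}SU_{mn}))=0$, and that every $d_r^{0,6}$ with $r\geq8$ lands in negative fibre degree; hence ${}^{S}d_3^{0,6}$ and ${}^{S}d_7^{0,6}$ are the only candidates out of $(0,6)$. Thus ${}^{S}E_7^{0,6}=\ker{}^{S}d_3^{0,6}=\langle\frac{n}{\epsilon_2(n)}c_3\rangle\cong\mathbb{Z}$, and $d_7$ is simultaneously the last differential out of $(0,6)$ and the only one into ${}^{S}E_7^{7,0}=H^7(K(\mathbb{Z}/n,2);\mathbb{Z})=\mathbb{Z}/\epsilon_3(n)n\langle R_n\rangle$. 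Consequently ${}^{S}E_\infty^{0,6}=\ker{}^{S}d_7^{0,6}$, and by the fibre edge homomorphism ${}^{S}E_\infty^{0,6}$ is the image of $H^6(\mathbf{B}P(n,mn);\mathbb{Z})\to H^6(\mathbf{B}SU_{mn};\mathbb{Z})$, which Proposition \ref{H^6 BP(n,mn) to H^6 BSU_mn} identifies with $\langle\frac{\epsilon_3(mn)}{\epsilon_3(m/n)\epsilon_2(n)}n\,c_3\rangle$. Comparing with the generator $\frac{n}{\epsilon_2(n)}c_3$ of ${}^{S}E_7^{0,6}$ shows that the image of ${}^{S}d_7^{0,6}$ is the cyclic subgroup of $\mathbb{Z}/\epsilon_3(n)n$ of order $\frac{\epsilon_3(mn)}{\epsilon_3(m/n)}$, which is generated by $\frac{\epsilon_3(n)\epsilon_3(m/n)}{\epsilon_3(mn)}nR_n$; this yields the asserted value of ${}^{S}d_7^{0,6}(\frac{n}{\epsilon_2(n)}c_3)$ up to a unit. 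For the final clause, a short case split on whether $3\mid n$ shows that $\epsilon_3(n)n\mid m$ forces $\epsilon_3(mn)=\epsilon_3(m/n)$, so that order is $1$ and ${}^{S}d_7^{0,6}=0$.

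The main obstacle I anticipate is making the ${}^{S}d_3^{0,6}$ comparison fully rigorous: one must check that the inclusion of central extensions genuinely induces a map of the two delooped fibre sequences, and, crucially, that the induced base map $K(\mathbb{Z}/n,2)\to K(\mathbb{Z},3)$ is $\mathbf{B}^2$ of $\mathbb{Z}/n\hookrightarrow S^1$ and hence pulls $x_1$ back to a \emph{generator} $\beta_n$ rather than to some proper multiple. Pinning this down is what fixes the coefficient $(mn-2)\equiv-2\pmod n$, and thereby the appearance of $\epsilon_2(n)$ in the kernel; once it is settled, the remaining steps are bookkeeping with the orders recorded in (\ref{K(Z/2,2)}) and Proposition \ref{H^6 BP(n,mn) to H^6 BSU_mn}.
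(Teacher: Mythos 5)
Your proposal is correct and follows essentially the same route as the paper: the value of ${}^{S}d_3^{0,6}$ is obtained by comparing with the spectral sequence of $\mathbf{B}U_{mn}\to\mathbf{B}PU_{mn}\to K(\mathbb{Z},3)$ from \cite{Gu}, and ${}^{S}d_7^{0,6}$ is deduced from the fibre edge homomorphism together with Proposition \ref{H^6 BP(n,mn) to H^6 BSU_mn}. Your write-up merely makes explicit what the paper calls an ``easy comparison'' (the morphism of central extensions, the reduction $(mn-2)\equiv -2 \bmod n$, and the degree bookkeeping), so it is a faithful, slightly more detailed version of the paper's own argument.
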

\begin{proof}
See Figure \ref{^S E figure} for the differentials of the spectral sequence ${^{S}E}_{*}^{*,*}$ discussed here. For degree reasons the only potentially non-trivial differential out of ${^{S}E}_{2}^{0,4}$ is ${^{S}d}_{5}^{0,4}$. By Proposition \ref{H^5(BP)} we have
$${^{S}E}_{\infty}^{5,0}\cong\mathbb{Z}/\epsilon_{2}(n)n\cong{^{S}E}_{2}^{5,0},$$
from which it follows that ${^{S}d}_{5}^{0,4}=0$. The statement about ${^{S}d}_{3}^{0,6}$ follows from an easy comparison of the cohomological Serre spectral sequences between the fiber sequences
$$\mathbf{B}SU_{mn}\rightarrow\mathbf{B}P(n,mn)\rightarrow K(\mathbb{Z}/n,2)$$
and
$$\mathbf{B}U_{mn}\rightarrow\mathbf{B}PU_{mn}\rightarrow K(\mathbb{Z},3),$$
the $E_3$-page of the latter of which is well understood in \cite{Gu}. Therefore, the statement about ${^{S}d}_{7}^{0,6}$ follows from Proposition \ref{H^6 BP(n,mn) to H^6 BSU_mn}.
\end{proof}

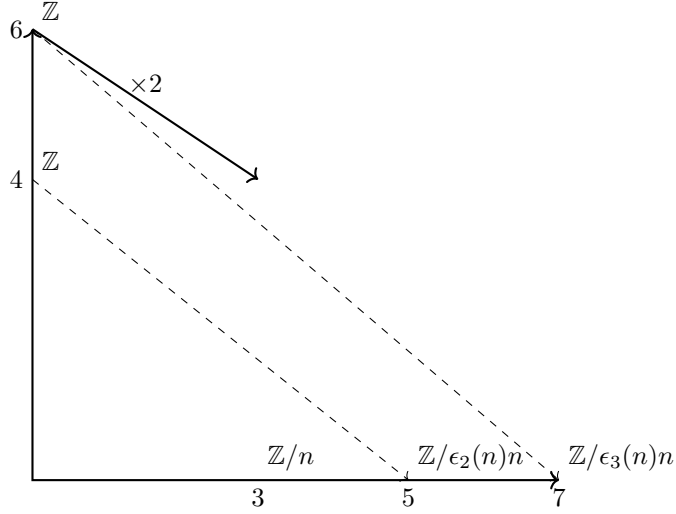
\begin{figure}[!]
\begin{tikzpicture}
\draw[step=0.5cm,gray, thick];
\draw [ thick, <->] (0,6)--(0,0)--(7,0);

\node [below] at (3,0) {$3$};
\node [below] at (5,0) {$5$};
\node [below] at (7,0) {$7$};
\node [left] at (0,4) {$4$};
\node [left] at (0,6) {$6$};
\node [above right] at (0,4) {$\mathbb{Z}$};
\node [above right] at (0,6) {$\mathbb{Z}$};
\node [above right] at (3,0) {$\mathbb{Z}/n$};
\node [above right] at (5,0) {$\mathbb{Z}/\epsilon_{2}(n)n$};
\node [above right] at (7,0) {$\mathbb{Z}/\epsilon_{3}(n)n$};
\draw (0,4) [dashed, ->] to (5,0);
\draw (0,6) [thick, ->] to node [above] {$\times 2$}(3,4);
\draw (0,6) [dashed, ->] to (7,0);
\end{tikzpicture}
\caption{Low dimensional differentials of the spectral sequence $^{S}E_{*}^{*,*}$, when $\epsilon_3(n)n|m$. The dashed arrows represent trivial differentials.}\label{^S E figure}
\end{figure}

For future convenience we introduce the following notation:
\begin{equation}\label{I(m,n)}
I(m,n)=\frac{\epsilon_3(m/n)\epsilon_3(n)}{\epsilon_3(mn)}n.
\end{equation}
Lemma \ref{^S d_3} has the following immediate consequence:
\begin{corollary}\label{H^7(BP(n,mn)) n odd}
Assume that $n$ is odd, and $n|m$. Then we have
$$H^{7}(\mathbf{B}P(n,mn);\mathbb{Z})\cong\mathbb{Z}/I(m,n),$$
which is generated by $R_n(x_1'')$. Here $x_1''$ generates $H^{2}(\mathbf{B}P(n,mn);\mathbb{Z}/n)$. Moreover, $B(x_1'')=x_1'$ where $B$ is the Bockstein homomorphism.
\end{corollary}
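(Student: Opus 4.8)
The plan is to read off $H^{7}(\mathbf{B}P(n,mn);\mathbb{Z})$ directly from the spectral sequence ${^{S}E}_{*}^{*,*}$ by means of Lemma \ref{^S d_3}. Since $n$ is odd we have $\epsilon_{2}(n)=1$, so the hypothesis $n\mid m$ is exactly the condition $\epsilon_{2}(n)n\mid m$ needed to invoke that lemma. First I would list the entries of total degree $7$ on the ${^{S}E}_{2}$-page. Because $H^{*}(\mathbf{B}SU_{mn};\mathbb{Z})$ vanishes in all odd degrees and in degree $2$, and $H^{s}(K(\mathbb{Z}/n,2);\mathbb{Z})$ vanishes for $s=1,2$, the only potentially nonzero entries with $s+t=7$ are ${^{S}E}_{2}^{7,0}\cong\mathbb{Z}/\epsilon_{3}(n)n$, generated by $R_n$, and ${^{S}E}_{2}^{3,4}\cong\mathbb{Z}/n$, generated by $c_{2}\beta_n$. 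By Lemma \ref{^S d_3} we have ${^{S}d}_{3}^{0,6}(c_3)=2c_{2}\beta_n$; as $n$ is odd, $2$ is a unit modulo $n$, so this differential is surjective onto the $(3,4)$ entry and ${^{S}E}_{4}^{3,4}=0$. Hence the only surviving contribution of total degree $7$ is in filtration $(7,0)$, there is no extension problem, and $H^{7}(\mathbf{B}P(n,mn);\mathbb{Z})\cong{^{S}E}_{\infty}^{7,0}$.

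Next I would compute ${^{S}E}_{\infty}^{7,0}$. The kernel of ${^{S}d}_{3}^{0,6}$ is generated by $\tfrac{n}{\epsilon_{2}(n)}c_3=nc_3$, and for degree reasons no further differential can leave the $(0,6)$ position before page $7$: on pages $4,5,6$ the respective targets $(4,3),(5,2),(6,1)$ all vanish. Thus ${^{S}E}_{7}^{0,6}\cong\mathbb{Z}$, generated by $nc_3$. Likewise the only nontrivial differential entering $(7,0)$ is ${^{S}d}_{7}^{0,6}$, the earlier candidates from $(4,2)$ and $(2,4)$ having zero source. Lemma \ref{^S d_3} then gives ${^{S}d}_{7}^{0,6}(nc_3)=I(m,n)R_n$, so that ${^{S}E}_{\infty}^{7,0}=\operatorname{coker}({^{S}d}_{7}^{0,6})=(\mathbb{Z}/\epsilon_{3}(n)n)/\langle I(m,n)R_n\rangle$. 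A short arithmetic check completes this step: since $3\mid m/n$ forces $3\mid mn$, we have $\epsilon_{3}(m/n)\mid\epsilon_{3}(mn)$, whence $I(m,n)\mid\epsilon_{3}(n)n$, and the cokernel is cyclic of order exactly $I(m,n)$, giving $H^{7}(\mathbf{B}P(n,mn);\mathbb{Z})\cong\mathbb{Z}/I(m,n)$.

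Finally I would identify the generator and the Bockstein relation. The composite
${^{S}E}_{2}^{7,0}=H^{7}(K(\mathbb{Z}/n,2);\mathbb{Z})\twoheadrightarrow{^{S}E}_{\infty}^{7,0}\hookrightarrow H^{7}(\mathbf{B}P(n,mn);\mathbb{Z})$
is the pullback along the projection $\pi\colon\mathbf{B}P(n,mn)\to K(\mathbb{Z}/n,2)$, so the image $\pi^{*}R_n=R_n(x_1'')$ of the generator $R_n$ generates ${^{S}E}_{\infty}^{7,0}=H^{7}(\mathbf{B}P(n,mn);\mathbb{Z})$. For the last clause, $x_1''$ is by construction the pullback $\pi^{*}\iota$ of the fundamental class $\iota\in H^{2}(K(\mathbb{Z}/n,2);\mathbb{Z}/n)$, whose integral Bockstein is $\beta_n$; naturality of $B$ then yields $B(x_1'')=\pi^{*}\beta_n$, a generator of $H^{3}(\mathbf{B}P(n,mn);\mathbb{Z})\cong\mathbb{Z}/n$ which is identified with $x_1'$ through Lemma \ref{d_3}.

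The computation is genuinely immediate once Lemma \ref{^S d_3} is in hand, so there is no serious obstacle; the only points requiring care are the bookkeeping that ${^{S}d}_{7}^{0,6}$ is the sole differential relating $(0,6)$ and $(7,0)$ (which is forced by the vanishing of the intermediate targets and sources above), and the divisibility $I(m,n)\mid\epsilon_{3}(n)n$ that turns the cokernel into $\mathbb{Z}/I(m,n)$.
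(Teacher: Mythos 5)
Your proof is correct and follows essentially the same route as the paper: the paper states this corollary as an immediate consequence of Lemma \ref{^S d_3}, and your argument is precisely the spelling-out of that spectral-sequence bookkeeping for ${^{S}E}_{*}^{*,*}$ (vanishing of all other total-degree-$7$ entries, surjectivity of ${^{S}d}_{3}^{0,6}$ onto the $(3,4)$ entry for $n$ odd, the differential ${^{S}d}_{7}^{0,6}$ giving the cokernel $\mathbb{Z}/I(m,n)$, and the edge-homomorphism identification of the generator $R_n(x_1'')$ and of $B(x_1'')=x_1'$).
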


The general case is more complicated. Recall Lemma \ref{noncyclic}, which says that $H^{7}(\mathbf{B}P(n,mn);\mathbb{Z})$ is not a cyclic group when $n$ is even. With a little more work we can impose a strong restriction on the $k$-invariant $\kappa_5$.
\begin{remark}\label{identification}
 Since we have the homotopy equivalence
 $$\mathbf{B}P(n,mn)[4]=\mathbf{B}P(n,mn)[5]\simeq K(\mathbb{Z}/n,2)\times K(\mathbb{Z},4),$$
 the induced homomorphism
 $$H^{*}(K(\mathbb{Z}/n,2)\times K(\mathbb{Z},4);\mathbb{Z})\rightarrow H^{*}(\mathbf{B}P(n,mn);\mathbb{Z})$$
 is an isomorphism  in degree $\leq 6$ and an epimorphism in degree $7$. In view of this, in what follows we do not explicitly distinguish the elements $x_1', R_n(x_1''), e_2'$ with $\beta_n, R_n, \iota_4$, but stay aware of the relations in degree $7$.
\end{remark}
\begin{lemma}\label{kappa_5}
Assume $\epsilon_{2}(n)n|m$. Then we have
$$\kappa_5=\lambda_1 R_n\times 1+\lambda_2\beta_n\times\iota_4+1\times\Gamma_4$$
where $\lambda_1\in\mathbb{Z}/\epsilon_{3}(n)n$, $\lambda_2\in\mathbb{Z}/n$. Furthermore, the subgroups of $\mathbb{Z}/n$ generated by $2\lambda_2$ contains $2$.

In particular, if $n$ is odd, then, up to a scalar multiple, we have
$$\kappa_5=\lambda_1 R_n\times 1+\beta_n\times\iota_4+1\times\Gamma_4$$
\end{lemma}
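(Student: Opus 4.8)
The plan is to read off $\kappa_5$ from the sixth stage of the Postnikov tower. By Proposition \ref{kappa3} this stage is a principal $K(\mathbb{Z},6)$-fibration over $\mathbf{B}P(n,mn)[5]\simeq K(\mathbb{Z}/n,2)\times K(\mathbb{Z},4)$, classified by a $k$-invariant $\kappa_5\in H^7(K(\mathbb{Z}/n,2)\times K(\mathbb{Z},4);\mathbb{Z})$. First I would compute this group by the K\"unneth theorem, feeding in (\ref{K(Z/2,2)}) together with the description (\ref{K(Z,n)}) of $H^*(K(\mathbb{Z},4);\mathbb{Z})$ in the relevant range, where $\iota_4$ has degree $4$ and $\Gamma_4$ has degree $7$ and order $2$. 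All $\operatorname{Tor}$ summands and all other tensor summands vanish for degree reasons, leaving $H^7\cong\mathbb{Z}/\epsilon_3(n)n\oplus\mathbb{Z}/n\oplus\mathbb{Z}/2$ with generators $R_n\times1$, $\beta_n\times\iota_4$ and $1\times\Gamma_4$. Thus $\kappa_5=\lambda_1 R_n\times1+\lambda_2\beta_n\times\iota_4+\lambda_3\,1\times\Gamma_4$, and the task reduces to pinning down $\lambda_3$ and constraining $\lambda_2$.

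To see $\lambda_3=1$ I would use naturality of $k$-invariants along the fibre inclusion $\mathbf{B}SU_{mn}\hookrightarrow\mathbf{B}P(n,mn)$ coming from the simply connected cover $SU_{mn}\to P(n,mn)$. This map is an isomorphism on $\pi_i$ for $i\geq3$, so on fifth Postnikov stages it is (up to sign) the inclusion $K(\mathbb{Z},4)\hookrightarrow K(\mathbb{Z}/n,2)\times K(\mathbb{Z},4)$ of the $\iota_4$-factor, realising the identity on $\pi_4$. The first nonzero $k$-invariant of $\mathbf{B}SU_{mn}$ lies in $H^7(K(\mathbb{Z},4);\mathbb{Z})\cong\mathbb{Z}/2\langle\Gamma_4\rangle$, and it must be the nonzero class: otherwise $\mathbf{B}SU_{mn}[6]\simeq K(\mathbb{Z},4)\times K(\mathbb{Z},6)$ would have $H^7\cong\mathbb{Z}/2$, contradicting $H^7(\mathbf{B}SU_{mn};\mathbb{Z})=0$ (the ring $\mathbb{Z}[c_2,c_3,\dots]$ is concentrated in even degrees, and $\mathbf{B}SU_{mn}\to\mathbf{B}SU_{mn}[6]$ is an isomorphism on $H^7$ in the stable range $2mn>7$). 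Restricting $\kappa_5$ along the inclusion annihilates $R_n\times1$ and $\beta_n\times\iota_4$ and carries $1\times\Gamma_4$ to $\Gamma_4$; matching this with the $\mathbf{B}SU_{mn}$ $k$-invariant forces $\lambda_3=1$. (For functoriality I may use that $\Gamma_4=B\overline{\operatorname{Sq}^2}\iota_4$ by Lemma \ref{Steenrod}, pulled back along a map that is odd on $\iota_4$.)

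For $\lambda_2$ I would compare two computations of $H^7(\mathbf{B}P(n,mn))$. Since $\pi_7(\mathbf{B}P(n,mn))=0$, we have $\mathbf{B}P(n,mn)=\mathbf{B}P(n,mn)[6]$ through dimension $7$, and the Serre spectral sequence of the $K(\mathbb{Z},6)$-fibration, whose only degree-$7$ differential is the transgression $\iota_6\mapsto\kappa_5$, gives $H^7(\mathbf{B}P(n,mn))\cong G/\langle\kappa_5\rangle$ with $G=\mathbb{Z}/\epsilon_3(n)n\oplus\mathbb{Z}/n\oplus\mathbb{Z}/2$. On the other hand, Lemma \ref{^S d_3} shows that in ${}^{S}E$ the class $\beta_n\times\iota_4=x_1'e_2'=\beta_nc_2$ is hit only by ${}^{S}d_3^{0,6}(c_3)=2\beta_nc_2$, so it survives to order exactly $\epsilon_2(n)$. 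Equating, the image of $\beta_n\times\iota_4$ in $G/\langle\kappa_5\rangle$ must have order $\epsilon_2(n)$, i.e. $\epsilon_2(n)(\beta_n\times\iota_4)\in\langle\kappa_5\rangle$. Because $\lambda_3=1$, the cyclic group $\langle\kappa_5\rangle$ meets the $\beta_n\times\iota_4$-axis only through even multiples of $\kappa_5$, and unwinding the membership condition yields $\gcd(\lambda_2,n/\epsilon_2(n))=1$, which is precisely the assertion that $2$ lies in the subgroup of $\mathbb{Z}/n$ generated by $2\lambda_2$. When $n$ is odd this says $\lambda_2$ is a unit; rescaling $\kappa_5$ by an odd representative of $\lambda_2^{-1}$ (available since $n$ is odd) then normalises $\lambda_2$ to $1$ while preserving the coefficient $1$ of $1\times\Gamma_4$, giving the final displayed formula.

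The hard part will be this last step, namely extracting a constraint on $\lambda_2$ \emph{alone}. The two spectral sequences filter $H^7$ differently, so I must control the extension in ${}^{S}E$ relating $E_\infty^{3,4}$ and $E_\infty^{7,0}$, and disentangle the order computation in $G/\langle\kappa_5\rangle$ from $\lambda_1$, since the membership $\epsilon_2(n)(\beta_n\times\iota_4)\in\langle\kappa_5\rangle$ a priori also involves the $R_n$-coordinate. I expect this to be tractable because for $n$ even the $2$-primary and $3$-primary parts of $G$ split off and $\lambda_1$ influences only the $3$-local behaviour (cf. the role of $\epsilon_3$ in $I(m,n)$ from (\ref{I(m,n)})), so that the constraint collapses to a purely $2$-local condition on $\lambda_2$; verifying that it reduces exactly to $\gcd(\lambda_2,n/\epsilon_2(n))=1$ is the delicate point. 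A minor preliminary needed for the $\lambda_3$ argument is that the cover map is an isomorphism on $H^4$ (so odd on $\iota_4$), which follows from the vanishing of ${}^{S}d_5^{0,4}$ established in the proof of Lemma \ref{^S d_3}.
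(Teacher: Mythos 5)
The decomposition of $H^7(K(\mathbb{Z}/n,2)\times K(\mathbb{Z},4);\mathbb{Z})$, the identification $H^7(\mathbf{B}P(n,mn);\mathbb{Z})\cong G/\langle\kappa_5\rangle$, and your naturality argument for $\lambda_3=1$ are all fine; in fact your route to $\lambda_3=1$ (naturality of $k$-invariants along $\mathbf{B}SU_{mn}\to\mathbf{B}P(n,mn)$ together with $H^7(\mathbf{B}SU_{mn};\mathbb{Z})=0$) is genuinely different from the paper's, which instead deduces from Lemma \ref{^S d_3} that $H^7(\mathbf{B}P(n,mn);\mathbb{Z})$ is generated by the images of $R_n\times1$ and $\beta_n\times\iota_4$, so that $1\times\Gamma_4$ must occur in $\kappa_5$. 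The genuine gap is in the $\lambda_2$ step. The spectral sequence ${}^{S}E$ does \emph{not} give that the image of $\beta_n\times\iota_4$ in $G/\langle\kappa_5\rangle$ has order exactly $\epsilon_2(n)$, i.e.\ it does not give $\epsilon_2(n)(\beta_n\times\iota_4)\in\langle\kappa_5\rangle$: the group ${}^{S}E_\infty^{3,4}$ is only a filtration quotient, and the filtration step below it is the image of ${}^{S}E_\infty^{7,0}$, generated by the image of $R_n\times1$. What the spectral sequence actually yields is only $\epsilon_2(n)(\beta_n\times\iota_4)+\Lambda\, R_n\times1\in\langle\kappa_5\rangle$ for some unknown $\Lambda\in\mathbb{Z}/\epsilon_3(n)n$. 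You flag exactly this in your closing paragraph, so the decisive step is deferred rather than carried out; moreover the repair you sketch would fail: it is not true that $\lambda_1$ ``influences only the $3$-local behaviour'', since $R_n\times1$ generates $\mathbb{Z}/\epsilon_3(n)n$, whose $2$-primary part is nontrivial whenever $n$ is even, so the $R_n$-coordinate cannot be split away by a $2$-local/$3$-local decomposition.

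The correct resolution (and the paper's) is simpler than what you anticipate and requires no control of $\Lambda$ at all. Since $\lambda_3=1$ and $G$ is a direct sum, $\kappa_5\notin\langle R_n\times1,\beta_n\times\iota_4\rangle$, while $2\kappa_5$ does lie in that subgroup (because $2\Gamma_4=0$); as $\langle2\kappa_5\rangle$ has prime index $2$ in $\langle\kappa_5\rangle$, any intermediate subgroup is one of the two, and therefore $\langle\kappa_5\rangle\cap\langle R_n\times1,\beta_n\times\iota_4\rangle=\langle2\kappa_5\rangle$. Hence $2(\beta_n\times\iota_4)+\Lambda\,R_n\times1=2k\kappa_5$ for some integer $k$, and comparing only the $\beta_n\times\iota_4$-coordinates in the direct sum gives $2=2k\lambda_2$ in $\mathbb{Z}/n$, i.e.\ $2\in\langle2\lambda_2\rangle$; the $R_n$-coordinate merely pins down $\Lambda$ and never needs to be disentangled. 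If you replace your order claim by this coordinate comparison, the rest of your write-up (including the equivalence with $\gcd(\lambda_2,n/\epsilon_2(n))=1$ and the normalization for odd $n$ by an odd representative of $\lambda_2^{-1}$) goes through.
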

\begin{proof}
Suppose
$$\kappa_5=\lambda_1 R_n\times 1+\lambda_{2}\beta_n\times\iota_4+\lambda_{3}\times\Gamma_4,$$
where $\lambda_1\in \mathbb{Z}/\epsilon_{3}(n)n, \lambda_2\in\mathbb{Z}/n, \lambda_3\in\mathbb{Z}/2$. Lemma \ref{^S d_3} implies that $H^{7}(\mathbf{B}P(n,mn);\mathbb{Z})$ is generated by $R_n\times 1$ and $\beta_n\times\iota_4$, since they generate $^{S}E_{2}^{3,4}$ and $^{S}E_{2}^{7,0}$, the only non-trivial entries on the $E_2$-page with total degree $7$. In particular, the class $1\times\Gamma_4$ is a linear combination of them, from which it follows that $\lambda_3=1$.

Let $\langle\kappa_5\rangle$ be the subgroup of $H^{7}(\mathbf{B}P(n,mn);\mathbb{Z})$ generated by $\kappa_5$. Lemma \ref{^S d_3} implies that $2\beta_n\times\iota_4$ is in the subgroup generated by $R_n\times 1$, since $^{S}E_{\infty}^{3,4}$, generated by $\beta_n\times\iota_4$, has order $2$. Hence, there is some scalar $\Lambda\in\mathbb{Z}/\epsilon_{3}(n)n$ such that
\begin{equation}\label{kappa_5 subgroup}
2\beta_n\times\iota_4+\Lambda R_n\times 1\in\langle\kappa_5\rangle.
\end{equation}

Let $\langle\beta_n\times\iota_4, R_n\times 1\rangle$ be the subgroup of $H^{7}(\mathbf{B}P(n,mn);\mathbb{Z})$ generated by $\beta_n\times\iota_4$ and $R_n\times 1$. Then (\ref{kappa_5 subgroup}) implies
$$2\beta_n\times\iota_4+\Lambda R_n\times 1\in\langle\kappa_5\rangle\cap\langle\beta_n\times\iota_4, R_n\times 1\rangle=\langle 2\kappa_5\rangle,$$
where the identification of subgroups follows from the fact $$2\kappa_5\in\langle\kappa_5\rangle\cap\langle\beta_n\times\iota_4, R_n\times 1\rangle$$
and that $2$ is a prime number.

From the above, it follows that $2\beta_n\times\iota_4+\Lambda R_n\times 1$ is a multiple of
$$2\kappa_5=2(\lambda_1 R_n\times 1+\lambda_{2}\beta_n\times\iota_4+\lambda_{3}\times\Gamma_4)=2(\lambda_1 R_n\times 1+\lambda_{2}\beta_n\times\iota_4),$$
which implies that the ideal of $\mathbb{Z}/n$ generated by $2\lambda_2$ contains $2$, and the lemma follows.
\end{proof}

We make the following important observation:
\begin{lemma}\label{e_3(n)n}
Let $X$ be the $8$-skeleton of $K(\mathbb{Z}/n,2)$, with Brauer class
$$\alpha\in H^{3}(X;\mathbb{Z})_{\mathrm{tor}}=\operatorname{Br}(X)$$
the restriction of the fundamental class $\beta_n\in H^{3}(K(\mathbb{Z}/n,2);\mathbb{Z})$. If $\alpha$ is classified by a $PU_{mn}$-torsor, then $\epsilon_2(n)\epsilon_3(n)n|m$.
\end{lemma}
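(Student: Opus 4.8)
The plan is to translate the hypothesis into the lifting problem of Proposition~\ref{lift to BP} and then run obstruction theory up the Postnikov tower of $\mathbf{B}P(n,mn)$, extracting one divisibility relation from each of the two $k$-invariants that can be nonzero over an $8$-complex. First I would record the cohomology of $X=\operatorname{sk}_8K(\mathbb{Z}/n,2)$ that the argument needs. Since restriction from $K(\mathbb{Z}/n,2)$ is an isomorphism in degrees $<8$, equation (\ref{K(Z/2,2)}) and Lemma~\ref{R_n} give $H^2(X;\mathbb{Z})=H^4(X;\mathbb{Z})=0$ and $H^7(X;\mathbb{Z})\cong\mathbb{Z}/\epsilon_3(n)n$, generated by the restriction of $R_n$, and the restriction map is an isomorphism on $H^5$ and $H^7$. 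Because $H^2(X;\mathbb{Z})=0$, Proposition~\ref{lift to BP} applies, so $\alpha$ is classified by a $PU_{mn}$-torsor if and only if the classifying map $f\colon X\to K(\mathbb{Z}/n,2)=\mathbf{B}P(n,mn)[3]$ of $\alpha'$ (the skeletal inclusion) lifts to $\mathbf{B}P(n,mn)$.

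Next I would climb the tower. A lift restricts to a lift of $f$ through $\mathbf{B}P(n,mn)[4]=\mathbf{B}P(n,mn)[5]$, so the first obstruction $f^{*}\kappa_3\in H^5(X;\mathbb{Z})$ vanishes; since $f^{*}$ is an isomorphism on $H^5$ this forces $\kappa_3=0$, equivalently $\epsilon_2(n)n\mid m$ by Proposition~\ref{kappa3}. This is the first required divisibility, and it yields the splitting $\mathbf{B}P(n,mn)[5]\simeq K(\mathbb{Z}/n,2)\times K(\mathbb{Z},4)$. Now the key point: because $H^4(X;\mathbb{Z})=0$, the lift of $f$ to this product is \emph{unique}, namely $g=(f,0)$, and it must be the one induced by the full lift. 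Writing $\kappa_5=\lambda_1\,R_n\times1+(\text{a term in }\beta_n\times\iota_4\text{ and }1\times\Gamma_4)$ via the Künneth splitting $H^7(\mathbf{B}P(n,mn)[5];\mathbb{Z})\cong\mathbb{Z}/\epsilon_3(n)n\oplus\mathbb{Z}/n\oplus\mathbb{Z}/2$ (cf.\ Lemma~\ref{kappa_5}), and using that the $K(\mathbb{Z},4)$-coordinate of $g$ is null so that $g^{*}(\beta_n\times\iota_4)=g^{*}(1\times\Gamma_4)=0$, the second obstruction is $g^{*}\kappa_5=\lambda_1\,R_n|_X\in H^7(X;\mathbb{Z})\cong\mathbb{Z}/\epsilon_3(n)n$. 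Hence the existence of the lift forces $\lambda_1=0$.

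Finally I would convert $\lambda_1=0$ into $\epsilon_3(n)n\mid m$ by comparing two computations of $H^7(\mathbf{B}P(n,mn);\mathbb{Z})$. The principal fibration $K(\mathbb{Z},6)\to\mathbf{B}P(n,mn)[6]\to\mathbf{B}P(n,mn)[5]$ transgresses $\iota_6$ to $\kappa_5$, and together with $H^7(\mathbf{B}P(n,mn);\mathbb{Z})\cong H^7(\mathbf{B}P(n,mn)[6];\mathbb{Z})$ this identifies the group with $H^7(\mathbf{B}P(n,mn)[5];\mathbb{Z})/\langle\kappa_5\rangle$; under this identification the image of $R_n\times1$ is $p^{*}R_n$, where $p$ is the projection to $K(\mathbb{Z}/n,2)$. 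If $\lambda_1=0$ then $\kappa_5$ lies in the complementary summand $\mathbb{Z}/n\oplus\mathbb{Z}/2$, so $R_n\times1$ survives with full order and $p^{*}R_n$ has order $\epsilon_3(n)n$. On the other hand, the edge of the spectral sequence of Lemma~\ref{^S d_3} computes the order of $p^{*}R_n$ to be $I(m,n)=\epsilon_3(m/n)\epsilon_3(n)n/\epsilon_3(mn)$ as in (\ref{I(m,n)}) (this is Corollary~\ref{H^7(BP(n,mn)) n odd} when $n$ is odd). Equating orders gives $I(m,n)=\epsilon_3(n)n$, which, since $n\mid m$ already, unwinds to $\epsilon_3(n)n\mid m$; combined with $\epsilon_2(n)n\mid m$ and $\gcd(\epsilon_2(n),\epsilon_3(n))=1$ this yields $\epsilon_2(n)\epsilon_3(n)n\mid m$.

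I expect the main obstacle to be this last comparison: matching the homotopy-theoretic coefficient $\lambda_1$ of the $k$-invariant against the arithmetic quantity $I(m,n)$ produced by the spectral sequence ${}^{S}E$. One must verify that the two descriptions of $H^7(\mathbf{B}P(n,mn);\mathbb{Z})$ agree on the class $R_n$, and that the Künneth summand carrying $R_n\times1$ really is a direct complement of $\langle\kappa_5\rangle$ precisely when $\lambda_1=0$; this is what makes the order computation go through uniformly in $n$, including the even case in which, by Lemma~\ref{noncyclic}, the target group fails to be cyclic.
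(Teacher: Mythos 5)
Your proposal is correct and takes essentially the same route as the paper's own proof: both translate the hypothesis into the lifting problem via Proposition~\ref{lift to BP}, get $\epsilon_2(n)n\mid m$ from Proposition~\ref{kappa3}, use $H^4(X;\mathbb{Z})=0$ to see the lift to $\mathbf{B}P(n,mn)[5]$ is unique with obstruction $\lambda_1 R_n$ (Lemma~\ref{kappa_5}), and then play $\lambda_1$ against the order $I(m,n)$ of $R_n$ in $H^{7}(\mathbf{B}P(n,mn);\mathbb{Z})$ supplied by Lemma~\ref{^S d_3}. The only difference is organizational: you argue directly that $\lambda_1=0$ forces $I(m,n)=\epsilon_3(n)n$ and hence $\epsilon_3(n)n\mid m$, whereas the paper assumes $\epsilon_3(n)n\nmid m$, deduces $I(m,n)=n$, and derives the contradiction $\lambda_1 R_n\neq 0$.
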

\begin{proof}
For the obvious reason we do not distinguish cohomology classes of $X$ and $K(\mathbb{Z}/n,2)$ in degree $\leq 7$. It follows from Proposition \ref{kappa3} that $\epsilon_2(n)n|m$. It remains to prove that $\epsilon_3(n)n|m$. Assume $\epsilon_3(n)n\nmid m$ for a contradiction. Since we already have $\epsilon_2(n)n|m$, it follows that $3|n$ and $3\nmid\frac{m}{n}$. Hence we have
\begin{equation}\label{nR_n}
I(m,n)=n.
\end{equation}

Since $H^{2}(X;\mathbb{Z})=0$, there is a unique element $\alpha'\in H^{2}(X;\mathbb{Z}/n)$ such that $B(\alpha')=\alpha$, where $B$ is the Bockstein homomorphism. Therefore, the lifting problem shown by the following diagram
\begin{equation*}
\begin{tikzcd}
&\mathbf{B}P(n,mn)[5]=K(\mathbb{Z},4)\times K(\mathbb{Z}/n,2)\arrow{d}\arrow[r,"\kappa_5"]&K(\mathbb{Z},7)\\
X\arrow[r,"\alpha'"]\arrow[ru,dashed,"f_5"]&\mathbf{B}P(n,mn)[3]=K(\mathbb{Z}/n,2)
\end{tikzcd}
\end{equation*}
has a unique solution $f_5$ since $H^4(X;\mathbb{Z})=0$, and for the same reason, the composition $\kappa_5\cdot f_5$ is $\lambda_1 R_n\in H^7(X;\mathbb{Z})$, for some $\lambda_1\in\mathbb{Z}/\epsilon_3(n)n$, according to Lemma \ref{kappa_5}. On the other hand, it follows from Lemma \ref{^S d_3} and (\ref{nR_n}) that in $H^7(\mathbf{B}P(n,mn);\mathbb{Z})$ we have $nR_n=0$, whereas $R_n$ is of degree $3n$ in $H^7(\mathbf{B}P(n,mn)[5];\mathbb{Z})$. Therefore, we have
$$0\neq nR_n\in\langle\kappa_5\rangle\in H^7(\mathbf{B}P(n,mn)[5];\mathbb{Z}).$$
This implies that $nR_n$ is a multiple of $\lambda_1 R_n$, and in particular, $\kappa_5\cdot f_5=\lambda_1 R_n\neq 0$. Hence $f_5$ does not lift to $\mathbf{B}P(n,mn)[6]$, a contradiction.
\end{proof}
We proceed to study the next non-trivial stage of the Postnikov tower of $\mathbf{B}P(n,mn)$, namely $\mathbf{B}P(n,mn)[6]$. Recall from Lemma \ref{noncyclic} that when $n$ is even, we have
\begin{equation*}
\begin{split}
&H^{7}(\mathbf{B}P(n,mn);\mathbb{Z})\cong H^{7}(\mathbf{B}P(n,mn)[6];\mathbb{Z})\\
\cong &H^{7}(K(\mathbb{Z},4)\times K(\mathbb{Z}/n,2);\mathbb{Z})/(\kappa_ 5)\cong\mathbb{Z}/\epsilon_{3}(n)n\oplus\mathbb{Z}/n\oplus\mathbb{Z}/2/(\kappa_5),
\end{split}
\end{equation*}
where the components $\mathbb{Z}/\epsilon_{3}(n)n$, $\mathbb{Z}/n$ and $\mathbb{Z}/2$ are generated by $R_n\times 1$, $\beta_n\times\iota_4$ and $1\times\Gamma_4$.
\begin{lemma}\label{E_infty^3,4}
Suppose $\epsilon_3(n)n|m$. Then we have $E_{\infty}^{3,4}\cong\mathbb{Z}/\epsilon_3(n)n$. Moreover, as a direct sum component of $H^7(\mathbf{B}P(n,mn);\mathbb{Z})$, $E_{\infty}^{3,4}$ is generated by $R_n$.
\end{lemma}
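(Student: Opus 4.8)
The plan is to read off $E_\infty^{3,4}$ directly from the Serre spectral sequence $(E_*^{*,*}, d_*^{*,*})$ of the fibration $\mathbf{B}\mathbb{Z}/m\to\mathbf{B}P(n,mn)\to\mathbf{B}PU_{mn}$, working under the standing hypothesis $\epsilon_2(n)n\mid m$ of this section together with the assumption $\epsilon_3(n)n\mid m$, so that in fact $\epsilon_2(n)\epsilon_3(n)n\mid m$. By (\ref{E_2}) and Theorem \ref{Cohomology of BPU}, the entry $E_2^{3,4}\cong H^3(\mathbf{B}PU_{mn};\mathbb{Z}/m)\cong\mathbb{Z}/m$ is generated by $x_1v^2$, where $v\in H^2(\mathbf{B}\mathbb{Z}/m)$ is the fiber class and $x_1\in H^3(\mathbf{B}PU_{mn})$.

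First I would pin down the differentials meeting $E^{3,4}$. The only incoming differential is $d_3^{0,6}$, and since $d_3(v)=nx_1$ by Lemma \ref{d_3}, the Leibniz rule gives $d_3(v^3)=3nx_1v^2$, so $d_3^{0,6}$ is multiplication by $3n$. The outgoing $d_3^{3,4}$ sends $x_1v^2$ to $2nx_1^2v$, which vanishes because $x_1^2$ is $2$-torsion and hence $2nx_1^2=0$ in every case. Therefore $E_4^{3,4}\cong\mathbb{Z}/\gcd(3n,m)$, a cyclic group, and the only differential that can still act is $d_5^{3,4}\colon E_5^{3,4}\to E_5^{8,0}$.

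Rather than evaluate $d_5^{3,4}$ by hand — which is delicate precisely when $3\mid n$, since then $E_2^{8,0}$ carries the $3$-torsion class $y_{3,0}$ (when $3\mid m$ and $3\nmid n$ one may instead invoke Corollary \ref{d_5}) — I would determine $E_\infty^{3,4}$ by an order count. From the proof of Corollary \ref{H^6(BP)} one already has $E_\infty^{7,0}=0$ and $E_\infty^{5,2}\cong\mathbb{Z}/\epsilon_2(n)$, and these are the only associated graded pieces of $H^7(\mathbf{B}P(n,mn);\mathbb{Z})$ of total degree $7$ besides $E_\infty^{3,4}$. On the other hand, the spectral sequence $^SE_*^{*,*}$ of Lemma \ref{^S d_3} has only the two total-degree-$7$ entries $^SE_2^{7,0}\cong\mathbb{Z}/\epsilon_3(n)n$ (generated by $R_n$) and $^SE_2^{3,4}\cong\mathbb{Z}/n$; the differential $^Sd_3^{0,6}(c_3)=2c_2\beta_n$ leaves $^SE_\infty^{3,4}\cong\mathbb{Z}/\epsilon_2(n)$, while Lemma \ref{noncyclic} forbids $H^7$ from being cyclic and thereby rules out any further differential out of $^SE^{3,4}$. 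Hence $|H^7(\mathbf{B}P(n,mn);\mathbb{Z})|=\epsilon_2(n)\epsilon_3(n)n$, which forces $|E_\infty^{3,4}|=\epsilon_3(n)n$; being a subgroup of the cyclic group $E_4^{3,4}$, it must be $\mathbb{Z}/\epsilon_3(n)n$.

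For the ``moreover'' clause I would use the filtration short exact sequence $0\to F^4H^7\to H^7\to E_\infty^{3,4}\to 0$, where $F^4H^7\cong E_\infty^{5,2}\cong\mathbb{Z}/\epsilon_2(n)$, and identify the surviving generator $x_1v^2$ of $E_\infty^{3,4}$ with $R_n$. At every odd prime $F^4$ vanishes, so $R_n$ — which has order exactly $\epsilon_3(n)n$ by the $^SE$ computation (cf. Corollary \ref{H^7(BP(n,mn)) n odd} and Remark \ref{identification}) — maps isomorphically onto the odd part of $E_\infty^{3,4}$ and generates it there. The main obstacle is the $2$-primary part: I must show that $R_n$ sits in filtration exactly $3$, with leading term $x_1v^2$, rather than slipping into $F^4$, and that the extension splits so that $\langle R_n\rangle$ is a genuine direct summand; this is genuinely subtle in the boundary case $n=2$, where $|E_\infty^{3,4}|=|F^4|=2$ and a mere order comparison fails. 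I would resolve it by naturality, comparing the two filtrations of the pulled-back class $p^*R_n$ along the square (\ref{Postnikov}) — equivalently, by reading off the leading term of $\kappa_5$ via Lemma \ref{kappa_5} — which places $R_n$ in filtration $3$ and exhibits $E_\infty^{3,4}=\langle R_n\rangle$ as the complementary summand to $E_\infty^{5,2}$.
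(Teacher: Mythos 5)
Your proposal is correct in outline, but it reaches the isomorphism $E_\infty^{3,4}\cong\mathbb{Z}/\epsilon_3(n)n$ by a genuinely different route than the paper. The paper compares $E_*^{*,*}$ with the spectral sequence ${^H}E_*^{*,*}$ of $K(\mathbb{Z}/m,1)\to K(\mathbb{Z}/n,2)\to K(\mathbb{Z}/mn,2)$ along the ladder of fibrations (\ref{ladder of fib}) extending the square (\ref{Postnikov}): the map ${^H}E_2^{3,4}\to E_2^{3,4}$, $v^2\beta_{mn}\mapsto v^2x_1$, is an isomorphism, and since ${^H}d_5^{3,4}$ is known from the proof of Lemma \ref{R_n} (zero when $3\mid n$, onto $\rho_{mn}$ otherwise) while Corollary \ref{d_5} pins down $d_5^{3,4}$ in the remaining case, naturality yields ${^H}E_\infty^{3,4}\cong E_\infty^{3,4}$ outright. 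This single comparison delivers both halves of the lemma at once, because ${^H}E_\infty^{3,4}$ is all of $H^7(K(\mathbb{Z}/n,2);\mathbb{Z})=\langle R_n\rangle$; moreover it needs only the stated hypothesis $\epsilon_3(n)n\mid m$. You instead avoid evaluating $d_5^{3,4}$ entirely by an order count: $\lvert H^7(\mathbf{B}P(n,mn);\mathbb{Z})\rvert=\epsilon_2(n)\epsilon_3(n)n$ from ${^S}E_*^{*,*}$ together with Lemma \ref{noncyclic}, and $E_\infty^{7,0}=0$, $E_\infty^{5,2}\cong\mathbb{Z}/\epsilon_2(n)$ from the proof of Corollary \ref{H^6(BP)}; combined with cyclicity of $E_4^{3,4}$ this is a valid and pleasantly indirect way to get the group. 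What it costs is twofold: first, Lemma \ref{^S d_3}, Lemma \ref{noncyclic} and Corollary \ref{H^6(BP)} all assume $\epsilon_2(n)n\mid m$, so your argument proves the lemma only under the stronger hypothesis $\epsilon_2(n)\epsilon_3(n)n\mid m$ (harmless for its use in Theorem \ref{H^7(BP)}, but weaker than the statement); second, for the ``moreover'' clause you fall back on exactly the paper's mechanism anyway, so the counting replaces only the $d_5$ evaluation, not the comparison. On that last step, be explicit about the one fact your sketch leaves implicit: the $E_2$-level map ${^H}E_2^{3,4}\to E_2^{3,4}$ is an isomorphism, so by naturality ${^H}E_\infty^{3,4}\to E_\infty^{3,4}$ is injective, and your order count then makes it an isomorphism; this is what puts $p^*R_n$ in filtration exactly $3$ with generating leading term and splits off $\langle R_n\rangle$ as a summand. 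The parenthetical alternative via Lemma \ref{kappa_5} would not do this job: that lemma leaves $\lambda_1$ undetermined and says nothing about where $R_n$ sits in the filtration coming from the base $\mathbf{B}PU_{mn}$, so it is not an equivalent substitute for the naturality comparison.
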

\begin{proof}
Consider the following commutative diagram
\begin{equation}\label{ladder of fib}
\begin{tikzcd}
K(\mathbb{Z}/m,1)\arrow[d,"="]\arrow{r}&\mathbf{B}P(n,mn)\arrow{d}\arrow{r}&\mathbf{B}PU_{mn}\arrow{d}\\
K(\mathbb{Z}/m,1)\arrow{r}&K(\mathbb{Z}/n,2)\arrow{r}&K(\mathbb{Z}/mn,2)
\end{tikzcd}
\end{equation}
It follows from the proof of Lemma \ref{R_n} that the morphism of spectral sequences induced by (\ref{ladder of fib}) restricts to an isomorphism
$${^H}E_4^{3,4}\cong E_4^{3,4}.$$
We proceed to show that it restricts to an isomorphism
\begin{equation}\label{Einfty(3,4)}
{^H}E_{\infty}^{3,4}\cong E_{\infty}^{3,4}.
\end{equation}
For degree reasons, $d_{3}^{0,6}$ is the only non-trivial differential reaching the entry $E_{*}^{3,4}$. By the Leibniz rule, we have
$$E_{3}^{3,4}/\operatorname{Im}d_{3}^{0,6}\cong(\mathbb{Z}/m)/3n\cong\mathbb{Z}/\epsilon_3(m)n,$$
where the last equation follows from the fact that $\epsilon_{3}(n)n|m$. On the other hand, it follows from Leibniz rule that $d_3^{3,4}(v^2x_1)=2nvx_1=0$. Therefore we have $d_3^{3,4}=0$ and $$E_{4}^{3,4}=E_{4}^{3,4}\cong\mathbb{Z}/\epsilon_3(m)n.$$
For degree reasons, the only potentially nontrivial differential into or out of $E_{4}^{3,4}$ is
$$d_5^{3,4}: E_{5}^{3,4}\rightarrow E_{5}^{8,0}.$$
It then follows from (\ref{Hd_5(3,4)}) in the proof of Lemma \ref{R_n} that $d_5^{3,4}=0$ when $3|n$. On the other hand, it follows from Corollary \ref{d_5} that
$$d_5^{3,4}: E_{5}^{3,4}\cong\mathbb{Z}/\epsilon_3(m)n\rightarrow E_{5}^{8,0}\cong H^8(\mathbf{B}PU_{mn};\mathbb{Z})$$
is onto the subgroup of $E_5^{8,0}$ of order $3$ when $3\nmid n$ and $3|m$. Therefore, (\ref{Einfty(3,4)}) holds, and the desired assertion follows from Lemma \ref{R_n}.
\end{proof}

The next lemma concerns $\beta_n\times\iota_4$, which, as an element of $H^{7}(\mathbf{B}P(n,mn);\mathbb{Z})$, is identified with $e_2'x_1'$. Recall that $E_{2}^{0,4}\cong\mathbb{Z}/m$ is generated by $v^2$.
\begin{lemma}\label{E_infty^0,4}
The group $E_{\infty}^{0,4}\cong\mathbb{Z}/\epsilon_2(m)n$ is the subgroup of $E_{2}^{0,4}$ generated by $\frac{m}{\epsilon_2(m)n}v^2$.As a quotient of $H^{4}(\mathbf{B}P(n,mn);\mathbb{Z})$, $E_{\infty}^{0,4}$ is generated  by the element represented by $e_2'$, the generator of $H^{4}(\mathbf{B}P(n,mn);\mathbb{Z})$. Moreover, suppose that $\epsilon_2(n)\epsilon_3(n)n|m$. Then we can choose $v$, up to an invertible scalar coefficient, so that in $E_{\infty}^{0,4}$ there is a relation $e_2'=\frac{m}{\epsilon_2(m)n}v^2$.
\end{lemma}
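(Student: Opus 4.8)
The plan is to treat the three assertions of the lemma in turn: the determination of the group $E_\infty^{0,4}$ together with its generator expressed through $v$; the identification of $E_\infty^{0,4}$ as a cyclic quotient of $H^4(\mathbf{B}P(n,mn);\mathbb{Z})$ via the edge homomorphism; and finally the matching of the two resulting generators of this cyclic group by a suitable normalization of $v$. The first two are spectral-sequence bookkeeping; the third carries the only real subtlety.

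First I would compute $E_\infty^{0,4}$ directly in $E_*^{*,*}$. For degree reasons no differential enters the position $(0,4)$, and the only differentials leaving it are $d_3^{0,4}$ and $d_5^{0,4}$. Since $E_2^{5,0}\cong H^5(\mathbf{B}PU_{mn};\mathbb{Z})=0$ by Theorem \ref{Cohomology of BPU}, the target of $d_5^{0,4}$ vanishes, so $E_\infty^{0,4}=\operatorname{Ker}d_3^{0,4}$. By Lemma \ref{d_3} and the Leibniz rule, $d_3^{0,4}(v^2)=2v\,d_3^{0,2}(v)=2n\,vx_1$, and since $E_3^{3,2}\cong H^3(\mathbf{B}PU_{mn};\mathbb{Z}/m)\cong\mathbb{Z}/m$ is generated by $vx_1$, the map $d_3^{0,4}$ is multiplication by $2n$ on $\mathbb{Z}/m$. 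Its kernel is cyclic of order $\gcd(2n,m)$, generated by $\tfrac{m}{\gcd(2n,m)}v^2$. Under the standing hypothesis $\epsilon_2(n)n\mid m$ one checks $\gcd(2n,m)=\epsilon_2(m)n$: writing $m=nk$ gives $\gcd(2n,m)=n\,\epsilon_2(k)$, and $\epsilon_2(k)=\epsilon_2(m/n)=\epsilon_2(m)$ because $m,n$ have the same parity when $n$ is odd while $m$ is forced even when $n$ is even. This yields the first assertion with its explicit generator $\tfrac{m}{\epsilon_2(m)n}v^2$.

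For the second assertion, $E_\infty^{0,4}$ is the bottom filtration quotient $H^4(\mathbf{B}P(n,mn);\mathbb{Z})/F^1$, hence a cyclic quotient of $H^4(\mathbf{B}P(n,mn);\mathbb{Z})\cong\mathbb{Z}\langle e_2'\rangle$ (Corollary \ref{H^4(BP)}(1)); it is therefore generated by the image of $e_2'$, which under the edge homomorphism is the restriction $i^*(e_2')$ to the fiber $\mathbf{B}\mathbb{Z}/m$. For the third assertion, both $i^*(e_2')$ and $\tfrac{m}{\epsilon_2(m)n}v^2$ generate the cyclic group $E_\infty^{0,4}\cong\mathbb{Z}/\epsilon_2(m)n$, so they differ by a unit $u\in(\mathbb{Z}/\epsilon_2(m)n)^*$. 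The generator $v$ of $H^2(\mathbf{B}\mathbb{Z}/m;\mathbb{Z})\cong\mathbb{Z}/m$ is itself defined only up to a unit $c\in(\mathbb{Z}/m)^*$, and replacing $v$ by $cv$ scales $\tfrac{m}{\epsilon_2(m)n}v^2$ by $c^2$; as the reduction $(\mathbb{Z}/m)^*\to(\mathbb{Z}/\epsilon_2(m)n)^*$ is surjective, this is exactly the freedom recorded by the phrase ``up to an invertible scalar coefficient,'' and a normalization of $v$ brings the two generators into the stated relation. The stronger hypothesis $\epsilon_2(n)\epsilon_3(n)n\mid m$ enters to guarantee that this same $v$ is the generator underlying the clean descriptions of $E_\infty^{3,4}$ (Lemma \ref{E_infty^3,4}) and of $H^7$ (Lemma \ref{^S d_3}), so that one consistent choice of $v$ serves all the degree relations used later.

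I expect the main obstacle to be controlling the unit $u$ beyond what the formulation already concedes. One can attempt to compute $i^*(e_2')$ by restriction along the central subgroup: lift $i$ through the covering $\mathbf{B}\mathbb{Z}/mn\to\mathbf{B}\mathbb{Z}/m$ coming from the center $\mathbb{Z}/mn$ of $SU_{mn}$, use $q^*(e_2')=\pm c_2$ (the edge map of the spectral sequence of $\mathbf{B}SU_{mn}\to\mathbf{B}P(n,mn)\to K(\mathbb{Z}/n,2)$, via the vanishing established in the proof of Lemma \ref{^S d_3} together with Proposition \ref{H^5(BP)}), and evaluate $c_2\mapsto\binom{mn}{2}w^2$ on the center. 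Because $\pi^*(v^2)=n^2w^2$, however, this restriction determines $u$ only modulo $\epsilon_2(m)$ and loses all information at the odd primes dividing $n$; thus $u$ cannot be forced to $1$ in general, and the most one can honestly assert is the relation up to a scalar, precisely as stated. This residual ambiguity is harmless for the applications, which invoke the relation only $2$-locally and $3$-locally, where the surviving unit is controlled.
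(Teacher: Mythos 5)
Your proposal is correct and takes essentially the same approach as the paper, whose entire proof consists of noting that the first assertion follows from Lemma \ref{d_3} and the Leibniz rule and that the rest follows from Corollary \ref{H^4(BP)}; your computation of $\ker d_3^{0,4}$, the identity $\gcd(2n,m)=\epsilon_2(m)n$ under the standing hypothesis, and the edge-homomorphism identification of the generator with the image of $e_2'$ are exactly the details left implicit there. Your closing observation---that rescaling $v$ changes $\frac{m}{\epsilon_2(m)n}v^2$ only by squares of units, so the relation $e_2'=\frac{m}{\epsilon_2(m)n}v^2$ can honestly be asserted only up to an invertible scalar---is a more careful treatment of the normalization than the paper makes explicit, and it is consistent with how the lemma is used in Theorem \ref{H^7(BP)}, where any residual unit is absorbed into the invertible coefficient $\lambda$.
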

\begin{proof}
The fact that $E_{\infty}^{0,4}\cong\mathbb{Z}/\epsilon_2(m)n$ and that it is generated by $\frac{m}{\epsilon_2(n)n}v^2$ follows from Lemma \ref{d_3} and the Leibniz rule. The rest follows from Corollary \ref{H^4(BP)}.
\end{proof}
\begin{theorem}\label{H^7(BP)}
Suppose that $\epsilon_2(n)\epsilon_3(n)n|m$. Then
\begin{equation}\label{H^7 odd even}
H^{7}(\mathbf{B}P(n,mn);\mathbb{Z})\cong
\begin{cases}
\mathbb{Z}/\epsilon_3(n)n,\quad\textrm{$n$ odd,}\\
\mathbb{Z}/2\oplus\mathbb{Z}/\epsilon_3(n)n, \quad\textrm{$n$ even.}
\end{cases}
\end{equation}
Furthermore,
\begin{enumerate}
\item
if $n$ is odd, then, up to an invertible scalar,
$$\kappa_5=\frac{\epsilon_3(n)m}{\epsilon_3(m)n}\lambda R_n\times 1+\beta_n\times\iota_4+1\times\Gamma_4$$
\item
if $n$ is even, then, up to an invertible scalar,
$$\kappa_5=\frac{\epsilon_3(n)m}{\epsilon_2(m)\epsilon_3(m)n}\lambda R_n\times 1+\lambda_2\beta_n\times\iota_4+1\times\Gamma_4\quad \mathrm{mod}\quad 2\mathrm{-torsion}.$$
where $\lambda\in\mathbb{Z}/\epsilon_3(n)n$ is invertible and $\lambda_2$ is as in Lemma \ref{kappa_5}.
\end{enumerate}
\end{theorem}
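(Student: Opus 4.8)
The plan is to read off $H^7(\mathbf{B}P(n,mn);\mathbb{Z})$ directly from the spectral sequence $E_*^{*,*}$ of (\ref{fiberseq}), and then to pin down $\kappa_5$ using the fact that it restricts to zero along the projection $p\colon \mathbf{B}P(n,mn)\to\mathbf{B}P(n,mn)[5]$.

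First I would establish (\ref{H^7 odd even}). The only entries $E_\infty^{s,t}$ with $s+t=7$ that can be nonzero are $E_\infty^{7,0}$, $E_\infty^{5,2}$ and $E_\infty^{3,4}$. Since $\epsilon_2(n)n\mid m$ forces $4\mid mn$ when $n$ is even, Theorem \ref{Cohomology of BPU} gives $E_2^{7,0}=H^7(\mathbf{B}PU_{mn};\mathbb{Z})=0$ for $n$ even, while for $n$ odd both $E_2^{7,0}$ and $E_2^{5,2}$ are $2$-torsion and hence die in $E_\infty$ by Lemma \ref{p-torsion}. Thus for $n$ odd the only survivor is $E_\infty^{3,4}\cong\mathbb{Z}/\epsilon_3(n)n$ (Lemma \ref{E_infty^3,4}), giving $H^7\cong\mathbb{Z}/\epsilon_3(n)n$. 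For $n$ even the surviving terms are $E_\infty^{3,4}\cong\mathbb{Z}/\epsilon_3(n)n$ and $E_\infty^{5,2}\cong\mathbb{Z}/2$; because Lemma \ref{E_infty^3,4} identifies $E_\infty^{3,4}$ as a direct summand of $H^7$ generated by $R_n$, the complementary summand has order $2$ and the extension splits, yielding $H^7\cong\mathbb{Z}/2\oplus\mathbb{Z}/\epsilon_3(n)n$.

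Next, for the $k$-invariant. The projection $p$ lifts through $\mathbf{B}P(n,mn)[6]$, so $p^*(\kappa_5)=0$ in $H^7(\mathbf{B}P(n,mn);\mathbb{Z})$; by Remark \ref{identification} the map $p^*$ is precisely the degree-$7$ surjection considered there. Writing $\kappa_5=\lambda_1 R_n\times1+\lambda_2\beta_n\times\iota_4+1\times\Gamma_4$ as in Lemma \ref{kappa_5}, I would compute the images of the three generators under $p^*$. One has $p^*(R_n\times1)=R_n$, the generator of $E_\infty^{3,4}$. For $p^*(\beta_n\times\iota_4)=e_2'x_1'$ I would use the multiplicativity of $E_*^{*,*}$: by Lemma \ref{E_infty^0,4} the leading term of $e_2'$ is $\tfrac{m}{\epsilon_2(m)n}v^2\in E_\infty^{0,4}$, and $x_1'$ has leading term $x_1\in E_\infty^{3,0}$, so the product has leading term $\tfrac{m}{\epsilon_2(m)n}v^2x_1\in E_\infty^{3,4}$. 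Since the comparison in the proof of Lemma \ref{E_infty^3,4} identifies the generator $R_n$ of $E_\infty^{3,4}$ with a unit multiple of $v^2x_1$, this gives $e_2'x_1'=u\,\tfrac{m}{\epsilon_2(m)n}R_n$ for a unit $u$, modulo the $2$-torsion term $E_\infty^{5,2}$. Finally, $1\times\Gamma_4$ maps into the $2$-torsion summand $E_\infty^{5,2}$, which is $0$ when $n$ is odd.

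The relation $p^*(\kappa_5)=0$ then reads $\lambda_1 R_n+\lambda_2 u\,\tfrac{m}{\epsilon_2(m)n}R_n=0$ modulo $2$-torsion, i.e. $\lambda_1\equiv -\lambda_2 u\,\tfrac{m}{\epsilon_2(m)n}\pmod{\epsilon_3(n)n}$ up to $2$-torsion. When $n$ is odd, $\lambda_2$ is a unit (Lemma \ref{kappa_5}), there is no $2$-torsion (Lemma \ref{p-torsion}), and a direct check shows $\tfrac{\epsilon_3(m)}{\epsilon_2(m)\epsilon_3(n)}$ is a unit mod $\epsilon_3(n)n$, so $\tfrac{m}{\epsilon_2(m)n}$ agrees with $\tfrac{\epsilon_3(n)m}{\epsilon_3(m)n}$ up to a unit, giving formula (1). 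When $n$ is even, the same computation holds modulo the $2$-torsion, and the unit $\tfrac{\epsilon_3(m)}{\epsilon_3(n)}$ converts $\tfrac{m}{\epsilon_2(m)n}$ into $\tfrac{\epsilon_3(n)m}{\epsilon_2(m)\epsilon_3(m)n}$, giving formula (2). I expect the main obstacle to be the careful bookkeeping in the spectral sequence: tracking leading terms and filtration degrees to extract the coefficient of $R_n$ in $e_2'x_1'$ exactly up to a unit, and isolating the genuine $2$-torsion ambiguity (the $E_\infty^{5,2}$ and $\Gamma_4$ contributions) responsible for the ``mod $2$-torsion'' clause in the even case.
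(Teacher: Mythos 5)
Your treatment of the $k$-invariant is essentially the paper's own proof: both arguments combine Lemma \ref{kappa_5}, the vanishing of $\kappa_5$ pulled back along $p\colon\mathbf{B}P(n,mn)\to\mathbf{B}P(n,mn)[5]$ (a step the paper leaves implicit and you rightly make explicit), the relation $e_2'x_1'=\frac{m}{\epsilon_2(m)n}v^2x_1$ from Lemma \ref{E_infty^0,4} and multiplicativity, and Lemma \ref{E_infty^3,4} to convert this into a multiple of $R_n$. The genuine gap is in the even case of (\ref{H^7 odd even}): you assert $E_\infty^{5,2}\cong\mathbb{Z}/2$ with no argument. This is not a degree-reasons fact. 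The entry $E_2^{2,4}\cong H^{2}(\mathbf{B}PU_{mn};\mathbb{Z}/m)\cong\mathbb{Z}/m$ is nonzero, so the differential $d_3^{2,4}\colon E_3^{2,4}\to E_3^{5,2}\cong\mathbb{Z}/2$ (and likewise $d_5^{0,6}$ into that entry) could a priori be surjective, killing $E_\infty^{5,2}$; your argument would then output the cyclic group $\mathbb{Z}/\epsilon_3(n)n$ for $n$ even, i.e.\ the wrong answer. This is exactly the point where the paper invokes Lemma \ref{noncyclic}: since $H^{7}(\mathbf{B}P(n,mn);\mathbb{Z})$ is a quotient of $\mathbb{Z}/\epsilon_3(n)n\times\mathbb{Z}/n\times\mathbb{Z}/2$ by a cyclic subgroup, it cannot be cyclic when $n$ is even, which forces $E_\infty^{5,2}\neq 0$ (this is carried out in Case 1 of the proof of Corollary \ref{H^6(BP)}). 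Note that this input comes from the Postnikov tower, not from the spectral sequence you are manipulating, so it cannot be recovered by more careful bookkeeping there; your proof is completed by citing Lemma \ref{noncyclic} at this point, after which your splitting argument via the direct-summand statement of Lemma \ref{E_infty^3,4} is fine.

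A secondary imprecision: Lemma \ref{E_infty^3,4} does not identify $R_n$ with a unit multiple of $v^2x_1$; the generator of $E_\infty^{3,4}$ is the image of $\frac{\epsilon_3(m)}{\epsilon_3(n)}v^2x_1$, and when $3\mid m$ but $3\nmid n$ the class $v^2x_1$ itself does not survive to $E_\infty$, since it supports the nonzero differential $d_5^{3,4}$ of Corollary \ref{d_5}. Your final coefficient is nevertheless correct, because the discrepancy factor $\frac{\epsilon_3(m)}{\epsilon_3(n)}=3$ can occur only when $3\nmid n$, in which case $3$ is invertible modulo $\epsilon_3(n)n$ and is absorbed into the unit $\lambda$ --- this is the same normalization the paper uses to pass between $\frac{m}{\epsilon_2(m)n}$ and $\frac{\epsilon_3(n)m}{\epsilon_2(m)\epsilon_3(m)n}$. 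Finally, your derivation of the odd case of (\ref{H^7 odd even}), killing the $2$-torsion entries $E_\infty^{7,0}$ and $E_\infty^{5,2}$ via Lemma \ref{p-torsion}, is a correct minor variant of the paper's appeal to Corollary \ref{H^7(BP(n,mn)) n odd}.
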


\begin{proof}
The first case of (\ref{H^7 odd even}) is just Corollary \ref{H^7(BP(n,mn)) n odd}. For the other case, it follows from Lemma \ref{E_infty^3,4} and  Corollary \ref{H^7(BP(n,mn)) n odd} that we have a short exact sequence
$$0\rightarrow\mathbb{Z}/\epsilon_3(n)n\rightarrow H^{7}(\mathbf{B}P(n,mn);\mathbb{Z})\rightarrow\mathbb{Z}/2\rightarrow 0.$$
By Lemma \ref{noncyclic}, the group $H^{7}(\mathbf{B}P(n,mn);\mathbb{Z})$ is not cyclic. Therefore we have $$H^{7}(\mathbf{B}P(n,mn);\mathbb{Z})\cong
\mathbb{Z}/2\oplus\mathbb{Z}/\epsilon_3(n)n,$$
for $n$ even.

To prove (1) and (2) we need to find the coefficient $\lambda_1$ as in Lemma \ref{kappa_5}. This is accomplished by studying the element $e_2'x_1'\in H^{7}(\mathbf{B}P(n,mn);\mathbb{Z})$. In particular, we locate it in the spectral sequence $E_{*}^{*,*}$.

It follows from Lemma \ref{E_infty^0,4} that in $E_3^{3,4}$ we have the relation $e_2'x_1'=\frac{m}{\epsilon_2(m)n}v^2x_1$. On the other hand, it follows from Lemma \ref{E_infty^3,4} that $E_{\infty}^{3,4}$ is generated by $\frac{\epsilon_3(m)}{\epsilon_3(n)}v^2x_1$ which is identified with $\lambda R_n$ for some invertible element $\lambda\in\mathbb{Z}/\epsilon_3(n)n$. Hence, in $E_{\infty}^{3,4}$ we have the relation
\begin{equation}\label{kappa_5 in E}
e_2'x_1'=\frac{\epsilon_3(n)m}{\epsilon_2(m)\epsilon_3(m)n}\lambda R_n
\end{equation}
Since we have
\begin{equation*}
H^{7}(\mathbf{B}P(n,mn);\mathbb{Z})\cong
\begin{cases}
E_{\infty}^{3,4},\quad\textrm{$n$ odd,}\\
E_{\infty}^{3,4}\oplus E_{\infty}^{5,2}\cong E_{\infty}^{3,4}\oplus\mathbb{Z}/2,\quad\textrm{$n$ even,}
\end{cases}
\end{equation*}
The desired statement (1),(2) then follows immediately from (\ref{kappa_5 in E}).
\end{proof}

\begin{proof}[Proof of Theorem \ref{main1}]
Let $X$ be an $8$-complex and $\alpha\in\operatorname{Br}(X)$. The first paragraph of the theorem, that $\operatorname{ind}(\alpha)|\epsilon_2(n)\epsilon_3(n)n^3$ follows immediately from Corollary \ref{upper bound}.

We proceed to prove the second paragraph. Let $X$, $\alpha$ and $\alpha'$ be as in Lemma \ref{e_3(n)n}. Consider $\alpha'$ as a map $X\rightarrow K(\mathbb{Z}/n,2)$. Then it follows from Proposition \ref{kappa3}, that $\alpha'$ has a lift to $\mathbf{B}P(n,mn)[4]$ if and only if $\epsilon_2(n)n|m$, in which case
$$\mathbf{B}P(n,mn)[4]\simeq K(\mathbb{Z}/n,2)\times K(\mathbb{Z},4)$$
and we have a unique lift $f_5$ as in the following diagram
\begin{equation}\label{main1diag}
\begin{tikzcd}
&\mathbf{B}P(n,mn)[5]=K(\mathbb{Z},4)\times K(\mathbb{Z}/n,2)\arrow{d}\arrow[r,"\kappa_5"]&K(\mathbb{Z},7)\\
X\arrow[r,"\alpha'"]\arrow[ru,dashed,"f_5"]&\mathbf{B}P(n,mn)[3]=K(\mathbb{Z}/n,2)
\end{tikzcd}
\end{equation}
as discussed in Lemma \ref{e_3(n)n}. Therefore, a lift of $\alpha_n'$ to $\mathbf{B}P(n,mn)[5]$ exists if and only if
$$\kappa_5\cdot f_5=0\in H^7(X;\mathbb{Z})\cong H^7(K(\mathbb{Z}/n,2);\mathbb{Z}).$$
On the other hand, the projection
$$\mathbf{B}P(n,mn)[4]\simeq K(\mathbb{Z}/n,2)\times K(\mathbb{Z},4)\rightarrow K(\mathbb{Z}/n,2)$$
splits, from which it follows that the homomorphism $H^*(f_5)$ is a quotient homomorphism sending exactly all classes in $H^*(K(\mathbb{Z},4);\mathbb{Z})$ to $0$. Recall from Proposition \ref{kappa3} and Lemma \ref{e_3(n)n} that we only need to consider $m$ and $n$ such that $\epsilon_2(n)\epsilon_3(n)n|m$.
Therefore, it follows from Theorem \ref{H^7(BP)} that
\begin{equation}\label{kappa_5f_5}
\kappa_5\cdot f_5=f_5^*(\kappa_5)=
\begin{cases}
\frac{\epsilon_3(n)m}{\epsilon_3(m)n}\lambda R_n, \quad\textrm{$n$ odd,}\\
\frac{\epsilon_3(n)m}{\epsilon_2(m)\epsilon_3(m)n}\lambda R_n\quad\mathrm{mod}\quad 2\mathrm{-torsion}, \quad\textrm{$n$ even.}
\end{cases}
\end{equation}
When $n$ is odd, $\kappa_5\cdot f_5=0$ if and only if $\epsilon_3(n)n|\frac{\epsilon_3(n)m}{\epsilon_3(m)n}$, i.e., $\epsilon_3(m)n^3|mn$. Since $n|m$, we have $\epsilon_3(n)|\epsilon_3(m)$, whence $\epsilon_3(n)n^3|mn$, for all $m,n$ such that the lift of $\alpha'$ to $\mathbf{B}P(n,mn)[6]$ exists, which, according to Proposition \ref{lift to BP}, is true if and only if $\alpha$ can be classified by a $PU_{mn}$-torsor. Therefore, $\epsilon_3(n)n^3|\operatorname{ind}(\alpha)$. Then it follows that $\epsilon_3(n)n^3=\operatorname{ind}(\alpha)$, as desired.

When $n$ is even, the same argument can be made with the indeterminacy of $2$-torsions:
$$\kappa_5\cdot f_5=\frac{\epsilon_3(n)m}{\epsilon_2(m)\epsilon_3(m)n}\lambda R_n=0\quad\mathrm{mod}\quad 2\mathrm{-torsions},$$
where $\lambda\in\mathbb{Z}/\epsilon_3(n)n$ is invertible. In other words, we have $\frac{\epsilon_3(n)n}{2}|\frac{\epsilon_3(n)m}{\epsilon_2(m)\epsilon_3(m)n},$
i.e., $\epsilon_3(m)n^3|mn$. Since $n|m$, we have $\epsilon_3(n)n^3|mn$ for all $m,n$ such that $\alpha$ is classified by a $PU_{mn}$-torsor. Hence $\epsilon_3(n)n^3|\operatorname{ind}(\alpha)$.
\end{proof}
\section{Proof of Theorem \ref{main2}}
It follows from Proposition \ref{kappa3} that $\mathbf{B}P(n,mn)[5]\simeq K(\mathbb{Z}/n,2)\times K(\mathbb{Z},4)$ when $\epsilon_2(n)n|m$. As will be shown later, the essential case of this section is $n=2$, which we treat first.
Consider the diagonal inclusion
$$\Delta_0:SU_2\hookrightarrow SU_{2m}.$$
Passing to the quotient spaces of the respective $\mathbb{Z}/2$ actions given by the scaler multiplication of $e^{\pi\sqrt{-1}}$, we have another inclusion
\begin{equation*}
\Delta_1:PU_2\hookrightarrow P(2,2m).
\end{equation*}
Passing to classifying spaces, we have
\begin{equation*}
\mathbf{B}\Delta_1: \mathbf{B}PU_2\hookrightarrow \mathbf{B}P(2,2m)
\end{equation*}
\begin{lemma}\label{H^3BDelta}
$\mathbf{B}\Delta_1$ induces an isomorphism on $H^3(-;\mathbb{Z})$.
\end{lemma}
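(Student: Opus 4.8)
The plan is to reduce the statement to a computation on second homotopy groups, where the geometry of the diagonal inclusion is transparent. First I would record that the two groups in question are both $\mathbb{Z}/2$: since $\mathbf{B}PU_2=\mathbf{B}P(2,2)$, Lemma \ref{d_3} gives $H^3(\mathbf{B}PU_2;\mathbb{Z})\cong\mathbb{Z}/2$, and the same lemma applied to general $m$ gives $H^3(\mathbf{B}P(2,2m);\mathbb{Z})\cong\mathbb{Z}/2$. Consequently $\mathbf{B}\Delta_1^{*}$ is a homomorphism $\mathbb{Z}/2\to\mathbb{Z}/2$, and it suffices to prove that it is nonzero.

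Next I would pass from cohomology to homotopy. Both $\mathbf{B}PU_2$ and $\mathbf{B}P(2,2m)$ are simply connected with finite $H^3$. For such a space $Y$ the Hurewicz theorem gives $H_2(Y;\mathbb{Z})\cong\pi_2(Y)$, and the universal coefficient sequence $0\to\operatorname{Ext}^1(H_2(Y),\mathbb{Z})\to H^3(Y;\mathbb{Z})\to\operatorname{Hom}(H_3(Y),\mathbb{Z})\to 0$ forces $\operatorname{Hom}(H_3(Y),\mathbb{Z})=0$, because a finite group admits no nonzero torsion-free quotient; hence $H^3(Y;\mathbb{Z})\cong\operatorname{Ext}^1(\pi_2(Y),\mathbb{Z})$ naturally. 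By naturality of Hurewicz and of the universal coefficient sequence, $\mathbf{B}\Delta_1^{*}$ is identified with $\operatorname{Ext}^1(-,\mathbb{Z})$ applied to the map $\pi_2(\mathbf{B}PU_2)\to\pi_2(\mathbf{B}P(2,2m))$ induced by $\mathbf{B}\Delta_1$. Since $\operatorname{Ext}^1(-,\mathbb{Z})$ sends isomorphisms of finite groups to isomorphisms, the whole problem collapses to showing that $\mathbf{B}\Delta_1$ is an isomorphism on $\pi_2$, equivalently that $\Delta_1$ is an isomorphism on $\pi_1$.

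The core computation is then purely group-theoretic. The diagonal inclusion $\Delta_0\colon SU_2\hookrightarrow SU_{2m}$, $A\mapsto\operatorname{diag}(A,\dots,A)$, is equivariant for the central $\mathbb{Z}/2$-actions, since it carries $-I_2=e^{\pi\sqrt{-1}}I_2$ to $-I_{2m}=e^{\pi\sqrt{-1}}I_{2m}$, which is precisely the generator of the $\mathbb{Z}/2\subset SU_{2m}$ defining $P(2,2m)$. Because $SU_2$ and $SU_{2m}$ are simply connected, the double covers $SU_2\to PU_2$ and $SU_{2m}\to P(2,2m)$ are universal, so $\pi_1(PU_2)$ and $\pi_1(P(2,2m))$ are the respective deck groups, namely the central $\mathbb{Z}/2$'s. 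The commutative square
\begin{equation*}
\begin{tikzcd}
SU_2 \arrow{r}{\Delta_0}\arrow{d} & SU_{2m}\arrow{d}\\
PU_2 \arrow{r}{\Delta_1} & P(2,2m)
\end{tikzcd}
\end{equation*}
is a map of universal double covers, so $\Delta_1$ induces on $\pi_1$ exactly the map of deck groups determined by $\Delta_0$ on central elements, namely the generator-to-generator isomorphism $\mathbb{Z}/2\to\mathbb{Z}/2$.

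The step requiring the most care is the equivariance bookkeeping in the last paragraph: one must confirm that $\Delta_0$ sends the chosen generator $-I_2$ to the generator $-I_{2m}$ and not to the identity, since this is exactly what distinguishes an isomorphism on $\pi_1$ from the zero map — everything else is formal. I note that one can sidestep the homological algebra of the second paragraph entirely by running the argument through the Postnikov projections $\mathbf{B}PU_2\to K(\mathbb{Z}/2,2)\leftarrow\mathbf{B}P(2,2m)$: the map $\mathbf{B}\Delta_1$ is compatible with these projections and induces the identity on $\pi_2\cong\mathbb{Z}/2$, hence matches the fundamental classes in $H^2(-;\mathbb{Z}/2)$, whose integral Bocksteins are the generators of the two copies of $H^3$.
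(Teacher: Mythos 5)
Your proposal is correct and follows essentially the same route as the paper: reduce via Hurewicz (together with universal coefficients, as in the paper's identification $H^3(\mathbf{B}P(n,mn);\mathbb{Z})\cong\pi_1(P(n,mn))$) to showing $\Delta_1$ is an isomorphism on $\pi_1$, and then verify this from the fact that the diagonal inclusion $\Delta_0$ intertwines the central $\mathbb{Z}/2$-actions on the universal covers $SU_2$ and $SU_{2m}$, carrying $-I_2$ to $-I_{2m}$. Your write-up merely makes explicit the universal-coefficient and deck-group bookkeeping that the paper leaves implicit.
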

\begin{proof}
By the Hurewicz theorem, it suffices to show that $\Delta_1:PU_2\hookrightarrow P(2,2m)$ induces an isomorphism of fundamental groups, which follows from the fact that the $\mathbb{Z}/2$ actions on their respective simply connected covers $SU_2$ and $SU_{2m}$ commute with the diagonal inclusion.
\end{proof}

Recall the well-known exceptional isomorphism $PU_2\cong SO_3$, from which it follows
\begin{equation*}
\mathbf{B}PU_2\cong\mathbf{B}SO_3,
\end{equation*}
and in particular,
\begin{equation*}
H^*(\mathbf{B}PU_2;\mathbb{Z}/2)\cong H^*(\mathbf{B}SO_3;\mathbb{Z}/2)\cong\mathbb{Z}/2[w_2,w_3],
\end{equation*}
where $w_2,w_3$ are the Stiefel-Whitney classes of the universal $SO_3$-bundle over $\mathbf{B}SO_3$.
\begin{lemma}\label{Sq^1 of w2}
In the setting above, we have
\begin{equation*}
\operatorname{Sq}^1(w_2)=w_3.
\end{equation*}
\end{lemma}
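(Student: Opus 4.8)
The plan is to exploit the fact that, in $H^*(\mathbf{B}SO_3;\mathbb{Z}/2)\cong\mathbb{Z}/2[w_2,w_3]$, the class $w_3$ is the only nonzero element in degree $3$ (the only monomial in $w_2$ of degree $2$ and $w_3$ of degree $3$ landing in degree $3$ is $w_3$ itself). Hence $\operatorname{Sq}^1(w_2)$ is forced to be either $0$ or $w_3$, and the entire content of the lemma is to rule out the first alternative. I would do this via the splitting principle together with the Cartan formula, which keeps everything explicit and avoids quoting the general Wu formula.

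First I would choose a map $f\colon Y\to\mathbf{B}SO_3$ that is injective on mod $2$ cohomology and along which the universal oriented $3$-plane bundle splits as a sum of real line bundles $L_1\oplus L_2\oplus L_3$. Writing $t_i=w_1(L_i)\in H^1(Y;\mathbb{Z}/2)$, the Whitney sum formula gives $f^*w_2=e_2$ and $f^*w_3=e_3$, the elementary symmetric polynomials in the $t_i$, while orientability of the bundle (that is, $w_1=0$) forces the constraint $e_1=t_1+t_2+t_3=0$.

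Next I would compute directly. Since each $t_i$ has degree $1$, we have $\operatorname{Sq}^1(t_i)=t_i^2$, and the Cartan formula yields $\operatorname{Sq}^1(t_it_j)=t_i^2t_j+t_it_j^2$. Summing over $i<j$ gives $\operatorname{Sq}^1(e_2)=\sum_{i\neq j}t_i^2t_j$, and the symmetric-function identity $\sum_{i\neq j}t_i^2t_j=e_1e_2-3e_3$ reduces mod $2$ to $\operatorname{Sq}^1(e_2)=e_1e_2+e_3$. The constraint $e_1=0$ then collapses this to $\operatorname{Sq}^1(f^*w_2)=e_3=f^*w_3$, and the injectivity of $f^*$ upgrades this to $\operatorname{Sq}^1(w_2)=w_3$ on $\mathbf{B}SO_3$ itself.

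I do not expect a serious obstacle: the degree-$3$ cohomology is one-dimensional, so the statement is a single binary choice, and the splitting-principle computation is a routine symmetric-function manipulation. The one point requiring care is bookkeeping the orientation constraint $e_1=0$ correctly, since it is precisely what distinguishes $SO_3$ from $O_3$ and is responsible for the nonvanishing of $\operatorname{Sq}^1(w_2)$. Alternatively one could simply invoke Wu's formula $\operatorname{Sq}^1(w_j)=w_1w_j+(j-1)w_{j+1}$ and specialize to $j=2$ with $w_1=0$.
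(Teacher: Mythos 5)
Your proof is correct, but it follows a genuinely different route from the paper's. You work entirely within classical characteristic-class theory: pull the universal bundle back along a splitting map $f\colon Y\to\mathbf{B}SO_3$, identify $f^*w_i$ with the elementary symmetric polynomials $e_i$ in the line-bundle classes $t_i$, impose the orientability constraint $e_1=0$, and compute $\operatorname{Sq}^1(e_2)=e_1e_2+e_3=e_3$ by the Cartan formula, so that injectivity of $f^*$ forces $\operatorname{Sq}^1(w_2)=w_3$; your fallback via Wu's formula $\operatorname{Sq}^1(w_j)=w_1w_j+(j-1)w_{j+1}$ with $w_1=0$ is the same fact in packaged form. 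The paper instead argues homotopy-theoretically: since $\pi_3(\mathbf{B}SO_3)=\pi_2(SO_3)=0$, the Postnikov map $w_2\colon\mathbf{B}SO_3\to K(\mathbb{Z}/2,2)$ realizes $\mathbf{B}SO_3[3]\simeq K(\mathbb{Z}/2,2)$, hence induces an isomorphism on mod $2$ cohomology in degrees less than $4$, and the relation $\operatorname{Sq}^1 b_2=b_3$ in $H^*(K(\mathbb{Z}/2,2);\mathbb{Z}/2)$ (Corollary \ref{K(Z/n,2)mod 2}, the case $n$ even with $4\nmid n$) then transports to $\operatorname{Sq}^1(w_2)=w_3$. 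Your argument is more elementary and self-contained---it needs neither the Postnikov tower nor Serre's description of $H^*(K(\mathbb{Z}/2,2);\mathbb{Z}/2)$---whereas the paper's proof is essentially free given machinery it has already set up and reuses (Corollary \ref{K(Z/n,2)mod 2} and Postnikov-stage comparisons pervade the whole paper), and the low-degree identification of $\mathbf{B}SO_3$ with $K(\mathbb{Z}/2,2)$ it establishes is exactly the kind of statement exploited in the surrounding lemmas.
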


\begin{proof}
The first nontrivial stage of the Postnikov tower of $\mathbf{B}SO_3$ is $K(\mathbb{Z}/2,2)\simeq\mathbf{B}SO_3[2]$, and $w_2$ is the Postnikov map. It follows from $\pi_3(\mathbf{B}SO_3)=0$ that we have  $K(\mathbb{Z}/2,2)\simeq\mathbf{B}SO_3[3]$, from which it follows that $w_2$ induces an isomorphism over mod $2$ cohomology groups in dimensions less than $4$. The lemma then follows from Corollary \ref{K(Z/n,2)mod 2}.
\end{proof}
Recall that we denote the generator of $H^3(\mathbf{B}P(n,mn);\mathbb{Z})$ by $x_1'$. Also recall the element $R_2(x_1')\in H^7(\mathbf{B}P(2,2m);\mathbb{Z})$, or $R_2$ for short. Let overhead bars indicate the mod $2$ reduction of integral cohomology classes. We have the following
\begin{corollary}\label{R2BDelta}
In mod $2$ cohomology we have
$(\mathbf{B}\Delta_1)^*(\bar{R}_2)=w_2^2w_3$. In particular $(\mathbf{B}\Delta_1)^*(\bar{R}_2)$ is nontrivial.
\end{corollary}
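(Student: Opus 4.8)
The plan is to realize $\bar{R}_2$ as a pullback from $K(\mathbb{Z}/2,2)$ and then transport the computation to $\mathbf{B}PU_2\cong\mathbf{B}SO_3$, where the polynomial structure of $H^*(\mathbf{B}SO_3;\mathbb{Z}/2)$ makes nonvanishing immediate. Write $p\colon\mathbf{B}P(2,2m)\to K(\mathbb{Z}/2,2)$ for the projection onto the first nontrivial Postnikov stage, so that $R_2$ is, by definition, the image under $p^*$ of the generator of $H^7(K(\mathbb{Z}/2,2);\mathbb{Z})$, and set $q=p\circ\mathbf{B}\Delta_1\colon\mathbf{B}PU_2\to K(\mathbb{Z}/2,2)$. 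By Proposition \ref{R_n mod 2} the mod $2$ reduction of that generator is $b_2^2b_3$, so $\bar{R}_2=p^*(b_2^2b_3)$ and
\[
(\mathbf{B}\Delta_1)^*(\bar{R}_2)=q^*(b_2^2b_3)=q^*(b_2)^2\,q^*(b_3).
\]
Everything therefore reduces to identifying $q^*(b_2)$ and $q^*(b_3)$ inside $H^*(\mathbf{B}SO_3;\mathbb{Z}/2)\cong\mathbb{Z}/2[w_2,w_3]$.

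The heart of the argument is the claim $q^*(b_2)=w_2$, which I would establish by playing the mod $2$ degree-$2$ information against the integral degree-$3$ isomorphism of Lemma \ref{H^3BDelta}. Since the map $\mathbf{B}P(2,2m)\to K(\mathbb{Z}/2,2)$ classifies the mod $2$ fundamental class, $p^*(b_2)=x_1''$ and hence $p^*(\beta_2)=p^*(B(b_2))=B(x_1'')=x_1'$, the generator of $H^3(\mathbf{B}P(2,2m);\mathbb{Z})\cong\mathbb{Z}/2$. Combined with Lemma \ref{H^3BDelta}, this makes $q^*\colon H^3(K(\mathbb{Z}/2,2);\mathbb{Z})\to H^3(\mathbf{B}SO_3;\mathbb{Z})$ an isomorphism of groups of order $2$; it therefore carries $\beta_2$ to the nonzero class, whose mod $2$ reduction is $\operatorname{Sq}^1 w_2=w_3$ by Lemma \ref{Sq^1 of w2}. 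Reducing mod $2$ and using $\bar{\beta}_2=b_3$ (Corollary \ref{K(Z/n,2)mod 2} with $n=2$) gives $q^*(b_3)=w_3$. Finally, the naturality of $\operatorname{Sq}^1$ together with $\operatorname{Sq}^1 b_2=b_3$ yields $\operatorname{Sq}^1 q^*(b_2)=q^*(b_3)=w_3\neq 0$; since $H^2(\mathbf{B}SO_3;\mathbb{Z}/2)=\mathbb{Z}/2\langle w_2\rangle$, this rules out $q^*(b_2)=0$ and forces $q^*(b_2)=w_2$.

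Assembling the pieces gives $(\mathbf{B}\Delta_1)^*(\bar{R}_2)=w_2^2w_3$, which is nonzero because $H^*(\mathbf{B}SO_3;\mathbb{Z}/2)$ is a polynomial algebra on $w_2$ and $w_3$. The main obstacle is precisely the middle step: an isomorphism on integral $H^3$ does not by itself pin down a mod $2$ class in degree $2$, and a priori $q^*(b_2)$ could vanish. The Bockstein identities $\operatorname{Sq}^1 b_2=b_3$ and $\operatorname{Sq}^1 w_2=w_3$, applied through the naturality of $\operatorname{Sq}^1$, are exactly what bridge the two degrees and eliminate that possibility; once $q^*(b_2)=w_2$ is secured, the conclusion is pure multiplicativity.
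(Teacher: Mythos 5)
Your proposal is correct and takes essentially the same approach as the paper: both arguments come down to showing that $(\mathbf{B}\Delta_1)^*$ carries the degree-$2$ and degree-$3$ mod $2$ generators to $w_2$ and $w_3$, using Lemma \ref{H^3BDelta} and Lemma \ref{Sq^1 of w2}, and then conclude by Proposition \ref{R_n mod 2} and multiplicativity in $\mathbb{Z}/2[w_2,w_3]$. The only difference is cosmetic ordering: the paper identifies $x_1''\mapsto w_2$ first and applies $\operatorname{Sq}^1$ to get $w_3$, whereas you pin down the degree-$3$ image first and run the naturality of $\operatorname{Sq}^1$ backwards to force $q^*(b_2)=w_2$.
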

\begin{proof}
It follows from Lemma \ref{H^3BDelta} and Lemma \ref{Sq^1 of w2} that $(\mathbf{B}\Delta_1)^*(\bar{x}_1')=w_3$ and $(\mathbf{B}\Delta_1)^*(x_1'')=w_2$, where $x_1''$ is the generator of $H^2(\mathbf{B}P(2,2m);\mathbb{Z}/2)$. The rest follows from Proposition \ref{R_n mod 2}.
\end{proof}
Recall from Proposition \ref{H^4(BP)} that $H^4(\mathbf{B}P(2,2m);\mathbb{Z})$ is generated by $e_2'$.
\begin{lemma}\label{e2}
Let $4|m$, and $(\mathbf{B}\Delta_1)^*$ be the homomorphism induced by $\mathbf{B}\Delta_1$ between integral cohomology groups. Then
\begin{equation*}
(\mathbf{B}\Delta_1)^*(e_2')=\frac{m}{4}e_2,
\end{equation*}
where $e_2$ and $e_2'$ are the generators of $H^4(\mathbf{B}PU_2;\mathbb{Z})$ and $H^4(\mathbf{B}P(2,2m);\mathbb{Z})$, respectively.
\end{lemma}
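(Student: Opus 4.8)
The plan is to transport the computation to the special unitary groups, where $H^4$ is generated by a Chern class and every map in sight is transparent, and then descend along the two scalar $\mathbb{Z}/2$-quotients. The diagonal $\Delta_0\colon SU_2\hookrightarrow SU_{2m}$ carries $-\mathbf{I}_2$ to $-\mathbf{I}_{2m}$, so it is equivariant for the scalar $\mathbb{Z}/2$-actions and descends to $\Delta_1$; applying $\mathbf{B}(-)$ yields a commutative square
\begin{equation*}
\begin{tikzcd}
\mathbf{B}SU_2\arrow[r,"\mathbf{B}\Delta_0"]\arrow[d,"q"']&\mathbf{B}SU_{2m}\arrow[d,"Q"]\\
\mathbf{B}PU_2\arrow[r,"\mathbf{B}\Delta_1"']&\mathbf{B}P(2,2m)
\end{tikzcd}
\end{equation*}
in which $q,Q$ are induced by the quotient maps $SU_2\to PU_2$ and $SU_{2m}\to P(2,2m)$. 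Taking $H^4(-;\mathbb{Z})$ gives the key identity $q^*\circ(\mathbf{B}\Delta_1)^*=\Delta_0^*\circ Q^*$, and since all four groups are infinite cyclic (using Proposition \ref{H^4(BP)} and Theorem \ref{Cohomology of BPU}) it suffices to evaluate the three already-understood maps $\Delta_0^*$, $q^*$, $Q^*$ on generators.

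First I would compute $\Delta_0^*$. The defining representation of $SU_{2m}$ restricts along the diagonal to $V^{\oplus m}$, where $V$ is the standard $2$-dimensional representation of $SU_2$; as $c_1(V)=0$, the total Chern class is $c(V^{\oplus m})=(1+c_2)^m$, so by naturality $\Delta_0^*(c_2)=m\,c_2$. Next, $q^*$ is controlled by Lemma \ref{BSU to BPU} with $r=2$: its image in degree $4$ is generated by $\epsilon_2(2)\cdot 2\,c_2=4c_2$, so $q^*(e_2)=\pm 4c_2$ and, in particular, $q^*$ is injective.

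The only point requiring care is $Q^*$, which I would pin down through the factorization $SU_{2m}\to P(2,2m)\xrightarrow{\varphi} PU_{2m}$ of the quotient $SU_{2m}\to PU_{2m}$. Combining Corollary \ref{H^4(BP)}(2), which gives that $(\mathbf{B}\varphi)^*$ sends the generator $e_2^{(2m)}$ of $H^4(\mathbf{B}PU_{2m};\mathbb{Z})$ to $4m\,e_2'$, with Lemma \ref{BSU to BPU} for $r=2m$, which gives that the composite $(\mathbf{B}\varphi\circ Q)^*$ sends $e_2^{(2m)}$ to $\pm 4m\,c_2$, yields $Q^*(4m\,e_2')=\pm 4m\,c_2$; since $H^4(\mathbf{B}SU_{2m};\mathbb{Z})\cong\mathbb{Z}$ is torsion-free we may cancel $4m$ and conclude $Q^*(e_2')=\pm c_2$. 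I expect this step—establishing that the fibre inclusion $\mathbf{B}SU_{2m}\hookrightarrow\mathbf{B}P(2,2m)$ is an isomorphism on $H^4$—to be the main obstacle, together with the attendant bookkeeping of signs, which one fixes once and for all by choosing the generators $e_2,e_2'$ compatibly.

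Finally I would assemble the pieces. Writing $(\mathbf{B}\Delta_1)^*(e_2')=k\,e_2$ and applying $q^*$, the identity $q^*\circ(\mathbf{B}\Delta_1)^*=\Delta_0^*\circ Q^*$ gives
\[
4k\,c_2=q^*(k\,e_2)=\Delta_0^*\bigl(Q^*(e_2')\bigr)=\Delta_0^*(c_2)=m\,c_2,
\]
whence $4k=m$ after fixing signs. Since $4\mid m$ by hypothesis this forces $k=m/4$, the claimed formula; one notes in passing that the equation $4k=m$ recovers $4\mid m$ as exactly the condition under which such a descent is possible, consistent with the standing assumption $\epsilon_2(n)n\mid m$.
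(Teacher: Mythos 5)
Your proof is correct, and it takes a route that is related to but genuinely distinct from the paper's. The paper never leaves the projective groups: it factors the diagonal map $\mathbf{B}PU_2\to\mathbf{B}PU_{2m}$ as $\mathbf{B}PU_2\xrightarrow{\mathbf{B}\Delta_1}\mathbf{B}P(2,2m)\xrightarrow{\mathbf{B}\varphi}\mathbf{B}PU_{2m}$, quotes that the composite is multiplication by $m^2$ on $H^4$ (Lemma 8.2 of \cite{Gu}, or a Chern class calculation), quotes Corollary \ref{H^4(BP)} to see that $(\mathbf{B}\varphi)^*$ is multiplication by $4m$, and divides: $m^2/(4m)=m/4$. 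You instead descend from the special unitary covers, using the covering square relating $\mathbf{B}\Delta_0$ and $\mathbf{B}\Delta_1$ together with three separate evaluations: $(\mathbf{B}\Delta_0)^*(c_2)=m\,c_2$ by the Whitney sum formula, $q^*(e_2)=\pm 4c_2$ by Lemma \ref{BSU to BPU} at $r=2$, and $Q^*(e_2')=\pm c_2$ by combining Corollary \ref{H^4(BP)} with Lemma \ref{BSU to BPU} at $r=2m$ and cancelling $4m$ in the torsion-free group $H^4(\mathbf{B}SU_{2m};\mathbb{Z})$. The two arguments share the essential input Corollary \ref{H^4(BP)}, but where the paper cites the external multiplication-by-$m^2$ computation from \cite{Gu}, you re-derive the same information from results internal to the paper (and indeed the two are consistent: $q^*$ is injective and $4\cdot m^2=(4m)\cdot m$). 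What your approach buys is self-containedness, plus the explicit identification $Q^*(e_2')=\pm c_2$ of the restriction to the fibre, which is of independent use; what the paper's approach buys is brevity, needing only one commutative triangle and two known multiplications. In both versions the formula is understood up to compatible choices of sign for the generators $e_2$, $e_2'$, $c_2$, exactly as you note.
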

\begin{proof}
Consider the following composition
\begin{equation*}
\mathbf{B}PU_2\xrightarrow{\mathbf{B}\Delta_1}\mathbf{B}P(2,2m)\rightarrow\mathbf{B}PU_{2m},
\end{equation*}
where the second arrow is induced by the quotient homomorphism. By Theorem \ref{Cohomology of BPU} and Proposition \ref{H^4(BP)}, $H^4(-;\mathbb{Z})$ of all $3$ spaces involved are isomorphic to $\mathbb{Z}$. It follows from Lemma 8.2 of \cite{Gu}, or a simple calculation using the Cartan formula of Chern classes, that $H^4(-;\mathbb{Z})$ of the composition is multiplication by $m^2$, and from Proposition \ref{H^4(BP)} that $H^4(-;\mathbb{Z})$ of the second arrow is multiplication by $4m$. Therefore $H^4(\mathbf{B}\Delta_1;\mathbb{Z})$ is multiplication by
\begin{equation*}
m^2/4m=\frac{m}{4}.
\end{equation*}
\end{proof}

\begin{corollary}\label{m=4}
Let $m=4$ and $\bar{e}_2'$ be the mod $2$ reduction of $e_2'$. Then in the mod $2$ cohomology, we have
\begin{equation*}
(\mathbf{B}\Delta_1)^*(\bar{e}_2')=w_2^2.
\end{equation*}
\end{corollary}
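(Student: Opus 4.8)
The plan is to deduce the corollary almost directly from Lemma \ref{e2} by specializing to $m=4$, and then to pin down the mod $2$ reduction $\bar{e}_2$ as the unique nonzero class $w_2^2$ in degree $4$.

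First I would invoke Lemma \ref{e2} with $m=4$. Since the multiplier there is $\frac{m}{4}=1$, the integral homomorphism $(\mathbf{B}\Delta_1)^*\colon H^4(\mathbf{B}P(2,8);\mathbb{Z})\to H^4(\mathbf{B}PU_2;\mathbb{Z})$ sends the generator $e_2'$ exactly to $e_2$. Because mod $2$ reduction is natural with respect to the pullback $(\mathbf{B}\Delta_1)^*$, applying reduction to the identity $(\mathbf{B}\Delta_1)^*(e_2')=e_2$ yields $(\mathbf{B}\Delta_1)^*(\bar{e}_2')=\bar{e}_2$. Thus the whole problem reduces to identifying $\bar{e}_2$ inside $H^4(\mathbf{B}PU_2;\mathbb{Z}/2)=H^4(\mathbf{B}SO_3;\mathbb{Z}/2)$.

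For that identification I would argue as follows. In the ring $\mathbb{Z}/2[w_2,w_3]$ the only monomial of degree $4$ is $w_2^2$, so $H^4(\mathbf{B}PU_2;\mathbb{Z}/2)\cong\mathbb{Z}/2$ is spanned by $w_2^2$; it therefore suffices to prove $\bar{e}_2\neq 0$. By Theorem \ref{Cohomology of BPU} (with $n=2$) the group $H^4(\mathbf{B}PU_2;\mathbb{Z})\cong\mathbb{Z}$ is free on $e_2$, while $H^5(\mathbf{B}PU_2;\mathbb{Z})=0$ for degree reasons (there is no product of the generators $e_2$ and $x_1$ in degree $5$). Consequently the Bockstein exact sequence shows the kernel of mod $2$ reduction on $H^4$ is $2H^4(\mathbf{B}PU_2;\mathbb{Z})$; since $e_2$ is a generator it does not lie in $2H^4$, so $\bar{e}_2\neq 0$ and hence $\bar{e}_2=w_2^2$. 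Combining this with the previous paragraph gives $(\mathbf{B}\Delta_1)^*(\bar{e}_2')=\bar{e}_2=w_2^2$, as claimed.

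The only step requiring any care is the identification $\bar{e}_2=w_2^2$, i.e.\ verifying that the reduction of the integral generator does not vanish; this is the place where freeness of $H^4(\mathbf{B}PU_2;\mathbb{Z})$ and the vanishing of $H^5(\mathbf{B}PU_2;\mathbb{Z})$ are used. Everything else is formal: the specialization $m=4$ in Lemma \ref{e2} and the naturality of mod $2$ reduction.
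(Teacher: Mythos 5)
Your proposal is correct and takes essentially the same route as the paper: specialize Lemma \ref{e2} to $m=4$, use naturality of mod $2$ reduction, and identify the reduction of $e_2$ with $w_2^2$ as the unique nonzero class in $H^4(\mathbf{B}PU_2;\mathbb{Z}/2)$. The only difference is that you justify $\bar{e}_2\neq 0$ explicitly via the Bockstein exact sequence (using $H^4(\mathbf{B}PU_2;\mathbb{Z})\cong\mathbb{Z}$ generated by $e_2$), a detail the paper's proof leaves implicit.
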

\begin{proof}
Notice that
$$H^4(\mathbf{B}PU_2;\mathbb{Z}/2)\cong H^4(\mathbf{B}SO_3;\mathbb{Z}/2)$$
is generated by $w_2^2$. Therefore $w_2^2$ is the mod $2$ reduction of $e_2$. The rest follows from Lemma \ref{e2}.
\end{proof}
The author owes the following lemma to A. Bousfield.
\begin{lemma}\label{Sq^3e_2'}
Let $4|m$, and recall the definition of $\bar{e}_2'$ from Corollary \ref{m=4}. We have
\begin{equation*}
\operatorname{Sq}^3(\bar{e}_2')\neq 0.
\end{equation*}
\end{lemma}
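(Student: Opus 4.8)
The plan is to work with mod $2$ coefficients throughout and to reduce the claim to a single Steenrod computation in the fibration $\mathbf{B}SU_{2m}\to\mathbf{B}P(2,2m)\xrightarrow{p}K(\mathbb{Z}/2,2)$. First I would invoke the Adem relation $\operatorname{Sq}^3=\operatorname{Sq}^1\operatorname{Sq}^2$ to write $\operatorname{Sq}^3(\bar{e}'_2)=\operatorname{Sq}^1(\operatorname{Sq}^2\bar{e}'_2)$. Since $4\mid m$, Theorem \ref{H^7(BP)} gives $H^7(\mathbf{B}P(2,2m);\mathbb{Z})\cong\mathbb{Z}/2\oplus\mathbb{Z}/2$, which is $2$-torsion; hence the reduction map $H^7(\mathbf{B}P(2,2m);\mathbb{Z})\to H^7(\mathbf{B}P(2,2m);\mathbb{Z}/2)$ is injective, and because $\operatorname{Sq}^1$ is the integral Bockstein followed by reduction, $\operatorname{Sq}^1(\operatorname{Sq}^2\bar{e}'_2)\neq 0$ if and only if $\operatorname{Sq}^2\bar{e}'_2$ is \emph{not} the mod $2$ reduction of an integral class. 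Thus it suffices to locate $\operatorname{Sq}^2\bar{e}'_2$ in $H^6$ precisely enough to see it is not integral.

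The core step is to establish that, in $H^6(\mathbf{B}P(2,2m);\mathbb{Z}/2)$,
\[
\operatorname{Sq}^2\bar{e}'_2=\widetilde{c}_3+x_1''\,\bar{e}'_2+(\text{decomposables in }x_1'',\bar{x}'_1),
\]
where $\bar{e}'_2$ restricts to the Chern class $c_2$ on the fiber $\mathbf{B}SU_{2m}$ and $\widetilde{c}_3$ is the integral class extending $c_3$. This $\widetilde{c}_3$ exists because, for $n=2$, the differentials of Lemma \ref{^S d_3} out of $c_3$ vanish ($\,d_3(c_3)=2c_2\beta_2=0$ and $d_7(c_3)=0\,$), so $c_3$ is a permanent cycle carried by an integral class. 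On the fiber the ordinary Wu formula gives $\operatorname{Sq}^2 c_2=c_3$ (since $c_1=0$ for $SU$); the extra term $x_1''\,\bar{e}'_2$ should be the contribution of the $\mathbb{Z}/2$-twist, with $x_1''\in H^2(\mathbf{B}P(2,2m);\mathbb{Z}/2)$ playing the role of $c_1$ in the twisted Wu formula $\operatorname{Sq}^2 c_2=c_1c_2+c_3$. I expect this twist term to be precisely the mod $2$ shadow of the integral differential $d_3(c_3)=2c_2\beta_2$ of Lemma \ref{^S d_3}: the factor $2$ is exactly what should force a nontrivial interaction between $c_2$ and the base after reduction.

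Granting the formula, the conclusion is immediate. Using $\operatorname{Sq}^1\widetilde{c}_3=0$ (integrality), $\operatorname{Sq}^1 x_1''=\bar{x}'_1$, and $\operatorname{Sq}^1\bar{e}'_2=0$, I obtain
\[
\operatorname{Sq}^3\bar{e}'_2=\operatorname{Sq}^1(x_1''\,\bar{e}'_2)+\operatorname{Sq}^1(\text{decomposables})=\bar{x}'_1\,\bar{e}'_2+(\text{lower filtration terms}),
\]
and $\bar{x}'_1\bar{e}'_2$ is the nonzero generator of $E_\infty^{3,4}$ in the Serre spectral sequence of $p$ (a product of the permanent cycles $\bar{x}'_1=p^*b_3$ and $\bar{e}'_2\leftrightarrow c_2$, with no differential entering or leaving $E^{3,4}$ for degree reasons), so it cannot be cancelled by the lower-filtration terms; hence $\operatorname{Sq}^3\bar{e}'_2\neq 0$. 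The main obstacle is the middle step — pinning the coefficient of $x_1''\bar{e}'_2$ to be nonzero. This coefficient is exactly the $2$-torsion part of the $k$-invariant $\kappa_5$ left undetermined by Theorem \ref{H^7(BP)} (which fixes $\kappa_5$ only modulo $2$-torsion), so it genuinely requires a separate argument. Naturality along $\mathbf{B}\Delta_1$ is of no help: by Lemma \ref{e2}, Corollary \ref{m=4}, and the Cartan formula one computes $(\mathbf{B}\Delta_1)^*\operatorname{Sq}^3\bar{e}'_2=\operatorname{Sq}^3(w_2^2)=0$, so the twist term must instead be extracted intrinsically from the nontriviality of the fibration as recorded by Lemma \ref{^S d_3}.
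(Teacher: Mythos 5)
Your opening and closing reductions are sound: since $H^7(\mathbf{B}P(2,2m);\mathbb{Z})\cong\mathbb{Z}/2\oplus\mathbb{Z}/2$ is all $2$-torsion when $4|m$, detecting $\operatorname{Sq}^3(\bar{e}_2')=\operatorname{Sq}^1\operatorname{Sq}^2(\bar{e}_2')$ does reduce to showing $\operatorname{Sq}^2(\bar{e}_2')$ is not the reduction of an integral class, and \emph{if} your displayed formula held with the term $x_1''\bar{e}_2'$ present, applying $\operatorname{Sq}^1$ and comparing filtrations in the Serre spectral sequence of $\mathbf{B}SU_{2m}\to\mathbf{B}P(2,2m)\to K(\mathbb{Z}/2,2)$ would finish the proof (with the small caveat that the vanishing of differentials touching $E^{3,4}$ is not purely "for degree reasons": $d_3$ from $E^{0,6}$ and $d_5$ into $E^{8,0}$ must be checked, though both do vanish, using the permanence of $\bar{c}_3$ and the Leibniz rule). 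The genuine gap is the one you name yourself: the coefficient of $x_1''\bar{e}_2'$ in $\operatorname{Sq}^2(\bar{e}_2')$ is never shown to be nonzero. As you correctly observe, that coefficient is exactly the $2$-torsion ambiguity in $\kappa_5$ left open by Theorem \ref{H^7(BP)}, i.e.\ it is equivalent to the lemma itself; so the proposal reformulates the statement rather than proving it. Moreover, the source you hope to extract it from, Lemma \ref{^S d_3}, carries no usable information here: for $n=2$ the differentials in that lemma are literally zero ($d_3(c_3)=2c_2\beta_2=0$ because $2\beta_2=0$, and $d_7(c_3)=2R_2=0$ in $\mathbb{Z}/2$), so the "mod $2$ shadow" of a vanishing integral differential is not something one can read off; a genuinely mod $2$ (or mod $4$) input is required.

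The paper supplies exactly this missing input, but from the \emph{other} fibration, $\mathbf{B}P(2,2m)\to\mathbf{B}PU_{2m}\to K(\mathbb{Z}/m,2)$, in which $\mathbf{B}P(2,2m)$ is the fiber and the total space is known. There, the class $b_2\otimes\bar{x}_1'\in {^2E}_2^{2,3}$ is shown to be a permanent cocycle, while $H^5(\mathbf{B}PU_{2m};\mathbb{Z}/2)\cong\mathbb{Z}/2$ (Theorem \ref{Cohomology of BPU}) is already exhausted by ${^2E}_\infty^{5,0}$; this counting argument forces ${^2d}_2^{0,4}(\bar{e}_2')=b_2\otimes\bar{x}_1'$, and then the Araki--V\'azquez operations commuting with differentials (Theorem \ref{Steenrod in E}) give ${^2d}_2^{0,7}(\operatorname{Sq}^3\bar{e}_2')={_F\operatorname{Sq}^3}(b_2\otimes\bar{x}_1')=b_2\otimes(\bar{x}_1')^2\neq 0$, whence $\operatorname{Sq}^3(\bar{e}_2')\neq 0$. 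The crucial external ingredient is cohomological control of the total space $\mathbf{B}PU_{2m}$, something to push against; in your fibration the total space is $\mathbf{B}P(2,2m)$ itself, the very object being computed, which is why your setup produces no such constraint. To complete your route you would have to import that control, e.g.\ by comparing your spectral sequence with the paper's along $\mathbf{B}P(2,2m)\to\mathbf{B}PU_{2m}$, at which point you would essentially have reconstructed the paper's argument.
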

\begin{proof}
Consider the following fiber sequence:
\begin{equation*}
\mathbf{B}P(2,2m)\rightarrow\mathbf{B}PU_{2m}\rightarrow K(\mathbb{Z}/m,2),
\end{equation*}
and denote its associate Serre spectral sequence in $\mathbb{Z}/2$ coefficients by $^2E_*^{*,*}$. Recall that $H^2(\mathbf{B}P(2,2m);\mathbb{Z}/2)$ is generated by $x_1''$, and $H^3(\mathbf{B}P(2,2m);\mathbb{Z}/2)$ by $\bar{x_1}'$. Also recall from Corollary \ref{K(Z/n,2)mod 2} which asserts
\begin{equation*}
H^*(K(\mathbb{Z}/n,2);\mathbb{Z}/2)=\mathbb{Z}/2[b_2, b_3, b_5]
\end{equation*}
where $b_5=\operatorname{Sq}^2(b_3)$ and $\operatorname{Sq}^1(b_2)=0$. For obvious degree reasons, the differential
\begin{equation}\label{^2d_3(0,2)}
^2d_3^{0,2}: {^2E}_3^{0,2}\cong H^2(\mathbf{B}P(2,2m);\mathbb{Z}/2)\rightarrow  {^2E}_3^{3,0}\cong H^3(K(\mathbb{Z}/m,2);\mathbb{Z}/2), x_1''\mapsto b_3
\end{equation}
is an isomorphism. Therefore, the element $x_1''$ is transgressive and we have
\begin{equation*}
\begin{split}
^2d_4^{0,3}: {^2E}_4^{0,3}\cong H^3(\mathbf{B}P(2,2m);\mathbb{Z}/2)&\rightarrow  {^2E}_4^{4,0}\cong H^4(K(\mathbb{Z}/m,2);\mathbb{Z}/2),\\
\bar{x}_1'=\operatorname{Sq}^1(x_1'')&\mapsto \operatorname{Sq}^1(b_3)=0,
\end{split}
\end{equation*}
because $b_3$ is the reduction of an integral class. From this it follows that
\begin{equation}\label{^2E(2,3)}
^2d_4^{2,3}(b_2\otimes\bar{x}_1')=0.
\end{equation}
It follows from Proposition \ref{kappa3} that
\begin{equation*}
H^4(\mathbf{B}P(2,2m);\mathbb{Z}/2)\cong\mathbb{Z}/2\oplus\mathbb{Z}/2,
\end{equation*}
and moreover, it is generated by $(x_1'')^2$ and $\bar{e}_2'$. Since $x_1''\in {^2E}_{3}^{0,2}$ is transgressive, so is $(x_1'')^2=\operatorname{Sq}^2(x_1'')$. Furthermore, we have
\begin{equation}\label{^2d_5(0,4)}
^2d_5^{0,4}((x_1'')^2)=\operatorname{Sq}^2(^2d_5^{0,4}(x_1''))=\operatorname{Sq}^2(b_3)=b_5.
\end{equation}
For obvious degree reasons, this is the only nontrivial differential reaching $^2E_*^{5,0}$. Hence, we have
\begin{equation}\label{^2E(5,0)}
^2E_{\infty}^{5,0}\cong\mathbb{Z}/2.
\end{equation}

On the other hand, it follows from (\ref{^2E(2,3)}) that $b_2\otimes\bar{x}_1'\in ^2E_3^{2,3}$ is a permanent cocycle, whereas
\begin{equation*}
H^5(\mathbf{B}PU_{2m};\mathbb{Z}/2)\cong\mathbb{Z}/2,
\end{equation*}
a consequence of Theorem \ref{Cohomology of BPU}. Therefore, it follows from (\ref{^2E(5,0)}) that $b_2\otimes\bar{x}_1'$ is a coboundary. For degree reasons and (\ref{^2d_5(0,4)}), we have
\begin{equation}\label{^2d_2(0,4)}
^2d_2^{0,4}(\bar{e}_2')=b_2\otimes\bar{x}_1'.
\end{equation}
We recall a theorem regarding Steenrod operations in spectral sequences, proved independently by Araki (\cite{Ar}) and V{\'a}zquez (\cite{Va}). We quote this theorem from \cite{Mc} as follows:
\begin{theorem}[Theorem 6.15, \cite{Mc}]\label{Steenrod in E}
On the mod $p$ cohomology spectral sequence associated to a fibration $F\rightarrow E\rightarrow B$, there are operations
\begin{equation*}
\textrm{for $p$ odd}
\begin{cases}
_F\operatorname{P}^s: E_r^{a,b}\rightarrow E_r^{a,b+2s(p-1)}, 1\leq r\leq\infty,\\
_B\operatorname{P}^s: E_r^{a,b}\rightarrow E_r^{a+(2s-b)(p-1),pb}, 2\leq r\leq\infty,
\end{cases}
\end{equation*}

\begin{equation*}
\textrm{for $p=2$}
\begin{cases}
_F\operatorname{Sq}^i: E_r^{a,b}\rightarrow E_r^{a,b+i}, 1\leq r\leq\infty,\\
_B\operatorname{Sq}^i: E_r^{a,b}\rightarrow E_r^{a+i-b,2b}, 2\leq r\leq\infty,
\end{cases}
\end{equation*}
that converge to the action of $\mathscr{A}_p$ on $H^*(E;\mathbb{Z}/p)$, commute with the differentials
in the spectral sequence, satisfy analogues of Cartan's formula and the Adem relations and reduce to the $\mathscr{A}_p$-action on $H^*(F;\mathbb{Z}/p)$ and $H^*(B;\mathbb{Z}/p)$, when $r=2$ and $a=0$ or $b=0$ (that is, on $E_2^{*,0}$ and $E_2^{0,*}$). Here $\mathscr{A}_p$ denotes the mod $p$ Steenrod algebra.
\end{theorem}
These operations satisfy a list of axioms similar to those characterizing Steenrod operations. In particular, we have
\begin{equation}\label{Araki}
^F\operatorname{Sq}^i=0: E_r^{a,b}\rightarrow E_r^{a,b+i}, i<0 \textrm{ or }i>b.
\end{equation}
For the complete list of the axioms, see, for example, \cite{Ar}.

It follows from (\ref{^2d_2(0,4)}) that
\begin{equation*}
^2d_2^{0,7}(\operatorname{Sq}^3(\bar{e}_2'))=_F\operatorname{Sq}^3(b_2\otimes\bar{x}_1')=b_2\otimes(\bar{x}_1')^2\neq 0.
\end{equation*}
In particular, $\operatorname{Sq}^3(\bar{e}_2')\neq 0$.
\end{proof}

In order to reduce the proof of Theorem \ref{main2} to the case that $n=2$, we need the following
\begin{theorem}[Theorem 1.3, \cite{An3}]\label{separate primes}
Let $(X,\mathscr{O}_X)$ be a connected locally ringed topos, and let $\alpha=\alpha_1+\cdots +\alpha_t$ be the prime decomposition of a Brauer class $\alpha\in\operatorname{Br}_{top}(X)$ so that each $\operatorname{per}(\alpha_i)=p_i^{a_i}$ for distinct primes $p_1,\cdots, p_t$. Then
$$\operatorname{ind}(\alpha)=\operatorname{ind}(\alpha_1)\cdots\operatorname{ind}(\alpha_t).$$
\end{theorem}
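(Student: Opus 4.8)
The plan is to prove the two divisibilities $\operatorname{ind}(\alpha)\mid\prod_i\operatorname{ind}(\alpha_i)$ and $\prod_i\operatorname{ind}(\alpha_i)\mid\operatorname{ind}(\alpha)$ separately, exploiting the coprimality of the factors. First I would record that, since $\operatorname{per}(\alpha_i)=p_i^{a_i}$, Theorem \ref{same prime divisors} forces $\operatorname{ind}(\alpha_i)$ to share the same prime divisors as $\operatorname{per}(\alpha_i)$ and hence to be a power of $p_i$; in particular the integers $\operatorname{ind}(\alpha_1),\dots,\operatorname{ind}(\alpha_t)$ are pairwise coprime, while $\operatorname{per}(\alpha)=\prod_i p_i^{a_i}$ and $\operatorname{ind}(\alpha)=\prod_i p_i^{c_i}$ for suitable exponents $c_i$. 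Thus the theorem is equivalent to the statement that the $p_i$-adic valuation of $\operatorname{ind}(\alpha)$ equals that of $\operatorname{ind}(\alpha_i)$ for each $i$, and the two divisibilities bound these valuations from the two sides.

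For the upper bound I would use that over a finite complex the index is realized as the rank of a genuine twisted bundle (this is part of the rank-map description recorded after Theorem \ref{AH diff}): choose for each $i$ an Azumaya algebra $A_i$, equivalently a $PU_{r_i}$-torsor, of degree $r_i=\operatorname{ind}(\alpha_i)$ representing $\alpha_i$. The tensor product $A_1\otimes\cdots\otimes A_t$ then represents $\sum_i\alpha_i=\alpha$ and has degree $\prod_i r_i=\prod_i\operatorname{ind}(\alpha_i)$, whence $\operatorname{ind}(\alpha)\mid\prod_i\operatorname{ind}(\alpha_i)$. Should one prefer to avoid the realization statement, the same conclusion follows prime by prime: for fixed $i$ one selects representing degrees $r_j$ of $\alpha_j$ with $v_{p_i}(r_j)$ minimal (hence $0$ for $j\ne i$), tensors them, and reads off the $p_i$-adic valuation of $\operatorname{ind}(\alpha)$.

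For the lower bound, the coprimality of the periods lets me isolate each summand arithmetically. Choosing, by the Chinese Remainder Theorem, an integer $s_i$ with $s_i\equiv 1\pmod{p_i^{a_i}}$ and $s_i\equiv 0\pmod{p_j^{a_j}}$ for $j\ne i$, one has $s_i\alpha_i=\alpha_i$ and $s_i\alpha_j=0$ for $j\ne i$, so that $s_i\alpha=\alpha_i$. The key input is then the multiplication lemma $\operatorname{ind}(m\beta)\mid\operatorname{ind}(\beta)$, valid for every Brauer class $\beta$ and integer $m$, which yields $\operatorname{ind}(\alpha_i)=\operatorname{ind}(s_i\alpha)\mid\operatorname{ind}(\alpha)$. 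Since the $\operatorname{ind}(\alpha_i)$ are pairwise coprime, their product divides $\operatorname{ind}(\alpha)$, completing the lower bound and hence the asserted equality.

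The main obstacle is the multiplication lemma, which is the topological incarnation of the classical fact that a splitting field of $\beta$ splits $m\beta$. I would establish it geometrically via the twisted projective bundle $\pi\colon Y=P\times_{PU_r}\mathbb{CP}^{r-1}\to X$ associated to a $PU_r$-torsor $P$ representing $\beta$: the tautological twisted line bundle trivializes $\pi^*\beta$, hence also $\pi^*(m\beta)$, so $m\beta$ becomes untwisted after pullback to $Y$. A K-theoretic Gysin transfer $\pi_!\colon KU^0(Y)_{\pi^*(m\beta)}=KU^0(Y)\to KU^0(X)_{m\beta}$ then produces a twisted class whose rank is the fiberwise Euler characteristic $\chi(\mathbb{CP}^{r-1})=r$, whence $\operatorname{ind}(m\beta)\mid r$; letting $r$ range over representing degrees of $\beta$ gives $\operatorname{ind}(m\beta)\mid\operatorname{ind}(\beta)$. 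The delicate points to verify are the existence and normalization of the twisted Gysin map, in particular the $W_3$-correction coming from the vertical tangent bundle of the fibration, and the rank computation for $\pi_!(1)$. Alternatively, as the whole theorem is drawn from the Antieau--Williams framework, the multiplication lemma may simply be invoked from the twisted K-theory developed in \cite{An1}.
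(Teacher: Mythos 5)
The paper itself does not prove this statement: it is quoted verbatim as Theorem~1.3 of \cite{An3} and used as a black box, so your proposal has to be judged as a standalone argument. Your upper bound is essentially correct, with one caveat you yourself anticipated: in the topological setting the index is a gcd of representing degrees and is \emph{not} known to be attained by an actual Azumaya algebra, so the first formulation (``choose $A_i$ of degree $r_i=\operatorname{ind}(\alpha_i)$'') is unjustified; but your prime-by-prime fallback --- picking, via Theorem~\ref{same prime divisors}, representatives of $\alpha_j$ of $p_j$-power degree for $j\ne i$ and a representative of $\alpha_i$ whose degree has minimal $p_i$-valuation --- is correct and complete, since $v_{p_i}(\gcd S)=\min_{s\in S}v_{p_i}(s)$ is attained. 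The CRT reduction $s_i\alpha=\alpha_i$ is also fine.

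The genuine gap is in the lower bound, where everything rests on the multiplication lemma $\operatorname{ind}(m\beta)\mid\operatorname{ind}(\beta)$, and your geometric proof of it fails at the rank computation. The rank of a twisted K-theoretic Gysin pushforward $\pi_!$ is the \emph{fiberwise K-theoretic index} (a holomorphic Euler characteristic with respect to the complex orientation of $T_\pi$), not the topological Euler characteristic $\chi(\mathbb{CP}^{r-1})=r$. Under the trivialization of $\pi^*(m\beta)$ by the tautological twisted bundle $\mathcal{O}(-m)=\mathcal{O}(-1)^{\otimes m}$, the unit $1\in KU^0(Y)$ corresponds to pushing forward $\mathcal{O}(-m)$, and $\operatorname{rank}\pi_!\bigl(\mathcal{O}(-m)\bigr)=\chi\bigl(\mathbb{CP}^{r-1},\mathcal{O}(-m)\bigr)=0$ for $0<m<r$, which yields no divisibility information at all; other natural twisted classes (e.g.\ $\mathcal{O}(-m)\otimes\Lambda^j\operatorname{Hom}(\mathcal{O}(-1),\pi^*E)$) give ranks of binomial type such as $\binom{r}{m}$, not $r$. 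The class whose pushforward has rank $r$, namely $\lambda_{-1}(T_\pi^*)$, is untwisted and lands in untwisted $KU^0(X)$, so it says nothing about the twist $m\beta$. The classical algebraic proof of the lemma goes through splitting fields, which have no topological analogue, and I would not rely on it being quotable from \cite{An1}. Fortunately the lemma can be avoided entirely, and this is (in effect) how the cited result is proved: the opposite (conjugate) Azumaya algebra $A^{\mathrm{op}}$ represents $-\beta$ with the same degree, so $\operatorname{ind}(-\gamma)=\operatorname{ind}(\gamma)$; writing $\alpha_i=\alpha+\sum_{j\ne i}(-\alpha_j)$ and tensoring an arbitrary degree-$r$ representative of $\alpha$ with $p_j$-power-degree representatives of $\alpha_j^{\mathrm{op}}$ (again Theorem~\ref{same prime divisors}) gives $v_{p_i}(\operatorname{ind}(\alpha_i))\le v_{p_i}(r)$ for every representing degree $r$, hence $\operatorname{ind}(\alpha_i)\mid\operatorname{ind}(\alpha)$, and coprimality finishes the proof. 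Alternatively, in the only case you need ($m=s_i$ with $\gcd(s_i,p_i)=1$), twisted Adams operations $\psi^{s_i}\colon KU(X)_{\alpha}\to KU(X)_{s_i\alpha}$, which preserve ranks and exist after inverting $s_i$, control the $p_i$-valuation; but the opposite-algebra argument is both simpler and integral.
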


\begin{proof}[Proof of Theorem \ref{main2}]
Suppose $n=2l$ where $l$ is an odd number. Write $\alpha=\alpha_1+\alpha_2$, where $\alpha_1$ and $\alpha_2$ are of order $2$ and $l$ respectively. It follows from Theorem \ref{main1} that $\operatorname{ind}(\alpha_2)|\epsilon_3(l)l^3$. By Theorem \ref{separate primes}, it suffices to show that $\operatorname{ind}(\alpha_1)\nmid 2^3$. Hence, it suffices to prove the theorem for $n=2$.

Recall that for any $n$
\begin{equation*}
\kappa_5=\lambda_1R_n\times 1+ \lambda_2\beta_n\times\iota_4+1\times\Gamma_4.
\end{equation*}
In the case $n=2$, this means that $\lambda_1$ and $\lambda_2$ are either $0$ or $1$. Therefore, it suffices to determine $\lambda_1$ and $\lambda_2$ in the mod $2$ cohomology group. It follows from Corollary \ref{R2BDelta} and Corollary \ref{m=4} that in $H^7(\mathbf{B}PU_2;\mathbb{Z}/2)$, we have
\begin{equation*}
\begin{split}
0=&(\mathbf{B}\Delta_1)^*(\lambda_1 \bar{R}_2+\lambda_2\bar{x}_1'\bar{e}_2'+\operatorname{Sq}^3(\bar{e}_2'))\\
=&(\mathbf{B}\Delta_1)^*(\lambda_1 \bar{R}_2)+\lambda_2 w_2^2w_3+\operatorname{Sq}^3(w_2^2)\\
=&\lambda_1w_2^2w_3+\lambda_2 w_2^2w_3+[\operatorname{Sq}^2(w_2)w_3+w_3\operatorname{Sq}^2(w_2)]\\
=&(\lambda_1+\lambda_2)w_2^2w_3
\end{split}
\end{equation*}
which implies $\lambda_1+\lambda_2=0$. On the other hand, it follows from Lemma \ref{Sq^3e_2'} that $\lambda_1$ and $\lambda_2$ cannot be both $0$. So we have
$$\lambda_1=\lambda_2=1.$$
Therefore, when $m=4$, we have
\begin{equation*}
\kappa_5=R_2\times 1+ \beta_2\times\iota_4+1\times\Gamma_4.
\end{equation*}
Hence, the obstruction class for lifting $\beta_2$ to $\mathbf{B}P(2,8)[5]$ is $R_2\neq 0$, and the desired result follows.
\end{proof}

\bibliographystyle{abbrv}
\bibliography{ref}
\end{document}